\documentclass[11pt]{amsart}
\usepackage{amsmath,amssymb,amsthm}
\usepackage[margin=1in]{geometry}
\usepackage{tikz}

\newtheorem{theorem}{Theorem}[section]
\newtheorem{proposition}[theorem]{Proposition}
\newtheorem{lemma}[theorem]{Lemma}
\newtheorem{corollary}[theorem]{Corollary}
\theoremstyle{definition}
\newtheorem{definition}[theorem]{Definition}
\theoremstyle{remark}
\newtheorem{remark}[theorem]{Remark}
\newtheorem{example}[theorem]{Example}
\theoremstyle{plain}
\newtheorem{introtheorem}{Theorem}

\DeclareMathOperator{\sign}{sign}
\DeclareMathOperator{\rank}{rank}
\DeclareMathOperator{\ind}{ind}

\newcommand{\authorinfo}{%
  \author{Fr\'ed\'eric Bihan}
  \address{Laboratoire de Math\'ematiques, Universit\'e Savoie Mont Blanc, 73376 Le Bourget-du-Lac Cedex, France}
  \email{frederic.bihan@univ-smb.fr}}

\title{Descartes' rule of signs for arbitrary fewnomial systems}
\authorinfo
\date{\today}

\begin{document}

\begin{abstract}
The classical Descartes' rule of signs bounds the number of positive roots of a real polynomial in one variable by the number of sign changes of its coefficients. This bound only depends on the order of the exponents and on the signs of the coefficients, that is, on the oriented matroids of the exponent and coefficient matrices. We give a multivariate generalization of this rule to arbitrary fewnomial systems: for systems of $n$ real polynomial equations in $n$ variables with $n+k+1$ monomials, we obtain upper bounds for the number of nondegenerate positive solutions which only depend on sign conditions on the maximal minors of the coefficient matrix and of the exponent matrix and its liftings. For $k=1$ they are the Descartes rules of signs for circuits of Bihan--Dickenstein and Bihan--Dickenstein--Forsgård. Our approach starts from the Gale duality and the Khovanskii--Rolle argument of Bihan and Sottile, which reduce the problem to counting common zeros of iterated Jacobians in a polyhedron $\Delta$. This polyhedron is a chamber of a hyperplane arrangement whose combinatorics is determined by the oriented matroid of the coefficient matrix. We bound the number of these zeros by the Bézout number minus the zeros forced in the other chambers by a topological degree, which we compute by a facet-count formula involving the signs of minors of a Gale dual matrix of the exponent matrix. This leads to a recursion over the flats of the arrangement, which ends on lines with the Descartes rules for circuits. In this way the coefficients enter the bounds through the combinatorics of the arrangement, and the exponents through sign conditions on minors.
\end{abstract}
\maketitle

\section*{Introduction}

How many real solutions can a system of real polynomial equations have? Bézout's theorem bounds the number of complex solutions by the product of the degrees of the equations, and this bound is often far from the number of real solutions. Khovanskii's theory of fewnomials \cite{Kh} shows that the number of real solutions with positive coordinates can be bounded in terms of the number of monomials only, independently of the degrees. In this paper we look for bounds of the same kind which, in addition, are determined by sign conditions on the coefficients and the exponents. This is what the classical Descartes' rule of signs, stated by René Descartes in \emph{La Géométrie} (1637), does for polynomials in one variable, where these signs are the signs of the coefficients and the order of the exponents.

\subsection*{The classical Descartes' rule of signs}

Let $f(x)=c_1x^{a_1}+c_2x^{a_2}+\dots+c_Nx^{a_N}$ be a real polynomial in one variable, with nonzero coefficients and exponents $a_1<a_2<\dots<a_N$; the exponents may be arbitrary real numbers, in which case $f$ is defined for $x>0$. The classical Descartes' rule of signs states that the number of positive roots of $f$, counted with multiplicities, is at most the number of sign changes in the sequence of coefficients $(c_1,c_2,\dots,c_N)$, and that the difference is even. For instance, $x^3-x+1$ has at most two positive roots, since the signs $(+,-,+)$ change twice. The bound is sharp: for every sequence of signs, there are polynomials with these signs whose number of positive roots is the number of sign changes. Thus the number of positive roots is bounded by a quantity which only depends on two pieces of information: the order of the exponents, and the signs of the coefficients.

\subsection*{Plücker coordinates and sign conditions}

Consider now a system of $n$ equations in $n$ variables with real coefficients, involving $N=n+k+1$ distinct monomials $x^{a_1},\dots,x^{a_N}$ with real exponents $a_j\in\mathbb{R}^n$:
\[
\sum_{j=1}^Nc_{ij}\,x^{a_j}=0,\qquad i=1,\dots,n,\qquad x\in\mathbb{R}^n_{>0}.
\]
It is encoded by two matrices: the $n\times N$ coefficient matrix $C=(c_{ij})$, and the $(n+1)\times N$ exponent matrix $A$ whose $j$-th column is $(1,a_j)$. Two simple operations do not change the number of positive solutions. Replacing the equations by independent linear combinations of them replaces $C$ by $GC$ with $G$ invertible. An invertible affine change of the exponents, $a_j\mapsto La_j+v$, corresponds to the monomial change of variables $x\mapsto x^L$, which is a bijection of the positive orthant, followed by the division of all the equations by $x^v$; it replaces $A$ by a matrix with the same row space. Hence the positive solutions only depend on the row spaces of $C$ and $A$, that is, on two points of Grassmannians. These points are described by their Plücker coordinates, the maximal minors of $C$ and $A$, up to a common factor. The signs of these minors, up to a global sign, form the \emph{oriented matroids} of $C$ and $A$.

Descartes' rule fits into this framework. For $n=1$, the exponent matrix has rows $(1,\dots,1)$ and $(a_1,\dots,a_N)$, and its maximal minors are the differences $a_j-a_i$: the oriented matroid of $A$ records the order of the exponents. The coefficient matrix is the row $(c_1,\dots,c_N)$, whose maximal minors are the coefficients themselves: the oriented matroid of $C$ records their signs. So Descartes' bound is determined by the oriented matroids of $A$ and $C$.

For systems with any $n$ and $k$, a natural problem is to find upper bounds for the number of positive solutions which are determined by \emph{sign conditions on the maximal minors of $C$ and $A$ and on simple functions of them}. Bounds depending only on the oriented matroids, that is, on the signs of the minors themselves, are the simplest instance, but there is no reason to expect the best bounds to be of this form. Indeed, the number of positive solutions is locally constant outside a discriminant locus, and by the Tarski--Seidenberg theorem it is determined by the signs of finitely many polynomials in the maximal minors of $C$ and $A$; the problem is to find sign conditions which are simple and give good bounds.

\subsection*{Descartes' rule of signs for circuits}

The general bounds of fewnomial theory only depend on $n$ and $k$. Bihan and Sottile \cite{BS} reduced the problem, by Gale duality, to a system of $k$ equations $\prod_{i}p_i(y)^{B_{ij}}=1$ in a polyhedron $\Delta\subset\mathbb{R}^k$ cut out by $n+k$ affine functions $p_i$, and obtained the bound $\frac{e^2+3}4\,2^{\binom k2}n^k$ by a Khovanskii--Rolle argument. Here $B$ and the coefficients of the $p_i$ come from Gale dual matrices of $A$ and $C$, whose maximal minors are, up to signs and a common factor, the complementary maximal minors of $A$ and $C$.

Bihan and Dickenstein \cite{BD} gave a Descartes rule of signs for systems supported on circuits, that is, for $k=1$ ($n+2$ monomials). By Gale duality such a system reduces to a single equation on an interval cut out by $n+1$ points, and they bound the number of positive solutions by the number of sign variations of a sequence built from signed maximal minors of $A$ and $C$. This bound only depends on the oriented matroids of $A$ and $C$, it is sharp, and it is at most $n+1$. For $n=1$ and three monomials, it is Descartes' rule.

Bihan, Dickenstein and Forsgård \cite{BDF} then obtained an optimal Descartes rule of signs for circuits, in which the signs of this sequence are replaced by the signs of its partial sums. These partial sums are linear functions of the maximal minors, so this rule is determined by sign conditions of the kind above which go beyond the oriented matroids, and it is optimal. The aim of this paper is to extend these Descartes rules from circuits to systems with any number $n+k+1$ of monomials. For $k=1$ our bounds are those of \cite{BD} and \cite{BDF}, and in general the circuit rules are the first step of a recursion over the flats of a hyperplane arrangement. Our bounds are determined by sign conditions of the same kind: the first one by the oriented matroids of $C$ and $A$ and of some liftings of $A$, the second one also by signs of partial sums of minors as in \cite{BDF} (Theorem~\ref{thm:main} below).

\subsection*{Strategy}

We follow the approach of Bihan and Sottile \cite{BS}, keeping track of the signs of the data at each step. The Gale system is encoded by Gale dual matrices $B$ of $A$ and $D$ of $C$ (Sections~\ref{sec:polysys} and~\ref{sec:gale}), and the starting point is the Bihan--Sottile bound (Theorem~\ref{thm:KR-general}):
\[
\#\{\text{positive solutions}\}\ \le\ \#\big(\{\Gamma_1=\dots=\Gamma_k=0\}\cap\Delta\big)+\sum_{j=1}^kb(C_j),
\]
where $\Gamma_k,\dots,\Gamma_1$ are iterated Jacobians, which are polynomials, and $b(C_j)$ is the number of noncompact components of a curve $C_j\subset\Delta$. It holds under a transversality assumption, which we avoid by applying the argument to small perturbations of the Jacobians, as in \cite{BS2}.

\emph{The first term.} Bihan and Sottile bound it by the Bézout number $\beta_k=n^k2^{\binom k2}$, which bounds the total number of nondegenerate zeros of $\Gamma=(\Gamma_1,\dots,\Gamma_k)$ in $\mathbb{R}^k$. But $\Delta$ is only one of the chambers of the hyperplane arrangement $\bigcup_i\{p_i=0\}$. Our first observation is that topology forces zeros in the other chambers $\mathcal C$: when they are nondegenerate, their number is at least $|d(\mathcal C)|$, where $d(\mathcal C)$, the \emph{boundary degree} of $\mathcal C$, is the degree of the map $\Gamma/|\Gamma|$ from the boundary of $\mathcal C$ to the sphere $S^{k-1}$. Indeed, by the Kronecker--Poincaré--Hopf theorem, $d(\mathcal C)$ is the number of zeros of $\Gamma$ in $\mathcal C$ counted with the signs of the Jacobian determinant of $\Gamma$ (equation~\eqref{eq:KPH}). Subtracting these forced zeros from $\beta_k$ bounds the first term. Our second observation is that $d(\mathcal C)$ is easy to compute: it is a signed count of the zeros of the Jacobians on the facets of $\mathcal C$, with signs given by the exponents (the facet-count formula below).

\emph{The second term.} The ends of the curves $C_j$ are zeros of $\Gamma_{j+1},\dots,\Gamma_k$ on faces of $\Delta$ of codimension $j$ satisfying a sign condition, so $2b(C_j)$ is also bounded by numbers of zeros of Jacobians on flats of the arrangement.

\emph{The recursion.} Restricted to a flat of the arrangement, the Jacobians are again the Jacobians of a system of the same type, with a smaller $k$. Both terms therefore reduce to counting zeros of Jacobians in the chambers of the flats, and the boundary degrees of these chambers are given by numbers of zeros on flats of one dimension less. This leads to a recursion on the dimension of the flats, which ends on lines, where the Descartes rule of signs for circuits applies. The data entering the recursion are sign conditions: the oriented matroid of $C$, the signs of the maximal minors of $A$ and of some liftings of $A$ (Theorem~\ref{thm:main} below), and, for the sharper bounds, the signs of partial sums of maximal minors of $A$ on the lines, as in \cite{BDF}. This is why the final bounds are Descartes-type bounds. The case $k=1$, at the end of this introduction, illustrates the method in its simplest form.

\subsection*{The main steps}

The reader may skip this subsection at first reading and go directly to the main result.

\medskip\noindent\emph{The facet-count formula} (Section~\ref{sec:boundary-degree}). For a chamber $\mathcal C$ of $\mathbb{R}^k$, only the absolute value of $d(\mathcal C)$ matters, and
\[
|d(\mathcal C)|=\frac12\,\Big|\sum_{i=1}^{n+k+1}\sign(B_{i1})\,N_i\Big|,
\]
where $N_i$ is the number of zeros of $(\Gamma_2,\dots,\Gamma_k)$ on the facet of $\mathcal C$ on $\{p_i=0\}$, and the index $n+k+1$ stands for the facet at infinity (Theorems~\ref{thm:facet-count} and~\ref{thm:facet-count-unbounded}). The key point is Lemma~\ref{lem:sign-identity}: the sign of the contribution of a zero depends only on the facet containing it.

The same formula holds for every chamber of every flat of the arrangement (see below), so $|d(\mathcal C)|$ is bounded from below in terms of bounds for numbers of zeros on flats of one dimension less.

\medskip\noindent\emph{Restriction to flats} (Sections~\ref{sec:mu} and~\ref{sec:reduced}). The numbers $N_i$ above, and the ends of the curves $C_j$, are zeros of Jacobians on flats $L_S=\bigcap_{s\in S}\{p_s=0\}$ of the arrangement, possibly at infinity; we write $\Lambda_S\subset\mathbb{RP}^k$ for the corresponding projective subspace, which covers both cases. We first determine which flats are reached: the closure of the level set $\{f_1=\dots=f_{j-1}=1\}$ meets the arrangement, for generic data, exactly along the flats $\Lambda_S$ for which the vectors $(B_{s1},\dots,B_{s,j-1})$, $s\in S$, positively span $\mathbb{R}^{j-1}$ (Theorem~\ref{thm:mu-boundary} and Corollary~\ref{cor:mu-generic}); this is the origin of the sign conditions defining the sets $\mathcal S_j(\Delta)$ below.

We then show that, on a flat $L_S$ of codimension $j$, the Jacobians $\Gamma_{j+1},\dots,\Gamma_k$ are, up to nonzero constants, the Jacobians of an explicit \emph{reduced system} $(\tilde D,\tilde B)$ (Definition~\ref{def:reduced-system}, Theorems~\ref{thm:restriction-LS} and~\ref{thm:restriction-LS-infinity}). Its affine functions are the restrictions of the $p_i$ to $L_S$, written in coordinates whose coefficients are ratios of \emph{maximal minors of $D$}, and its exponents are the relations of the support which vanish on the points $a_s$, $s\in S$. The identities are exact on finite flats and hold modulo the later Jacobians at infinity. Since $(\tilde D,\tilde B)$ is again a system of the same type, with $n$ unchanged and $k$ replaced by $k-j$, everything applies again on $L_S$. In particular, if $\mathcal C$ is a chamber of the flat $L_S$ of codimension $j=|S|\le k-1$, then
\[
|\tilde d(\mathcal C)|=\frac12\,\Big|\sum_{i\notin S}\sign\big(\det B_{S\cup\{i\},\{1,\dots,j+1\}}\big)\,\tilde N_i\Big|,
\]
where the rows of $B_{S\cup\{i\},\{1,\dots,j+1\}}$ are those of $S$ in increasing order followed by the row $i$, $\tilde N_i$ is the number of zeros of the reduced Jacobians $(\tilde\Gamma_{j+2},\dots,\tilde\Gamma_k)$ on the facet of $\mathcal C$ on $\{p_i=0\}$, and $\tilde d(\mathcal C)$ is the boundary degree of $(\tilde\Gamma_{j+1},\dots,\tilde\Gamma_k)$ (Theorem~\ref{thm:facet-count-LS}). For $S=\emptyset$ this is the formula above. The number of zeros in $\mathcal C$ is at least $|\tilde d(\mathcal C)|$ (Proposition~\ref{prop:lower-bound-LS}).

On a line, the reduced system has a single Jacobian, a polynomial of degree exactly $n$, and its numbers of zeros on the $n+2$ arcs of the line satisfy the Descartes rule of signs for circuits of \cite{BD}: they have a prescribed parity and are bounded in terms of the cyclic sign variation of the minors $\det B_{S\cup\{l\},\{1,\dots,k\}}$ (Proposition~\ref{prop:line-zeros}). This is where the recursion starts.

\medskip\noindent\emph{Recursive bounds} (Section~\ref{sec:recursive}). For every chamber $\mathcal C$ of every flat $\Lambda_S$ we define integers $\ell_S(\mathcal C)\le Z(S,\mathcal C)\le u_S(\mathcal C)$ bounding the number $Z(S,\mathcal C)$ of zeros of the reduced Jacobians in $\mathcal C$ (Theorem~\ref{thm:recursive-bounds}). Starting the recursion on lines with the optimal Descartes rule of \cite{BDF} instead of the rule of \cite{BD} gives sharper bounds $\ell_S(\mathcal C)\le\ell^*_S(\mathcal C)\le Z(S,\mathcal C)\le u^*_S(\mathcal C)\le u_S(\mathcal C)$ (Section~\ref{sec:BDF}, Theorem~\ref{thm:recursive-bounds-BDF}).

\subsection*{The main result}

For a subset $S$ of the indices with $|S|=j$, let $\Delta_S$ be the face of $\overline\Delta$ of codimension $j$ lying on the flat $\Lambda_S$, when it exists, and let $\mathcal S_j(\Delta)$ be the set of such $S$ for which the vectors $(B_{s1},\dots,B_{s,j-1})$, $s\in S$, positively span $\mathbb{R}^{j-1}$ (Definition~\ref{def:boundary-faces}). These are the faces that the closure of the curve $C_j$ can reach: since $C_j$ lies in the level set $\{f_1=\dots=f_{j-1}=1\}$, its closure can only meet faces $\Delta_S$ satisfying this sign condition. Under the genericity assumptions below, and after a small perturbation of the Jacobians (see below), each end of a noncompact component of $C_j$ converges to a point of a face $\Delta_S$ with $S\in\mathcal S_j(\Delta)$, this point is one of the $Z(S,\Delta_S)$ zeros in $\Delta_S$ of the reduced Jacobians, and distinct ends converge to distinct points. Since each noncompact component has two ends,
\[
2\,b(C_j)\le\sum_{S\in\mathcal S_j(\Delta)}Z(S,\Delta_S)
\]
(Proposition~\ref{prop:ends}), which explains the factor $\frac12$ below.

These sets and the recursive bounds depend on the choice of the basis $b_1,\dots,b_k$ of relations of the support forming the columns of $B$, through the minors of $B$ built on its first columns. These minors are maximal minors of lifted exponent matrices: for $1\le j\le k-1$, let $A_j$ be the matrix obtained from $A$ by adding $k-j$ rows $w_{j+1},\dots,w_k$ with $\langle w_m,b_l\rangle=\delta_{ml}$. Its columns are liftings of the points of $\mathcal A$ to $\mathbb{R}^{n+k-j}$, and its kernel is spanned by $b_1,\dots,b_j$ (Remark~\ref{rem:flag-minors}). Our main result is the following (Section~\ref{sec:possol}).

\begin{introtheorem}\label{thm:main}
Assume that the system~\eqref{eq:system} satisfies the genericity assumptions (R), (U$_D$), (U$_B$) and (G). Then
\begin{align*}
\#\{\text{nondegenerate positive solutions of~\eqref{eq:system}}\}\ &\le\ u^*_\emptyset(\Delta)+\frac12\sum_{j=1}^k\ \sum_{S\in\mathcal S_j(\Delta)}u^*_S(\Delta_S)\\
&\le\ u_\emptyset(\Delta)+\frac12\sum_{j=1}^k\ \sum_{S\in\mathcal S_j(\Delta)}u_S(\Delta_S),
\end{align*}
where $u^*_S(\mathcal C)\le u_S(\mathcal C)$ are the recursive upper bounds for the number $Z(S,\mathcal C)$ of common zeros of the reduced Jacobians in a chamber $\mathcal C$ of the flat $\Lambda_S$, built from the optimal Descartes rule of \cite{BDF} and from the Descartes rule of \cite{BD} on the lines of the arrangement. The last bound only depends on the oriented matroid of the coefficient matrix $C$ and on the oriented matroids of the exponent matrix $A$ and of the lifted exponent matrices $A_1,\dots,A_{k-1}$. The first one is sharper, but also depends on the values of the maximal minors of $A$, through partial sums on the lines.
\end{introtheorem}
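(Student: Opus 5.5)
The plan is to assemble the main theorem from the ingredients listed in the introduction, in the order they enter the Bihan--Sottile bound.

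First reduce to the Gale system. Under (R), Gale duality (Sections~\ref{sec:polysys} and~\ref{sec:gale}) gives a bijection between the nondegenerate positive solutions of~\eqref{eq:system} and the nondegenerate solutions in $\Delta$ of $\prod_i p_i(y)^{B_{ij}}=1$, $j=1,\dots,k$. Apply Theorem~\ref{thm:KR-general} to small generic perturbations of the iterated Jacobians, as in \cite{BS2}. This keeps the nondegenerate solutions stable and makes the curves $C_j$ and the zero sets transverse. It yields
\[
\#\{\text{solutions}\}\le \#\big(\{\Gamma_1=\dots=\Gamma_k=0\}\cap\Delta\big)+\sum_{j=1}^k b(C_j).
\]
One must check that the perturbation preserves the counts $Z(S,\mathcal C)$ on all flats, or at least their upper bounds. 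I expect this to be routine, since nondegenerate zeros persist and the assumptions (U$_D$), (U$_B$), (G) are open conditions.

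Next bound the two terms separately. The first term is $Z(\emptyset,\Delta)$, so by Theorem~\ref{thm:recursive-bounds-BDF} (or Theorem~\ref{thm:recursive-bounds}) it is at most $u^*_\emptyset(\Delta)\le u_\emptyset(\Delta)$. For the second term, use Proposition~\ref{prop:ends}: each end of a noncompact component of $C_j$ converges to one of the zeros counted in $Z(S,\Delta_S)$ with $S\in\mathcal S_j(\Delta)$, and distinct ends go to distinct points. This gives $2b(C_j)\le\sum_{S\in\mathcal S_j(\Delta)}Z(S,\Delta_S)$. Apply the recursive upper bounds on each face $\Delta_S$, which is a chamber of the flat $\Lambda_S$; this is legitimate because, by Theorems~\ref{thm:restriction-LS} and~\ref{thm:restriction-LS-infinity}, the restricted Jacobians are those of a reduced system of the same type. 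Summing gives both inequalities. The comparison $u^*\le u$ is part of Theorem~\ref{thm:recursive-bounds-BDF}.

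The main obstacle is the final dependence claim, that $u_S$ only involves the oriented matroids of $C$, $A$ and $A_1,\dots,A_{k-1}$. I would prove it by induction along the recursion, with three inputs:
\begin{itemize}
\item The combinatorics of the arrangement (chambers, faces $\Delta_S$, which flats are lines, the order of the $n+2$ points on each line) is determined by the signs of maximal minors of $D$, hence by the oriented matroid of $C$.
\item The signs $\sign\det B_{S\cup\{i\},\{1,\dots,j+1\}}$ in the facet-count formula (Theorem~\ref{thm:facet-count-LS}) and the positive-spanning conditions defining $\mathcal S_j(\Delta)$ are signs of minors of the first $j+1$ columns of $B$. By Remark~\ref{rem:flag-minors} these are, up to a global sign, complementary maximal minors of the lifted matrix $A_{j+1}$ (or of $A$ when $j+1=k$).
\item On lines, the \cite{BD} rule (Proposition~\ref{prop:line-zeros}) uses only the sign pattern of the minors $\det B_{S\cup\{l\},\{1,\dots,k\}}$ and the cyclic order of the points.
\end{itemize}
So at each level the recursion applies only integer operations to quantities fixed by these sign data, and the claim follows. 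The sharper bound additionally uses the signs of partial sums of the \cite{BDF} rule, which depend on the values of the maximal minors of $A$.
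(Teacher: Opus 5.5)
Your assembly follows the paper: Theorem~\ref{thm:possol-Z} combines Theorem~\ref{thm:KR-general} with Proposition~\ref{prop:ends}, then Theorems~\ref{thm:recursive-bounds} and~\ref{thm:recursive-bounds-BDF}, and the dependence claim is argued as in Remark~\ref{rem:flag-choice}. But your first step has a gap. Perturbing the Jacobians cannot produce the hypotheses of Theorem~\ref{thm:KR-general} that involve only the $\psi_m$:
\begin{itemize}
\item condition~(i) for $j=k$ asks that $C_k(\Theta)=\{\psi_1=\dots=\psi_{k-1}=0\}\cap\Delta$ be a smooth curve;
\item condition~(ii) for $j=k$ asks that every solution of the Gale system be nondegenerate.
\end{itemize}
No $\Theta_m$ enters either condition. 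This is why Lemma~\ref{lem:perturbed-jacobians} assumes that $0$ is a regular value of $(\psi_1,\dots,\psi_r)$ on $\Delta$ for every $r$. The statement only counts \emph{nondegenerate} positive solutions, so the given system may also have degenerate ones. In that case Theorem~\ref{thm:KR-general}, whose conclusion includes nondegeneracy of all positive solutions, cannot be applied to it, however $\Theta$ is chosen. Your claim that the perturbation of the Jacobians ``makes the curves $C_j$ and the zero sets transverse'' is therefore false for $C_k$ and $Z_k$.

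What is missing is a perturbation of the \emph{system} that keeps its nondegenerate positive solutions, makes $0$ a regular value, and leaves every ingredient of the bound unchanged. The paper replaces $C$ by $C\operatorname{diag}(s_1^{-1},\dots,s_{n+k}^{-1},1)$ with $s\in\mathbb{R}^{n+k}_{>0}$ close to $(1,\dots,1)$. This has the following effects.
\begin{itemize}
\item $B$ is untouched, and $D$ becomes $\operatorname{diag}(s_1,\dots,s_{n+k},1)D$. So $p_i\mapsto s_ip_i$, and the arrangement, $\Delta$, its faces, the sets $\mathcal S_j(\Delta)$, the signs and the partial sums are all unchanged.
\item The Jacobians and reduced Jacobians are multiplied by positive constants (Lemma~\ref{lem:chart-independence}), so the numbers $Z(S,\mathcal C)$ and assumption (G) are preserved.
\item $\psi_j\mapsto\psi_j+\sum_iB_{ij}\log s_i$. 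Since $B'$ has rank $k$, the map $\log s\mapsto(\sum_iB_{ij}\log s_i)_j$ is onto, and Sard's theorem gives regularity for almost every $s$.
\item The nondegenerate solutions persist by the implicit function theorem.
\end{itemize}
Saying that (U$_D$), (U$_B$), (G) are open conditions does not replace this. A general perturbation of the data would need its own transversality argument. A perturbation of the exponents could even increase $u^*$: a vanishing partial sum $\mu_\ell(\Delta)$ may become nonzero and raise $U_\Delta$. On the other hand, your worry that perturbing the Jacobians might change the $Z(S,\mathcal C)$ is misplaced. These numbers are defined by the exact Jacobians, and Proposition~\ref{prop:ends} already compares $\#Z_0(\Theta)$ and $b(C_j(\Theta))$ with them once $\eta$ is chosen as there.
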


This is Theorem~\ref{thm:possol-Z}, Corollaries~\ref{cor:possol-recursive} and~\ref{cor:possol-BDF}, and Remark~\ref{rem:flag-choice}. The existence of a face $\Delta_S$ is characterized in terms of the oriented matroid of $D$, or equivalently of $C$ (Proposition~\ref{prop:faces-Delta}), and the positive spanning conditions are sign conditions on minors of $B$, which are maximal minors of liftings $A_j$ of the exponent matrix (Remark~\ref{rem:flag-minors}). So the last bound depends on signs only, but on the signs of more than the maximal minors of $A$. The liftings correspond to the choice of a flag of relations $b_1,\dots,b_{k-1}$ of the support, and the bounds can be optimized over this choice (Remark~\ref{rem:flag-choice}).

The genericity assumptions are discussed in Remark~\ref{rem:genericity}. They are mild: (R), (U$_D$) and (U$_B$) hold after a small perturbation of $C$ and of the exponents (the support may consist of arbitrary points of $\mathbb{R}^n$) and for a generic choice of the flag $b_1,\dots,b_{k-1}$; and since nondegenerate positive solutions persist under small perturbations, the bounds apply to every system which can be approximated by systems satisfying (G) with the same sign data (Remark~\ref{rem:genericity}). The transversality assumption of the Khovanskii--Rolle argument of \cite{BS} is not needed, since, following \cite{BS2}, we apply this argument to small perturbations of the Jacobians (Lemma~\ref{lem:perturbed-jacobians}).

\subsection*{Examples}

We illustrate the bounds of Theorem~\ref{thm:main} on two examples with $k=2$: a polynomial in one variable with four monomials and three positive roots, for which the bound is sharp for a good choice of the first relation $b_1$ and not for a bad one, and the system of El Hilany \cite{EH} with two equations, five monomials and seven positive solutions, for which our two bounds are $8$ and $10$ (Subsection~\ref{subsec:examples}). For comparison, the best previously known bound for two equations in two variables with five monomials is $15$, due to Bihan and Sottile \cite{BS}.

\subsection*{The case $k=1$}

For $k=1$, the Gale system is a single equation on an interval $\Delta$ of a line cut by $n+1$ points, $\Gamma_1$ is a polynomial of degree $n$, and Theorem~\ref{thm:main} reduces to the Descartes rule of signs for circuits of Bihan and Dickenstein \cite{BD} (Remark~\ref{rem:possol-small-k}). It is instructive to see how. The other chambers are the other arcs $\Delta'$ of the line, and the facet-count formula gives $|d(\Delta')|=1$ exactly when the signs of the first column of $B$ at the two endpoints of $\Delta'$ agree, that is, when $\Delta'$ is \emph{coherent}, and $d(\Delta')=0$ otherwise. So the Descartes rule for circuits is the bound
\[
\#\{\text{positive solutions}\}\ \le\ 1+n-\#\{\text{coherent arcs }\Delta'\neq\Delta\},
\]
that is, the Bézout number minus the coherent chambers different from $\Delta$, plus the contribution of the two ends of $\Delta$. In other words, \emph{the Descartes rule of signs of \cite{BD} only uses coherent chambers}.

In higher dimension, a chamber is coherent if the signs entering its facet-count formula are the same on all its facets; then no cancellation occurs in this formula, and its boundary degree is bounded from below by half the sum of the lower bounds on its facets. Restricting the lower bounds to coherent chambers gives weaker bounds, which can be written in closed form and generalize the Descartes rule for circuits to arbitrary $k$; we do not develop them here.

\subsection*{Genesis of the paper and use of AI}
This paper is the outcome of several years of work. Its main ideas were obtained by the author without the use of AI, in particular: the oriented matroid of $C$ determines the arrangement $\{p_i=0\}$ and the face structure of the positive chamber; the numbers $b(C_j)$ reduce to numbers of common zeros of Jacobians of the same type, built from new affine functions whose coefficients are maximal minors of $D$; the sign conditions on $B$ which determine the faces of $\Delta$ reached by the closure of $C_j$ (the sets $\mathcal S_j(\Delta)$); the topological degree on each chamber improves the Bézout bound in the positive chamber; and the results of \cite{BD,BDF} apply to the flats of dimension one, which provides the starting point of a recursion over all flats. An AI assistant was of great help, and saved a considerable amount of time, in obtaining explicit formulas for the boundary degree $d(\mathcal C)$, notably for unbounded chambers, in several technical proofs, and in the overall writing of the paper.

\subsection*{Acknowledgements}
This work has benefited from many discussions with Frank Sottile, Alicia Dickenstein and Sébastien Tavenas, to whom the author is very grateful.

\section{Polynomial systems and their Gale dual matrices}\label{sec:polysys}

\subsection{The system}
Let $\mathcal A\subset\mathbb{R}^n$ be a finite set with $|\mathcal A|=n+k+1$ that is not contained in an affine hyperplane. We number its elements $a_1,\dots,a_{n+k+1}$. In this section and the next we put $N=n+k+1$ and $[N]=\{1,\dots,N\}$. The \emph{exponent matrix} of $\mathcal A$ is the $(n+1)\times N$ matrix
\[
A=\begin{pmatrix}1&1&\cdots&1\\ a_1&a_2&\cdots&a_{N}\end{pmatrix},
\]
whose $j$-th column is $(1,a_j)^{T}$. Since $\mathcal A$ is not contained in an affine hyperplane, $A$ has rank $n+1$. Let $C=(c_{ij})$ be a real $n\times N$ matrix, the \emph{coefficient matrix}. With $x=(x_1,\dots,x_n)$ and $x^{a_j}=x_1^{a_{j1}}\cdots x_n^{a_{jn}}$, we consider the system
\begin{equation}\label{eq:system}
F_i(x)=\sum_{j=1}^{N}c_{ij}\,x^{a_j}=0,\qquad i=1,\dots,n .
\end{equation}
Since the exponents are real, $x^{a_j}$ is defined for $x\in\mathbb{R}^n_{>0}$. We are interested in the number of \emph{positive solutions} of~\eqref{eq:system}, that is, solutions in $\mathbb{R}^n_{>0}$. A solution $x$ is \emph{nondegenerate} if the Jacobian matrix $\big(\partial F_i/\partial x_l(x)\big)$ is invertible.

We assume throughout that
\begin{itemize}
\item[(R)] the $n\times(n+k)$ matrix $C'$ obtained from $C$ by removing its last column has rank $n$.
\end{itemize}
In particular $C$ has rank $n$, so $\dim\ker C=k+1$. Condition~(R) holds for generic coefficients. It fails exactly when some nontrivial linear combination of the equations~\eqref{eq:system} is a multiple of the single monomial $x^{a_N}$; such a system has no positive solution.

\subsection{Gale dual matrices}

\begin{definition}\label{def:gale-dual}
A \emph{Gale dual matrix} of $A$ is a real $N\times k$ matrix $B=(B_{ij})$ whose columns form a basis of $\ker A\subset\mathbb{R}^N$. A \emph{Gale dual matrix} of $C$ is a real $N\times(k+1)$ matrix $D=(D_{il})$ whose columns form a basis of $\ker C\subset\mathbb{R}^N$.
\end{definition}

The rows of $B$ and $D$ are indexed by $i\in[N]$, that is, by the elements $a_i$ of $\mathcal A$, in the chosen order. The columns of $B$ are the \emph{affine relations} of $\mathcal A$:
\[
\sum_{i=1}^{N}B_{ij}=0,\qquad \sum_{i=1}^{N}B_{ij}\,a_i=0,\qquad j=1,\dots,k .
\]
The first identity is the first row of $AB=0$. So the columns of $B$ sum to zero.

\begin{lemma}\label{lem:normalize-D}
Under~(R), there is a Gale dual matrix $D$ of $C$ whose last row is $(0,\dots,0,1)$. Two such matrices $D$ and $\hat D$ satisfy $\hat D=DH$, where $H=\begin{pmatrix}H_0&h\\0&1\end{pmatrix}$ with $H_0\in GL_k(\mathbb{R})$ and $h\in\mathbb{R}^k$.
\end{lemma}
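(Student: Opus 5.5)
The plan is to study the linear functional $\pi:\ker C\to\mathbb{R}$, $v\mapsto v_N$, which reads off the last coordinate; both claims of Lemma~\ref{lem:normalize-D} follow once we know its kernel.

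\emph{The kernel of $\pi$.} A vector $v\in\ker C$ with $v_N=0$ is a vector $v'\in\mathbb{R}^{n+k}$ with $C'v'=0$. By~(R), $C'$ has rank $n$, so $\ker C'$ has dimension $(n+k)-n=k$. Hence $\ker\pi\cap\ker C\cong\ker C'$ has dimension $k$.

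\emph{Surjectivity of $\pi$.} We have $\dim\ker C=k+1$, as noted after~(R). Since $\ker\pi$ has dimension $k<k+1$, $\pi$ is nonzero on $\ker C$, and therefore onto $\mathbb{R}$.

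\emph{Existence.} Choose a basis $d_1,\dots,d_k$ of $\ker C\cap\{v_N=0\}$ and a vector $d_{k+1}\in\ker C$ with last coordinate $1$. These $k+1$ vectors are linearly independent. Indeed, in a relation $\sum\lambda_l d_l=0$, the last coordinate gives $\lambda_{k+1}=0$, and then the remaining $\lambda_l$ vanish because $d_1,\dots,d_k$ form a basis. So $d_1,\dots,d_{k+1}$ is a basis of $\ker C$. The matrix $D$ with columns $d_1,\dots,d_{k+1}$ is a Gale dual matrix of $C$, and its last row is $(0,\dots,0,1)$.

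\emph{Uniqueness up to $H$.} Let $D$ and $\hat D$ be two such matrices. Their columns are bases of the same space $\ker C$, so $\hat D=DH$ for a unique $H\in GL_{k+1}(\mathbb{R})$. Comparing last rows gives $(0,\dots,0,1)=(0,\dots,0,1)H$, which says that the last row of $H$ is $(0,\dots,0,1)$. So $H=\begin{pmatrix}H_0&h\\0&1\end{pmatrix}$, and $\det H=\det H_0\neq0$ gives $H_0\in GL_k(\mathbb{R})$.

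Conversely, every such $H$ sends $D$ to a Gale dual matrix of $C$ whose last row is $(0,\dots,0,1)$. The only point requiring care is the dimension count $\dim\ker C'=k$, which is exactly where~(R) is used.
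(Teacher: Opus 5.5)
Your proof is correct and follows the same construction as the paper: take a basis of $K_0=\ker C\cap\{z_N=0\}$, complete it by a vector of $\ker C$ with last coordinate $1$, and read the block form of $H$ off the last row of $\hat D=DH$. The only difference is how you get $\dim K_0=k$: you use $K_0\cong\ker C'$ and rank--nullity, whereas the paper argues by duality that $\ker C\subset\{z_N=0\}$ would put $e_N^T$ in the row space of $C$ and contradict~(R). The two arguments are equivalent.
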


\begin{proof}
Let $e_N$ be the last vector of the standard basis. The hyperplane $\{z_N=0\}$ contains $\ker C$ if and only if $e_N^{T}$ is in the row space of $C$, that is, $e_N^{T}=u^{T}C$ for some $u\in\mathbb{R}^n$. Such a $u$ is nonzero and satisfies $u^{T}C'=0$, which contradicts~(R). Hence $K_0=\ker C\cap\{z_N=0\}$ has dimension $k$. Take a basis of $K_0$ as the first $k$ columns of $D$, and a vector of $\ker C$ with last coordinate $1$ as its last column. The second statement holds because $D$ and $\hat D$ have the same column space, their first $k$ columns span the same space $K_0$, and their last columns both have last coordinate $1$.
\end{proof}

From now on $B$ is a Gale dual matrix of $A$ and $D$ is a Gale dual matrix of $C$ with last row $(0,\dots,0,1)$. These are the matrices used in the rest of the paper. We set
\[
p_i(y)=\sum_{l=1}^{k}D_{il}\,y_l+D_{i,k+1},\qquad i=1,\dots,N,\qquad y=(y_1,\dots,y_k)\in\mathbb{R}^k,
\]
so that $p_N=1$ and $(p_1(y),\dots,p_N(y))^{T}=D\binom{y}{1}$. Thus $y\mapsto(p_1(y),\dots,p_N(y))$ is an affine isomorphism from $\mathbb{R}^k$ onto the affine space $\ker C\cap\{z_N=1\}$. A change of $D$ as in Lemma~\ref{lem:normalize-D} is the affine change of coordinates $y\mapsto H_0y+h$, and a change of $B$ is $B\mapsto BG$ with $G\in GL_k(\mathbb{R})$.

Let $D'$ be the $(n+k)\times k$ matrix obtained from $D$ by removing its last row and its last column, and write $D'_i=(D_{i1},\dots,D_{ik})$ for its $i$-th row. Let $B'$ be the $(n+k)\times k$ matrix obtained from $B$ by removing its last row.

\begin{definition}\label{def:setting}
A \emph{pair of type $(n,k)$} is a pair $(D,B)$ of real matrices of sizes $(n+k+1)\times(k+1)$ and $(n+k+1)\times k$ such that the last row of $D$ is $(0,\dots,0,1)$ and the columns of $B$ sum to zero.
\end{definition}

Gale dual matrices $D$ and $B$ as above form a pair of type $(n,k)$. The functions $p_i$, the matrices $D'$ and $B'$, and the functions $f_j$, $\psi_j$ and $\Gamma_j$ defined in Sections~\ref{sec:gale} and~\ref{sec:KR} make sense for an arbitrary pair of type $(n,k)$. From Section~\ref{sec:jacobians} on we only use this structure, and we do not assume that $D$ and $B$ are Gale dual matrices of a polynomial system; this generality is needed for the reduced systems on the flats of the arrangement (Section~\ref{sec:reduced}). Since $p_N=1$, the last row of $B$ does not enter the definitions of $f_j$ and $\psi_j$.

\subsection{Minors}
The maximal minors of a Gale dual matrix are, up to a common factor and signs, the complementary maximal minors of the original matrix. This is the classical duality between the Plücker coordinates of a linear subspace of $\mathbb{R}^N$ and those of its orthogonal complement. For $I\subset[N]$ let $B_I$ and $D_I$ be the submatrices with rows indexed by $I$. Let $A^{J}$ and $C^{J}$ be the submatrices with columns indexed by $J$, and put $I^c=[N]\setminus I$ and $|I|_\Sigma=\sum_{i\in I}i$.

\begin{proposition}[{\cite[Lemma~1.11(ii)]{Karp}}]\label{prop:gale-minors}
There are nonzero constants $\kappa_B$ and $\kappa_D$ such that
\[
\det B_I=\kappa_B\,(-1)^{|I|_\Sigma}\det A^{I^c}\quad(|I|=k),
\qquad
\det D_J=\kappa_D\,(-1)^{|J|_\Sigma}\det C^{J^c}\quad(|J|=k+1).
\]
\end{proposition}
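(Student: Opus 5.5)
The plan is to prove the classical duality between the Plücker coordinates of a subspace and those of its orthogonal complement, and then apply it twice: once to $(A,B)$ with $\operatorname{row}(A)^\perp=\ker A=\operatorname{col}(B)$, and once to $(C,D)$ with $\ker C=\operatorname{col}(D)$. Both statements have the same shape. Take an $r\times N$ matrix $M$ of rank $r$ and an $N\times(N-r)$ matrix $P$ whose columns form a basis of $\ker M$. I will show that there is $\kappa\neq0$ such that
\[
\det P_I=\kappa\,(-1)^{|I|_\Sigma}\det M^{I^c}\quad\text{for all }I\text{ with }|I|=N-r.
\]
Here $r=n+1$ for $(A,B)$ and $r=n$ for $(C,D)$; in both cases the rank hypothesis holds, $A$ having rank $n+1$ and $C$ having rank $n$ by~(R).

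The key step is a single determinant computation. Form the $N\times N$ matrix $Q=\begin{pmatrix}M\\ P^{T}\end{pmatrix}$. Since the rows of $M$ and the columns of $P$ span complementary orthogonal subspaces, $Q$ is invertible. Then
\[
QQ^{T}=\begin{pmatrix}MM^{T}&0\\0&P^{T}P\end{pmatrix},
\]
which is block diagonal because $MP=0$. I will expand $\det Q$ by the generalized Laplace expansion along its first $r$ rows:
\[
\det Q=\sum_{|J|=r}\varepsilon_J\det M^{J}\det (P^{T})^{J^c},\qquad \varepsilon_J=(-1)^{|J|_\Sigma+r(r+1)/2},
\]
and note that $\det (P^{T})^{J^c}=\det P_{J^c}$.

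Expanding $\det Q$ alone does not isolate individual minors, so the cleaner route is to compute one entry of an exterior power. Apply the Cauchy--Binet form of Jacobi's complementary minor theorem to $Q^{-1}$, whose first $r$ columns are $M^{T}(MM^{T})^{-1}$ and whose last $N-r$ columns are $P(P^{T}P)^{-1}$. Jacobi's identity gives
\[
\det\big((Q^{-1})_{I,\,\{r+1,\dots,N\}}\big)=\pm\frac{\det Q^{I^c}_{\{1,\dots,r\}}}{\det Q}=\pm\frac{\det M^{I^c}}{\det Q},
\]
where the sign is $(-1)^{|I|_\Sigma+\sum_{m>r}m}$. The left-hand side equals $\det P_I\cdot\det(P^{T}P)^{-1}$. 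Combining these yields the claimed relation with
\[
\kappa=\pm\frac{\det(P^{T}P)}{\det Q},
\]
which is a constant independent of $I$ and nonzero. The residual global sign $(-1)^{\sum_{m>r}m}$ is absorbed into $\kappa$.

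The main obstacle is bookkeeping the sign in Jacobi's identity so that the $I$-dependence is exactly $(-1)^{|I|_\Sigma}$. I would check it on $N=2$, $r=1$: with $M=(m_1,m_2)$ and $P=(m_2,-m_1)^{T}$, we have $\det P_{\{1\}}=m_2=\det M^{\{2\}}$ and $\det P_{\{2\}}=-m_1=-\det M^{\{1\}}$. These agree with $(-1)^{|I|_\Sigma}$ up to the constant $\kappa=-1$, which confirms the convention.

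Finally, the statement does not depend on the choice of Gale dual. Changing $B\mapsto BG$ multiplies all $\det B_I$ by $\det G$, so only $\kappa_B$ changes, and the same holds for $D$.
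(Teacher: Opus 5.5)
Your proof is correct, and it takes a different route from the paper, which gives no argument of its own. The paper cites \cite[Lemma~1.11(ii)]{Karp}: the row space $V$ of $A$ (resp.\ $C$) and $\operatorname{alt}(V^\perp)$ have the same Pl\"ucker coordinates, where $\operatorname{alt}$ flips the sign of every second coordinate. This flipping is what produces the sign $(-1)^{|I|_\Sigma}$, up to a global sign.

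You instead give a self-contained linear-algebra argument. You apply Jacobi's complementary minor identity once, to $Q^{-1}=\big(M^T(MM^T)^{-1}\ \big|\ P(P^TP)^{-1}\big)$ with column set $\{r+1,\dots,N\}$. This yields the explicit constant $\kappa=(-1)^{\sum_{m>r}m}\det(P^TP)/\det Q$, which is nonzero because $P$ has full column rank and $Q$ is invertible.

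The citation is shorter and places the identity in its natural basis-free setting, the Pl\"ucker duality between $V$ and $V^\perp$. Your route is elementary and fixes the constant. It also makes visible that only $MP=0$, $P$ being a kernel basis, and the rank conditions are used ($\operatorname{rank}A=n+1$, and $\operatorname{rank}C=n$ from (R)).

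Two small remarks. The generalized Laplace expansion paragraph plays no role and can be dropped. The sign bookkeeping you flag as the main obstacle is already settled by the standard sign $(-1)^{|I|_\Sigma+|J|_\Sigma}$ in Jacobi's identity, so your $N=2$ computation serves only as a sanity check.
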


Indeed, the maximal minors of $A$ (respectively $C$) are the Plücker coordinates of its row space $V$, those of $B$ (respectively $D$) are the Plücker coordinates of $\ker A=V^\perp$ (respectively $\ker C$), and \cite[Lemma~1.11(ii)]{Karp} states that $V$ and $\operatorname{alt}(V^\perp)$ have the same Plücker coordinates, $\Delta_I(V)=\Delta_{[N]\setminus I}(\operatorname{alt}(V^\perp))$, where $\operatorname{alt}$ changes the sign of every second coordinate. This duality goes back to Hilbert; see the discussion in \cite{Karp}.

In particular:
\begin{itemize}
\item any $k$ rows of $B$ are linearly independent if and only if no $n+1$ elements of $\mathcal A$ lie on an affine hyperplane;
\item any $k+1$ rows of $D$ are linearly independent, which is the assumption (U$_D$) of Section~\ref{sec:recursive}, if and only if all maximal minors of $C$ are nonzero;
\item the arrangement of the hyperplanes $\{p_i=0\}$ is determined, as an oriented matroid, by the signs of the maximal minors of $C$, and the signs of the maximal minors of $B$ are determined by those of $A$.
\end{itemize}

\section{Gale duality for polynomial systems}\label{sec:gale}

We recall the Gale duality for polynomial systems of Bihan and Sottile \cite{BS}, in the notation of Section~\ref{sec:polysys}.

\begin{definition}\label{def:gale-system}
The \emph{positive chamber} is the polyhedron
\[
\Delta=\{y\in\mathbb{R}^k:\ p_i(y)>0,\ i=1,\dots,n+k\}.
\]
The \emph{Gale system} of~\eqref{eq:system} is the system
\begin{equation}\label{eq:gale-system}
f_j(y)=\prod_{i=1}^{n+k}p_i(y)^{B_{ij}}=1,\qquad j=1,\dots,k,\qquad y\in\Delta .
\end{equation}
Equivalently, $\psi_j(y)=\sum_{i=1}^{n+k}B_{ij}\log p_i(y)=0$ for $j=1,\dots,k$ and $y\in\Delta$. A solution $y$ is \emph{nondegenerate} if the Jacobian matrix of $(\psi_1,\dots,\psi_k)$ is invertible at $y$.
\end{definition}

Since $p_N=1$, the factor $i=N$ can be added to the products and to the sums. Note that $\Delta$ is one of the chambers of the arrangement $\bigcup_{i\le n+k}\{p_i=0\}$, and that it may be empty or unbounded.

\begin{theorem}[Gale duality \cite{BS}]\label{thm:gale-duality}
Assume~(R). The map
\[
\Phi:\ \mathbb{R}^n_{>0}\longrightarrow\mathbb{R}^N_{>0},\qquad x\longmapsto\big(x^{a_1-a_N},\dots,x^{a_N-a_N}\big),
\]
maps the positive solutions of~\eqref{eq:system} into $\ker C\cap\{z_N=1\}$. Followed by the inverse of $y\mapsto D\binom y1$, it induces a bijection
\[
\{x\in\mathbb{R}^n_{>0}:\ F_1(x)=\dots=F_n(x)=0\}\ \xrightarrow{\ \sim\ }\ \{y\in\Delta:\ f_1(y)=\dots=f_k(y)=1\}.
\]
It is characterized by $p_j(y)=x^{a_j-a_N}$ for $j=1,\dots,N$. Moreover, $x$ is a nondegenerate solution of~\eqref{eq:system} if and only if $y$ is a nondegenerate solution of~\eqref{eq:gale-system}.
\end{theorem}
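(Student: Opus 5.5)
The plan is to prove the bijection by writing both sides as the same set of points of $\mathbb{R}^N_{>0}$. On one side is the positive part of $\ker C\cap\{z_N=1\}$; on the other is the image of the monomial map $\Phi$. The nondegeneracy statement then comes from comparing tangent spaces.

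\emph{Step 1: image of $\Phi$.} First, $x$ is a positive solution of~\eqref{eq:system} if and only if $\Phi(x)\in\ker C$. Indeed, $F_i(x)=x^{a_N}\sum_j c_{ij}x^{a_j-a_N}$ and $x^{a_N}>0$. The last coordinate of $\Phi(x)$ is $1$.

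Next, I would characterize the image: a point $z\in\mathbb{R}^N_{>0}$ with $z_N=1$ lies in $\Phi(\mathbb{R}^n_{>0})$ if and only if $\sum_i B_{ij}\log z_i=0$ for all $j$. Writing $\log\Phi(x)=(\langle a_i-a_N,\log x\rangle)_i$, the image of $\log\Phi$ is the set of vectors of the form $A^T(t,u)$ having last coordinate $0$. This is the row space of $A$ intersected with $\{w_N=0\}$. Since $\ker A=\operatorname{span}$ of the columns of $B$, this row space is $\{w: B^Tw=0\}$, which gives the characterization.

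Injectivity of $\Phi$ holds because the vectors $a_i-a_N$ span $\mathbb{R}^n$, as $\mathcal A$ is not contained in an affine hyperplane. So $x\mapsto\log\Phi(x)$ is an injective linear map in $\log x$.

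\emph{Step 2: composition with $D$.} By Lemma~\ref{lem:normalize-D} and the remark after it, $y\mapsto D\binom y1=(p_i(y))_i$ is an affine isomorphism $\mathbb{R}^k\to\ker C\cap\{z_N=1\}$. Positivity of all coordinates of $z$ is exactly $y\in\Delta$, since $p_N=1$. The relations $\sum_i B_{ij}\log p_i(y)=0$ are exactly $f_j(y)=1$.

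Combining this with Step 1, positive solutions $x$ correspond bijectively to $y\in\Delta$ with $f(y)=1$, via $p_j(y)=x^{a_j-a_N}$. This identity also characterizes the map.

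\emph{Step 3: nondegeneracy, the main obstacle.} I would work in logarithmic coordinates $w=\log z$ on $\mathbb{R}^N_{>0}\cap\{z_N=1\}\cong\mathbb{R}^{N-1}$. Consider two submanifolds:
\begin{itemize}
\item $M=\log\Phi(\mathbb{R}^n_{>0})$, a linear subspace of dimension $n$;
\item $P=\log(\ker C\cap\{z_N=1\}\cap\mathbb{R}^N_{>0})$, a $k$-dimensional submanifold.
\end{itemize}
Since $n+k=N-1$, each side's solution is nondegenerate exactly when $M$ and $P$ meet transversally at the point.

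For $x$: the derivative of $F/x^{a_N}=C\Phi$ is invertible iff $\ker(C\cdot \mathrm{diag}(z))\cap T M=0$. Note that $\ker(C\,\mathrm{diag}(z))$ is the tangent space of $P$ in $w$-coordinates, which is the transversality condition.

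For $y$: the Jacobian of $\psi=B^T\log p(y)$ is $B^T$ applied to the tangent map of $P$. Since $M=\ker B^T$ (restricted to $w_N=0$), invertibility is again $TP\cap M=0$.

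The delicate point is checking dimensions and that $B^T$ restricted to $\{w_N=0\}$ has kernel exactly $M$ and full rank $k$. Both follow from $\operatorname{rank}A=n+1$.
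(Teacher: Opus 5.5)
Your argument is correct and is the standard Bihan--Sottile proof, which the paper cites without reproducing. Both solution sets are identified with $\Phi(\mathbb{R}^n_{>0})\cap\ker C\cap\{z_N=1\}$, the image of $\Phi$ being cut out by $\prod_i z_i^{B_{ij}}=1$ since the row space of $A$ is $(\ker A)^\perp$, and both nondegeneracy conditions say that $M$ and $P$ meet transversally. Two small points to tighten: $T_wP$ is $\ker(C\operatorname{diag}(z))\cap\{w_N=0\}$ rather than the whole $(k+1)$-dimensional space $\ker(C\operatorname{diag}(z))$, which is harmless since $M\subset\{w_N=0\}$; and replacing $DF$ by $D(F/x^{a_N})$ is legitimate only because $F(x)=0$ at the point.
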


\begin{remark}\label{rem:gale-choices}
The Gale system does not depend on the choices of $B$ and $D$, up to the obvious changes. Replacing $B$ by $BG$ with $G\in GL_k(\mathbb{R})$ replaces $(f_1,\dots,f_k)$ by the monomials $\big(\prod_lf_l^{G_{lj}}\big)_j$, which have the same set of common solutions of $f=1$. Replacing $D$ as in Lemma~\ref{lem:normalize-D} is an affine change of the coordinates $y$. Reordering $\mathcal A$ permutes the rows of $B$ and $D$. However, the functions $\Gamma_m$ of Section~\ref{sec:KR} depend on the ordered basis given by the columns of $B$, in particular on its first column.
\end{remark}

\section{The Bihan--Sottile bound for positive solutions}\label{sec:KR}

We keep the notation of Sections~\ref{sec:polysys} and~\ref{sec:gale} and assume~(R) and $\Delta\neq\emptyset$. By Theorem~\ref{thm:gale-duality}, the positive solutions of~\eqref{eq:system} correspond to the solutions in $\Delta$ of $\psi_1=\dots=\psi_k=0$, where
\[
\psi_j=\sum_{i=1}^{n+k}B_{ij}\log|p_i|,\qquad j=1,\dots,k .
\]
The functions $\psi_j$ are defined and smooth on the complement $\mathbb{R}^k\setminus\bigcup_{i\le n+k}\{p_i=0\}$ of the arrangement, and not only on $\Delta$; on $\Delta$ we have $\psi_j=\log f_j$. Following Bihan and Sottile \cite{BS}, we bound the number of positive solutions by the number of common zeros of successive Jacobians. Put $P=\prod_{i=1}^{n+k}p_i$, a polynomial of degree $n+k$, and define
\[
\Gamma_k=P\det\frac{\partial(\psi_1,\dots,\psi_k)}{\partial(y_1,\dots,y_k)},\qquad
\Gamma_j=P\det\frac{\partial(\psi_1,\dots,\psi_j,\Gamma_{j+1},\dots,\Gamma_k)}{\partial(y_1,\dots,y_k)},\quad j=k-1,\dots,1 .
\]
Since $\partial\psi_j/\partial y_l=\sum_iB_{ij}D_{il}/p_i$, these are polynomials (see Section~\ref{sec:jacobians}); from Section~\ref{sec:jacobians} on we study them for an arbitrary pair $(D,B)$ of type $(n,k)$ (Definition~\ref{def:setting}). The Khovanskii--Rolle argument of \cite{BS} needs the successive systems defined by the $\psi_j$ and the $\Gamma_j$ to be transversal, which is not always the case. Following \cite[Theorem~7]{BS2}, we apply it to small perturbations of the Jacobians. Let $\Theta_1,\dots,\Theta_k$ be smooth functions on $\Delta$; the main example is $\Theta_m=\Gamma_m$. For $0\le j\le k$ and $1\le m\le k$ put
\begin{align*}
C_j(\Theta)&=\{y\in\Delta:\ \psi_1=\dots=\psi_{j-1}=0,\ \Theta_{j+1}=\dots=\Theta_k=0\},\\
Z_j(\Theta)&=\{y\in\Delta:\ \psi_1=\dots=\psi_j=0,\ \Theta_{j+1}=\dots=\Theta_k=0\},
\end{align*}
\[
J_m(\Theta)=P\det\frac{\partial(\psi_1,\dots,\psi_m,\Theta_{m+1},\dots,\Theta_k)}{\partial(y_1,\dots,y_k)} .
\]
Thus $Z_k(\Theta)$ is the solution set of the Gale system, $Z_0(\Theta)=\{\Theta_1=\dots=\Theta_k=0\}\cap\Delta$, and $Z_{j-1}(\Theta)=C_j(\Theta)\cap\{\Theta_j=0\}$. For $\Theta=\Gamma$ we have $J_m(\Gamma)=\Gamma_m$, and we write $C_j=C_j(\Gamma)$. Consider the conditions:
\begin{itemize}
\item[(i)] for $1\le j\le k$, the differentials of $\psi_1,\dots,\psi_{j-1},\Theta_{j+1},\dots,\Theta_k$ are linearly independent at every point of $C_j(\Theta)$, so that $C_j(\Theta)$ is a smooth curve (it is closed in $\Delta$);
\item[(ii)] for $1\le j\le k$, the points of $Z_j(\Theta)$ are nondegenerate solutions of $\psi_1=\dots=\psi_j=\Theta_{j+1}=\dots=\Theta_k=0$;
\item[(iii)] for $1\le j\le k$, $\Theta_j$ and $J_j(\Theta)$ have the same sign at every point of $Z_j(\Theta)$.
\end{itemize}
For $\Theta=\Gamma$, condition~(iii) is empty, and~(i) and~(ii) form the transversality assumption of the Khovanskii--Rolle argument of \cite{BS}. We write $b(C_j(\Theta))$ for the number of noncompact connected components of $C_j(\Theta)$.

\begin{theorem}[Bihan--Sottile bound \cite{BS,BS2}]\label{thm:KR-general}
Assume (R), and let $\Theta_1,\dots,\Theta_k$ satisfy (i), (ii) and (iii). Then the positive solutions of~\eqref{eq:system} are nondegenerate, and
\[
\#\{x\in\mathbb{R}^n_{>0}:\ F(x)=0\}\ \le\ \#Z_0(\Theta)+\sum_{j=1}^kb(C_j(\Theta)).
\]
\end{theorem}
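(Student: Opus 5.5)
By Theorem~\ref{thm:gale-duality}, it suffices to bound the number of solutions in $\Delta$ of $\psi_1=\dots=\psi_k=0$; that is, we bound $\#Z_k(\Theta)$, and the same bijection transfers nondegeneracy. The plan is the Khovanskii--Rolle descending induction on $j$. We prove that for $j=k,k-1,\dots,1$
\[
\#Z_j(\Theta)\ \le\ \#Z_{j-1}(\Theta)+b(C_j(\Theta)).
\]
Chaining these from $j=k$ down to $j=1$ then gives the stated bound. Nondegeneracy of the positive solutions is condition~(ii) for $j=k$, combined with Theorem~\ref{thm:gale-duality}.

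For the inductive step, fix $j$. By~(i), $C_j(\Theta)$ is a smooth curve, closed in $\Delta$. The set $Z_j(\Theta)$ is the zero set of $\psi_j$ restricted to $C_j(\Theta)$, and $Z_{j-1}(\Theta)=C_j(\Theta)\cap\{\Theta_j=0\}$. Orient $C_j(\Theta)$ by the tangent field $v$ obtained by contracting $dy_1\wedge\dots\wedge dy_k$ with the differentials of $\psi_1,\dots,\psi_{j-1},\Theta_{j+1},\dots,\Theta_k$. This is the field whose pairing with $d\psi_j$ is $\det\partial(\psi_1,\dots,\psi_j,\Theta_{j+1},\dots,\Theta_k)/\partial y$. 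It equals $J_j(\Theta)/P$, up to a fixed sign that does not depend on the point. Since $P>0$ on $\Delta$, the derivative of $\psi_j$ along the oriented curve has the sign of $J_j(\Theta)$. By~(ii), $J_j(\Theta)\neq0$ at every point of $Z_j(\Theta)$, so the zeros of $\psi_j|_{C_j}$ are simple.

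Now consider two consecutive zeros of $\psi_j$ on a component of $C_j(\Theta)$. At both points $\psi_j$ crosses zero transversally, so the derivative of $\psi_j$ along the curve takes opposite signs there. Hence $J_j(\Theta)$ takes opposite signs at the two points. By~(iii), $\Theta_j$ has the sign of $J_j(\Theta)$ at points of $Z_j(\Theta)$, so $\Theta_j$ also takes opposite signs there. By the intermediate value theorem, $\Theta_j$ vanishes on the open arc between them, which gives a point of $Z_{j-1}(\Theta)$. Distinct gaps give disjoint arcs, hence distinct points. On a compact component (a circle) with $r$ zeros there are $r$ gaps, so at least $r$ points of $Z_{j-1}$. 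On a noncompact component (an open arc) with $r$ zeros there are $r-1$ gaps, so at least $r-1$ points. Summing over components yields $\#Z_j\le\#Z_{j-1}+b(C_j)$. Finiteness of all sets follows along the way, since $Z_0(\Theta)$ consists of nondegenerate points by~(ii) and the recursion counts from it. If needed, one can invoke~(ii) at level $0$ or restrict attention to the isolated points.

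The main obstacle is the sign bookkeeping. We must check that the derivative of $\psi_j$ along $v$ really is $J_j(\Theta)/P$ with a constant sign, including the correct ordering in the determinant and the positivity of $P$ on $\Delta$. This requires computing the contraction explicitly via the cofactor expansion of the Jacobian determinant. A second point is that the components of $C_j$ are either circles or lines. This follows from~(i) and the classification of connected $1$-manifolds, which is the only topological input.
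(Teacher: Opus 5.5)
Your proposal is correct and follows the paper's proof essentially step for step. It uses the same tangent field $\tau$, defined by $\langle x,\tau\rangle=\det(\nabla\psi_1;\dots;\nabla\psi_{j-1};x;\nabla\Theta_{j+1};\dots;\nabla\Theta_k)$ and satisfying $J_j(\Theta)=P\langle\nabla\psi_j,\tau\rangle$ exactly. It then uses the same Rolle argument between consecutive simple zeros via (iii), and the same count of arcs on compact versus noncompact components, giving $\#Z_j(\Theta)\le\#Z_{j-1}(\Theta)+b(C_j(\Theta))$.

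The only loose point is your finiteness remark, since (ii) says nothing about $Z_0(\Theta)$. No finiteness is needed, though: the componentwise inequality holds trivially whenever the right-hand side is infinite.
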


\begin{proof}
By Theorem~\ref{thm:gale-duality}, the positive solutions correspond to the points of $Z_k(\Theta)$, and they are nondegenerate by~(ii) for $j=k$. Let $1\le j\le k$. By~(i), the vector $\tau(y)$ defined by $\langle x,\tau(y)\rangle=\det\big(\nabla\psi_1;\dots;\nabla\psi_{j-1};x;\nabla \Theta_{j+1};\dots;\nabla \Theta_k\big)(y)$ for all $x\in\mathbb{R}^k$ is a nowhere vanishing tangent vector field on $C_j(\Theta)$, and $J_j(\Theta)=P\,\langle\nabla\psi_j,\tau\rangle$. Since $P>0$ on $\Delta$, the sign of $J_j(\Theta)$ on $C_j(\Theta)$ is the sign of the derivative of $\psi_j$ along $\tau$. The set $Z_j(\Theta)$ is the zero set of $\psi_j$ on $C_j(\Theta)$, and these zeros are simple by~(ii). Let $a$ and $b$ be two consecutive zeros on a connected component $K$ of $C_j(\Theta)$. The derivatives of $\psi_j$ along $\tau$ at $a$ and $b$ have opposite signs, hence so have $J_j(\Theta)(a)$ and $J_j(\Theta)(b)$, and so have $\Theta_j(a)$ and $\Theta_j(b)$ by~(iii). So $\Theta_j$ vanishes on the open arc of $K$ between $a$ and $b$, at a point of $Z_{j-1}(\Theta)$. If $K$ is compact, it contains as many such arcs as zeros of $\psi_j$; if $K$ is noncompact, it contains one less. Hence $\#Z_j(\Theta)\le\#Z_{j-1}(\Theta)+b(C_j(\Theta))$, and the theorem follows by induction.
\end{proof}

For $1\le m\le k$ let $D_m=2^{k-m}n$, the degree bound of Proposition~\ref{prop:gammak}(2). We identify a polynomial $\Theta_m$ of degree at most $D_m$ with its homogenization $\Theta^h_m(y,y_{k+1})=y_{k+1}^{D_m}\Theta_m(y/y_{k+1})$, a form of degree $D_m$, and we measure the distance between such polynomials by the maximum of the absolute values of the differences of their coefficients.

\begin{lemma}[Perturbed Jacobians, after {\cite[Theorem~7]{BS2}}]\label{lem:perturbed-jacobians}
Assume (R), (U$_D$), (U$_B$) and (G) (Section~\ref{sec:recursive}), and that, for $1\le r\le k$, the value $0$ is a regular value of the map $(\psi_1,\dots,\psi_r)\colon\Delta\to\mathbb{R}^r$. Then for every $\eta>0$ there are polynomials $\Theta_1,\dots,\Theta_k$, with $\deg \Theta_m\le D_m$ and $\Theta_m$ at distance at most $\eta$ from $\Gamma_m$, which satisfy (i), (ii) and (iii).
\end{lemma}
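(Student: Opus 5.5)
We construct $\Theta_k,\Theta_{k-1},\dots,\Theta_1$ one at a time, in decreasing order of the index, by a parametric transversality (Sard) argument, following \cite[Theorem~7]{BS2}. At each step $\Theta_m$ is a small generic perturbation of the polynomial $J_m(\Theta)$, which is built from the already chosen $\Theta_{m+1},\dots,\Theta_k$. Since $\Theta_{m+1},\dots,\Theta_k$ stay close to $\Gamma_{m+1},\dots,\Gamma_k$ and $J_m$ depends continuously (polynomially) on their coefficients, $J_m(\Theta)$ is close to $\Gamma_m$ and has degree at most $D_m$ by the same degree count as in Proposition~\ref{prop:gammak}(2). So we can keep $\Theta_m$ within $\eta$ of $\Gamma_m$ by choosing the successive perturbations small enough; the tolerances are chosen backwards from $m=1$.

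\emph{Step $m$.} Assume $\Theta_{m+1},\dots,\Theta_k$ have been chosen so that the following hold for all $j\ge m+1$:
\begin{itemize}
\item[(a)] the sets $C_j(\Theta)$ are smooth curves;
\item[(b)] the conditions (ii) and (iii) hold for the indices $j\ge m+1$;
\item[(c)] the map $(\psi_1,\dots,\psi_{m},\Theta_{m+1},\dots,\Theta_k)$ has $0$ as a regular value on $\Delta$.
\end{itemize}
We set $\Theta_m=J_m(\Theta)+\epsilon\cdot\text{(monomial of degree }\le D_m)$, with $\epsilon$ ranging over the small vectors of the finite-dimensional space of polynomials of degree at most $D_m$. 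Consider the evaluation map $(y,\epsilon)\mapsto\big(\psi_1,\dots,\psi_{m-1},\Theta_m,\dots,\Theta_k\big)(y)$. The $\epsilon$-family contains the constants, so this map is a submersion with respect to the $\Theta_m$-coordinate. By the parametric transversality theorem, for almost every small $\epsilon$ the value $0$ is regular for $(\psi_1,\dots,\psi_{m-1},\Theta_m,\dots,\Theta_k)$ on $\Delta$. This gives (c) at the next step and hence (i) for $j=m-1$. The same argument applied to $(\psi_1,\dots,\psi_{m-1},\Theta_{m+1},\dots,\Theta_k)$ gives that $C_m(\Theta)$ is a smooth curve; here the hypothesis that $0$ is regular for $(\psi_1,\dots,\psi_r)$ supplies the base case $m=k$.

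For (ii) at $j=m$: the set $Z_m(\Theta)$ does not depend on $\Theta_m$, so we need its points to be nondegenerate. This is exactly statement (c) at step $m$, which was arranged at the previous step; for $m=k$ it is the regular-value hypothesis with $r=k$. Nondegeneracy makes $Z_m(\Theta)$ discrete. To see it is finite, note that it lies in the zero set of $J_m(\Theta)$ intersected with the analytic set $Z_m$, and a nondegenerate solution of a system containing logarithms stays away from the boundary of $\Delta$ under the genericity assumptions (G). We expect this behaviour at the walls to be the delicate point.

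For (iii) at $j=m$: at a point of $Z_m(\Theta)$, condition (ii) gives $J_m(\Theta)\neq0$. If $|\epsilon|$ is smaller than the minimum of $|J_m(\Theta)|$ over the finitely many points of $Z_m(\Theta)$, then $\Theta_m$ has the same sign as $J_m(\Theta)$ there. Since almost every $\epsilon$ works for the transversality, we can choose such an $\epsilon$.

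\emph{Main obstacle.} The main difficulty is that $\Delta$ may be unbounded and the points of $Z_m(\Theta)$ could accumulate at infinity or on $\partial\Delta$. In that case the required minimum of $|J_m(\Theta)|$ might not be positive, and finiteness might fail. We would handle this by using (U$_D$), (U$_B$) and (G) to show that $Z_m(\Theta)$ is a compact subset of $\Delta$, which would bound $\epsilon$ uniformly. The first thing to try is to control $Z_m(\Theta)$ near each face $\Delta_S$ through the asymptotics of the $\psi_j$ there, since they behave like $\sum_{s\in S}B_{sj}\log p_s$. Failing that, we would replace the constant shift by a shift $\epsilon\cdot\phi$, with $\phi$ a positive polynomial chosen so that the inequality $|\epsilon\phi|<|J_m(\Theta)|$ holds on all of $Z_m(\Theta)$.
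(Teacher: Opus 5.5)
Your scheme is the paper's: choose $\Theta_k,\dots,\Theta_1$ downward as $J_m(\Theta)$ plus a small shift whose family contains the constants, get transversality from Sard, and get (iii) from the smallness of the shift. The proof is not complete, though, and the missing step is the one you flag as the main obstacle.

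\textbf{Finiteness of $Z_m(\Theta)$.} Condition (iii) needs $\inf_{Z_m(\Theta)}|J_m(\Theta)|>0$, that is, finiteness of $Z_m(\Theta)$. Nondegeneracy only gives discreteness, and a priori $Z_m(\Theta)$ could accumulate on $\partial\overline\Delta\subset\mathbb{S}^k$, including at infinity, where $J_m(\Theta)$ may tend to $0$. This is needed already for $m=k$, where $Z_k(\Theta)$ is the Gale solution set.
\begin{itemize}
\item Assumption (G) does not give it directly, since (G) concerns the exact Jacobians on the flats, not the perturbed $\Theta_m$.
\item Your fallback shift $\epsilon\phi$ does not work. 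The degree of $\phi$ must stay at most $D_m$, and $\phi$ would have to vanish at the accumulation points at least as fast as $J_m(\Theta)$, which you do not control.
\item Note also that $Z_m(\Theta)$ does not lie in the zero set of $J_m(\Theta)$. By nondegeneracy, $J_m(\Theta)\neq0$ there.
\end{itemize}

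The paper closes this gap by requiring $\eta$ to be at most the threshold of Proposition~\ref{prop:ends}. Then Lemma~\ref{lem:closed-faces} and compactness of the closed faces show that the homogenized $\Theta^h_{j+1},\dots,\Theta^h_k$ still have no common zero on $\overline{\Delta_{S'}}$ whenever $|S'|\ge j+1$; this is (P1). Now let $\xi$ be an accumulation point of $Z_m(\Theta)$ with $m<k$.
\begin{itemize}
\item Since $Z_m(\Theta)\subset C_{m+1}(\Theta)\subset\mu_{m+1}$, Theorem~\ref{thm:mu-boundary}(1) together with (U$_B$) forces $|S(\xi)|\ge m+1$.
\item By continuity, $\Theta^h_{m+1},\dots,\Theta^h_k$ all vanish at $\xi$.
\item This contradicts (P1) for $j=m$.
\end{itemize}
For $m=k$, $\xi$ must be a vertex. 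There $\psi_k=-s\sum_i\tau_iB_{ik}+O(1)$ with $s\to+\infty$, and $\sum_i\tau_iB_{ik}\neq0$ by (U$_B$), so $\psi_k$ cannot vanish near $\xi$. This is Proposition~\ref{prop:ends}(3), and it is the actual content behind ``(U$_D$), (U$_B$), (G) give compactness.''

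\textbf{Induction bookkeeping.} Only $\Theta_m$ depends on the parameter at step $m$. So parametric transversality for $(\psi_1,\dots,\psi_r,\Theta_m,\dots,\Theta_k)$ requires that $(\psi_1,\dots,\psi_r,\Theta_{m+1},\dots,\Theta_k)$ already have independent differentials on its zero set.
\begin{itemize}
\item Your hypotheses (a)--(c) cover only $r=m$ and the curves $C_j$ with $j\ge m+1$.
\item So they do not provide smoothness of $C_m(\Theta)$, which is the case $r=m-1$ you need at step $m$. You cannot get it ``by the same argument'' at step $m$, since $\Theta_{m+1},\dots,\Theta_k$ are already fixed.
\item They also miss the case $r=0$, which is needed for $C_1(\Theta)$.
\end{itemize}
As in the paper, you must carry along the smoothness of all the sets $M_{m,r}=\{\psi_1=\dots=\psi_r=0,\ \Theta_{m+1}=\dots=\Theta_k=0\}$ for $0\le r\le m$. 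Then apply Sard to the restrictions of $J_m(\Theta)$ to all of them simultaneously.
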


\begin{proof}
We choose $\Theta_k,\Theta_{k-1},\dots,\Theta_1$ in this order, as $\Theta_m=J_m(\Theta)+c_m$, where $c_m$ is a small real constant and $J_m(\Theta)$ only involves the already chosen $\Theta_{m+1},\dots,\Theta_k$. The proof of Proposition~\ref{prop:gammak} only uses the definition of the Jacobians and the degrees of the last rows, so it shows that $J_m(\Theta)$ is a polynomial of degree at most $D_m$. Its coefficients depend continuously on those of $\Theta_{m+1},\dots,\Theta_k$, and $J_m(\Gamma)=\Gamma_m$. We may assume that $\eta$ is at most the number $\eta$ of Proposition~\ref{prop:ends}. Hence there is $\eta'>0$ such that $\Theta_m$ is at distance at most $\eta$ from $\Gamma_m$ for every $m$ as soon as $|c_m|\le\eta'$ for every $m$.

For $0\le r\le m$ put $M_{m,r}=\{y\in\Delta:\ \psi_1=\dots=\psi_r=0,\ \Theta_{m+1}=\dots=\Theta_k=0\}$. We show by downward induction on $m$ that the $c_m$ can be chosen so that, for $0\le r\le m$, the differentials of $\psi_1,\dots,\psi_r,\Theta_{m+1},\dots,\Theta_k$ are linearly independent along $M_{m,r}$, which is then a smooth submanifold of dimension $m-r$. For $m=k$ this is the assumption on $\psi$. Assume it for $m$. By Sard's theorem applied to the restrictions of $J_m(\Theta)$ to the manifolds $M_{m,r}$, $0\le r\le m-1$, for almost every $c_m\in\mathbb{R}$ the value $-c_m$ is a regular value of all these restrictions, that is, $0$ is a regular value of the restriction of $\Theta_m$ to each $M_{m,r}$, and then $M_{m-1,r}=M_{m,r}\cap\{\Theta_m=0\}$ has the required property. Moreover $M_{m,m}=Z_m(\Theta)$ consists of nondegenerate solutions, and it is finite by Proposition~\ref{prop:ends}(3). At each of its points $J_m(\Theta)\neq0$, since the solution is nondegenerate and $P\neq0$. So $\Theta_m$ has the sign of $J_m(\Theta)$ on $Z_m(\Theta)$ as soon as $|c_m|$ is small enough. We choose $c_m$ satisfying these conditions, with $|c_m|\le\eta'$.

Then $C_j(\Theta)=M_{j,j-1}$ and $Z_j(\Theta)=M_{j,j}$ for $1\le j\le k$, which gives (i) and (ii), and (iii) holds by the choice of the $c_j$.
\end{proof}

The regularity assumption of Lemma~\ref{lem:perturbed-jacobians} is harmless. Let $s\in\mathbb{R}^{n+k}_{>0}$, and replace the coefficient matrix $C$ by $C\operatorname{diag}(s_1^{-1},\dots,s_{n+k}^{-1},1)$, with Gale dual matrix $\operatorname{diag}(s_1,\dots,s_{n+k},1)D$. This multiplies the forms $P_i$ by positive constants. It does not change the signs of the minors of $C$ and of $D$, nor the arrangement and its chambers. It multiplies $P$ and the Jacobians $\Gamma_m$ by positive constants, and by Lemma~\ref{lem:chart-independence} it changes neither the common zeros of the Jacobians of the reduced systems of Section~\ref{sec:recursive} nor their nondegeneracy. It replaces $\psi_j$ by $\psi_j+\sum_{i\le n+k}B_{ij}\log s_i$. The columns of $B$ sum to zero, so its last row is determined by the others and $B'$ has rank $k$. Hence $\log s\mapsto\big(\sum_iB_{ij}\log s_i\big)_j$ is a surjective linear map, and the preimage of a null set is a null set. By Sard's theorem, for almost every $s$ the regularity assumption holds for the new system. By the implicit function theorem, every finite set of nondegenerate positive solutions of the original system gives, for $s$ close enough to $(1,\dots,1)$, a set of as many nondegenerate positive solutions of the new system.

\begin{remark}\label{rem:BS-jacobians}
The Jacobians used in \cite{BS} are not exactly the polynomials $\Gamma_j$. They are the rational functions
\[
G_k=\det\frac{\partial(\psi_1,\dots,\psi_k)}{\partial(y_1,\dots,y_k)},\qquad
G_j=\det\frac{\partial(\psi_1,\dots,\psi_j,G_{j+1},\dots,G_k)}{\partial(y_1,\dots,y_k)},\quad j=k-1,\dots,1,
\]
obtained without multiplying by $P$ at each step, and it is shown in \cite{BS} that $P^{2^{k-j}}G_j$ is a polynomial of degree at most $2^{k-j}n$; the degree bound of Proposition~\ref{prop:gammak}(2) below is essentially theirs. The two families are very similar: $P^{2^{k-j}}G_j$ and $\Gamma_j$ satisfy the same degree bound and differ by polynomial multiples of the later Jacobians $\Gamma_{j'}$, $j'>j$ (Proposition~\ref{prop:BS-jacobians}). Hence they have the same common zeros, define the same curves $C_j$, and have the same nondegenerate zeros, also on the flats of the arrangement and at infinity. Since all our results only involve common zero sets of the Jacobians and their nondegeneracy, they hold for either family. We use the $\Gamma_j$ because they are defined by a recursion on polynomials, and because they satisfy exact restriction formulas on the flats (Theorems~\ref{thm:restriction-LS} and~\ref{thm:restriction-LS-infinity}), which hold for the $P^{2^{k-j}}G_j$ only modulo the later Jacobians. For instance $P^2G_{k-1}=\Gamma_{k-1}-Q\,\Gamma_k$ for a polynomial $Q$, so that $P^2G_{k-1}$ does not vanish in general at the vertices of the arrangement, whereas $\Gamma_{k-1}$ does (Corollary~\ref{cor:flat-vanishing}).
\end{remark}

The Jacobians $\Gamma_j$, obtained by multiplying by $P$ at each step of the recursion, were pointed out to the author by Frank Sottile \cite{Sottile-pc}, together with the property that $\Gamma_{k-1}$ vanishes at the vertices of the hyperplane arrangement (see Corollary~\ref{cor:flat-vanishing}).

The rest of the paper provides bounds for the two terms of Theorem~\ref{thm:KR-general}, applied to perturbations $\Theta$ of the Jacobians given by Lemma~\ref{lem:perturbed-jacobians}. By Proposition~\ref{prop:ends}, if $\Theta$ is close enough to $\Gamma$:
\begin{itemize}
\item $\#Z_0(\Theta)$ is the number $Z(\emptyset,\Delta)$ of zeros of $\Gamma_1=\dots=\Gamma_k=0$ in $\Delta$, bounded above by $u_\emptyset(\Delta)$ and $u^*_\emptyset(\Delta)$ in Sections~\ref{sec:recursive} and~\ref{sec:BDF};
\item $2b(C_j(\Theta))$ is at most the number of zeros of $\Gamma_{j+1},\dots,\Gamma_k$ on the faces of $\overline\Delta$ of codimension $j$ that satisfy a sign condition on the minors of $B$.
\end{itemize}
The resulting bounds for the number of positive solutions of~\eqref{eq:system} are Theorem~\ref{thm:possol-Z} and Corollaries~\ref{cor:possol-recursive} and~\ref{cor:possol-BDF}.

\section{The Jacobians $\Gamma_j$}\label{sec:jacobians}

\subsection{A Cauchy--Binet formula and the degrees of the Jacobians}\label{sec:gammak}

\begin{proposition}\label{prop:gammak}
Let $(D,B)$ be a pair of type $(n,k)$ (Definition~\ref{def:setting}).
\begin{enumerate}
\item For $1\le m\le k$,
\[
\Gamma_m=\sum_{I}\det B_{I,\{1,\dots,m\}}\prod_{i\in\{1,\dots,n+k\}\setminus I}p_i\cdot\det\big(D'_I;\nabla\Gamma_{m+1};\dots;\nabla\Gamma_k\big),
\]
where $I$ runs over the subsets of $\{1,\dots,n+k\}$ with $|I|=m$, and the rows indexed by $I$ are taken in the same order in $B_{I,\{1,\dots,m\}}$ and in $D'_I$. In particular, for $m=k$,
\[
\Gamma_k=\sum_{|I|=k}\det(B'_I)\,\det(D'_I)\prod_{i\notin I}p_i ,
\]
where $B'_I$ and $D'_I$ are the $k\times k$ submatrices of $B'$ and $D'$ with rows indexed by $I$.
\item For $j=0,\dots,k-1$,
\[
\deg\Gamma_{k-j}\le 2^{j}\,n .
\]
In particular $\deg\Gamma_k\le n$, $\deg\Gamma_{k-1}\le 2n$, $\deg\Gamma_{k-2}\le 4n$, and $\deg\Gamma_1\le 2^{k-1}n$.
\end{enumerate}
\end{proposition}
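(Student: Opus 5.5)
The plan is to expand the defining determinant of $\Gamma_m$ by multilinearity in its first $m$ rows, and then to read off the degrees by induction on $j$.

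\emph{Part (1).} For $i\le n+k$ we have $\nabla\log|p_i|=D'_i/p_i$, so $\nabla\psi_l=\sum_{i=1}^{n+k}B_{il}\,D'_i/p_i$. This holds because $p_N=1$, so the index $N$ contributes nothing. Substituting into the first $m$ rows of the determinant and using multilinearity gives
\[
\det\big(\nabla\psi_1;\dots;\nabla\psi_m;\nabla\Gamma_{m+1};\dots;\nabla\Gamma_k\big)=\sum_{i_1,\dots,i_m}\prod_{l=1}^m\frac{B_{i_l l}}{p_{i_l}}\det\big(D'_{i_1};\dots;D'_{i_m};\nabla\Gamma_{m+1};\dots\big).
\]
Terms with a repeated index vanish. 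Group the remaining tuples by their underlying set $I$ with $|I|=m$. Reordering the rows to increasing order introduces the sign of the permutation, and summing over the orderings produces $\det B_{I,\{1,\dots,m\}}$, exactly as in Cauchy--Binet. The factor $\prod_{i\in I}p_i^{-1}$ is independent of the ordering. Multiplying by $P$ then gives $\prod_{i\notin I}p_i$, which is the stated formula. For $m=k$ the last rows are absent, and $B_{I,\{1,\dots,k\}}=B'_I$ because $I\subset\{1,\dots,n+k\}$.

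\emph{Part (2).} I will argue by induction on $j$. For $j=0$, each term of $\Gamma_k$ is a constant times a product of $n+k-k=n$ affine functions, so $\deg\Gamma_k\le n$. For the inductive step, fix $m=k-j$ and assume $\deg\Gamma_{k-i}\le 2^in$ for $i<j$. In the term indexed by $I$, the factor $\prod_{i\notin I}p_i$ has degree $n+k-m=n+j$. The determinant $\det(D'_I;\nabla\Gamma_{m+1};\dots;\nabla\Gamma_k)$ has $m$ constant rows, and its remaining rows have entries of degree at most $\deg\Gamma_{k-i}-1\le 2^in-1$ for $i=0,\dots,j-1$. Its degree is therefore at most $\sum_{i<j}(2^in-1)=(2^j-1)n-j$. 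The total degree is at most $n+j+(2^j-1)n-j=2^jn$, as required.

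The only delicate point is the sign bookkeeping in part (1): one must check that the row order used in $B_{I,\{1,\dots,m\}}$ and in $D'_I$ is the same, so that the permutation signs combine into the single minor. This is the standard Cauchy--Binet argument. Part (2) is then pure degree counting, and the $-j$ from the gradients cancels the $+j$ from the excess affine factors.
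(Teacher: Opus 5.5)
Your proof is correct and follows the paper's argument: for part~1 you expand the logarithmic rows by multilinearity and group the tuples by their underlying set $I$, and for part~2 you make the same degree count. The only difference is bookkeeping in part~2: you sum $\sum_{i<j}(2^in-1)$ directly by strong induction, where the paper passes through the recursion $S_{j+1}\le 2S_j+n$ on partial sums of the degree bounds.
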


\begin{proof}
1. By definition, $\Gamma_m=P\det(\nabla\psi_1;\dots;\nabla\psi_m;\nabla\Gamma_{m+1};\dots;\nabla\Gamma_k)$, with $\nabla\psi_l=\sum_{i=1}^{n+k}B_{il}D'_i/p_i$. Expanding the first $m$ rows by multilinearity, and discarding the terms with a repeated index, which vanish, we get
\begin{equation}\label{eq:gamma-expansion}
\Gamma_m=\sum_{(i_1,\dots,i_m)}B_{i_11}\cdots B_{i_mm}\prod_{i\notin\{i_1,\dots,i_m\}}p_i\cdot
\det\big(D'_{i_1};\dots;D'_{i_m};\nabla\Gamma_{m+1};\dots;\nabla\Gamma_k\big),
\end{equation}
where $(i_1,\dots,i_m)$ runs over the sequences of pairwise distinct indices in $\{1,\dots,n+k\}$. Fix $I=\{r_1,\dots,r_m\}$, listed in the chosen order. The sequences with underlying set $I$ are the sequences $(r_{\sigma(1)},\dots,r_{\sigma(m)})$, where $\sigma$ is a permutation of $\{1,\dots,m\}$. Reordering the first $m$ rows, the corresponding determinant is $\sign(\sigma)\det(D'_I;\nabla\Gamma_{m+1};\dots;\nabla\Gamma_k)$, and summing $\sign(\sigma)\prod_{l=1}^mB_{r_{\sigma(l)}l}$ over $\sigma$ gives $\det B_{I,\{1,\dots,m\}}$. For $m=k$ there are no rows $\nabla\Gamma_{m'}$, and $B_{I,\{1,\dots,k\}}=B'_I$ since $I\subset\{1,\dots,n+k\}$.

2. Let $d_m$ be an upper bound on $\deg\Gamma_m$. In each term of~1, the product $\prod_{i\notin I}p_i$ has degree at most $n+k-m$, the rows $D'_I$ are constant, and the entries of the row $\nabla\Gamma_{m'}$ have degree at most $d_{m'}-1$. Hence
\[
d_m\le n+k-m+\sum_{m'=m+1}^{k}(d_{m'}-1)=n+\sum_{m'=m+1}^{k}d_{m'} .
\]
Put $S_j=\sum_{m'=k-j+1}^{k}d_{m'}$, so that $S_0=0$ and $d_{k-j}\le n+S_j$. Then $S_{j+1}=S_j+d_{k-j}\le2S_j+n$, and induction gives $S_j\le(2^j-1)n$, hence $d_{k-j}\le2^jn$.
\end{proof}

\begin{remark}
These are upper bounds only; cancellations may lower the degrees. The degree of $\Gamma_k$ may be strictly smaller than $n$, because the homogeneous parts of degree $n$ can cancel, and $\Gamma_k$ may even vanish identically, for instance if $\rank B'<k$ or $\rank D'<k$. Once some $\Gamma_m$ is constant, all the $\Gamma_{m'}$ with $m'<m$ vanish identically.
\end{remark}

\subsection{Comparison with the Jacobians of Bihan and Sottile}\label{subsec:BS-jacobians}

We prove the relation between the Jacobians $\Gamma_m$ and the Jacobians $G_m$ of \cite{BS}, recalled in Remark~\ref{rem:BS-jacobians}, that are obtained without multiplying by $P$ at each step. Let $H=\bigcup_{i\le n+k}\{p_i=0\}$ be the arrangement, and put $e_m=2^{k-m}$, $D_m=2^{k-m}n$ and $\hat G_m=P^{e_m}G_m$.

\begin{proposition}\label{prop:BS-jacobians}
For $m=1,\dots,k$, $\hat G_m$ is a polynomial, and there are polynomials $h_{m,m'}$, $m<m'\le k$, with $\deg h_{m,m'}\le D_m-D_{m'}$, such that
\[
\hat G_m=\Gamma_m+\sum_{m'=m+1}^kh_{m,m'}\,\Gamma_{m'}\qquad\text{on }\mathbb{R}^k .
\]
Consequently:
\begin{enumerate}
\item for every $m$ and every affine subspace $L\subset\mathbb{R}^k$, the common zero sets of $(\hat G_m,\dots,\hat G_k)$ and of $(\Gamma_m,\dots,\Gamma_k)$ in $L$ coincide; off $H$ they are also the common zero sets of $(G_m,\dots,G_k)$, so that the curves $C_j$ of Section~\ref{sec:KR} are the same for both families, and so are the zeros of $\Gamma_1=\dots=\Gamma_k=0$ counted by $Z(\emptyset,\mathcal C)$;
\item at a common zero in $L$ of $\Gamma_m,\dots,\Gamma_k$, with $k-m+1=\dim L$, the Jacobian matrices of the restrictions to $L$ of $(\hat G_m,\dots,\hat G_k)$ and of $(\Gamma_m,\dots,\Gamma_k)$ differ by a unipotent triangular matrix, so one family is nondegenerate there if and only if the other one is;
\item at a point where $\Gamma_{m+1}=\dots=\Gamma_k=0$, $\hat G_m=\Gamma_m$; in particular $G_m$ and $\Gamma_m$ have the same sign at such a point of $\Delta$, since $e_m$ is even for $m<k$ and $P>0$ on $\Delta$.
\end{enumerate}
The same holds for the homogenizations of degree $D_m$, with forms $h^h_{m,m'}$ of degree $D_m-D_{m'}$, hence also on the flats at infinity.
\end{proposition}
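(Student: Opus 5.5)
We argue by downward induction on $m$, proving the polynomiality of $\hat G_m$ and the identity $\hat G_m=\Gamma_m+\sum_{m'>m}h_{m,m'}\Gamma_{m'}$ together. For $m=k$ we have $e_k=1$ and $\hat G_k=PG_k=\Gamma_k$ by definition, so there is nothing to prove.

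For the inductive step, assume the identity for every $m'>m$, and write
\[
\hat G_m=P^{e_m}\det(\nabla\psi_1;\dots;\nabla\psi_m;\nabla G_{m+1};\dots;\nabla G_k).
\]
Since $G_{m'}=\hat G_{m'}P^{-e_{m'}}$, we have
\[
\nabla G_{m'}=P^{-e_{m'}}\nabla\hat G_{m'}-e_{m'}\hat G_{m'}P^{-e_{m'}-1}\nabla P .
\]
We expand the determinant row by row in these two parts. First we take the parts $P^{-e_{m'}}\nabla\hat G_{m'}$ in all rows $m'>m$. The factor in front is $P^{e_m-\sum_{m'>m}e_{m'}}=P^{1}$, because $e_m-\sum_{m'=m+1}^k2^{k-m'}=2^{k-m}-(2^{k-m}-1)=1$. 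This term is therefore $P\det(\nabla\psi_1;\dots;\nabla\psi_m;\nabla\hat G_{m+1};\dots;\nabla\hat G_k)$. The terms containing $\nabla P$ in two or more rows cancel, since those rows are proportional. The terms containing $\nabla P$ in exactly one row $m'$ carry the factor $\hat G_{m'}$ and the power $P^{e_m-\sum e-1}=P^0$, multiplied by $P\det(\dots)$ with the $\nabla\psi$ rows; the $P$ clears the denominators of the $\nabla\psi_l$ exactly as in Proposition~\ref{prop:gammak}(1). So each such term is a polynomial multiple of $\hat G_{m'}$, and by induction a polynomial combination of $\Gamma_{m''}$ with $m''\ge m'$.

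It remains to compare $P\det(\dots;\nabla\hat G_{m+1};\dots)$ with $\Gamma_m=P\det(\dots;\nabla\Gamma_{m+1};\dots)$. By induction $\nabla\hat G_{m'}=\nabla\Gamma_{m'}+\sum_{m''>m'}\big(h\nabla\Gamma_{m''}+\Gamma_{m''}\nabla h\big)$. We expand by multilinearity. A term with $h\nabla\Gamma_{m''}$ in row $m'$ repeats a row $\nabla\Gamma_{m''}$ together with the row coming from $m''$, so it vanishes unless that later row also takes a non-$\nabla\Gamma$ part. In every case, each term other than the leading one contains at least one factor $\Gamma_{m''}$ with $m''>m$ coming from a $\Gamma\nabla h$ part. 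We make this precise by a triangular change of rows: replace row $m'$ successively, starting from the last row, and subtract the multiples $h\nabla\Gamma_{m''}$, using that these are combinations of rows already present modulo multiples of $\Gamma$'s. The result is $\hat G_m=\Gamma_m+\sum_{m'>m}h_{m,m'}\Gamma_{m'}$, where the $h_{m,m'}$ are polynomials once the $P$-denominators are cleared, again via the Cauchy--Binet expansion of Proposition~\ref{prop:gammak}. I expect this bookkeeping to be the main obstacle. The degree bounds then follow by homogeneous degree counting as in Proposition~\ref{prop:gammak}(2): every term of $\hat G_m$ has degree at most $D_m$, and since $\deg\Gamma_{m'}\le D_{m'}$, we can choose $h_{m,m'}$ of degree at most $D_m-D_{m'}$ by truncating to homogeneous components after homogenization. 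The same computation with homogenized forms gives the statement at infinity.

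The consequences are then direct. (1) The relation is triangular with identity diagonal, so it can be inverted, expressing each $\Gamma_m$ as $\hat G_m$ plus polynomial multiples of later $\hat G$'s; hence the common zero sets on any $L$ agree. Off $H$ we have $P\ne0$, so $\hat G$ and $G$ have the same zeros. (2) At a common zero, the differential of $h\Gamma_{m'}$ restricted to $L$ equals $h\,d\Gamma_{m'}$, so the Jacobian matrices are related by a unipotent triangular matrix. (3) This is immediate from the identity, together with $P^{e_m}>0$ on $\Delta$ for $m<k$.
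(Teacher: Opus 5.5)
Your overall route is the paper's: downward induction, the splitting of $\nabla G_{m'}$, the identity $e_m=1+\sum_{m'>m}e_{m'}$, discarding terms with two rows $\nabla P$, then comparing with $\Gamma_m$. Your bottom-up row operations are a valid substitute for the paper's remark that a term in which every row $m'$ is a multiple of some $\nabla\Gamma_{r_{m'}}$, $r_{m'}\ge m'$, vanishes unless $r_{m'}=m'$ for all $m'$. The consequences also follow as you say.

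\textbf{The gap is the degree bound.} You do not prove $\deg h_{m,m'}\le D_m-D_{m'}$. It is part of the statement, and it is exactly what makes the homogenized identity, hence the assertion on flats at infinity, hold. Truncating the $h_{m,m'}$ to degree $\le D_m-D_{m'}$ does not preserve the identity. The discarded part $\sum_{m'}h^{>}_{m,m'}\Gamma_{m'}$ need not vanish: top-degree parts can cancel between different $m'$, and $\Gamma_{m'}$ may have degree $<D_{m'}$. Likewise, a bound $\deg(c\,\Gamma_{m'})\le D_m$ only gives $\deg c\le D_m-\deg\Gamma_{m'}$, which is weaker.

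In general, an element of degree $\le d$ of an ideal $(g_1,\dots,g_s)$ is not of the form $\sum h_ig_i$ with $\deg h_i\le d-\deg g_i$. For example, $x=y\cdot x^2-x\cdot(xy-1)$, but $x$ has no degree-bounded representation in terms of $x^2$ and $xy-1$. Homogenizing a representation whose $h$'s have too large degree leaves a power of $y_{k+1}$ that cannot be divided out.

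\textbf{The fix.} Read the bounds off the construction, factor by factor, as in Proposition~\ref{prop:gammak}(2):
\begin{itemize}
\item by induction, each piece $\nabla\Gamma_{m'}$, $h_{m',r}\nabla\Gamma_r$, $\Gamma_r\nabla h_{m',r}$ of the row $\nabla\hat G_{m'}$ has degree $\le D_{m'}-1$, and this survives your row operations;
\item $P$ together with $m$ logarithmic rows contributes degree $\le n+k-m$;
\item since $n+k-m+\sum_{m'>m}(D_{m'}-1)=D_m$, the cofactor of $\Gamma_r$ in each term has degree $\le D_m-D_r$.
\end{itemize}
Let $Q_{m,m'}$ be the determinant with the row $\nabla\hat G_{m'}$ replaced by $\nabla P$. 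The same count gives $\deg Q_{m,m'}\le D_m-D_{m'}$, so the terms $\hat G_{m'}Q_{m,m'}$ contribute cofactors $h_{m',r}Q_{m,m'}$ of degree $\le D_m-D_r$.

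\textbf{A smaller point on the terms with one row $\nabla P$.} By your own count they carry $P^0$. So there is no prefactor $P$ to clear the denominators of the $\nabla\psi_l$, and Proposition~\ref{prop:gammak}(1) does not apply as stated. These terms are polynomial because $\nabla P=\sum_i(P/p_i)D'_i$. In the expansion of $Q_{m,m'}$, a nonzero term uses pairwise distinct rows $D'_{i_1},\dots,D'_{i_m},D'_i$, so its denominator $p_{i_1}\cdots p_{i_m}p_i$ divides $P$.
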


\begin{proof}
For $m'>m$ we have $G_{m'}=P^{-e_{m'}}\hat G_{m'}$, so $\nabla G_{m'}=P^{-e_{m'}}\big(\nabla\hat G_{m'}-e_{m'}\hat G_{m'}\nabla P/P\big)$. Since $e_m=1+\sum_{m'>m}e_{m'}$, expanding by multilinearity, and noting that two rows proportional to $\nabla P$ give zero, we get
\[
\hat G_m=P\det\big(\nabla\psi_1;\dots;\nabla\psi_m;\nabla\hat G_{m+1};\dots;\nabla\hat G_k\big)-\sum_{m'>m}e_{m'}\,\hat G_{m'}\,Q_{m,m'},
\]
where $Q_{m,m'}$ is the determinant $\det(\nabla\psi_1;\dots;\nabla\psi_m;\nabla\hat G_{m+1};\dots;\nabla\hat G_k)$ with the row $\nabla\hat G_{m'}$ replaced by $\nabla P$. Write $\nabla\psi_l=\sum_iB_{il}D'_i/p_i$ and $\nabla P=\sum_i(P/p_i)D'_i$. In the expansion of $Q_{m,m'}$, a nonzero term uses pairwise distinct rows $D'_{i_1},\dots,D'_{i_m},D'_i$, so its denominator $p_{i_1}\cdots p_{i_m}p_i$ divides $P$. Hence $Q_{m,m'}$ is a polynomial whenever the $\hat G_{m''}$, $m''>m$, are, and likewise the function $P\det(\nabla\psi_1;\dots;\nabla\psi_m;v_{m+1};\dots;v_k)$ is a polynomial for all polynomial vectors $v_{m+1},\dots,v_k$.

We argue by decreasing induction on $m$. For $m=k$, $\hat G_k=\Gamma_k$. Let $m<k$ and assume the statement for all $m'>m$. Then
\[
\nabla\hat G_{m'}=\nabla\Gamma_{m'}+\sum_{r>m'}h_{m',r}\nabla\Gamma_r+\sum_{r>m'}\Gamma_r\nabla h_{m',r}.
\]
Expand the first determinant by multilinearity in the last $k-m$ rows. The terms containing a row $\Gamma_r\nabla h_{m',r}$ are $\Gamma_r$ times a polynomial. In the other terms, the row of index $m'$ is a multiple of some $\nabla\Gamma_{r_{m'}}$ with $r_{m'}\ge m'$, and the term vanishes unless the $r_{m'}$ are pairwise distinct, which forces $r_{m'}=m'$ for all $m'$; this term is $P\det(\nabla\psi_1;\dots;\nabla\psi_m;\nabla\Gamma_{m+1};\dots;\nabla\Gamma_k)=\Gamma_m$. Finally, by the induction hypothesis, the terms $\hat G_{m'}Q_{m,m'}$ are polynomial combinations of the $\Gamma_r$, $r>m$. This gives the formula with polynomials $h_{m,r}$.

For the degrees, as in the proof of Proposition~\ref{prop:gammak}(2), the product of $P$ and of $m$ logarithmic rows contributes degree at most $n+k-m$, and by induction each of the pieces $\nabla\Gamma_{m'}$, $h_{m',r}\nabla\Gamma_r$ and $\Gamma_r\nabla h_{m',r}$ of the row $\nabla\hat G_{m'}$ has degree at most $D_{m'}-1$. So every term of the expansion has degree at most $n+k-m+\sum_{m'>m}(D_{m'}-1)=D_m$, and its cofactor of $\Gamma_r$ has degree at most $D_m-D_r$. The same count gives $\deg Q_{m,m'}\le D_m-D_{m'}$. Hence $\deg h_{m,r}\le D_m-D_r$, and $\deg\hat G_m\le D_m$; homogenizing gives the last assertion.

Assertions~1 and~3 follow from the triangular form of the relation. For~2, at a common zero of $\Gamma_m,\dots,\Gamma_k$ the differentials of the terms $h_{m',r}\Gamma_r$ are $h_{m',r}\,d\Gamma_r$, and the same holds for the restrictions to $L$.
\end{proof}

\subsection{Vanishing on flats}

For $S\subset\{1,\dots,n+k\}$ put $L_S=\bigcap_{i\in S}\{p_i=0\}\subset\mathbb{R}^k$, and let $I_S$ be the ideal of $\mathbb{R}[y_1,\dots,y_k]$ generated by the $p_i$, $i\in S$.

\begin{corollary}\label{cor:flat-vanishing}
Let $(D,B)$ be a pair of type $(n,k)$, $S\subset\{1,\dots,n+k\}$ and $1\le m<|S|$. Then $\Gamma_m\in I_S^{\,|S|-m}$. In particular $\Gamma_m$ vanishes on $L_S$, to order at least $|S|-m$ at every point of $L_S$.
\end{corollary}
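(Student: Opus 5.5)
We argue by decreasing induction on $m$, using the Cauchy--Binet expansion of Proposition~\ref{prop:gammak}(1):
\[
\Gamma_m=\sum_{|I|=m}\det B_{I,\{1,\dots,m\}}\prod_{i\notin I}p_i\cdot\det\big(D'_I;\nabla\Gamma_{m+1};\dots;\nabla\Gamma_k\big).
\]
Set $s=|S|$. The statement is about $1\le m<s$, so it suffices to prove the stronger claim that $\Gamma_m\in I_S^{\,s-m}$ for every $m$ with $1\le m\le k$, with the convention that $I_S^{\,r}$ is the whole ring when $r\le 0$. For $m\ge s$ this is empty, which starts the induction at $m=k$.

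\emph{Counting factors.} Fix $m<s$ and a subset $I$ with $|I|=m$. The product $\prod_{i\notin I}p_i$ contains the factors $p_i$ for $i\in S\setminus I$. There are at least $s-m$ of them, so this product lies in $I_S^{\,|S\setminus I|}\subset I_S^{\,s-m}$. The remaining factors $\det B_{I,\{1,\dots,m\}}$ and $\det(D'_I;\nabla\Gamma_{m+1};\dots)$ are polynomials, and ideals are closed under multiplication by polynomials. Hence every term lies in $I_S^{\,s-m}$, and so does $\Gamma_m$.

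Notice that this argument does not use the induction hypothesis at all: the explicit product of the $p_i$ already gives enough factors. The rows $\nabla\Gamma_{m'}$ can only add more vanishing, which we do not need.

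\emph{Vanishing on the flat.} Every element of $I_S^{\,r}$ is a sum of products of $r$ of the affine functions $p_i$ with $i\in S$, times polynomials. Each such product vanishes to order at least $r$ at every point of $L_S$, since each $p_i$, $i\in S$, vanishes there to order at least $1$. So $\Gamma_m$ vanishes on $L_S$ to order at least $s-m$. This holds even when $L_S$ is empty, in which case the statement is vacuous.

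\emph{Main point to check.} The only subtle step is the expansion itself, namely that the multilinear expansion leaves no denominators. This is exactly Proposition~\ref{prop:gammak}(1): the repeated indices are discarded, so each term carries the full complementary product $\prod_{i\notin I}p_i$. There is then no real obstacle, and the proof is short.
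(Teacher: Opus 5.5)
Your proof is correct and is essentially the paper's own argument: in the expansion of Proposition~\ref{prop:gammak}(1), each product $\prod_{i\notin I}p_i$ contains at least $|S\setminus I|\ge|S|-m$ factors $p_i$ with $i\in S$, so every term lies in $I_S^{\,|S|-m}$. The ``decreasing induction'' framing is superfluous, as you observe yourself, since the inductive hypothesis is never used.
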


\begin{proof}
In Proposition~\ref{prop:gammak}(1), the product $\prod_{i\notin I}p_i$ contains at least $|S\setminus I|\ge|S|-m$ factors $p_i$ with $i\in S$.
\end{proof}

\begin{remark}\label{rem:flat-vanishing}
For instance, $\Gamma_{k-1}$ vanishes at every finite vertex of the arrangement, $\Gamma_{k-2}$ vanishes on every finite edge and to order at least $2$ at every finite vertex, and $\Gamma_1$ vanishes on every finite flat of dimension at most $k-2$. On the other hand, at a finite vertex $v=L_S$, $|S|=k$, Proposition~\ref{prop:gammak}(1) with $m=k$ gives $\Gamma_k(v)=\det B_S\det D'_S\prod_{i\notin S}p_i(v)$, which is nonzero for generic data (Lemma~\ref{lem:vertices}). If $\det B_S=0$, that is, by Proposition~\ref{prop:gale-minors}, if the $n+1$ exponents $a_i$, $i\notin S$, lie on an affine hyperplane, then $\Gamma_k(v)=0$ as well, and $v$ is a common zero of $\Gamma_1,\dots,\Gamma_k$ of intersection multiplicity at least $(k-1)!$ when it is isolated.
\end{remark}

\section{The boundary degree}\label{sec:boundary-degree}

By Theorem~\ref{thm:KR-general}, applied to small perturbations of the Jacobians (Lemma~\ref{lem:perturbed-jacobians} and Proposition~\ref{prop:ends}), the number of nondegenerate positive solutions of~\eqref{eq:system} is at most the number of common zeros of $\Gamma_1,\dots,\Gamma_k$ in $\Delta$, plus the numbers of noncompact components of curves $C_j(\Theta)$. To bound the first term, we use a topological invariant of the chambers of the arrangement $\bigcup_{i\le n+k}\{p_i=0\}$.

Let $\mathcal C$ be a connected component of $\mathbb{R}^k\setminus\bigcup_{i}\{p_i=0\}$. Assume that $\mathcal C$ is bounded, so that $\overline{\mathcal C}$ is a polytope and $\partial \mathcal C\cong S^{k-1}$. Orient $\partial \mathcal C$ as the boundary of $\mathcal C$ (outward normal first). Assume that $\Gamma=(\Gamma_1,\dots,\Gamma_k)$ does not vanish on $\partial \mathcal C$, and set
\[
d(\mathcal C)=\deg\left(\frac{\Gamma}{|\Gamma|}\colon \partial \mathcal C\longrightarrow S^{k-1}\right).
\]
By the Kronecker--Poincar\'e--Hopf theorem, if the zeros of $\Gamma$ in $\mathcal C$ are isolated, then
\begin{equation}\label{eq:KPH}
d(\mathcal C)=\sum_{y\in \mathcal C,\ \Gamma(y)=0}\ind_y\Gamma ,
\end{equation}
and at a nondegenerate zero the index is $\sign\det D\Gamma(y)$. In particular, if these zeros are nondegenerate, $|d(\mathcal C)|$ is a lower bound for the number of solutions of $\Gamma_1=\dots=\Gamma_k=0$ in $\mathcal C$. Unbounded chambers are treated in the same way, with a contribution at infinity (Subsection~\ref{subsec:unbounded}).

We now look for an upper bound for
\[
\#\big(\{\Gamma_1=\dots=\Gamma_k=0\}\cap\Delta\big)
\]
which is better than the Bézout bound. By Proposition~\ref{prop:gammak}(2), the product of the degrees of $\Gamma_1,\dots,\Gamma_k$ is at most the \emph{Bézout number}
\[
\beta_k=\prod_{j=0}^{k-1}2^jn=n^k\,2^{\binom k2},
\]
so that, if the common zeros of $\Gamma_1,\dots,\Gamma_k$ are nondegenerate, there are at most $\beta_k$ of them in $\mathbb{R}^k$ (see Lemma~\ref{lem:bezout-flat} for the precise statement). This bound does not distinguish $\Delta$ from the other chambers of the arrangement $\bigcup_{i\le n+k}\{p_i=0\}$, and the idea is to subtract the zeros lying in the other chambers:
\[
\#\big(\{\Gamma_1=\dots=\Gamma_k=0\}\cap\Delta\big)\ \le\ \beta_k-\sum_{\mathcal C\neq\Delta}\#\big(\{\Gamma_1=\dots=\Gamma_k=0\}\cap \mathcal C\big),
\]
where $\mathcal C$ runs over the chambers different from $\Delta$. By~\eqref{eq:KPH}, if the zeros are nondegenerate, the number of zeros in a chamber $\mathcal C$ is at least $|d(\mathcal C)|$. Hence
\[
\#\big(\{\Gamma_1=\dots=\Gamma_k=0\}\cap\Delta\big)\ \le\ \beta_k-\sum_{\mathcal C\neq\Delta}|d(\mathcal C)| .
\]
In this section we compute $d(\mathcal C)$ in terms of the zeros of $\Gamma_2,\dots,\Gamma_k$ on the facets of $\mathcal C$ and of the signs of the first column of $B$ (Theorems~\ref{thm:facet-count} and~\ref{thm:facet-count-unbounded}).

\begin{remark}[Normalization]
Replacing $p_i$ by $-p_i$ does not change the functions $\psi_j$, which involve $\log|p_i|$. It multiplies $P$ and $\Gamma_k$ by $-1$, and it leaves $\Gamma_1,\dots,\Gamma_{k-1}$ unchanged, since by downward induction each of them contains the factor $P$ and the row $\nabla\Gamma_k$, while its other rows are unchanged. Hence it multiplies $d(\mathcal C)$ by $-1$. This is the origin of the factor $\epsilon=\sign P|_{\mathcal C}$ in Theorem~\ref{thm:facet-count}.
\end{remark}

\subsection{The case $k=1$}
Here $\overline{\mathcal C}=[a,b]$ with $p_{i_0}(a)=0$ and $p_{i_1}(b)=0$, and $\Gamma_1=P\,\psi_1'$. Assume $p_i>0$ on $\mathcal C$ and $B_{i_0},B_{i_1}\neq0$. Then $D'_{i_0}>0$ and $D'_{i_1}<0$, so
\[
\sign\Gamma_1(a)=\sign B_{i_0},\qquad \sign\Gamma_1(b)=-\sign B_{i_1}.
\]
With the boundary orientation ($b$ counted positively, $a$ negatively),
\[
d(\mathcal C)=\tfrac12\big(\sign\Gamma_1(b)-\sign\Gamma_1(a)\big)
=-\tfrac12\big(\sign B_{i_0}+\sign B_{i_1}\big)\in\{-1,0,1\}.
\]
Thus $d(\mathcal C)$ is not a universal quantity: it depends on $B$. For instance, if $B_{i_0},B_{i_1}>0$, then $\psi_1\to-\infty$ at both ends of $\mathcal C$, so $\psi_1$ has an interior maximum, of index $-1$.

\subsection{A Cauchy-index formula for $d(\mathcal C)$}

\begin{theorem}[Cauchy-index representation, general $k$]\label{thm:cauchy-general}
Assume that every zero of $(\Gamma_2,\dots,\Gamma_k)$ on $\partial \mathcal C$ lies in the relative interior of a facet, and that at each such zero the restriction of $(\Gamma_2,\dots,\Gamma_k)$ to that facet has nonsingular differential. Then
\[
d(\mathcal C)=\frac12\sum_{\substack{y\in\partial \mathcal C\\ \Gamma_2(y)=\dots=\Gamma_k(y)=0}}
\sign\big(\Gamma_1(y)\big)\cdot
\sign\det\big(D_{v_j}\Gamma_i(y)\big)_{\substack{2\le i\le k\\ 1\le j\le k-1}},
\]
where $(v_1,\dots,v_{k-1})$ is a positively oriented basis of the facet containing $y$, for the boundary orientation of $\partial \mathcal C$.
\end{theorem}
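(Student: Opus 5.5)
We compute the degree of $\Gamma/|\Gamma|\colon\partial\mathcal C\to S^{k-1}$ by counting preimages of a regular value in a degenerate position. For an immersion-free version, we use the classical fact that the degree of a map $g\colon M^{k-1}\to S^{k-1}$ (with $M$ closed and oriented) can be computed as half the sum of local contributions over the points mapped to the two poles $\pm e_1$. More precisely, we use the points mapped to the great circle direction: consider $g'=(g_2,\dots,g_k)$, the last $k-1$ components. The set $\{g'=0\}$ is the preimage of $\{\pm e_1\}$. If $e_1$ and $-e_1$ are both regular values, then $\deg g=\sum_{g(y)=e_1}\sign\det Dg(y)=\sum_{g(y)=-e_1}\sign\det Dg(y)$. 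Averaging the two expressions gives the factor $\frac12$ and a sum over all of $\{g'=0\}$.

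Concretely, I would proceed in three steps. First, the hypotheses give that $\Gamma_1\neq0$ at every zero of $(\Gamma_2,\dots,\Gamma_k)$ on $\partial\mathcal C$, since $\Gamma$ does not vanish on $\partial\mathcal C$. Such zeros are isolated in the relative interiors of facets, because the restricted differential is nonsingular; they are therefore finitely many by compactness. Second, at such a point $y$ on a facet $F$, with a positively oriented basis $(v_1,\dots,v_{k-1})$ of $T_yF$, we compute the local orientation sign of $g=\Gamma/|\Gamma|$ at $y$, where $g(y)=\sign(\Gamma_1(y))e_1$. Near $\pm e_1$, use the chart of $S^{k-1}$ given by the last $k-1$ coordinates. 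The differential of $g'=\Gamma'/|\Gamma|$ at $y$ equals $D\Gamma'/|\Gamma_1(y)|$, since $\Gamma'(y)=0$ kills the derivative of $1/|\Gamma|$. The chart $(z_2,\dots,z_k)$ is orientation preserving at $e_1$ for the standard orientation of $S^{k-1}$ (outward normal first) and reversing at $-e_1$. That sign flip is exactly $\sign\Gamma_1(y)$. So the local degree at $y$ is $\sign\Gamma_1(y)\cdot\sign\det(D_{v_j}\Gamma_i(y))$. Third, summing the identities for the two poles and dividing by $2$ yields the formula.

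The regularity of $\pm e_1$ for $g$ follows from the nonsingularity hypothesis. The computation above shows $\det Dg(y)\neq0$ at every preimage. The fact that preimages all lie in relative interiors of facets, where $\partial\mathcal C$ is smooth, makes the smooth degree computation legitimate. One can alternatively justify it by approximating $\partial\mathcal C\cong S^{k-1}$ by a smooth sphere that agrees with it near these finitely many points, or simply by noting that the degree of a continuous map can be computed at a regular value whose preimage lies in a smooth open part.

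The main obstacle I expect is the orientation bookkeeping in step two. One must check that the chart $(z_2,\dots,z_k)$ has the orientation of $S^{k-1}$ at $e_1$ and the opposite one at $-e_1$, with the boundary convention "outward normal first" for both $\partial\mathcal C$ and $S^{k-1}\subset\mathbb{R}^k$. I would verify this against the case $k=1$, where $S^0=\{\pm1\}$ and the formula must reduce to $d(\mathcal C)=\frac12(\sign\Gamma_1(b)-\sign\Gamma_1(a))$. There the empty determinant is $1$ and the boundary orientation gives sign $+1$ at $b$ and $-1$ at $a$. This is consistent with the formula above, provided the orientation sign of $\partial\mathcal C$ is absorbed into the "positively oriented basis" convention.
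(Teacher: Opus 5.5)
Your proposal is correct and is essentially the paper's proof. Both arguments use that $\pm e_1$ are regular values of $\Gamma/|\Gamma|$ whose preimages are the zeros of $(\Gamma_2,\dots,\Gamma_k)$ on $\partial\mathcal C$, obtain the local degree $\sign\Gamma_1(y)\cdot\sign\det\big(D_{v_j}\Gamma_i(y)\big)$ from the orientation of a chart of $S^{k-1}$ near each pole, and average the two resulting expressions for $d(\mathcal C)$. The differences are cosmetic: your chart $(z_2,\dots,z_k)$ has differential $D\Gamma'/|\Gamma_1|$ at a preimage, so the orientation reversal at $-e_1$ is the only sign to track, whereas the paper's chart $u_i=\Gamma_i/\Gamma_1$ produces two extra factors $(-1)^{k-1}$ at $-e_1$ that cancel; you also justify computing the degree on the piecewise smooth sphere $\partial\mathcal C$ at regular values whose preimages lie in the smooth part, which the paper leaves implicit.
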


\begin{proof}
By the hypotheses, $e_1$ and $-e_1$ are regular values of $\Gamma/|\Gamma|$ on $\partial \mathcal C$. The preimages of $\pm e_1$ are the points $y\in\partial \mathcal C$ with $\Gamma_2(y)=\dots=\Gamma_k(y)=0$ and $\pm\Gamma_1(y)>0$.

Near $e_1$, use the chart $(u_2,\dots,u_k)$ of $S^{k-1}$. It is positively oriented, since $(e_1,e_2,\dots,e_k)$ is a positive basis of $\mathbb{R}^k$ with $e_1$ the outward normal. In this chart the map reads $u_i=\Gamma_i/\Gamma_1$. At a preimage its differential is $D\Gamma_i/\Gamma_1$ with $\Gamma_1>0$. Hence
\[
d(\mathcal C)=\sum_{\substack{\Gamma_2=\dots=\Gamma_k=0\\ \Gamma_1>0}}\sign\det\big(D_{v_j}\Gamma_i\big).
\]
Near $-e_1$ the outward normal is $-e_1$, so the chart $(u_2,\dots,u_k)$ is negatively oriented. The same computation gives
\[
d(\mathcal C)=-\sum_{\substack{\Gamma_2=\dots=\Gamma_k=0\\ \Gamma_1<0}}\sign\det\big(D_{v_j}\Gamma_i\big).
\]
Averaging the two expressions yields the formula.
\end{proof}

\begin{remark}\label{rem:signed-count}
Comparing the two expressions in the proof shows that the signed count of all zeros of $(\Gamma_2,\dots,\Gamma_k)$ on $\partial \mathcal C$ vanishes. This is consistent with the fact that this zero set is the oriented boundary of the compact curve $\partial \mathcal C\cap\{\Gamma_3=\dots=\Gamma_k=0\}$. It is the analogue of $\sum\sign D_v\Gamma_2=0$ for $k=2$.
\end{remark}

\subsection{Simplification using the definition of $\Gamma_1$}

For $i=1,\dots,n+k$, let $\epsilon_i=\sign p_i|_{\mathcal C}$, and let $\epsilon=\sign P|_{\mathcal C}=\prod_i\epsilon_i$. For a point $y$ in the relative interior of a facet, let $i(y)$ be the index such that $y\in\{p_{i(y)}=0\}$.

The following lemma is the key point of this section. A priori, the product on its left-hand side depends on the point $y$, through the value of $\Gamma_1$ and the derivatives of $\Gamma_2,\dots,\Gamma_k$ at $y$. Almost miraculously, its sign does not depend on $y$ itself, but only on the facet containing $y$, that is, on the index $i(y)$, through the sign of the entry $B_{i(y)1}$ of the first column of $B$ and the global sign $\epsilon$. This is what turns the Cauchy-index formula of Theorem~\ref{thm:cauchy-general} into the facet-count formula of Theorem~\ref{thm:facet-count}.

\begin{lemma}\label{lem:sign-identity}
Under the hypotheses of Theorem~\ref{thm:cauchy-general}, for every $y\in\partial \mathcal C$ with $\Gamma_2(y)=\dots=\Gamma_k(y)=0$,
\[
\sign\big(\Gamma_1(y)\big)\cdot
\sign\det\big(D_{v_j}\Gamma_i(y)\big)_{\substack{2\le i\le k\\ 1\le j\le k-1}}
=-\,\epsilon\,\sign B_{i(y)1}.
\]
\end{lemma}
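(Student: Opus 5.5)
The plan is to compute $\Gamma_1(y)$ directly from its definition at a point $y$ in the relative interior of the facet of $\mathcal C$ on $\{p_i=0\}$, where $i=i(y)$ and $\Gamma_2(y)=\dots=\Gamma_k(y)=0$. We have $\Gamma_1=P\det(\nabla\psi_1;\nabla\Gamma_2;\dots;\nabla\Gamma_k)$ with $\nabla\psi_1=\sum_l B_{l1}D'_l/p_l$. Hence $P\nabla\psi_1=\sum_l B_{l1}(P/p_l)D'_l$ is a polynomial vector. At $y$ every term with $l\neq i$ still contains the factor $p_i$ and vanishes. Therefore
\[
\Gamma_1(y)=B_{i1}\,\Big(\prod_{l\neq i}p_l(y)\Big)\,\det\big(D'_i;\nabla\Gamma_2(y);\dots;\nabla\Gamma_k(y)\big).
\]
This is the same mechanism as in Corollary~\ref{cor:flat-vanishing}, applied with $|S|=1$. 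Since $\Gamma$ does not vanish on $\partial\mathcal C$, $\Gamma_1(y)\neq0$, so in particular $B_{i1}\neq0$. I then determine the signs of the three factors separately.

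\emph{Sign of the product.} Since $y$ lies in the relative interior of the facet, $p_l(y)\neq0$ for $l\neq i$. By continuity from $\mathcal C$, $\sign p_l(y)=\epsilon_l$. Hence the product has sign $\prod_{l\neq i}\epsilon_l=\epsilon\,\epsilon_i$.

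\emph{Sign of the determinant.} Note that $D'_i=\nabla p_i$ is a normal vector to the facet. Let $\nu$ be the outward normal, so that $(\nu,v_1,\dots,v_{k-1})$ is a positive basis of $\mathbb{R}^k$, by the boundary orientation convention. Because $\epsilon_ip_i>0$ inside $\mathcal C$ and $p_i=0$ on the facet, moving outward decreases $\epsilon_ip_i$, so $\sign\langle\nabla p_i,\nu\rangle=-\epsilon_i$; this is nonzero since the facet is a hyperplane piece. Let $M$ be the matrix with rows $\nabla p_i,\nabla\Gamma_2,\dots,\nabla\Gamma_k$ and $V$ the matrix with columns $\nu,v_1,\dots,v_{k-1}$, with $\det V>0$. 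The first row of $MV$ is $(\langle\nabla p_i,\nu\rangle,0,\dots,0)$, because the $v_j$ are tangent to $\{p_i=0\}$. Expanding along this row gives
\[
\det M\cdot\det V=\langle\nabla p_i,\nu\rangle\det\big(D_{v_j}\Gamma_m(y)\big)_{2\le m\le k,\ 1\le j\le k-1}.
\]
So $\sign\det M=-\epsilon_i\,\sign\det(D_{v_j}\Gamma_m(y))$, and the latter is nonzero by the nonsingularity hypothesis of Theorem~\ref{thm:cauchy-general}.

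\emph{Combining.} This gives $\sign\Gamma_1(y)=\sign B_{i1}\cdot\epsilon\epsilon_i\cdot(-\epsilon_i)\cdot\sign\det(D_{v_j}\Gamma_m(y))$. Multiplying by $\sign\det(D_{v_j}\Gamma_m(y))$ and using $\epsilon_i^2=1$ and $(\pm1)^2=1$ yields $-\epsilon\,\sign B_{i1}$, which is the statement of Lemma~\ref{lem:sign-identity}.

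The only delicate step is the orientation bookkeeping: checking that the outward normal paired with $\nabla p_i$ gives $-\epsilon_i$, and that placing $\nu$ first matches the boundary orientation used in Theorem~\ref{thm:cauchy-general}. Everything else is the observation that all terms of $P\nabla\psi_1$ with $l\neq i$ vanish on $\{p_i=0\}$.
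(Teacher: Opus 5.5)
Your proposal is correct and follows essentially the same route as the paper's proof. You keep only the term $B_{i1}\prod_{l\neq i}p_l\,D'_i$ of $P\nabla\psi_1$ on the facet, change basis to the positive basis $(\nu,v_1,\dots,v_{k-1})$ to extract the factor $-\epsilon_i$ from the row $D'_i$, and then collect signs using $\prod_{l\neq i}\epsilon_l=\epsilon\,\epsilon_i$.
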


\begin{proof}
Write $i_0=i(y)$.

\emph{Step 1: $\Gamma_1$ on the facet.} By definition, $\Gamma_1=P\cdot\det\big(\nabla\psi_1;\nabla\Gamma_2;\dots;\nabla\Gamma_k\big)$, where the arguments are the rows. Only the first row has poles, and
\[
P\,\nabla\psi_1=\sum_{i=1}^{n+k}B_{i1}\,D'_i\prod_{l\ne i}p_l .
\]
Since $y$ lies in the relative interior of $F_{i_0}$, the only $p_l$ vanishing at $y$ is $p_{i_0}$, so only the term $i=i_0$ survives:
\[
\Gamma_1(y)=B_{i_01}\prod_{l\ne i_0}p_l(y)\cdot\det\big(D'_{i_0};\nabla\Gamma_2(y);\dots;\nabla\Gamma_k(y)\big).
\]

\emph{Step 2: the determinant along the facet.} The outward unit normal to $F_{i_0}$ is $\nu=-\epsilon_{i_0}D'_{i_0}/|D'_{i_0}|$, and $(\nu,v_1,\dots,v_{k-1})$ is a positive basis of $\mathbb{R}^k$. Let $W$ be the matrix with these columns, so $\det W>0$. Multiplying on the right by $W$, the row $D'_{i_0}$ becomes $(-\epsilon_{i_0}|D'_{i_0}|,0,\dots,0)$ and the row $\nabla\Gamma_i$ becomes $(D_\nu\Gamma_i,D_{v_1}\Gamma_i,\dots,D_{v_{k-1}}\Gamma_i)$. Expanding along the first row,
\[
\sign\det\big(D'_{i_0};\nabla\Gamma_2;\dots;\nabla\Gamma_k\big)
=-\epsilon_{i_0}\,\sign\det\big(D_{v_j}\Gamma_i\big)_{\substack{2\le i\le k\\ 1\le j\le k-1}}.
\]

\emph{Step 3: collecting signs.} Since $\sign\prod_{l\ne i_0}p_l(y)=\prod_{l\ne i_0}\epsilon_l$, Steps 1 and 2 give
\[
\sign\Gamma_1(y)=-\,\epsilon\,\sign B_{i_01}\cdot\sign\det\big(D_{v_j}\Gamma_i(y)\big).
\]
By transversality the determinant is nonzero. Multiplying both sides by its sign yields the identity.
\end{proof}

\begin{theorem}[Facet-count formula]\label{thm:facet-count}
Assume the hypotheses of Theorem~\ref{thm:cauchy-general}. For $i=1,\dots,n+k$, let $N_i$ be the number of zeros of $(\Gamma_2,\dots,\Gamma_k)$ in the relative interior of the facet $F_i=\{p_i=0\}\cap\partial \mathcal C$. This is an unsigned count, with $N_i=0$ if $\{p_i=0\}$ does not support a facet of $\mathcal C$. Then
\[
d(\mathcal C)=-\frac{\epsilon}{2}\sum_{i=1}^{n+k}\sign(B_{i1})\,N_i .
\]
In particular,
\[
|d(\mathcal C)|=\frac12\,\Big|\sum_{i=1}^{n+k}\sign(B_{i1})\,N_i\Big| ,
\]
which does not depend on $\epsilon$.
\end{theorem}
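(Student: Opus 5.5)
The formula follows by substituting Lemma~\ref{lem:sign-identity} into the Cauchy-index representation of Theorem~\ref{thm:cauchy-general}. Under the hypotheses of that theorem, every zero of $(\Gamma_2,\dots,\Gamma_k)$ on $\partial\mathcal C$ lies in the relative interior of exactly one facet $F_{i(y)}$. So the sum over such zeros splits into a sum over $i=1,\dots,n+k$ of sums over the zeros lying in $F_i$. An index $i$ for which $\{p_i=0\}$ does not support a facet contributes nothing, consistent with $N_i=0$.

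Next I would check that $\Gamma_1(y)\neq0$ at each such zero, so that the sign in Theorem~\ref{thm:cauchy-general} is meaningful. This follows from the hypothesis that $\Gamma$ does not vanish on $\partial\mathcal C$, which is already implicit in the definition of $d(\mathcal C)$. Then Lemma~\ref{lem:sign-identity} replaces each summand $\sign(\Gamma_1(y))\,\sign\det(D_{v_j}\Gamma_i(y))$ by $-\epsilon\,\sign B_{i(y)1}$. This value depends only on the facet index. Hence the inner sum over the zeros in $F_i$ equals $-\epsilon\,\sign(B_{i1})\,N_i$, with $N_i$ the unsigned count, and multiplying by the factor $\frac12$ gives
\[
d(\mathcal C)=-\frac{\epsilon}{2}\sum_{i=1}^{n+k}\sign(B_{i1})\,N_i .
\]
When $B_{i1}=0$, the lemma forces $\Gamma_1(y)=0$ at such points (Step~1 of its proof), which the nonvanishing of $\Gamma$ on $\partial\mathcal C$ excludes. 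So either $N_i=0$ or the term is absent anyway, and the convention $\sign 0=0$ is consistent.

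Since $\epsilon=\pm1$, taking absolute values gives the second formula, which does not depend on $\epsilon$. No step is a real obstacle, because all the work is in Lemma~\ref{lem:sign-identity}. The only point needing care is bookkeeping: the orientation of the basis $(v_j)$ used in the lemma must be the one fixed in Theorem~\ref{thm:cauchy-general}, namely the boundary orientation with the outward normal first. Both statements use exactly this convention.
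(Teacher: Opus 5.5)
Your proposal is correct and is the paper's proof: substitute Lemma~\ref{lem:sign-identity} into the Cauchy-index formula of Theorem~\ref{thm:cauchy-general}, then group the zeros by the facet that contains them. Your side remark that $B_{i1}=0$ forces $N_i=0$ is also made in the paper, in the remark that follows the theorem.
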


\begin{proof}
By Theorem~\ref{thm:cauchy-general} and Lemma~\ref{lem:sign-identity},
\[
d(\mathcal C)=\frac12\sum_{\substack{y\in\partial \mathcal C\\ \Gamma_2(y)=\dots=\Gamma_k(y)=0}}\big(-\epsilon\,\sign B_{i(y)1}\big)
=-\frac{\epsilon}{2}\sum_{i=1}^{n+k}\sign(B_{i1})\,N_i ,
\]
where the second equality groups the zeros according to the facet containing them.
\end{proof}

By Theorem~\ref{thm:restriction-LS}, $N_i$ is the number of common zeros, in the facet $F_i$, of the Jacobians of a reduced system of the same type with $k-1$ variables; this is how these numbers are bounded recursively in Section~\ref{sec:recursive}.

\begin{remark}
\begin{enumerate}
\item The hypothesis $\Gamma_1(y)\ne0$ forces $B_{i(y)1}\neq0$ at every such zero.
\item For $k=1$ the zero set is the pair of endpoints of $\mathcal C$ and the determinant is empty, so the theorem reduces to $d(\mathcal C)=-\frac{\epsilon}{2}\big(\sign B_{i_01}+\sign B_{i_11}\big)$, as computed above.
\item The Jacobian signs disappear from the formula: $d(\mathcal C)$ depends only on the first column of $B$ and on the unsigned numbers of zeros of $(\Gamma_2,\dots,\Gamma_k)$ on each facet.
\item By Remark~\ref{rem:signed-count}, the signed count of the zeros of $(\Gamma_2,\dots,\Gamma_k)$ on $\partial \mathcal C$ vanishes; with the sign identity of Lemma~\ref{lem:sign-identity}, this gives $\sum_y\sign B_{i(y)1}\,\sign\Gamma_1(y)=0$, the sum running over these zeros.
\end{enumerate}
\end{remark}

\subsection{An example with $n=2$ and $k=3$}

\begin{example}\label{ex:n2k3}
Consider the $n+k=5$ affine functions in $y=(y_1,y_2,y_3)$
\[
p_1=y_1,\quad p_2=y_2,\quad p_3=y_3,\quad p_4=1-y_1-y_2-y_3,\quad
p_5=1-2y_1-\tfrac13 y_2+\tfrac15 y_3,
\]
and the matrix
\[
B=\begin{pmatrix}
-3 & 1 & 3\\
2 & -3 & -2\\
2 & 1 & -1\\
1 & 3 & -3\\
1 & -3 & -1
\end{pmatrix}.
\]

\emph{The cell.} Let $\mathcal C=\{p_1>0,\dots,p_5>0\}$. It is the standard simplex with the vertex $e_1$ cut off by the plane $\{p_5=0\}$, so all five planes support facets of $\mathcal C$. Its vertices are
\[
(0,0,0),\quad (0,1,0),\quad (0,0,1),\quad (\tfrac12,0,0),\quad
(\tfrac25,\tfrac35,0),\quad (\tfrac{6}{11},0,\tfrac{5}{11}).
\]
The facets $F_1$ and $F_5$ are triangles, and $F_2$, $F_3$, $F_4$ are quadrilaterals. All $p_i$ are positive on $\mathcal C$, so $\epsilon=+1$.

\emph{The functions $\Gamma_j$.} We have
\[
\Gamma_3=-30y_1^2+10y_1y_2-\tfrac{88}{5}y_1y_3-\tfrac53y_2^2-\tfrac{28}{5}y_2y_3
+\tfrac75y_3^2-2y_1+8y_2+\tfrac{42}{5}y_3+7,
\]
while $\Gamma_2$ and $\Gamma_1$ have $34$ and $133$ monomials respectively.

\emph{Zeros on the boundary.} The system $\Gamma_2=\Gamma_3=0$ has exactly four solutions on $\partial \mathcal C$. Each lies in the relative interior of a facet and is transversal. The values below are rounded to the digits shown:
\[
\begin{array}{c|c|c|c|c}
\text{facet} & y & \Gamma_1(y) & \det\big(D_{v_j}\Gamma_i(y)\big) & -\epsilon\,\sign B_{i1}\\
\hline
F_2 & (0.4565,\ 0,\ 0.2367) & -10.8 & +1209 & -1\\
F_3 & (0.4703,\ 0.0455,\ 0) & +3.25 & -3536 & -1\\
F_4 & (0.4873,\ 0.0890,\ 0.4238) & +7.77 & -3032 & -1\\
F_5 & (0.4982,\ 0.1248,\ 0.1899) & -2.82 & +626 & -1
\end{array}
\]
In every row $\sign\Gamma_1(y)\cdot\sign\det\big(D_{v_j}\Gamma_i(y)\big)=-\epsilon\,\sign B_{i(y)1}$, in accordance with Lemma~\ref{lem:sign-identity}.

\emph{The degree.} We have $(N_1,\dots,N_5)=(0,1,1,1,1)$, and the first column of $B$ is $(-3,2,2,1,1)^T$. Theorem~\ref{thm:facet-count} gives
\[
d(\mathcal C)=-\frac12\Big(\sign(-3)\cdot 0+\sign(2)\cdot1+\sign(2)\cdot1+\sign(1)\cdot1+\sign(1)\cdot1\Big)=-2 .
\]
The Cauchy-index formula of Theorem~\ref{thm:cauchy-general}, using the Jacobian signs of the table, gives the same value. In particular, the system $\Gamma_1=\Gamma_2=\Gamma_3=0$ has at least two solutions in $\mathcal C$.
\end{example}

\subsection{The unbounded case: a contribution at infinity}\label{subsec:unbounded}

In this subsection $\mathcal C$ is a connected component of $\mathbb{R}^k\setminus\bigcup_i\{p_i=0\}$, which may be unbounded. As before, $\epsilon_i=\sign p_i|_{\mathcal C}$ and $\epsilon=\prod_i\epsilon_i$.

\medskip
\noindent\emph{The last row of $B$.} The polynomial $p_{n+k+1}=1$ plays the role of the hyperplane at infinity. Since the columns of $B$ sum to zero,
\[
\sum_{i=1}^{n+k}B_{i1}=-B_{n+k+1,1}.
\]

\medskip
\noindent\emph{Top-degree parts.} For a polynomial $g$ and an integer $D\ge\deg g$, let $g^{\mathrm{top}}$ denote its homogeneous component of degree $D$, where $D$ is specified each time. We use:
\begin{itemize}
\item $p_i^{\mathrm{top}}(y)=\langle D'_i,y\rangle$, the linear part of $p_i$, and $P^{\mathrm{top}}=\prod_{i=1}^{n+k}p_i^{\mathrm{top}}$;
\item $\psi_1^{\mathrm{top}}=\sum_{i=1}^{n+k}B_{i1}\log|p_i^{\mathrm{top}}|$;
\item $\Gamma_m^{\mathrm{top}}$, the homogeneous component of degree $D_m:=2^{k-m}n$ of $\Gamma_m$, for $m=1,\dots,k$.
\end{itemize}

\medskip
\noindent\emph{Data at infinity.}
\begin{itemize}
\item The recession cone of $\mathcal C$ is
\[
\mathcal C_\infty=\{y\in\mathbb{R}^k:\ \epsilon_ip_i^{\mathrm{top}}(y)\ge0,\ i=1,\dots,n+k\}.
\]
Set $\Sigma=\mathcal C_\infty\cap S^{k-1}$.
\item Let $\Sigma^\circ$ be the interior of $\Sigma$ in $S^{k-1}$. If $\mathcal C_\infty$ is full-dimensional, then $\Sigma^\circ=\{\theta\in S^{k-1}:\epsilon_ip_i^{\mathrm{top}}(\theta)>0\ \forall i\}$. Otherwise $\Sigma^\circ=\emptyset$.
\item Set $\mathcal C_R=\mathcal C\cap B(0,R)$.
\end{itemize}

\begin{lemma}[Top-degree parts]\label{lem:top}
For $m=1,\dots,k$, $\Gamma_m^{\mathrm{top}}$ is the function $\Gamma_m$ built from the central arrangement $p_1^{\mathrm{top}},\dots,p_{n+k}^{\mathrm{top}}$ with the same matrix $B$. It is homogeneous of degree $D_m$, and
\[
\Gamma_m=\Gamma^{\mathrm{top}}_m+(\text{terms of degree}<D_m).
\]
\end{lemma}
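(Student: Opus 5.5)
I will argue by downward induction on $m$, from $m=k$ to $m=1$, with the homogeneous-degree bookkeeping of Proposition~\ref{prop:gammak}(2) as the guide. Write $\Gamma_m^{\circ}$ for the function $\Gamma_m$ built from the central arrangement $p_1^{\mathrm{top}},\dots,p_{n+k}^{\mathrm{top}}$ with the same matrix $B$. The claim to prove at each step is that $\Gamma_m-\Gamma_m^{\circ}$ has degree $<D_m$ and that $\Gamma_m^{\circ}$ is homogeneous of degree $D_m$ (possibly zero). Then $\Gamma_m^{\circ}=\Gamma_m^{\mathrm{top}}$, and the displayed decomposition follows.

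For the base case I use the formula of Proposition~\ref{prop:gammak}(1):
\[
\Gamma_k=\sum_{|I|=k}\det B'_I\det D'_I\prod_{i\notin I}p_i .
\]
The coefficients $\det B'_I\det D'_I$ depend only on $B$ and on the linear parts $D'_i$, so they are the same for the central arrangement. Each product $\prod_{i\notin I}p_i$ has $n$ factors. Its homogeneous part of degree $n$ is $\prod_{i\notin I}p_i^{\mathrm{top}}$, and the remainder has degree $<n$. Summing over $I$ gives $\Gamma_k=\Gamma_k^{\circ}+(\deg<n)$, with $\Gamma_k^{\circ}$ homogeneous of degree $n=D_k$.

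For the inductive step, assume the claim for all $m'>m$. Then $\nabla\Gamma_{m'}=\nabla\Gamma_{m'}^{\circ}+(\text{entries of degree}<D_{m'}-1)$, and $\nabla\Gamma^{\circ}_{m'}$ is homogeneous of degree $D_{m'}-1$; this holds trivially when $\Gamma^{\circ}_{m'}=0$. Now apply Proposition~\ref{prop:gammak}(1) for $\Gamma_m$. In each term, $\prod_{i\notin I}p_i$ has $n+k-m$ factors, the rows $D'_I$ are constant and equal for both arrangements, and the $k-m$ gradient rows are as just described. Expand the determinant multilinearly in the rows and the product in its factors. Exactly one term keeps all top parts, namely
\[
\det B_{I,\{1,\dots,m\}}\prod_{i\notin I}p_i^{\mathrm{top}}\det\big(D'_I;\nabla\Gamma_{m+1}^{\circ};\dots;\nabla\Gamma_k^{\circ}\big).
\]
This term is homogeneous of degree $n+k-m+\sum_{m'>m}(D_{m'}-1)$. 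The computation of Proposition~\ref{prop:gammak}(2) shows this equals $D_m$ exactly. The key point is that the exact (not merely bounded) degrees satisfy $D_m=n+\sum_{m'>m}D_{m'}$, since $n+\sum_{m'>m}2^{k-m'}n=2^{k-m}n$. Every other term of the expansion has strictly smaller degree. Summing over $I$, the top parts give exactly formula (1) for the central arrangement, which is $\Gamma_m^{\circ}$, and the claim follows.

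The only real point of care is to read $\Gamma_m^{\mathrm{top}}$ as the component of degree exactly $D_m$, not the leading form of $\Gamma_m$. The statement is then compatible with cancellations: if some $\Gamma^{\circ}_{m'}$ vanishes identically, the statement still holds with $\Gamma_m^{\mathrm{top}}=0$. I also need the identity of Proposition~\ref{prop:gammak}(1) to hold for the central arrangement. It does, because that proposition is a formal identity valid for any affine functions, in particular for the pair with $p_i$ replaced by $p_i^{\mathrm{top}}$. The central arrangement has no constant terms, so strictly it is not of the normalized form; but the proof of (1) never used the last column of $D$. I expect no serious obstacle beyond this bookkeeping.
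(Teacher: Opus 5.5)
Your proof is correct and follows essentially the same route as the paper. In both, a downward induction replaces each $p_i$ and each $\nabla\Gamma_{m'}$ by its top-degree part in the multilinear expansion of Proposition~\ref{prop:gammak}(1), and the identity $n+k-m+\sum_{m'>m}(D_{m'}-1)=D_m$ identifies the surviving terms with the expansion for the central arrangement. The only cosmetic difference is that the paper uses the ordered-tuple expansion~\eqref{eq:gamma-expansion} instead of the grouped Cauchy--Binet form, which changes nothing.
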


\begin{proof}
Expanding the logarithmic rows by multilinearity, as in~\eqref{eq:gamma-expansion}, gives
\[
\Gamma_{k-j}=\sum_{(i_1,\dots,i_{k-j})}\Big(\prod_{s=1}^{k-j}B_{i_s s}\Big)\prod_{l\notin I}p_l\cdot
\det\big(D'_{i_1};\dots;D'_{i_{k-j}};\nabla\Gamma_{k-j+1};\dots;\nabla\Gamma_k\big).
\]
Here the sum runs over ordered tuples of pairwise distinct indices in $\{1,\dots,n+k\}$, and $I=\{i_1,\dots,i_{k-j}\}$.

We argue by induction on $j$. Replace $p_l$ by $p_l^{\mathrm{top}}$ and $\Gamma_m$ by the functions built from the central arrangement. By the induction hypothesis these are the $\Gamma_m^{\mathrm{top}}$, and $\nabla\Gamma_m^{\mathrm{top}}$ is the homogeneous component of degree $D_m-1$ of $\nabla\Gamma_m$. So each term of the expansion equals the corresponding term of the central expansion, which is homogeneous of degree
\[
(n+j)+\sum_{m=k-j+1}^{k}(D_m-1)=n+j+(2^j-1)n-j=2^jn=D_{k-j},
\]
plus terms of lower degree. The sum of the central terms is the function $\Gamma_{k-j}$ of the central arrangement.
\end{proof}

\begin{lemma}[Sign identity at infinity]\label{lem:sign-infinity}
Let $\theta\in\Sigma^\circ$ satisfy $\Gamma^{\mathrm{top}}_2(\theta)=\dots=\Gamma^{\mathrm{top}}_k(\theta)=0$. Let $(v_1,\dots,v_{k-1})$ be a basis of $T_\theta S^{k-1}$ such that $(\theta,v_1,\dots,v_{k-1})$ is a positive basis of $\mathbb{R}^k$. If $\det\big(D_{v_j}\Gamma^{\mathrm{top}}_i(\theta)\big)\neq0$, then
\[
\sign\big(\Gamma^{\mathrm{top}}_1(\theta)\big)\cdot
\sign\det\big(D_{v_j}\Gamma^{\mathrm{top}}_i(\theta)\big)_{\substack{2\le i\le k\\ 1\le j\le k-1}}
=-\,\epsilon\,\sign B_{n+k+1,1} .
\]
\end{lemma}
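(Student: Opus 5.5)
The plan is to copy the proof of Lemma~\ref{lem:sign-identity}, with the outward normal to a facet replaced by the radial direction $\theta$. The facet term $D'_{i_0}$ will be replaced by the Euler identity for the homogeneous functions of the central arrangement. Throughout I use Lemma~\ref{lem:top}: for every $m$, $\Gamma^{\mathrm{top}}_m$ is the Jacobian $\Gamma_m$ of the central arrangement $p_1^{\mathrm{top}},\dots,p_{n+k}^{\mathrm{top}}$ with the same matrix $B$.

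\emph{Step 1: $\Gamma_1^{\mathrm{top}}$ at $\theta$.} Since $\theta\in\Sigma^\circ$, we have $\epsilon_ip_i^{\mathrm{top}}(\theta)>0$ for all $i\le n+k$, so $\theta$ lies off the central arrangement. There the defining formula holds pointwise:
\[
\Gamma^{\mathrm{top}}_1(\theta)=P^{\mathrm{top}}(\theta)\det\big(\nabla\psi_1^{\mathrm{top}};\nabla\Gamma_2^{\mathrm{top}};\dots;\nabla\Gamma_k^{\mathrm{top}}\big)(\theta),
\qquad \sign P^{\mathrm{top}}(\theta)=\prod_i\epsilon_i=\epsilon .
\]
I must check that the recursive definition of the central $\Gamma_m$ is exactly this $P\det(\cdots)$ expression with the central $\psi$'s. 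This holds by the construction in Lemma~\ref{lem:top}, where only the $p_i$ are replaced by their linear parts.

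\emph{Step 2: Euler identities.} The function $\psi_1^{\mathrm{top}}$ is a combination of the functions $\log|p_i^{\mathrm{top}}|$. Each $p_i^{\mathrm{top}}$ is linear, so
\[
\langle\nabla\log|p_i^{\mathrm{top}}|(\theta),\theta\rangle=\frac{\langle D'_i,\theta\rangle}{p_i^{\mathrm{top}}(\theta)}=1 .
\]
Using that the columns of $B$ sum to zero, this gives
\[
\langle\nabla\psi_1^{\mathrm{top}}(\theta),\theta\rangle=\sum_{i=1}^{n+k}B_{i1}=-B_{n+k+1,1}.
\]
For $m\ge2$, $\Gamma^{\mathrm{top}}_m$ is homogeneous of degree $D_m$. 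Euler's formula then gives $\langle\nabla\Gamma^{\mathrm{top}}_m(\theta),\theta\rangle=D_m\Gamma^{\mathrm{top}}_m(\theta)=0$, by the hypothesis $\Gamma^{\mathrm{top}}_2(\theta)=\dots=\Gamma^{\mathrm{top}}_k(\theta)=0$.

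\emph{Step 3: change of basis and collecting signs.} Let $W$ be the matrix with columns $(\theta,v_1,\dots,v_{k-1})$, so that $\det W>0$. Multiply the $k\times k$ matrix of rows from Step~1 on the right by $W$. The first row becomes $(-B_{n+k+1,1},*,\dots,*)$. The row of $\nabla\Gamma^{\mathrm{top}}_i$ becomes $(0,D_{v_1}\Gamma^{\mathrm{top}}_i,\dots,D_{v_{k-1}}\Gamma^{\mathrm{top}}_i)$. The matrix is then block triangular, and expanding along the first column gives
\[
\sign\det\big(\nabla\psi_1^{\mathrm{top}};\nabla\Gamma_2^{\mathrm{top}};\dots\big)(\theta)=-\sign B_{n+k+1,1}\cdot\sign\det\big(D_{v_j}\Gamma^{\mathrm{top}}_i(\theta)\big).
\]
Combining this with Step~1 gives
\[
\sign\Gamma_1^{\mathrm{top}}(\theta)=-\epsilon\,\sign B_{n+k+1,1}\,\sign\det\big(D_{v_j}\Gamma^{\mathrm{top}}_i(\theta)\big).
\]
This determinant is nonzero by assumption, so multiplying both sides by its sign yields the identity.

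I expect no real obstacle. The only delicate point is Step~1: the identification of $\Gamma_1^{\mathrm{top}}$ with the pointwise Jacobian formula for the central arrangement must be justified from Lemma~\ref{lem:top}, including that the central $\Gamma_m$ are exactly the homogeneous parts of degree $D_m$ and not of some lower degree. The sign computation itself is the same as in Lemma~\ref{lem:sign-identity}, with the Euler relation taking the place of the vanishing of $p_{i_0}$.
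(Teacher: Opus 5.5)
Your proposal is correct and is essentially the paper's proof. Both use the same three facts: $\sign P^{\mathrm{top}}(\theta)=\epsilon$ on $\Sigma^\circ$, $\langle\theta,\nabla\psi_1^{\mathrm{top}}(\theta)\rangle=-B_{n+k+1,1}$ from the zero column sums of $B$, and Euler's identity killing $\langle\theta,\nabla\Gamma^{\mathrm{top}}_i(\theta)\rangle$ for $i\ge2$; these are then combined with right multiplication by $W$ and expansion along the first column, and the identification you flag in Step~1 is exactly what Lemma~\ref{lem:top} supplies, as in the paper.
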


\begin{proof}
By Lemma~\ref{lem:top},
\[
\Gamma^{\mathrm{top}}_1=P^{\mathrm{top}}\det\big(\nabla\psi^{\mathrm{top}}_1;\nabla\Gamma^{\mathrm{top}}_2;\dots;\nabla\Gamma^{\mathrm{top}}_k\big).
\]
We use three facts.
\begin{enumerate}
\item On $\Sigma^\circ$ we have $\sign p_l^{\mathrm{top}}=\epsilon_l$ for all $l$, so $\sign P^{\mathrm{top}}(\theta)=\epsilon$.
\item For $t>0$,
\[
\psi^{\mathrm{top}}_1(ty)=\psi^{\mathrm{top}}_1(y)+\Big(\sum_{i=1}^{n+k}B_{i1}\Big)\log t=\psi^{\mathrm{top}}_1(y)-B_{n+k+1,1}\log t,
\]
so $\langle\theta,\nabla\psi^{\mathrm{top}}_1(\theta)\rangle=-B_{n+k+1,1}$.
\item By Euler's identity, $\langle\theta,\nabla\Gamma^{\mathrm{top}}_i(\theta)\rangle=D_i\Gamma^{\mathrm{top}}_i(\theta)=0$ for $i\ge2$.
\end{enumerate}
Let $W$ be the matrix with columns $\theta,v_1,\dots,v_{k-1}$, so that $\det W>0$. Multiply the matrix $\big(\nabla\psi^{\mathrm{top}}_1;\nabla\Gamma^{\mathrm{top}}_2;\dots;\nabla\Gamma^{\mathrm{top}}_k\big)$ on the right by $W$. By facts 2 and 3, its first column becomes $(-B_{n+k+1,1},0,\dots,0)^T$. Expanding along this column,
\[
\det\big(\nabla\psi^{\mathrm{top}}_1;\nabla\Gamma^{\mathrm{top}}_2;\dots;\nabla\Gamma^{\mathrm{top}}_k\big)(\theta)
=-\frac{B_{n+k+1,1}}{\det W}\,\det\big(D_{v_j}\Gamma^{\mathrm{top}}_i(\theta)\big).
\]
Multiplying by $P^{\mathrm{top}}(\theta)$ and using fact 1 gives the claim.
\end{proof}

\begin{theorem}[Facet-count formula, unbounded case]\label{thm:facet-count-unbounded}
Assume the following.
\begin{enumerate}
\item[(H)] $\Gamma$ does not vanish on $\partial \mathcal C$. The zeros of $(\Gamma_2,\dots,\Gamma_k)$ on $\partial \mathcal C$ lie in the relative interiors of facets and are transversal there.
\item[(H$_\infty$)] (i) $\Gamma^{\mathrm{top}}_1,\dots,\Gamma^{\mathrm{top}}_k$ have no common zero on $\Sigma$. (ii) The common zeros of $\Gamma^{\mathrm{top}}_2,\dots,\Gamma^{\mathrm{top}}_k$ on $\Sigma$ lie in $\Sigma^\circ$ and are nondegenerate as zeros on $S^{k-1}$.
\end{enumerate}
For $i=1,\dots,n+k$, let $N_i$ be the number of zeros of $(\Gamma_2,\dots,\Gamma_k)$ in the relative interior of the facet $F_i=\{p_i=0\}\cap\partial \mathcal C$, with $N_i=0$ if $\{p_i=0\}$ does not support a facet of $\mathcal C$. Let $N_{n+k+1}$ be the number of common zeros of $\Gamma^{\mathrm{top}}_2,\dots,\Gamma^{\mathrm{top}}_k$ on $\Sigma$.

Then there is $R_0>0$ such that $\Gamma$ does not vanish on $\mathcal C\setminus B(0,R_0)$. Consequently the degree
\[
d(\mathcal C)=\deg\left(\frac{\Gamma}{|\Gamma|}\colon\partial \mathcal C_R\longrightarrow S^{k-1}\right),
\]
with $\partial \mathcal C_R$ oriented as the boundary of $\mathcal C_R$, does not depend on $R\ge R_0$. When the zeros of $\Gamma$ in $\mathcal C$ are isolated, it equals the sum of their indices, and
\[
d(\mathcal C)=-\frac{\epsilon}{2}\sum_{i=1}^{n+k+1}\sign(B_{i1})\,N_i ,
\qquad\text{so that}\qquad
|d(\mathcal C)|=\frac12\,\Big|\sum_{i=1}^{n+k+1}\sign(B_{i1})\,N_i\Big| .
\]
\end{theorem}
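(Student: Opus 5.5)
The plan is to reduce to the bounded case on the truncated chamber $\mathcal C_R$ and to treat the spherical cap $\mathcal C\cap\{|y|=R\}$ as an extra ``facet at infinity''.

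\emph{Step 1: no zeros near infinity.} Write $y=r\theta$ with $\theta\in S^{k-1}$ and $r$ large. By Lemma~\ref{lem:top}, $\Gamma_m(r\theta)=r^{D_m}\big(\Gamma_m^{\mathrm{top}}(\theta)+O(1/r)\big)$, uniformly in $\theta$. Points of $\mathcal C$ with large norm have directions $\theta$ within $O(1/r)$ of $\Sigma$, since $\epsilon_ip_i(r\theta)>0$ gives $\epsilon_ip_i^{\mathrm{top}}(\theta)>-O(1/r)$. By (H$_\infty$)(i) and compactness, $\max_m|\Gamma_m^{\mathrm{top}}|$ is bounded below on a neighbourhood of $\Sigma$. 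This gives $R_0$ such that $\Gamma\neq0$ on $\mathcal C\setminus B(0,R_0)$. Independence of $d(\mathcal C)$ from $R\ge R_0$ then follows by homotopy: $\partial\mathcal C_R$ and $\partial\mathcal C_{R'}$ cobound $\mathcal C_{R'}\setminus\mathcal C_R$, where $\Gamma$ does not vanish. For isolated zeros, the equality with the sum of indices is \eqref{eq:KPH} applied to $\mathcal C_R$. For nonsmooth corners I would round them off slightly, or use that $\partial\mathcal C_R$ is a topological sphere (or a union of components) and that the degree is defined topologically.

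\emph{Step 2: apply the Cauchy-index formula on $\partial\mathcal C_R$.} The proof of Theorem~\ref{thm:cauchy-general} uses only that $\pm e_1$ are regular values on a piecewise smooth oriented boundary. So
\[
d(\mathcal C)=\tfrac12\sum\sign\Gamma_1(y)\,\sign\det(D_{v_j}\Gamma_i(y)),
\]
where the sum runs over the zeros of $(\Gamma_2,\dots,\Gamma_k)$ on $\partial\mathcal C_R$. For $R$ large, these zeros split into two groups. The first group consists of the zeros on the flat facets $F_i\cap B(0,R)$. By Lemma~\ref{lem:sign-identity} each contributes $-\epsilon\,\sign B_{i1}$, and by (H) these are all $N_i$ of them once $R$ is large; we need finiteness of the $N_i$, which follows from transversality together with the analogous top-degree control on the facets. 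The second group consists of the zeros on the cap $\mathcal C\cap RS^{k-1}$, whose outward normal is $\theta$. We must also check that no zero lands on the corners $F_i\cap RS^{k-1}$. This uses (H$_\infty$)(ii): the zeros at infinity lie in $\Sigma^\circ$, away from $\partial\Sigma$.

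\emph{Step 3: the cap (main obstacle).} On the cap, rescale by $\theta\mapsto\Gamma_m(R\theta)/R^{D_m}$, a uniformly small $C^1$ perturbation of $\Gamma_m^{\mathrm{top}}$ on the sphere. By the nondegeneracy in (H$_\infty$)(ii) and the implicit function theorem, for $R$ large the zeros of $(\Gamma_2,\dots,\Gamma_k)$ on the cap are in bijection with the $N_{n+k+1}$ zeros $\theta\in\Sigma^\circ$ of $(\Gamma^{\mathrm{top}}_2,\dots,\Gamma^{\mathrm{top}}_k)$. Each lies near such a $\theta$, and the signs of $\Gamma_1$ and of the tangential Jacobian equal those of the top parts. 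Here we use (H$_\infty$)(i), which gives $\Gamma_1^{\mathrm{top}}(\theta)\ne0$, together with the positive rescaling factors. Away from $\Sigma^\circ$ there are no zeros on the cap, by compactness, since the cap direction set converges to $\Sigma$. By Lemma~\ref{lem:sign-infinity}, with $(\theta,v_1,\dots,v_{k-1})$ positive, matching the boundary orientation of the cap, each of these zeros contributes $-\epsilon\,\sign B_{n+k+1,1}$. Summing the two groups gives the formula, and the absolute value is immediate. I expect the delicate part to be the uniformity near $\partial\Sigma$ and at the corners, where the facets meet the cap; I would handle this by choosing a compact neighbourhood $K\subset\Sigma^\circ$ containing all zeros at infinity and bounding $|\Gamma_2^{\mathrm{top}},\dots,\Gamma_k^{\mathrm{top}}|$ from below on the complement of $K$ in a neighbourhood of $\Sigma$.
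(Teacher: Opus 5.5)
Your proposal is correct and follows essentially the same route as the paper. Both arguments rescale by $G^R_m(\theta)=R^{-D_m}\Gamma_m(R\theta)\to\Gamma^{\mathrm{top}}_m$ in $C^1$, confine the cap directions near $\Sigma$ (the paper uses Minkowski--Weyl, $\mathcal C=Q+\mathcal C_\infty$), apply the Cauchy-index formula to $\mathcal C_R$, and split the contributions using Lemma~\ref{lem:sign-identity} on the facets and Lemma~\ref{lem:sign-infinity} on the cap. The one minor difference is that the paper gets finiteness of the $N_i$ from the fact that a discrete semialgebraic set is finite, whereas your top-degree control on the facets, using (H$_\infty$)(ii) on $\Sigma\setminus\Sigma^\circ$, also works but is an extra argument.
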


\begin{proof}
For $\theta\in S^{k-1}$ and $R>0$ put $G^R_m(\theta)=R^{-D_m}\Gamma_m(R\theta)$. By Lemma~\ref{lem:top}, $G^R_m\to\Gamma^{\mathrm{top}}_m$ in $C^1(S^{k-1})$ as $R\to\infty$. Let $S_R$ be the sphere of radius $R$ and write $\mathcal C\cap S_R=R\,\Sigma_R$. By the Minkowski--Weyl theorem, $\mathcal C=Q+\mathcal C_\infty$ with $Q$ a bounded polytope. Hence $\Sigma_R$ lies in the $O(1/R)$-neighbourhood of $\Sigma$, and contains any given compact subset of $\Sigma^\circ$ in its interior for $R$ large.

By (H$_\infty$)(i), $\Gamma^{\mathrm{top}}$ does not vanish on a neighbourhood of $\Sigma$, so $G^R$ does not vanish on $\Sigma_R$, that is, $\Gamma$ does not vanish on $\mathcal C\cap S_R$, for $R\ge R_0$. This gives the first assertions: $d(\mathcal C)$ is the degree of $\Gamma/|\Gamma|$ on the boundary of the convex body $\mathcal C_R$ for any $R\ge R_0$, and it is the sum of the indices of the zeros of $\Gamma$ in $\mathcal C$.

By (H), the zeros of $(\Gamma_2,\dots,\Gamma_k)$ on the facets of $\mathcal C$ form a discrete semialgebraic set, hence a finite one, contained in $B(0,R)$ for $R$ large. By compactness, (H$_\infty$)(ii), $C^1$ convergence and the implicit function theorem, for $R$ large the zeros of $(G^R_2,\dots,G^R_k)$ on $\Sigma_R$ lie in its interior, are nondegenerate, and are in bijection with the $N_{n+k+1}$ common zeros $\theta$ of $\Gamma^{\mathrm{top}}_2,\dots,\Gamma^{\mathrm{top}}_k$ on $\Sigma$; moreover, at the zero close to $\theta$, the signs of $G^R_1$ and of the tangential Jacobian determinant of $(G^R_2,\dots,G^R_k)$ are those of $\Gamma^{\mathrm{top}}_1(\theta)$ and of $\det\big(D_{v_j}\Gamma^{\mathrm{top}}_i(\theta)\big)$.

Hence the zeros of $(\Gamma_2,\dots,\Gamma_k)$ on $\partial\mathcal C_R$ lie in its smooth part and are transversal. The proof of Theorem~\ref{thm:cauchy-general} only uses this, so it applies to $\mathcal C_R$ and expresses $d(\mathcal C)$ as one half of the sum of $\sign\Gamma_1(y)\,\sign\det\big(D_{v_j}\Gamma_i(y)\big)$ over these zeros, with $(v_1,\dots,v_{k-1})$ a positive basis of the tangent space of $\partial\mathcal C_R$ at $y$. By Lemma~\ref{lem:sign-identity}, which is local, the zeros on the facets contribute $-\frac{\epsilon}{2}\sum_{i=1}^{n+k}\sign(B_{i1})\,N_i$. On $\mathcal C\cap S_R$ the outward normal at $R\theta$ is $\theta$, and the rescaling does not change any sign, since $\Gamma_m(R\theta)=R^{D_m}G^R_m(\theta)$. By Lemma~\ref{lem:sign-infinity}, the zeros on $\mathcal C\cap S_R$ therefore contribute $-\frac{\epsilon}{2}\sign(B_{n+k+1,1})\,N_{n+k+1}$. Adding the two contributions gives the formula.
\end{proof}

\begin{remark}
\begin{enumerate}
\item \emph{The hyperplane at infinity as a facet.} Theorem~\ref{thm:facet-count-unbounded} has exactly the form of Theorem~\ref{thm:facet-count}. The only change is that the hyperplane at infinity, which corresponds to $p_{n+k+1}=1$, is counted as one more facet, with exponent $B_{n+k+1,1}$ and $N_{n+k+1}$ zeros. When $\mathcal C$ is bounded, $\Sigma=\emptyset$ and $N_{n+k+1}=0$, and we recover Theorem~\ref{thm:facet-count}.
\item \emph{The case $k=1$.} Let $\mathcal C=(a,+\infty)$ with $p_{i_0}(a)=0$, and assume $\epsilon=+1$. Then $N_{i_0}=N_{n+2}=1$, and the theorem gives
\[
d(\mathcal C)=-\tfrac12\big(\sign B_{i_0}+\sign B_{n+2}\big).
\]
This agrees with a direct computation. The leading coefficient of $\Gamma_1=\sum_{i=1}^{n+1}B_i\,D'_i\prod_{l\neq i}p_l$ is
\[
\Big(\prod_{l=1}^{n+1}D'_l\Big)\sum_{i=1}^{n+1}B_i=-\Big(\prod_{l=1}^{n+1}D'_l\Big)B_{n+2},
\]
with $\prod_lD'_l>0$, so $\sign\Gamma_1(+\infty)=-\sign B_{n+2}$. Moreover $\sign\Gamma_1(a)=\sign B_{i_0}$, and $d(\mathcal C)=\tfrac12\big(\sign\Gamma_1(+\infty)-\sign\Gamma_1(a)\big)$.
\item \emph{Lower-dimensional recession cone.} If $\mathcal C_\infty$ is not full-dimensional, then $\Sigma^\circ=\emptyset$. Hypothesis (H$_\infty$)(ii) then says that $\Gamma^{\mathrm{top}}_2,\dots,\Gamma^{\mathrm{top}}_k$ have no common zero on $\Sigma$, so $N_{n+k+1}=0$. This is a generic condition: it asks that $k-1$ polynomials have no common zero on a set of dimension $<k-1$.
\end{enumerate}
\end{remark}

\section{Boundary behaviour of the level sets $\mu_j$}\label{sec:mu}

We take $f_m=\prod_{i=1}^{n+k}|p_i|^{B_{im}}$, so that $\psi_m=\log f_m$. For $2\le j\le k$ let
\[
\mu_j=\big\{y\in\mathbb{R}^k\setminus H:\ f_1(y)=\dots=f_{j-1}(y)=1\big\},\qquad H=\bigcup_{i=1}^{n+k}\{p_i=0\}.
\]
For $i=1,\dots,n+k+1$ we write
\[
b_i^{(j-1)}=(B_{i1},\dots,B_{i,j-1})\in\mathbb{R}^{j-1}
\]
for the $i$-th row of $B$ truncated to its first $j-1$ columns. Since the columns of $B$ sum to zero, $b_{n+k+1}^{(j-1)}=-\sum_{i=1}^{n+k}b_i^{(j-1)}$.

\subsection{Projective setting}

Let $(y,y_{k+1})=(y_1,\dots,y_k,y_{k+1})\in\mathbb{R}^{k+1}$ and
\[
P_i(y,y_{k+1})=\sum_{l=1}^{k+1}D_{il}\,y_l,\qquad i=1,\dots,n+k+1 .
\]
The coefficient vector of $P_i$ is the $i$-th row $D_i$ of $D$. In particular $P_{n+k+1}=y_{k+1}$, and $P_i(y,1)=p_i(y)$ for all $i$.

Let $\mathbb{S}^k=(\mathbb{R}^{k+1}\setminus\{0\})/\mathbb{R}_{>0}$ be the oriented projective space. We identify $\mathbb{R}^k$ with $\{y_{k+1}>0\}$ via $y\mapsto[y:1]$. The closure of $\mathbb{R}^k$ in $\mathbb{S}^k$ is the closed hemisphere $\{y_{k+1}\ge0\}=\mathbb{R}^k\sqcup S^{k-1}_\infty$. A direction $\theta\in S^{k-1}$ corresponds to the point $[\theta:0]$, and $y^\nu\to[\theta:0]$ means $|y^\nu|\to\infty$ with $y^\nu/|y^\nu|\to\theta$.

Since $\sum_{i=1}^{n+k+1}B_{im}=0$, the function
\[
f_m(y,y_{k+1})=\prod_{i=1}^{n+k+1}|P_i(y,y_{k+1})|^{B_{im}}
\]
is homogeneous of degree $0$. It is therefore well defined on $\mathbb{S}^k\setminus\widehat H$, where $\widehat H=\bigcup_{i=1}^{n+k+1}\{P_i=0\}$, and it agrees with the previous $f_m$ on $\mathbb{R}^k$. Closures of subsets of $\mathbb{R}^k$ are taken in $\mathbb{S}^k$. Since $\mu_j$ is closed in $\mathbb{R}^k\setminus H$,
\[
\overline{\mu_j}\setminus\mu_j\subset\widehat H\cap\{y_{k+1}\ge0\}.
\]
For $\xi\in\mathbb{S}^k$ let $S(\xi)=\{i\le n+k+1:\ P_i(\xi)=0\}$. For $S\subset\{1,\dots,n+k+1\}$ let $L_S=\bigcap_{i\in S}\{P_i=0\}\subset\mathbb{S}^k$.

\begin{remark}
For a finite point $\xi=[y:1]$, $S(\xi)=\{i\le n+k:\ p_i(y)=0\}$. For a point at infinity $\xi=[\theta:0]$,
\[
S(\xi)=\{n+k+1\}\cup\{i\le n+k:\ p_i^{\mathrm{top}}(\theta)=\langle D'_i,\theta\rangle=0\}.
\]
Thus the hyperplane at infinity plays exactly the role of the other hyperplanes.
\end{remark}

\subsection{The characterization}

We use Stiemke's lemma, a variant of Gordan's alternative.

\begin{lemma}[Stiemke]\label{lem:stiemke}
Let $M$ be a real $r\times s$ matrix. Exactly one of the following holds:
\begin{enumerate}
\item there is $\tau\in\mathbb{R}^s$ with $\tau_i>0$ for all $i$ and $M\tau=0$;
\item there is $\mu\in\mathbb{R}^r$ such that $M^T\mu\ge0$ and $M^T\mu\neq0$.
\end{enumerate}
\end{lemma}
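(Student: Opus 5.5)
We will prove Stiemke's lemma by the standard route: show that the two alternatives cannot hold together, and then obtain one of them from a separating hyperplane argument (equivalently, from LP duality or Farkas' lemma).

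\emph{The alternatives are exclusive.} Suppose both hold. Then
\[
0=\mu^TM\tau=\langle M^T\mu,\tau\rangle .
\]
All entries of $M^T\mu$ are nonnegative, at least one is positive, and every $\tau_i$ is positive. Hence $\langle M^T\mu,\tau\rangle>0$, a contradiction.

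\emph{One alternative holds.} Assume (1) fails. Let $K=\ker M\subset\mathbb{R}^s$ and let $\Pi=\{\tau\in\mathbb{R}^s:\ \tau\ge0,\ \sum_i\tau_i=1\}$ be the standard simplex. Since $K$ is a cone, (1) holds exactly when $K$ meets the open orthant; failure of (1) does not by itself give $K\cap\Pi=\emptyset$, so we do not use a separation of $K$ from $\Pi$. Instead we work with the row space $K^\perp=\operatorname{im}M^T$, and aim to find a vector $z\in\operatorname{im}M^T$ with $z\ge0$ and $z\neq0$. Any such $z$ can be written $z=M^T\mu$, which gives (2).

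\emph{Finding $z$ by a linear program.} For each index $i$, consider the problem of maximizing $\tau_i$ subject to $M\tau=0$ and $0\le\tau\le\mathbf 1$. This problem is feasible (take $\tau=0$) and bounded.
\begin{itemize}
\item[(a)] If, for every $i$, the optimum is positive, let $\tau^{(i)}$ be an optimal point for index $i$. Then $\sum_i\tau^{(i)}$ lies in $K$ and has all coordinates positive. This gives (1), which we assumed fails.
\item[(b)] Hence some $i$ has optimum $0$, meaning $\tau_i=0$ for all $\tau\in K$ with $\tau\ge0$. Apply Farkas' lemma (LP duality) to this problem. The dual certificate gives $\mu$ and $w\ge0$ such that $e_i\le M^T\mu+w$ in the appropriate form.
\end{itemize}

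The main obstacle is this dual extraction in (b). The cleaner formulation is: the system $M\tau=0$, $\tau\ge0$, $\tau_i=1$ is infeasible. By Farkas' lemma there is $\mu$ with $M^T\mu\ge e_i$, after adjusting signs. This yields $M^T\mu\ge0$ with $i$-th coordinate at least $1$, hence $M^T\mu\neq0$, which is (2).

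So the plan is: exclusivity by the pairing $\langle M^T\mu,\tau\rangle$; then summation of optimal points to obtain (1) when every index is positively supported in $K$; otherwise Farkas' lemma applied to the infeasible system $M\tau=0$, $\tau\ge0$, $\tau_i=1$ to obtain (2). The only real care needed is getting the sign convention right in the Farkas step.
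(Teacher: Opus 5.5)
The paper gives no proof of this lemma. It quotes it as a classical variant of Gordan's alternative, and uses only the implication ``not (1) $\Rightarrow$ (2)'' in Step~3 of the proof of Theorem~\ref{thm:mu-boundary}. Your argument is a correct, self-contained derivation from Farkas' lemma, so it supplies what the paper takes for granted.

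Both parts are right. Exclusivity follows from $\langle M^T\mu,\tau\rangle=\mu^TM\tau=0$. For existence, the dichotomy works: either every coordinate $i$ is positive on some $\tau^{(i)}\in\ker M\cap\mathbb{R}^s_{\ge0}$, and then $\sum_i\tau^{(i)}$ gives (1); or some coordinate $i$ is not, and then, after rescaling, the system $M\tau=0$, $\tau\ge0$, $\tau_i=1$ is infeasible.

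Two simplifications. The linear programs are unnecessary in the first branch: any such $\tau^{(i)}$ will do, and no optimum is needed. The sign convention you left open in the Farkas step works out as follows. Farkas' lemma for the system $\binom{M}{e_i^T}\tau=\binom{0}{1}$, $\tau\ge0$, gives $(\mu',\lambda)\in\mathbb{R}^r\times\mathbb{R}$ with $M^T\mu'+\lambda e_i\ge0$ and $\lambda<0$. Then $\mu=-\mu'/\lambda$ satisfies $M^T\mu\ge e_i$, hence $M^T\mu\ge0$ and $M^T\mu\neq0$.

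Your first, LP-duality version of (b) is also valid once you state one missing observation. The dual of maximizing $\tau_i$ subject to $M\tau=0$, $0\le\tau\le\mathbf 1$, is minimizing $\mathbf 1^Tw$ subject to $M^T\mu+w\ge e_i$, $w\ge0$. A dual optimal value $0$ then forces $w=0$, which again gives $M^T\mu\ge e_i$.
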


For a finite family of vectors $\{b_i^{(j-1)}\}_{i\in S}$ in $\mathbb{R}^{j-1}$, condition~1 applied to the matrix with columns $b_i^{(j-1)}$ says that the convex cone generated by the $b_i^{(j-1)}$ is a linear subspace. We say that $\{b_i^{(j-1)}\}_{i\in S}$ \emph{positively spans} $\mathbb{R}^{j-1}$ if this cone is all of $\mathbb{R}^{j-1}$, that is, if condition~1 holds and $\rank\{b_i^{(j-1)}\}_{i\in S}=j-1$.

\begin{theorem}\label{thm:mu-boundary}
Let $\xi\in\widehat H\cap\{y_{k+1}\ge0\}$ and $S=S(\xi)$.
\begin{enumerate}
\item If $\xi\in\overline{\mu_j}$, then there is $\tau\in\mathbb{R}^S$ with $\tau_i>0$ for all $i\in S$ and $\sum_{i\in S}\tau_i\,b_i^{(j-1)}=0$. Equivalently, the cone generated by $\{b_i^{(j-1)}\}_{i\in S}$ is a linear subspace of $\mathbb{R}^{j-1}$.
\item Conversely, assume that the rows $D_i$, $i\in S$, are linearly independent in $\mathbb{R}^{k+1}$, and that $\{b_i^{(j-1)}\}_{i\in S}$ positively spans $\mathbb{R}^{j-1}$. Then $\xi\in\overline{\mu_j}$. More precisely, for every sufficiently small convex neighbourhood $V$ of $\xi$, $\mu_j$ meets every connected component of $V\setminus\widehat H$ contained in $\{y_{k+1}>0\}$.
\end{enumerate}
\end{theorem}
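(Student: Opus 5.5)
Near $\xi$ the factors $P_i$ with $i\notin S$ are bounded away from $0$ and $\infty$, so all the work is in the $\log|P_i|$ with $i\in S$. Concretely, I would work in a small convex neighbourhood $V$ of $\xi$ in $\mathbb{S}^k$, choosing a chart: $y_{k+1}=1$ if $\xi$ is finite, and a normalization $|(y,y_{k+1})|=1$ if $\xi$ is at infinity. For $m\le j-1$ this gives
\[
\psi_m=\sum_{i\in S}B_{im}\log|P_i|+h_m ,
\]
with $h_m$ smooth and bounded on $V$. In vector form, $(\psi_1,\dots,\psi_{j-1})=\sum_{i\in S}\log|P_i|\,b_i^{(j-1)}+h$, where $h$ is bounded.

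\emph{Part 1.} Take a sequence $y^\nu\in\mu_j$ with $y^\nu\to\xi$, and set $t_i^\nu=-\log|P_i(y^\nu)|$ for $i\in S$; each $t_i^\nu\to+\infty$. From $\psi^{(j-1)}(y^\nu)=0$ we get $\sum_{i\in S}t_i^\nu b_i^{(j-1)}=h(y^\nu)$, which stays bounded. Divide by $T^\nu=\max_i t_i^\nu$ and pass to a subsequence, so that $t^\nu/T^\nu\to\tau\ge0$ with $\tau\neq0$ and $\sum_{i\in S}\tau_ib_i^{(j-1)}=0$. This only gives $\tau\ge 0$, but strict positivity is what the statement requires, so I would argue instead by Stiemke's alternative (Lemma~\ref{lem:stiemke}). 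Suppose no strictly positive $\tau$ exists. Then there is $\mu$ with $\langle\mu,b_i^{(j-1)}\rangle\ge0$ for all $i\in S$, and $>0$ for some $i_1\in S$. Pairing the relation with $\mu$ gives
\[
\sum_{i\in S}t_i^\nu\langle\mu,b_i^{(j-1)}\rangle=\langle\mu,h(y^\nu)\rangle .
\]
Every term on the left is nonnegative, and the $i_1$ term tends to $+\infty$, while the right side is bounded. This contradiction proves Part 1, and the Stiemke form is exactly the stated equivalence.

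\emph{Part 2.} Since the rows $D_i$, $i\in S$, are independent, the functions $u_i=P_i$ for $i\in S$, completed by other coordinates $w$, form a local coordinate system at $\xi$ (a projective chart). The components of $V\setminus\widehat H$ near $\xi$ are then the orthant sectors given by the sign patterns of $(u_i)_{i\in S}$, intersected with $\{y_{k+1}>0\}$. Fix such a sector and its $w$-coordinate $w_0$, and look for points of the form $|u_i|=e^{-s_i}$ with $s_i$ large. The equations become
\[
\sum_{i\in S}s_ib_i^{(j-1)}=h(u,w_0) .
\]
Positive spanning gives a strictly positive $\tau$ in the kernel and rank $j-1$. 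So choose $s=\lambda\tau+\sigma$, with $\lambda\to\infty$ and $\sigma$ ranging over a $(j-1)$-dimensional subspace on which $\sigma\mapsto\sum_i\sigma_ib_i^{(j-1)}$ is bijective. Solving $\sum_i\sigma_ib_i^{(j-1)}=h(e^{-\lambda\tau-\sigma},w_0)$ is then a fixed-point problem for a continuous map of a large ball into itself, since $h$ is bounded; Brouwer's theorem supplies a solution. For $\lambda$ large the point is as close to $\xi$ as we like and lies in the prescribed sector, so $\mu_j$ meets every such component.

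\emph{Main obstacle.} The points at infinity: checking that $h$ stays bounded and continuous in the chart at $[\theta:0]$. This uses degree-$0$ homogeneity (from $\sum_iB_{im}=0$) and the fact that $P_{n+k+1}=y_{k+1}$ is one of the $u_i$. One must also handle the restriction to $y_{k+1}>0$, which is automatic because that is one of the sign conditions on the $u_i$.
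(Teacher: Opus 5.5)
Your proposal is correct and follows the paper's proof essentially step by step: the same local chart in which the $P_i$, $i\in S$, serve as coordinates, the same reduction of the level set to $Mt=c$ with $c$ bounded, Stiemke's alternative for part~1, and the ansatz $t=s\tau+M^+u$ (your $s=\lambda\tau+\sigma$) for part~2. The only difference is that you solve the resulting fixed-point equation with Brouwer's theorem, which needs only boundedness and continuity of $h$. The paper instead uses the derivative bound $O(e^{-\min_i t_i})$ to get a contraction, which also gives uniqueness of the solution, and that uniqueness is reused later in Proposition~\ref{prop:ends}(c).
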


\begin{proof}
\emph{Step 1: an affine chart.} Choose a linear form $L$ on $\mathbb{R}^{k+1}$ with $L(\xi)>0$; for instance $L=y_{k+1}$ if $\xi$ is finite and $L(y,y_{k+1})=\langle\theta,y\rangle$ if $\xi=[\theta:0]$. The open set $\{L>0\}\subset\mathbb{S}^k$ is identified with the affine hyperplane $A_L=\{L=1\}\subset\mathbb{R}^{k+1}$. On $A_L$ set $q_i=P_i|_{A_L}$, an affine function. By homogeneity of degree $0$,
\[
f_m=\prod_{i=1}^{n+k+1}|q_i|^{B_{im}}\qquad\text{on }A_L\setminus\widehat H .
\]
The functions $q_i$ vanishing at $\xi$ are exactly those with $i\in S$.

Assume the rows $D_i$, $i\in S$, are linearly independent. Since $P_i(\xi)=0$ for $i\in S$ and $L(\xi)\neq0$, the form $L$ is not in the span of the $P_i$, $i\in S$. Hence the restrictions of these $P_i$ to $\ker L$, which are the linear parts of the $q_i$, are still linearly independent. The functions $z_i=q_i$, $i\in S$, can then be completed by coordinates $w$ on $L_S\cap A_L$ to local coordinates of $A_L$ near $\xi$.

\emph{Step 2: the equations near $\xi$.} Near $\xi$,
\[
\psi_m=\sum_{i\in S}B_{im}\log|q_i|+c_m,\qquad c_m=\sum_{i\notin S}B_{im}\log|q_i|,
\]
where $c_m$ is smooth and bounded near $\xi$ because $q_i(\xi)\neq0$ for $i\notin S$. Let $M$ be the $(j-1)\times|S|$ matrix with columns $b_i^{(j-1)}$, $i\in S$, and put $t_i=-\log|q_i|$ for $i\in S$. Then the equations $f_1=\dots=f_{j-1}=1$ read
\[
Mt=c,\qquad c=(c_1,\dots,c_{j-1}).
\]

\emph{Step 3: proof of 1.} Let $y^\nu\in\mu_j$ converge to $\xi$. Then $t_i^\nu\to+\infty$ for every $i\in S$, while $Mt^\nu=c(y^\nu)$ stays bounded. Suppose there is no $\tau>0$ in $\ker M$. By Lemma~\ref{lem:stiemke} there is $\mu$ with $\lambda=M^T\mu\ge0$ and $\lambda\neq0$. Then
\[
\langle\lambda,t^\nu\rangle=\langle\mu,Mt^\nu\rangle=\langle\mu,c(y^\nu)\rangle
\]
is bounded, whereas $\langle\lambda,t^\nu\rangle\ge\big(\min_{i\in S}t_i^\nu\big)\sum_{i\in S}\lambda_i\to+\infty$. This is a contradiction.

\emph{Step 4: proof of 2.} Use the coordinates $(z,w)$ of Step~1 and fix signs $\sigma\in\{\pm1\}^S$; each choice selects a connected component of $V\setminus\widehat H$ near $\xi$. The components contained in $\{y_{k+1}>0\}$ are those with $\sigma_{n+k+1}=+1$ when $n+k+1\in S$, and all of them otherwise. Write $z_i=\sigma_ie^{-t_i}$ and $w=w(\xi)$. The equations become
\[
Mt=\tilde c(t):=c\big(z(t),w(\xi)\big),
\]
where $\tilde c(t)\to c(\xi)$ as $\min_it_i\to+\infty$, together with its derivatives in $t$, which are $O(e^{-\min_i t_i})$.

Choose $\tau\in\ker M$ with all $\tau_i>0$, and a right inverse $M^+$ of $M$, which exists since $\rank M=j-1$. Look for solutions $t=s\tau+M^+u$ with $u\in\mathbb{R}^{j-1}$. Since $M\tau=0$, the equation becomes
\[
u=\tilde c\big(s\tau+M^+u\big).
\]
For $s$ large, the right-hand side maps a closed ball around $c(\xi)$ into itself and is a contraction there. So it has a unique fixed point $u(s)$, with $u(s)\to c(\xi)$. Then $t_i(s)\to+\infty$ for every $i\in S$, so the corresponding points lie in $\mu_j$, in the chosen component, and converge to $\xi$.
\end{proof}

\subsection{Consequences}

\begin{corollary}[Finite points]\label{cor:mu-finite}
Let $y\in H$ and $S=\{i\le n+k:\ p_i(y)=0\}$. If $y\in\overline{\mu_j}$, then the cone generated by $\{b_i^{(j-1)}\}_{i\in S}$ is a linear subspace. Conversely, if the $D'_i$, $i\in S$, are linearly independent and $\{b_i^{(j-1)}\}_{i\in S}$ positively spans $\mathbb{R}^{j-1}$, then $\mu_j$ accumulates at $y$ from each of the $2^{|S|}$ cells adjacent to $y$.
\end{corollary}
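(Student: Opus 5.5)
This is the specialization of Theorem~\ref{thm:mu-boundary} to finite points, so the plan is to apply that theorem at $\xi=[y:1]$ and translate its hypotheses and conclusions into affine language. For a finite point we have $P_{n+k+1}(\xi)=y_{k+1}=1\neq0$. Hence $S(\xi)=\{i\le n+k:\ p_i(y)=0\}=S$, and since $y\in H$ the set $S$ is nonempty, so $\xi\in\widehat H\cap\{y_{k+1}\ge0\}$.

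\emph{First assertion.} The closure of $\mu_j$ in $\mathbb{S}^k$, intersected with the open hemisphere $\{y_{k+1}>0\}\cong\mathbb{R}^k$, is its closure in $\mathbb{R}^k$. So $y\in\overline{\mu_j}$ in the affine sense means $\xi\in\overline{\mu_j}$. Theorem~\ref{thm:mu-boundary}(1) then gives a strictly positive $\tau\in\ker M$, which by the equivalence stated there says that the cone generated by the $b_i^{(j-1)}$, $i\in S$, is a linear subspace.

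\emph{Converse.} To invoke Theorem~\ref{thm:mu-boundary}(2) we need the full rows $D_i=(D'_i,D_{i,k+1})$, $i\in S$, to be linearly independent in $\mathbb{R}^{k+1}$. This follows at once from the assumed independence of the $D'_i$, since any linear relation among the $D_i$ restricts to a relation among their first $k$ coordinates $D'_i$. The theorem then says that, for every small convex neighbourhood $V$ of $\xi$, $\mu_j$ meets every connected component of $V\setminus\widehat H$ contained in $\{y_{k+1}>0\}$. Taking $V$ inside the finite chart, only the hyperplanes $p_i=0$ with $i\in S$ pass near $y$. Because the $D'_i$, $i\in S$, are independent, the $p_i$, $i\in S$, serve as part of a coordinate system near $y$, as in Step~1 of that proof. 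The components of $V\setminus H$ are therefore exactly the $2^{|S|}$ sign patterns $\sigma\in\{\pm1\}^S$, namely the cells adjacent to $y$. Since $\mu_j$ meets each of them for every small $V$, it accumulates at $y$ from each cell.

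The only point needing care is this identification of the local components with the $2^{|S|}$ adjacent cells, which relies on the independence of the $D'_i$ and on shrinking $V$ so that it avoids the hyperplanes $\{p_i=0\}$, $i\notin S$. The rest is a direct transcription.
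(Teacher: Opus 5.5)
Your proposal is correct and follows the paper's proof: apply Theorem~\ref{thm:mu-boundary} at $\xi=[y:1]$, observe that $S(\xi)=S$, and deduce the independence of the rows $D_i$, $i\in S$, from that of the $D'_i$. Your identification of the local components of $V\setminus\widehat H$ with the $2^{|S|}$ adjacent cells simply makes explicit a step the paper leaves to the reader.
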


\begin{proof}
Apply Theorem~\ref{thm:mu-boundary} to $\xi=[y:1]$. Since $n+k+1\notin S$ and every $D_i$ with $i\le n+k$ has first $k$ entries $D'_i$, the independence of the $D_i$, $i\in S$, follows from that of the $D'_i$.
\end{proof}

\begin{corollary}[Directions at infinity]\label{cor:mu-infinity}
Let $\theta\in S^{k-1}$ and
\[
S=\{n+k+1\}\cup\{i\le n+k:\ \langle D'_i,\theta\rangle=0\}.
\]
If $\mu_j$ contains points $y$ with $|y|\to\infty$ and $y/|y|\to\theta$, then the cone generated by $\{b_i^{(j-1)}\}_{i\in S}$ is a linear subspace. Conversely, if the $D'_i$, $i\in S\setminus\{n+k+1\}$, are linearly independent and $\{b_i^{(j-1)}\}_{i\in S}$ positively spans $\mathbb{R}^{j-1}$, then $\mu_j$ contains such points in every region of the arrangement adjacent to the direction $\theta$.
\end{corollary}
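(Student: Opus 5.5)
This is a direct application of Theorem~\ref{thm:mu-boundary} at the point at infinity $\xi=[\theta:0]$ of the oriented projective space $\mathbb{S}^k$, in the same way that Corollary~\ref{cor:mu-finite} applies it at a finite point $\xi=[y:1]$.

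First we identify $S(\xi)$. The remark following the definition of $S(\xi)$ gives
\[
S(\xi)=\{n+k+1\}\cup\{i\le n+k:\ \langle D'_i,\theta\rangle=0\},
\]
which is exactly the set $S$ of the statement. Next we translate the hypothesis. Points $y\in\mu_j$ with $|y|\to\infty$ and $y/|y|\to\theta$ are precisely sequences in $\mu_j\subset\{y_{k+1}>0\}$ that converge to $[\theta:0]$ in $\mathbb{S}^k$. So the hypothesis says $\xi\in\overline{\mu_j}$. Part~1 of Theorem~\ref{thm:mu-boundary} then gives $\tau>0$ with $\sum_{i\in S}\tau_ib_i^{(j-1)}=0$. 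This is the statement that the cone generated by the $b_i^{(j-1)}$, $i\in S$, is a linear subspace.

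For the converse we must check the independence hypothesis of part~2, namely that the full rows $D_i$, $i\in S$, are linearly independent in $\mathbb{R}^{k+1}$. We have $D_{n+k+1}=(0,\dots,0,1)$, and for $i\le n+k$ the row is $D_i=(D'_i,D_{i,k+1})$. Suppose a relation $\sum_{i\in S}\lambda_iD_i=0$ holds. Its first $k$ coordinates give $\sum_{i\in S\setminus\{n+k+1\}}\lambda_iD'_i=0$. By the assumed independence of the $D'_i$, all these $\lambda_i$ vanish. The last coordinate then forces $\lambda_{n+k+1}=0$.

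Part~2 of Theorem~\ref{thm:mu-boundary} now says that, for a small convex neighbourhood $V$ of $\xi$, $\mu_j$ meets every connected component of $V\setminus\widehat H$ contained in $\{y_{k+1}>0\}$, with points converging to $\xi$. These components are the germs near the direction $\theta$ of the regions of the affine arrangement adjacent to $\theta$. Their points converging to $[\theta:0]$ are affine points $y$ with $|y|\to\infty$ and $y/|y|\to\theta$, which is the desired conclusion.

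The only point needing care is this last identification. A region of $\mathbb{R}^k\setminus H$ is adjacent to the direction $\theta$ exactly when its closure in $\mathbb{S}^k$ contains $[\theta:0]$. Near $\xi$, such a region is a union of components of $V\setminus\widehat H$ lying in $\{y_{k+1}>0\}$, and it contains at least one such component. So every adjacent region contains points of $\mu_j$ tending to $\theta$.
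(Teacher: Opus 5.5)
Your proof is correct and follows the paper's own argument: apply Theorem~\ref{thm:mu-boundary} at $\xi=[\theta:0]$, where $S(\xi)$ is the given $S$, and use $D_{n+k+1}=(0,\dots,0,1)$ to pass from independence of the $D'_i$ to independence of the full rows $D_i$. Your identification of the regions adjacent to $\theta$ with components of $V\setminus\widehat H$ in $\{y_{k+1}>0\}$ spells out a step the paper leaves implicit.
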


\begin{proof}
Apply Theorem~\ref{thm:mu-boundary} to $\xi=[\theta:0]$. Since $D_{n+k+1}=(0,\dots,0,1)$, the rows $D_i$, $i\in S$, are independent if and only if the $D'_i$, $i\in S\setminus\{n+k+1\}$, are.
\end{proof}

\begin{remark}
\begin{enumerate}
\item If no entry of $B$ is zero, each $b_i^{(j-1)}$ is nonzero, so a single index never satisfies condition~1 of Theorem~\ref{thm:mu-boundary}. Hence $\overline{\mu_j}$ never meets a finite hyperplane at a point lying on no other hyperplane. Likewise $\mu_j$ never escapes to infinity in a direction $\theta$ with $\langle D'_i,\theta\rangle\neq0$ for all $i\le n+k$.
\item The gap between conditions~1 and~2 only concerns families $\{b_i^{(j-1)}\}_{i\in S}$ whose cone is a proper subspace of $\mathbb{R}^{j-1}$, for instance $j\ge3$ and $b_i^{(j-1)}=-c\,b_{i'}^{(j-1)}$ with $c>0$. Then only the part of $L_S$ on which $c(w)$ is compatible with the image of $M$ can belong to $\overline{\mu_j}$.
\end{enumerate}
\end{remark}

\begin{corollary}[Generic description]\label{cor:mu-generic}
Assume that any $k+1$ rows of $D$ are linearly independent, and that any $j-1$ of the vectors $b_1^{(j-1)},\dots,b_{n+k+1}^{(j-1)}$ are linearly independent. Then
\[
\overline{\mu_j}\setminus\mu_j=\bigcup_{S}\big(L_S\cap\{y_{k+1}\ge0\}\big),
\]
where $S$ runs over the subsets of $\{1,\dots,n+k+1\}$ with $|S|\le k$ such that $\{b_i^{(j-1)}\}_{i\in S}$ positively spans $\mathbb{R}^{j-1}$. It suffices to take the minimal such $S$, which satisfy $j\le|S|\le 2(j-1)$. In particular $\overline{\mu_j}\setminus\mu_j$ has dimension at most $k-j$, whereas $\mu_j$ has dimension $k-j+1$ at its regular points.
\end{corollary}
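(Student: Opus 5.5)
The plan is to derive Corollary~\ref{cor:mu-generic} from Theorem~\ref{thm:mu-boundary}, applied at every point $\xi\in\widehat H\cap\{y_{k+1}\ge0\}$. Two facts come first. By the assumption on $D$, for every such $\xi$ the rows $D_i$, $i\in S(\xi)$, are linearly independent in $\mathbb{R}^{k+1}$. Indeed, a point of $\mathbb{S}^k$ is a nonzero vector, so it can lie on at most $k$ of the hyperplanes $\{P_i=0\}$ when any $k+1$ of the rows are independent. This gives $|S(\xi)|\le k$ and the independence of $\{D_i\}_{i\in S(\xi)}$.

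Second, I will show that, under the assumption that any $j-1$ of the $b_i^{(j-1)}$ are independent, condition~1 of Theorem~\ref{thm:mu-boundary} is equivalent to positive spanning for every $S$. Suppose $\sum_{i\in S}\tau_i b_i^{(j-1)}=0$ with all $\tau_i>0$. If $S\neq\emptyset$, then $|S|\ge j$, because any $j-1$ or fewer of the vectors are independent. Hence the family contains $j-1$ independent vectors and has rank $j-1$, so it positively spans $\mathbb{R}^{j-1}$. The case $S=\emptyset$ does not arise, since $\xi\in\widehat H$.

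With these two facts, Theorem~\ref{thm:mu-boundary} gives the following. A point $\xi\in\widehat H\cap\{y_{k+1}\ge0\}$ lies in $\overline{\mu_j}$ if and only if $\{b_i^{(j-1)}\}_{i\in S(\xi)}$ positively spans $\mathbb{R}^{j-1}$. Since $\overline{\mu_j}\setminus\mu_j\subset\widehat H\cap\{y_{k+1}\ge0\}$ and $\mu_j\cap\widehat H=\emptyset$, this characterizes $\overline{\mu_j}\setminus\mu_j$. I then rewrite this pointwise criterion as the stated union.

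For the inclusion ``$\subset$'', a point $\xi$ of the boundary lies in $L_{S(\xi)}$, and $S(\xi)$ is positively spanning with $|S(\xi)|\le k$. For ``$\supset$'', take $\xi\in L_S\cap\{y_{k+1}\ge0\}$ with $S$ positively spanning. Then $S\subset S(\xi)$, and a superset of a positively spanning family is still positively spanning: add the extra vectors with small positive weights and compensate using the spanning property. So $\xi$ lies in the boundary.

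The same monotonicity shows that it suffices to take the minimal positively spanning $S$. For the bounds on minimal $S$:
\begin{itemize}
\item The lower bound $|S|\ge j$ was shown above.
\item For the upper bound, I will use Steinitz's theorem: every point in the interior of the convex hull of a set in $\mathbb{R}^{d}$ is in the interior of the convex hull of at most $2d$ of its points. Applied to $0$ and the vectors $b_i^{(j-1)}$, with $d=j-1$, this gives $|S|\le2(j-1)$.
\end{itemize}

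For the dimension claim, each $L_S$ with $|S|\ge j$ and independent rows $D_i$ is a projective subspace of dimension $k-|S|\le k-j$. Meanwhile $\mu_j$ is locally cut out by $j-1$ equations in $\mathbb{R}^k$, so it has dimension $k-j+1$ at its regular points.

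I expect the main obstacle to be keeping track of the point at infinity, $n+k+1\in S$. The $D$-independence assumption must cover row $D_{n+k+1}$ as well, which it does since any $k+1$ rows of the full $D$ are assumed independent, and the $b$-assumption includes $b_{n+k+1}^{(j-1)}$. The Steinitz step is the only place where an external classical result enters.
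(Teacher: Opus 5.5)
Your proposal is correct and follows essentially the same route as the paper's proof. You apply Theorem~\ref{thm:mu-boundary} at each $\xi$ with $S=S(\xi)$, use the assumption on $D$ to get $|S(\xi)|\le k$ and independent rows, and use the assumption on the $b_i^{(j-1)}$ to force $|S|\ge j$ and turn the positive dependence of condition~1 into positive spanning. You then conclude, as the paper does, with the monotonicity of positive spanning, Steinitz's theorem, and $\dim L_S=k-|S|$.
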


\begin{proof}
Let $\xi\in\widehat H\cap\{y_{k+1}\ge0\}$ and $S=S(\xi)$. Since $\xi$ is a common zero of the $P_i$, $i\in S$, and any $k+1$ rows of $D$ are independent, we have $|S|\le k$ and the rows $D_i$, $i\in S$, are independent. Under the second assumption, condition~1 of Theorem~\ref{thm:mu-boundary} forces $|S|\ge j$, because at most $j-1$ vectors are linearly independent and so admit no positive dependence. Then any $j-1$ of the $b_i^{(j-1)}$, $i\in S$, span $\mathbb{R}^{j-1}$, so condition~1 is equivalent to positive spanning. Theorem~\ref{thm:mu-boundary} thus gives: $\xi\in\overline{\mu_j}$ if and only if $\{b_i^{(j-1)}\}_{i\in S(\xi)}$ positively spans $\mathbb{R}^{j-1}$.

Positive spanning is monotone: if $\{b_i^{(j-1)}\}_{i\in S}$ positively spans, so does $\{b_i^{(j-1)}\}_{i\in S'}$ for $S'\supset S$. Every $\xi\in L_S$ has $S(\xi)\supset S$. This gives the displayed equality. A minimal positively spanning family in $\mathbb{R}^{j-1}$ has at least $j$ and at most $2(j-1)$ elements by Steinitz's theorem. Finally $L_S$ has dimension $k-|S|\le k-j$.
\end{proof}

\begin{corollary}[The case $j=2$]\label{cor:mu2}
Assume that the entries $B_{i1}$, $i\in[n+k+1]$, are nonzero, as under (U$_B$). Here $b_i^{(1)}=B_{i1}$, and positive spanning of $\mathbb{R}$ means that $S$ contains two indices with opposite signs of $B_{i1}$. If any $k+1$ rows of $D$ are linearly independent, then:
\begin{enumerate}
\item $\overline{\mu_2}\cap H=\bigcup\{p_i=p_{i'}=0\}$, over the pairs $i,i'\le n+k$ with $B_{i1}B_{i'1}<0$;
\item $\mu_2$ escapes to infinity exactly in the directions
\[
\theta\in\bigcup_{\substack{i\le n+k\\ B_{i1}B_{n+k+1,1}<0}}\{\theta\in S^{k-1}:\ \langle D'_i,\theta\rangle=0\}.
\]
\end{enumerate}
\end{corollary}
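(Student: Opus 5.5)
This is the case $j=2$ of Corollary~\ref{cor:mu-generic}, made explicit. There, with $j=2$, the vectors $b_i^{(1)}=B_{i1}$ lie in $\mathbb{R}^1$. The hypothesis that any $j-1=1$ of them is linearly independent just says that each $B_{i1}$ is nonzero, which is what we assume. Together with the independence of any $k+1$ rows of $D$, this gives
\[
\overline{\mu_2}\setminus\mu_2=\bigcup_S\big(L_S\cap\{y_{k+1}\ge0\}\big),
\]
over the subsets $S$ with $|S|\le k$ for which $\{B_{i1}\}_{i\in S}$ positively spans $\mathbb{R}$. In $\mathbb{R}$, a family of nonzero numbers positively spans exactly when it contains two numbers of opposite signs. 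By Steinitz's bound, the minimal such $S$ have exactly $2=2(j-1)$ elements. So the union may be taken over the pairs $S=\{i,i'\}$ with $B_{i1}B_{i'1}<0$, where $i,i'\in[n+k+1]$.

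We then separate finite points from points at infinity.

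\emph{Finite points.} Intersect with $\{y_{k+1}>0\}$, which is identified with $\mathbb{R}^k$. If $S$ contains $n+k+1$, then $L_S\subset\{P_{n+k+1}=0\}=\{y_{k+1}=0\}$, so it has no finite point. The pairs with $i,i'\le n+k$ give the affine flats $\{p_i=p_{i'}=0\}$. Since $\mu_2$ is closed in $\mathbb{R}^k\setminus H$ and $\mu_2\cap H=\emptyset$, we get $\overline{\mu_2}\cap H=(\overline{\mu_2}\setminus\mu_2)\cap\mathbb{R}^k$, which is assertion~1. Note that $|S|=2\le k$ is needed for the pair to appear. If $k=1$, the independence of any two rows of $D$ forces $\{p_i=p_{i'}=0\}=\emptyset$, so the statement still holds.

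\emph{Points at infinity.} The points of $\overline{\mu_2}$ on $\{y_{k+1}=0\}$ are the points $[\theta:0]$ of $\overline{\mu_2}$. Such a point lies in $L_S$ for some admissible pair $S$ if and only if $S(\xi)$ contains an admissible pair. For $\xi=[\theta:0]$ we have $S(\xi)=\{n+k+1\}\cup\{i\le n+k:\langle D'_i,\theta\rangle=0\}$. There are two kinds of admissible pairs to consider.
\begin{itemize}
\item[(a)] Pairs $\{i,n+k+1\}$ with $B_{i1}B_{n+k+1,1}<0$. These give exactly the directions in the displayed union of assertion~2.
\item[(b)] Pairs $\{i,i'\}$ with $i,i'\le n+k$. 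For such a pair, $L_S$ at infinity lies in $\{\langle D'_i,\theta\rangle=\langle D'_{i'},\theta\rangle=0\}$. Then $S(\xi)$ contains $n+k+1$ together with both $i$ and $i'$. Since $B_{i1}$ and $B_{i'1}$ have opposite signs, one of them has sign opposite to $B_{n+k+1,1}$. So $\theta$ already lies in one of the hyperplanes of the union in (a), and these directions add nothing new.
\end{itemize}
By the identification recalled before Corollary~\ref{cor:mu-finite}, $\mu_2$ escapes to infinity in direction $\theta$ exactly when $[\theta:0]\in\overline{\mu_2}$. This gives assertion~2.

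The only delicate step is the absorption in (b): one must check that the union of the flats at infinity of finite pairs is contained in the union over pairs involving $n+k+1$. This rests on the observation that $n+k+1$ belongs to every $S(\xi)$ at infinity, combined with the sign dichotomy above.
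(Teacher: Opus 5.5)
Your proof is correct and follows the paper's argument: apply Corollary~\ref{cor:mu-generic} with $j=2$, reduce to pairs of indices with opposite signs of $B_{i1}$, and for directions at infinity use that $n+k+1\in S(\xi)$. Then any admissible pair yields an index $i\le n+k$ with $\langle D'_i,\theta\rangle=0$ and $B_{i1}B_{n+k+1,1}<0$ (your aside about $k=1$ is unnecessary, since $\mu_2$ is only defined when $k\ge2$).
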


\begin{proof}
Apply Corollary~\ref{cor:mu-generic} with $j=2$; its second assumption holds because the $B_{i1}$ are nonzero. For part~2, a positively spanning $S\subset S(\xi)$ at a point $\xi=[\theta:0]$ contains two indices with opposite signs of $B_{i1}$. Since $n+k+1\in S(\xi)$, there is in any case $i\le n+k$ with $\langle D'_i,\theta\rangle=0$ and $B_{i1}B_{n+k+1,1}<0$.
\end{proof}

\begin{example}
Take $n=1$, $k=2$, $j=2$ and
\[
p_1=y_1,\qquad p_2=y_2,\qquad p_3=1+y_1+y_2,\qquad (B_{11},B_{21},B_{31},B_{41})=(1,1,1,-3).
\]
Any three of the rows $(1,0,0)$, $(0,1,0)$, $(1,1,1)$, $(0,0,1)$ of $D$ are linearly independent. The level set is the curve
\[
\mu_2=\big\{\,|y_1y_2(1+y_1+y_2)|=1\,\big\}.
\]
All finite exponents $B_{i1}$ are positive, so by Corollary~\ref{cor:mu2} the curve does not meet the lines $p_i=0$, which is clear directly. Since $B_{41}<0$, it escapes to infinity exactly in the directions parallel to the three lines, namely $\theta=\pm e_2$, $\pm e_1$ and $\pm(1,-1)/\sqrt2$. Indeed, the curve has branches with $y_1\to\pm\infty$ and $y_2\sim\pm1/y_1^2$, branches with the roles of $y_1$ and $y_2$ exchanged, and branches asymptotic to the line $y_1+y_2=-1$, along which $|y_1y_2|\to\infty$ is compensated by $1+y_1+y_2\to0$.
\end{example}

\subsection{Coordinates on a flat $L_S$ given by minors of $D$}\label{subsec:minor-coordinates}

We now describe the restrictions of the Jacobians to a flat $L_S$. We use coordinates on $L_S$ whose coefficients are, up to a common factor, maximal minors of $D$.

For a set $U\subset\{1,\dots,n+k+1\}$ with $|U|=k+1$, write $D_U$ for the square matrix with rows $D_u$, $u\in U$, in a specified order. For $u\in U$ and any index $i$, write $D_U^{(u\leftarrow i)}$ for the matrix obtained from $D_U$ by replacing its row $D_u$ by $D_i$, in the same position.

\begin{lemma}\label{lem:minor-coordinates}
Let $S\subset\{1,\dots,n+k+1\}$ with $|S|=j$, and let $T\subset\{1,\dots,n+k+1\}\setminus S$ with $|T|=k+1-j$, such that $D_{S\cup T}$ is invertible. Here the rows of $S$ come first and then those of $T$. For every index $i$ put
\[
c_{it}=\frac{\det D_{S\cup T}^{(t\leftarrow i)}}{\det D_{S\cup T}},\qquad t\in T .
\]
Then, as linear forms on $L_S=\bigcap_{s\in S}\{P_s=0\}$,
\[
P_i\big|_{L_S}=\sum_{t\in T}c_{it}\,P_t\big|_{L_S}\qquad\text{for every }i\notin S .
\]
\end{lemma}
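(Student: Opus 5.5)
This is Cramer's rule applied to the basis $D_{S\cup T}$ of $\mathbb{R}^{k+1}$. Since $D_{S\cup T}$ is an invertible $(k+1)\times(k+1)$ matrix, its rows $D_u$, $u\in S\cup T$, form a basis of $\mathbb{R}^{k+1}$. For any index $i$ we can therefore write uniquely
\[
D_i=\sum_{s\in S}\lambda_{is}D_s+\sum_{t\in T}\lambda_{it}D_t .
\]
The coefficients are given by Cramer's rule. Replace the row $D_t$ of $D_{S\cup T}$ by $D_i$ and expand by multilinearity in that row. All terms except the one containing $D_t$ have a repeated row, so they vanish. Hence
\[
\det D_{S\cup T}^{(t\leftarrow i)}=\lambda_{it}\det D_{S\cup T},
\]
that is, $\lambda_{it}=c_{it}$ for $t\in T$.

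Since $P_u$ is the linear form with coefficient vector $D_u$, the identity of vectors translates into an identity of linear forms on $\mathbb{R}^{k+1}$:
\[
P_i=\sum_{s\in S}\lambda_{is}P_s+\sum_{t\in T}c_{it}P_t .
\]
Restricting to $L_S$, where every $P_s$ with $s\in S$ vanishes by definition, gives $P_i|_{L_S}=\sum_{t\in T}c_{it}P_t|_{L_S}$. This holds for every $i\notin S$, and in fact for every $i$.

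The only point requiring care is that $P_i$ is linear in $(y,y_{k+1})$, so the linear combination of coefficient rows really is a linear combination of forms. The identification of $L_S$ in $\mathbb{S}^k$ with the cone in $\mathbb{R}^{k+1}$ is harmless, because the identity holds pointwise on $\mathbb{R}^{k+1}$. The ordering of rows (those of $S$ first, then those of $T$) does not matter for the ratio $c_{it}$, since numerator and denominator use the same order. I do not expect any real obstacle.
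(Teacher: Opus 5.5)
Your proof is correct and is the paper's argument: the rows of $D_{S\cup T}$ form a basis of $\mathbb{R}^{k+1}$, Cramer's rule identifies the coefficients of $D_i$ in this basis as $c_{it}$, and the terms indexed by $S$ vanish on $L_S$. Your multilinearity derivation of Cramer's rule and your remark that the row order cancels in the ratio are correct and simply make explicit what the paper cites.
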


\begin{proof}
The rows $D_r$, $r\in S\cup T$, form a basis of $\mathbb{R}^{k+1}$. By Cramer's rule, $D_i=\sum_{r\in S\cup T}c_{ir}D_r$ with $c_{ir}=\det D_{S\cup T}^{(r\leftarrow i)}/\det D_{S\cup T}$. Since $P_s=0$ on $L_S$ for $s\in S$, the claim follows.
\end{proof}

From now on we fix $S$ and $T$ as in Lemma~\ref{lem:minor-coordinates}, with $1\le j\le k-1$, and an index $t_0\in T$. We put $T'=T\setminus\{t_0\}$ and $\tilde R=\{1,\dots,n+k+1\}\setminus S$, so that $|T'|=k-j$ and $|\tilde R|=n+(k-j)+1$. On the open subset $\{P_{t_0}>0\}$ of $L_S$, the functions
\[
w_t=\frac{P_t}{P_{t_0}},\qquad t\in T',
\]
are $k-j$ affine coordinates. We write $\varphi\colon\mathbb{R}^{k-j}\to L_S\cap\{P_{t_0}>0\}$ for the inverse map. By Lemma~\ref{lem:minor-coordinates},
\begin{equation}\label{eq:minor-coordinates}
\frac{P_i}{P_{t_0}}\circ\varphi\,(w)=c_{it_0}+\sum_{t\in T'}c_{it}\,w_t\qquad(i\in\tilde R).
\end{equation}
This function equals $w_t$ for $i=t\in T'$ and equals $1$ for $i=t_0$.
\begin{itemize}
\item If $n+k+1\notin S$, we may take $n+k+1\in T$ and $t_0=n+k+1$. Then $\varphi$ is an affine parametrization of the affine subspace $L_S\cap\mathbb{R}^k=\{p_s=0,\ s\in S\}$, and $w_t=p_t\circ\varphi$.
\item If $n+k+1\in S$, then $L_S$ lies in the hyperplane at infinity, and $L_S\cap\{P_{t_0}>0\}$ is identified with the affine subspace $\{\theta\in V:\ \ell_{t_0}(\theta)=1\}$ of the linear subspace $V\subset\mathbb{R}^k$ defined in Subsection~\ref{subsec:restriction-infinity} below.
\end{itemize}

\subsection{The reduced system}\label{subsec:reduced-system}

Split the rows of $B$ indexed by $S$, in increasing order, into two blocks:
\[
N=(B_{s l})_{s\in S,\ 1\le l\le j},\qquad Q=(B_{s l})_{s\in S,\ j<l\le k}.
\]
We assume $\det N\neq0$.

\begin{remark}
If $\{b_s^{(j-1)}\}_{s\in S}$ positively spans $\mathbb{R}^{j-1}$, with positive dependence $\sum_{s\in S}\tau_s\, b_s^{(j-1)}=0$, then the first $j-1$ columns of $N$ have rank $j-1$ and their left kernel is spanned by $\tau$. In that case $\det N\neq0$ if and only if $\sum_{s\in S}\tau_s B_{sj}\neq0$.
\end{remark}

\begin{definition}\label{def:reduced-system}
Order $\tilde R$ so that $t_0$ comes last, and set $\tilde n=n$ and $\tilde k=k-j$. The \emph{reduced system along $S$}, with chart $t_0$, is the pair $(\tilde D,\tilde B)$ defined as follows.
\begin{enumerate}
\item The matrix $\tilde D$ has rows indexed by $\tilde R$ and $\tilde k+1$ columns, indexed by $T'$ (in increasing order) and then $t_0$. Its entries are the coefficients $\tilde D_{it}=c_{it}$ of~\eqref{eq:minor-coordinates}.
\item The matrix $\tilde B$ has rows indexed by $\tilde R$ and $\tilde k$ columns, indexed by $m=j+1,\dots,k$. Its entries are
\[
\tilde B_{im}=B_{im}-\sum_{l,l'=1}^jB_{il}\,(N^{-1})_{ll'}\,Q_{l'm},\qquad i\in\tilde R .
\]
\end{enumerate}
\end{definition}

\begin{proposition}\label{prop:reduced-format}
The pair $(\tilde D,\tilde B)$ is a pair of type $(\tilde n,\tilde k)$ (Definition~\ref{def:setting}):
\begin{enumerate}
\item the last row of $\tilde D$, indexed by $t_0$, is $(0,\dots,0,1)$, and for $t\in T'$ the row $\tilde D_t$ is the unit vector of the column $t$;
\item the columns of $\tilde B$ sum to zero;
\item for $i\in\tilde R$,
\[
\tilde B_{i,j+1}=\frac{\det B_{S\cup\{i\},\{1,\dots,j+1\}}}{\det N},
\]
where the rows of $B_{S\cup\{i\},\{1,\dots,j+1\}}$ are those of $S$ in increasing order followed by the row $i$.
\end{enumerate}
Its affine functions are the functions $\tilde p_i(w)=c_{it_0}+\sum_{t\in T'}c_{it}\,w_t$ of~\eqref{eq:minor-coordinates}, $i\in\tilde R\setminus\{t_0\}$, so that $\tilde p_t=w_t$ for $t\in T'$.
\end{proposition}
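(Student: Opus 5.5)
The three assertions are direct verifications from the definitions, so I will check them one at a time, relying only on Cramer's rule (Lemma~\ref{lem:minor-coordinates}) and on the column sums of $B$.

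\emph{Assertion 1.} Take $i=t_0$. Then $\det D_{S\cup T}^{(t\leftarrow t_0)}$ contains two equal rows for every $t\ne t_0$, so it vanishes, while $c_{t_0t_0}=1$. Hence the row of $\tilde D$ indexed by $t_0$ is $(0,\dots,0,1)$, with the $1$ in the last column, which is the column indexed by $t_0$. The same argument applied to $i=t\in T'$ shows that $\tilde D_t$ is the unit vector of the column $t$. Since $t_0$ is ordered last in $\tilde R$, this is the normalization required in Definition~\ref{def:setting}. The description of the affine functions is then just \eqref{eq:minor-coordinates} evaluated in the chart $t_0$.

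\emph{Assertion 2.} The columns of $B$ sum to zero, so $\sum_{i\in\tilde R}B_{im}=-\sum_{s\in S}B_{sm}$ for every $m$. For $m>j$ this gives
\[
\sum_{i\in\tilde R}\tilde B_{im}=-\sum_{s\in S}B_{sm}+\sum_{s\in S}\sum_{l,l'}B_{sl}(N^{-1})_{ll'}Q_{l'm}.
\]
The double sum is $\mathbf 1^{T}NN^{-1}Q=\mathbf 1^{T}Q$, whose $m$-th entry is $\sum_{s\in S}B_{sm}$. The two terms therefore cancel, and the columns of $\tilde B$ sum to zero.

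\emph{Assertion 3.} Consider the $(j+1)\times(j+1)$ block matrix $\begin{pmatrix}N & Q_{\cdot,j+1}\\ b_i^{(j)} & B_{i,j+1}\end{pmatrix}$, whose determinant is $\det B_{S\cup\{i\},\{1,\dots,j+1\}}$ with the rows taken in the stated order. By the Schur complement formula its determinant equals
\[
\det N\cdot\big(B_{i,j+1}-b_i^{(j)}N^{-1}Q_{\cdot,j+1}\big)=\det N\cdot\tilde B_{i,j+1}.
\]
Dividing by $\det N$ gives the claimed formula for $\tilde B_{i,j+1}$.

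None of the three steps is a real obstacle. The only care needed is with orderings: the rows of $S$ must be taken in increasing order in both $N$ and the minor, and the columns of $\tilde D$ must be ordered with $T'$ first and $t_0$ last, so that the unit vectors land in the correct positions.
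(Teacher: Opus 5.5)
Your proof is correct and follows the paper's argument. Assertion~1 is the observation $c_{it}=\delta_{it}$ for $i,t\in T$, and assertion~3 is the same Schur complement computation. For assertion~2 the paper writes your calculation as $\mathbf 1^TBG=0$, with $G=\begin{pmatrix} I_j & -N^{-1}Q\\ 0 & I_{k-j}\end{pmatrix}$ and the rows of $BG$ indexed by $S$ equal to $[\,N\mid 0\,]$. It uses this form again later, but it is the same identity you verify in coordinates.
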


\begin{proof}
Assertion~1 holds because $c_{it}=\delta_{it}$ for $i,t\in T$. For assertion~2, let
\begin{equation}\label{eq:G}
G=\begin{pmatrix} I_j & -N^{-1}Q\\ 0 & I_{k-j}\end{pmatrix}.
\end{equation}
The rows of $BG$ indexed by $S$ are those of $\big[\,N\,\big|\,0\,\big]$, and its rows indexed by $\tilde R$ in the columns $m>j$ are those of $\tilde B$. The column sums of $BG$ are those of $B$ multiplied by $G$, hence zero, and assertion~2 follows. For assertion~3, $\tilde B_{i,j+1}$ is the Schur complement of $N$ in the $(j+1)\times(j+1)$ matrix $B_{S\cup\{i\},\{1,\dots,j+1\}}$, hence equals the ratio of the two determinants.
\end{proof}

We write $\tilde\psi_m=\sum_{i\in\tilde R\setminus\{t_0\}}\tilde B_{im}\log|\tilde p_i|$ ($m=j+1,\dots,k$), $\tilde P=\prod_{i\in\tilde R\setminus\{t_0\}}\tilde p_i$, and $\tilde\Gamma_k,\dots,\tilde\Gamma_{j+1}$ for the Jacobians of the reduced system, defined from $\tilde P$ and $\tilde\psi_{j+1},\dots,\tilde\psi_k$ as in Section~\ref{sec:KR}. In the numbering of Section~\ref{sec:KR} these are the functions $\Gamma_{\tilde k},\dots,\Gamma_1$ of $(\tilde D,\tilde B)$, and $\deg\tilde\Gamma_m\le2^{k-m}n$ by Proposition~\ref{prop:gammak}(2). We also write $\tilde\Gamma=(\tilde\Gamma_{j+1},\dots,\tilde\Gamma_k)$.

\begin{remark}
\begin{enumerate}
\item \emph{Pure minors.} Multiplying every $\tilde p_i$ by $\det D_{S\cup T}$ changes each $\tilde\psi_m$ only by an additive constant. It therefore multiplies each $\tilde\Gamma_m$ by a nonzero constant and does not change the zero sets. After this rescaling, the coefficients of the reduced affine functions are exactly the maximal minors $\det D_{S\cup T}^{(t\leftarrow i)}$ of $D$.
\item \emph{Uniform matrices.} If $D$ is uniform, that is, if all its maximal minors are nonzero, then every $T$ works, and every $\tilde p_i$ with $i\notin S\cup T$ involves all the coordinates $w_t$.
\end{enumerate}
\end{remark}

\subsection{Restriction to a finite flat}\label{subsec:restriction-finite}

In this subsection $n+k+1\notin S$ and $t_0=n+k+1$, so that $L_S$ also denotes the affine subspace $\{p_s=0,\ s\in S\}$ of $\mathbb{R}^k$, of dimension $k-j$, and $\varphi$ is the affine parametrization of $L_S$ given by the coordinates $w_t=p_t$, $t\in T'$.

\begin{theorem}\label{thm:restriction-LS}
Put
\[
\lambda=\det N\cdot\det\big(D_S;\,D_{T'};\,D_{n+k+1}\big)\neq0,
\]
where the rows of each block are taken in increasing order. Then the following identities between polynomials on $\mathbb{R}^{k-j}$ hold:
\begin{enumerate}
\item $\Gamma_m\circ\varphi=\lambda^{2^{k-m}}\,\tilde\Gamma_m$ for $m=j+1,\dots,k$;
\item $\Gamma_j\circ\varphi=\lambda^{2^{k-j}}\,\tilde P\cdot\det\dfrac{\partial(\tilde\Gamma_{j+1},\dots,\tilde\Gamma_k)}{\partial(w_1,\dots,w_{k-j})}$;
\item $\Gamma_m\circ\varphi=0$ for $m=1,\dots,j-1$.
\end{enumerate}
In particular,
\[
\{\Gamma_{j+1}=\dots=\Gamma_k=0\}\cap L_S=\varphi\big(\{\tilde\Gamma_{j+1}=\dots=\tilde\Gamma_k=0\}\big),
\]
so this intersection is defined by $k-j$ equations in $k-j$ variables.
\end{theorem}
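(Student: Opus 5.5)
We argue by a change of coordinates in $\mathbb{R}^k$ adapted to $L_S$, followed by a change of basis of the columns of $B$, and then a downward induction on $m$ using the Cauchy--Binet expansion of Proposition~\ref{prop:gammak}(1).

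\emph{Coordinates.} We use the affine coordinates $z=(z_s)_{s\in S}$, $z_s=p_s$, and $w=(w_t)_{t\in T'}$, $w_t=p_t$, on $\mathbb{R}^k$. This is legitimate since $D_{S\cup T}$ is invertible and $t_0=n+k+1$. In these coordinates $L_S=\{z=0\}$ and $\varphi(w)=(0,w)$. By Lemma~\ref{lem:minor-coordinates}, applied off $L_S$ to the full basis $S\cup T$, every $p_i$ is affine in $(z,w)$ with $p_i|_{z=0}=\tilde p_i(w)$. The Jacobian of the change of variables is a constant $\det(D_S;D_{T'})$ up to ordering, which equals $\det(D_S;D_{T'};D_{n+k+1})$. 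Each Jacobian determinant picks up this constant once.

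\emph{Column change.} We replace $B$ by $BG$ with $G$ as in \eqref{eq:G}. Then $\psi_1,\dots,\psi_j$ are unchanged and $\psi_m$, $m>j$, becomes $\psi_m-\sum_{l\le j}(N^{-1}Q)_{lm}\psi_l$. The rows of $BG$ indexed by $S$ vanish in the columns $m>j$. By row operations inside the determinants defining the $\Gamma$'s, the full Jacobians are unchanged. The determinant of the first $j$ rows changes by $\det N$ in the relevant term (see below).

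With this normalization, for $m>j$ the modified $\psi_m$ equals $\sum_{i\in\tilde R}\tilde B_{im}\log|p_i|$, with no $\log|z_s|$ terms.

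We next expand the first $j$ logarithmic rows of each $\Gamma_m$, $m\le j$, and all the rows for $m>j$. Concretely, we prove by downward induction on $m\ge j+1$ that $\Gamma_m=\lambda^{2^{k-m}}\tilde\Gamma_m(w)+O(z)$, where $O(z)$ denotes an element of the ideal $I_S$.

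For $\Gamma_k$: we compute $\Gamma_k=P\det(\nabla\psi_1;\dots;\nabla\psi_k)$ and use Cauchy--Binet with respect to the block structure in $(z,w)$. The factor $P=\prod_{s\in S}z_s\cdot\prod_{i\in\tilde R}p_i$ cancels the poles $1/z_s$ from the first $j$ rows, whose $z$-components are $B_{sl}/z_s$. The surviving term at $z=0$ takes from rows $1..j$ exactly the $z$-block $\mathrm{diag}(1/z_s)$ weighted by $N$, which gives $\det N$. The remaining $(k-j)\times(k-j)$ block in $w$ is exactly $\tilde P\cdot\det(\partial\tilde\psi/\partial w)$. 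Terms using fewer than $j$ poles contain a leftover factor $z_s$ and vanish on $L_S$. This yields factor $\lambda$ for $\Gamma_k$.

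For general $m>j$: $\nabla\Gamma_{m'}$ restricted to $L_S$ has $w$-components $\lambda^{2^{k-m'}}\nabla_w\tilde\Gamma_{m'}$. Its $z$-components multiply only the pole-free parts, which contribute $O(z)$. The same block argument then gives the factor $\lambda\cdot\prod_{m'>m}\lambda^{2^{k-m'}}=\lambda^{2^{k-m}}$. Item 2 is the same computation with $m=j$, where the first $j$ rows are all logarithmic rows absorbed by $\det N$, leaving the pure Jacobian of $\tilde\Gamma$ times $\tilde P$.

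Item 3 is Corollary~\ref{cor:flat-vanishing}, since $m<j=|S|$. The final set equality follows from item 1 because $\lambda\neq0$.

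The main obstacle is the bookkeeping in the inductive step. We must verify that the $O(z)$ corrections in $\Gamma_{m'}$ do not spoil exactness after differentiation. Their $w$-gradients vanish on $L_S$, while their $z$-gradients are nonzero. These $z$-gradients pair with the rows $1..j$, whose $z$-parts carry the poles, so they could a priori contribute. We handle this by noting that the expansion needs $j$ distinct $z$-columns for the $j$ pole rows. Any term in which a $\nabla\Gamma_{m'}$ row occupies a $z$-column leaves some pole row with no $z$-column, hence with no pole. Such a term retains a factor $z_s$ from $P$ and so vanishes on $L_S$.
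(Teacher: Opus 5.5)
Your proof is correct. It uses the same ingredients as the paper: the coordinates $z_s=p_s$, $w_t=p_t$, the unipotent matrix $G$ of~\eqref{eq:G}, and downward induction on $m$. The determinant bookkeeping, however, is organized differently.

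The paper starts from the polynomial Cauchy--Binet expansion of Proposition~\ref{prop:gammak}(1) and proceeds in three steps.
\begin{itemize}
\item The terms with $S\not\subset I$ vanish on $L_S$, because $\prod_{i\notin I}p_i$ contains some $p_s$.
\item The surviving minors factor as $\det B_{S\cup J,\{1,\dots,m\}}=\det N\cdot\det\tilde B_{J,\{j+1,\dots,m\}}$, using $BG$.
\item In the $(z,w)$ coordinates the constant rows $D'_s$ become the unit vectors $e_{z_s}$. Expanding along them deletes all $z$-columns at once and leaves only the $w$-gradients of the $\Gamma_{m'}$, that is, the gradients of $\Gamma_{m'}\circ\varphi$.
\end{itemize}
In that form the obstacle you single out (the $z$-gradients of the $O(z)$ corrections) never arises, and every identity holds between polynomials, term by term.

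Your version does the column change first, then a Laplace expansion along the $j$ logarithmic rows with pole counting. The pigeonhole argument replaces the unit-vector expansion. This dispenses with the minor identity, but at the price of working with rational functions. The individual Laplace terms still carry denominators $p_i$, $i\in\tilde R$, and these are not in general cleared term by term (a product of a pole-row entry in a $w$-column and a non-pole-row entry in a $z$-column can produce $1/p_i^2$). So your statement ``such a term vanishes on $L_S$'' should be read at points of $L_S$ where all $\tilde p_i\neq0$. The polynomial identity $\Gamma_m\circ\varphi=\lambda^{2^{k-m}}\tilde\Gamma_m$ then follows by density.

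There is also a small sign point. The Jacobian of your coordinate change is $\det(D'_S;D'_{T'})=\det(D_S;D_{T'};D_{n+k+1})$, not ``$\det(D_S;D_{T'})$ up to ordering''. Since $\Gamma_k\circ\varphi=\lambda\tilde\Gamma_k$ is sensitive to the sign of $\lambda$, you should fix the order: $z$ (by increasing $s$) before $w$ (by increasing $t$). With that order the Laplace sign is $+$ and the $z$-block contributes exactly $\det N$.
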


\begin{proof}
\emph{Step 1: the surviving terms.} Apply Proposition~\ref{prop:gammak}(1) to $\Gamma_m$. If $S\not\subset I$, the product $\prod_{i\notin I}p_i$ contains some $p_s$, $s\in S$, and the term vanishes on $L_S$. For $m<j$ this holds for every $I$, which proves assertion~3. For $m\ge j$ the remaining terms are those with $I=S\cup J$, $J\subset\tilde R\setminus\{t_0\}$, $|J|=m-j$; we list the rows of $S$ first, then those of $J$, each in increasing order.

\emph{Step 2: minors of $B$.} With $G$ as in~\eqref{eq:G}, the first $m$ columns of $G$ vanish outside the rows $1,\dots,m$, and its leading $m\times m$ block is unipotent. Hence $\det B_{S\cup J,\{1,\dots,m\}}=\det(BG)_{S\cup J,\{1,\dots,m\}}$. By the proof of Proposition~\ref{prop:reduced-format}, the rows of $BG$ indexed by $S$ are those of $\big[\,N\,\big|\,0\,\big]$, and its rows indexed by $J$ in the columns $j+1,\dots,m$ are those of $\tilde B$. This matrix is therefore block lower triangular, and
\[
\det B_{S\cup J,\{1,\dots,m\}}=\det N\cdot\det\tilde B_{J,\{j+1,\dots,m\}}
\]
(for $|J|=1$ this is Proposition~\ref{prop:reduced-format}(3)).

\emph{Step 3: minors of $D$.} In the affine coordinates $z_s=p_s$ ($s\in S$), $w_t=p_t$ ($t\in T'$), we have $L_S=\{z=0\}$ and $\varphi(w)=(0,w)$. Their Jacobian matrix with respect to $y$ has rows $D'_S$, $D'_{T'}$ and determinant $\det(D_S;D_{T'};D_{n+k+1})$, and the rows $D'_s$ become the unit vectors $e_{z_s}$. Expanding along them, and using $p_i\circ\varphi=\tilde p_i$ for $i\in\tilde R\setminus\{t_0\}$ (by~\eqref{eq:minor-coordinates}, since $P_{t_0}=1$), we get
\[
\det\big(D'_S;D'_J;\nabla\Gamma_{m+1};\dots;\nabla\Gamma_k\big)\circ\varphi=\det(D_S;D_{T'};D_{n+k+1})\cdot\det\big(\nabla\tilde p_J;\nabla(\Gamma_{m+1}\circ\varphi);\dots;\nabla(\Gamma_k\circ\varphi)\big),
\]
with gradients on the right taken with respect to $w$.

\emph{Step 4: induction.} By Steps~1--3, for $m\ge j$,
\begin{equation}\label{eq:restriction-CB}
\Gamma_m\circ\varphi=\lambda\sum_{J}\det\tilde B_{J,\{j+1,\dots,m\}}\prod_{i\in\tilde R\setminus(\{t_0\}\cup J)}\tilde p_i\cdot\det\big(\nabla\tilde p_J;\nabla(\Gamma_{m+1}\circ\varphi);\dots;\nabla(\Gamma_k\circ\varphi)\big).
\end{equation}
By Proposition~\ref{prop:gammak}(1) applied to the pair $(\tilde D,\tilde B)$ of type $(n,k-j)$ (Proposition~\ref{prop:reduced-format}), the right-hand side is $\lambda\tilde\Gamma_m$ once each $\Gamma_{m'}\circ\varphi$ is replaced by $\tilde\Gamma_{m'}$. Hence $\Gamma_k\circ\varphi=\lambda\tilde\Gamma_k$, and if $\Gamma_{m'}\circ\varphi=\lambda^{2^{k-m'}}\tilde\Gamma_{m'}$ for $m<m'\le k$, then $\Gamma_m\circ\varphi=\lambda\prod_{m'>m}\lambda^{2^{k-m'}}\,\tilde\Gamma_m=\lambda^{2^{k-m}}\tilde\Gamma_m$. This proves assertion~1 by downward induction. For $m=j$, only $J=\emptyset$ occurs, the product is $\tilde P$, and~\eqref{eq:restriction-CB} with assertion~1 gives assertion~2.
\end{proof}

\subsection{Restriction to a flat at infinity}\label{subsec:restriction-infinity}

In this subsection $n+k+1\in S$, so that $L_S$ is contained in the hyperplane at infinity $\{y_{k+1}=0\}$. Write $S=S'\cup\{n+k+1\}$ and $R=\{1,\dots,n+k\}\setminus S'$, so that $t_0\in T\subset R$. For $i\le n+k$ let $\ell_i(y)=p_i^{\mathrm{top}}(y)=\langle D'_i,y\rangle$, and put
\[
V=\bigcap_{s\in S'}\ker\ell_s\subset\mathbb{R}^k,\qquad \dim V=k-j+1 .
\]
Then $L_S=\{[\theta:0]:\ \theta\in V\setminus\{0\}\}$, the chart $L_S\cap\{P_{t_0}>0\}$ is the affine subspace $A_{t_0}=\{\theta\in V:\ \ell_{t_0}(\theta)=1\}$, and $\varphi\colon\mathbb{R}^{k-j}\to A_{t_0}$ is the parametrization given by the coordinates $w_t=\ell_t$, $t\in T'$. By~\eqref{eq:minor-coordinates}, $\tilde p_i=\ell_i\circ\varphi$ for $i\in R\setminus\{t_0\}$. Let $\Gamma_m^h(y,y_{k+1})=y_{k+1}^{D_m}\,\Gamma_m(y/y_{k+1})$ be the homogenization of $\Gamma_m$ of degree $D_m=2^{k-m}n$. Then $\Gamma_m^h(\theta,0)=\Gamma_m^{\mathrm{top}}(\theta)$, so
\[
\{\Gamma^h_{j+1}=\dots=\Gamma^h_k=0\}\cap L_S=\big\{[\theta:0]:\ \theta\in V\setminus\{0\},\ \Gamma^{\mathrm{top}}_{j+1}(\theta)=\dots=\Gamma^{\mathrm{top}}_k(\theta)=0\big\}.
\]
Let $I_m\subset\mathbb{R}[w_1,\dots,w_{k-j}]$ be the ideal generated by $\tilde\Gamma_m,\dots,\tilde\Gamma_k$, with $I_{k+1}=0$.

\begin{theorem}\label{thm:restriction-LS-infinity}
Put
\[
\lambda=(-1)^{k-j+1}\,\det N\cdot\det\big(D_{S'};\,D_{T'};\,D_{t_0};\,D_{n+k+1}\big)\neq0,
\]
where the rows of each block are taken in increasing order. Then:
\begin{enumerate}
\item $\Gamma^{\mathrm{top}}_k\circ\varphi=\lambda\,\tilde\Gamma_k$;
\item $\Gamma^{\mathrm{top}}_m\circ\varphi\equiv\lambda^{2^{k-m}}\,\tilde\Gamma_m\pmod{I_{m+1}}$ for $m=j+1,\dots,k$;
\item $\Gamma^{\mathrm{top}}_j\circ\varphi\equiv\lambda^{2^{k-j}}\,\tilde P\cdot\det\dfrac{\partial(\tilde\Gamma_{j+1},\dots,\tilde\Gamma_k)}{\partial(w_1,\dots,w_{k-j})}\pmod{I_{j+1}}$;
\item $\Gamma^{\mathrm{top}}_m\circ\varphi=0$ for $m\le j-2$, and $\Gamma^{\mathrm{top}}_{j-1}\circ\varphi$ belongs to the ideal generated by $\Gamma^{\mathrm{top}}_j\circ\varphi,\dots,\Gamma^{\mathrm{top}}_k\circ\varphi$.
\end{enumerate}
Consequently, for $m=j+1,\dots,k$ the ideal generated by $\Gamma^{\mathrm{top}}_m\circ\varphi,\dots,\Gamma^{\mathrm{top}}_k\circ\varphi$ equals $I_m$. In particular
\[
\{\theta\in A_{t_0}:\ \Gamma^{\mathrm{top}}_{j+1}(\theta)=\dots=\Gamma^{\mathrm{top}}_k(\theta)=0\}=\varphi\big(\{\tilde\Gamma_{j+1}=\dots=\tilde\Gamma_k=0\}\big),
\]
which is again defined by $k-j$ equations in $k-j$ variables.
\end{theorem}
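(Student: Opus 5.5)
The plan is to reduce to the finite-flat computation of Theorem~\ref{thm:restriction-LS}, applied to the central arrangement given by Lemma~\ref{lem:top}. The hyperplane at infinity is handled by a radial coordinate and Euler's identity.

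\emph{Step 1: restrict to $V$.} By Lemma~\ref{lem:top}, $\Gamma^{\mathrm{top}}_m$ is the function $\Gamma_m$ built from the linear forms $\ell_1,\dots,\ell_{n+k}$ with the same $B$. Its expansion is Proposition~\ref{prop:gammak}(1) with $p_i$ replaced by $\ell_i$; the proof of that formula is purely multilinear and does not use $p_N=1$. On $V$ every $\ell_s$, $s\in S'$, vanishes. As in Step~1 of the proof of Theorem~\ref{thm:restriction-LS}, only the terms with $S'\subset I$ survive. Since $|S'|=j-1$, this gives $\Gamma^{\mathrm{top}}_m|_V=0$ for $m\le j-2$, which is the first half of assertion~4.

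\emph{Step 2: coordinates on $V$.} Use coordinates $(u,w)$ on $V$ with $u=\ell_{t_0}$ and $w_t=\ell_t/\ell_{t_0}$ for $t\in T'$. Then $\ell_i=u\,\tilde p_i(w)$ for $i\in R$, by~\eqref{eq:minor-coordinates}. For the surviving terms, the $B$-minors factor exactly as in Step~2 of Theorem~\ref{thm:restriction-LS}, now with the block $N$ built on $S=S'\cup\{n+k+1\}$. The row $n+k+1$ appears through the identity $\sum_{i\in R}B_{il}=-\sum_{s\in S'}B_{sl}-B_{n+k+1,l}$, which is the coefficient of $\log u$ in
\[
\sum_{i\in R}B_{il}\log|\ell_i| = \sum_{i\in R\setminus\{t_0\}}B_{il}\log|\tilde p_i| + \Big(\sum_{i\in R}B_{il}\Big)\log u .
\]
Thus the radial direction $\partial_u$ plays the role of the normal to the extra hyperplane $P_{n+k+1}=0$. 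The $D$-minors are computed as in Step~3 of that proof, in the basis $(D'_{S'},D'_{T'},D'_{t_0})$ completed by $D_{n+k+1}$. Moving the row $D_{n+k+1}$ from its position in $S$ past the $k-j+1$ rows of $T'\cup\{t_0\}$ should produce the sign $(-1)^{k-j+1}$ in $\lambda$. I would fix this sign at the end by checking the case $j=1$ directly.

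\emph{Step 3: the main obstacle, gradients of later Jacobians.} The new difficulty is that $\nabla\Gamma^{\mathrm{top}}_{m'}$ now has a $u$-component $\partial_u\Gamma^{\mathrm{top}}_{m'}$, which has no analogue on a finite flat. By homogeneity, $\Gamma^{\mathrm{top}}_{m'}|_V=u^{D_{m'}}g_{m'}(w)$ with $g_{m'}=\Gamma^{\mathrm{top}}_{m'}\circ\varphi$. Euler's identity then gives $\partial_u\Gamma^{\mathrm{top}}_{m'}=D_{m'}u^{D_{m'}-1}g_{m'}$. Expanding the determinant, this $u$-column contributes only terms lying in the ideal generated by the $g_{m'}$, $m'>m$. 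Evaluating at $u=1$ then gives
\[
g_m\equiv\lambda\cdot\big(\text{reduced Cauchy--Binet expression in the }g_{m'}\big)\pmod{(g_{m+1},\dots,g_k)}.
\]
A downward induction then proceeds as in Step~4 of Theorem~\ref{thm:restriction-LS}, at each stage replacing $g_{m'}$ by $\lambda^{2^{k-m'}}\tilde\Gamma_{m'}$ modulo $I_{m'+1}$. This uses that $\nabla$ of an element of $I_{m'+1}$ lies in $I_{m'+1}$ plus combinations of the gradients of its generators, and the triangular structure of the expansion keeps the error in $I_{m+1}$. This gives assertions~1--3; for $m=k$ there is no later Jacobian, so assertion~1 is exact.

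\emph{Step 4: the remaining statements.} For $m=j-1$, the single surviving index set is $I=S'$. Its last column is $\partial_u$ applied to the $\Gamma^{\mathrm{top}}_{m'}$, so by Euler's identity $\Gamma^{\mathrm{top}}_{j-1}\circ\varphi$ lies in the ideal generated by the $g_{m'}$, $m'\ge j$; this is the second half of assertion~4. The equality of ideals follows from the triangular congruences, since $\lambda\ne0$. The equality of zero sets follows from the equality of ideals.
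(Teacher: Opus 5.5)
Your plan is the paper's proof: Cauchy--Binet for the central arrangement of Lemma~\ref{lem:top}, restriction to $V$, one column handled by Euler's identity, the zero column sums of $B$ to bring in the row $n+k+1$, and a downward induction modulo $I_{m+1}$. The radial coordinates $u=\ell_{t_0}$, $w_t=\ell_t/\ell_{t_0}$ are only a cosmetic change. They give directly the column that the paper obtains in linear coordinates $(w,r)$ by adding $\sum_t w_t$ times the $w_t$-columns to the $r$-column. At $u=1$ its entries are $\tilde p_i$ for $i\in J$ and $D_{m'}g_{m'}$ for the later rows.

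Two points should be carried out rather than postponed.

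\emph{The main term.} In your Step~3, only the entries $D_{m'}g_{m'}$ of the $u$-column give ideal terms; the entries $\tilde p_i$ give the main term. Your $\log u$ heuristic becomes a proof through the regrouping $J=J'\cup\{i\}$. The product $\tilde p_i\prod_{l\in R\setminus J}\tilde p_l=\prod_{l\in R\setminus J'}\tilde p_l$ does not depend on $i$. So the inner sum $\sum_i\det B_{(S',J',i),\{1,\dots,m\}}$ can be extended to all $i\le n+k$, since the added terms have repeated rows; this is why the part $-\sum_{s\in S'}B_{sl}$ of your identity drops out. It then equals $-(-1)^{m-j}\det N\,\det\tilde B_{J',\{j+1,\dots,m\}}$.

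\emph{The sign of $\lambda$.} A check at $j=1$ does not determine the sign for other $j$. You also need the constant to be the same for every $m$ to get exactly $\lambda^{2^{k-m}}$. In your coordinates, completed by $z_s=\ell_s$ ($s\in S'$), this is a short count. The Jacobian of $(z,u,w)$ at $u=1$ is $(-1)^{k-j}\delta$, with $\delta=\det(D_{S'};D_{T'};D_{t_0};D_{n+k+1})$. Expanding along the $u$-column at the $a$-th row of $J$, and moving that row of $B$ to the end, gives $(-1)^{a+1}(-1)^{|J|-a}=(-1)^{m-j}$. The column-sum step gives $-(-1)^{m-j}$. The total is therefore $(-1)^{k-j+1}\delta\det N=\lambda$ for every $m$, as in the paper, where the same count reads $(-1)^{k+m}\cdot(-1)^{m-j+1}$. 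This also confirms your guess $\lambda=\det N\cdot\det(D_S;D_{T'};D_{t_0})$.
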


\begin{proof}
\emph{Step 1: the surviving terms.} By Lemma~\ref{lem:top}, the $\Gamma^{\mathrm{top}}_m$ are the Jacobians of the central system $(\ell_i)_{i\le n+k}$ with the rows $1,\dots,n+k$ of $B$. The proof of Proposition~\ref{prop:gammak}(1) only uses the definition of the Jacobians, so
\[
\Gamma^{\mathrm{top}}_m=\sum_{I}\det B_{I,\{1,\dots,m\}}\prod_{i\in\{1,\dots,n+k\}\setminus I}\ell_i\cdot\det\big(D'_I;\nabla\Gamma^{\mathrm{top}}_{m+1};\dots;\nabla\Gamma^{\mathrm{top}}_k\big),
\]
with $I\subset\{1,\dots,n+k\}$, $|I|=m$. On $V$, a term with $S'\not\subset I$ vanishes. For $m\le j-2$ this holds for every $I$, so $\Gamma^{\mathrm{top}}_m\circ\varphi=0$. For $m\ge j-1$ the remaining terms are those with $I=S'\cup J$, $J\subset R$, $|J|=m-j+1$; we list the rows of $S'$ first, then those of $J$, each in increasing order.

\emph{Step 2: coordinates.} The linear forms $z_s=\ell_s$ ($s\in S'$), $w_t=\ell_t$ ($t\in T'$) and $r=\ell_{t_0}$ are linear coordinates on $\mathbb{R}^k$, with $V=\{z=0\}$, $A_{t_0}=\{z=0,\ r=1\}$ and $\varphi(w)=(0,w,1)$. Their Jacobian matrix with respect to $y$ has rows $D'_{S'}$, $D'_{T'}$, $D'_{t_0}$, and its determinant is $\delta=\det(D_{S'};D_{T'};D_{t_0};D_{n+k+1})$. The rows $D'_s$, $s\in S'$, become unit vectors $e_{z_s}$, so on $V$
\[
\det\big(D'_{S'};D'_J;\nabla\Gamma^{\mathrm{top}}_{m+1};\dots;\nabla\Gamma^{\mathrm{top}}_k\big)=\delta\cdot\det\big(\nabla_{(w,r)}\ell_J;\nabla_{(w,r)}\Gamma^{\mathrm{top}}_{m+1};\dots;\nabla_{(w,r)}\Gamma^{\mathrm{top}}_k\big),
\]
a determinant of size $K=k-j+1$.

\emph{Step 3: Euler's identity.} Each row is the gradient of a homogeneous function $f$, of degree $1$ for $f=\ell_i$ and $D_{m'}$ for $f=\Gamma^{\mathrm{top}}_{m'}$. On $V$, Euler's identity reads $\sum_tw_t\partial_{w_t}f+r\,\partial_rf=(\deg f)f$. At $r=1$, add to the $r$-column $\sum_tw_t$ times the $w_t$-columns; the new $r$-column has entries $\ell_i$, $i\in J$, and $D_{m'}\Gamma^{\mathrm{top}}_{m'}$. Expand along it and compose with $\varphi$. Since $\ell_i\circ\varphi=\tilde p_i$ for $i\in R\setminus\{t_0\}$ and $\ell_{t_0}\circ\varphi=1$, we put $\tilde p_{t_0}=1$, so that $\nabla\tilde p_{t_0}=0$ and $\prod_{i\in\{1,\dots,n+k\}\setminus(S'\cup J)}\ell_i\circ\varphi=\prod_{i\in R\setminus J}\tilde p_i$. If $i$ is the $a$-th element of $J$, its row has position $a$, and moving the row $i$ of $B_{S'\cup J,\{1,\dots,m\}}$ to the last position gives the sign $(-1)^{|J|-a}$. Since $(-1)^{a+K}(-1)^{|J|-a}=(-1)^{k+m}$, we get, with $J'=J\setminus\{i\}$,
\begin{multline*}
\Gamma^{\mathrm{top}}_m\circ\varphi=(-1)^{k+m}\delta\sum_{J'}\Big(\sum_{i}\det B_{(S',J',i),\{1,\dots,m\}}\Big)\prod_{i\in R\setminus J'}\tilde p_i\\
\cdot\det\big(\nabla\tilde p_{J'};\nabla(\Gamma^{\mathrm{top}}_{m+1}\circ\varphi);\dots;\nabla(\Gamma^{\mathrm{top}}_k\circ\varphi)\big)+E_m ,
\end{multline*}
where $J'$ runs over the subsets of $R$ with $|J'|=m-j$, the gradients are taken with respect to $w$, and $E_m$ collects the terms coming from the rows $\nabla\Gamma^{\mathrm{top}}_{m'}$. Each term of $E_m$ is a polynomial multiple of some $\Gamma^{\mathrm{top}}_{m'}\circ\varphi$, $m'>m$. For $m=j-1$ we have $J=\emptyset$, so $\Gamma^{\mathrm{top}}_{j-1}\circ\varphi=E_{j-1}$, which proves assertion~4.

\emph{Step 4: the zero column sums.} Let $m\ge j$. The inner sum may be taken over all $i\in\{1,\dots,n+k\}$, since the other terms have a repeated row. As the columns of $B$ sum to zero, it equals $-\det B_{(S',J',n+k+1),\{1,\dots,m\}}$. Moving the row $n+k+1$ in front of $J'$ gives the sign $(-1)^{m-j}$, and the rows $S',n+k+1$ are those of $S$ in increasing order. By the computation of Step~2 of the proof of Theorem~\ref{thm:restriction-LS}, which only uses that the rows of $BG$ indexed by $S$ are those of $\big[\,N\,\big|\,0\,\big]$ and that its rows indexed by $J'\subset R$ in the columns $j+1,\dots,m$ are those of $\tilde B$, we get
\[
\sum_{i}\det B_{(S',J',i),\{1,\dots,m\}}=-(-1)^{m-j}\det N\cdot\det\tilde B_{J',\{j+1,\dots,m\}} .
\]
The total sign is $(-1)^{k+m}\cdot(-1)^{m-j+1}=(-1)^{k-j+1}$, so the coefficient is $\lambda$. The terms with $t_0\in J'$ vanish since $\nabla\tilde p_{t_0}=0$. Hence, for $m\ge j$,
\begin{equation}\label{eq:restriction-CB-infinity}
\Gamma^{\mathrm{top}}_m\circ\varphi=\lambda\sum_{J'}\det\tilde B_{J',\{j+1,\dots,m\}}\prod_{i\in R\setminus(\{t_0\}\cup J')}\tilde p_i\cdot\det\big(\nabla\tilde p_{J'};\nabla(\Gamma^{\mathrm{top}}_{m+1}\circ\varphi);\dots;\nabla(\Gamma^{\mathrm{top}}_k\circ\varphi)\big)+E_m ,
\end{equation}
with $J'\subset R\setminus\{t_0\}=\tilde R\setminus\{t_0\}$. Up to $E_m$, this is~\eqref{eq:restriction-CB}.

\emph{Step 5: induction.} For $m=k$ we have $E_k=0$, and Proposition~\ref{prop:gammak}(1) applied to $(\tilde D,\tilde B)$ gives assertion~1. Let $j\le m<k$ and assume that for $m<m'\le k$, $\Gamma^{\mathrm{top}}_{m'}\circ\varphi\equiv\lambda^{2^{k-m'}}\tilde\Gamma_{m'}$ modulo $I_{m'+1}$, and that $\Gamma^{\mathrm{top}}_{m'}\circ\varphi,\dots,\Gamma^{\mathrm{top}}_k\circ\varphi$ generate $I_{m'}$. Then $E_m\in I_{m+1}$. In the determinants of~\eqref{eq:restriction-CB-infinity}, each row $\nabla(\Gamma^{\mathrm{top}}_{m'}\circ\varphi)$ is $\lambda^{2^{k-m'}}\nabla\tilde\Gamma_{m'}$ plus a combination of the rows $\nabla\tilde\Gamma_{m''}$, $m''>m'$, plus multiples of the $\tilde\Gamma_{m''}$. Row operations, from the last row upwards, remove the combinations, and the multiples contribute elements of $I_{m+1}$. By Proposition~\ref{prop:gammak}(1) applied to $(\tilde D,\tilde B)$,
\[
\Gamma^{\mathrm{top}}_m\circ\varphi\equiv\lambda\prod_{m'=m+1}^k\lambda^{2^{k-m'}}\,\tilde\Gamma_m=\lambda^{2^{k-m}}\tilde\Gamma_m\pmod{I_{m+1}},
\]
for $m\ge j+1$, which is assertion~2, and the ideal generated by $\Gamma^{\mathrm{top}}_m\circ\varphi,\dots,\Gamma^{\mathrm{top}}_k\circ\varphi$ is $I_m$. For $m=j$, only $J'=\emptyset$ occurs, the product is $\tilde P$, and the same argument gives assertion~3. The equality of zero sets follows from the equality of ideals.
\end{proof}

\begin{remark}
\begin{enumerate}
\item \emph{Why congruences.} Unlike Theorem~\ref{thm:restriction-LS}, only assertion~1 is an exact identity. In Step~3 of the proof, the Euler column has two kinds of entries. The entries $\ell_i$ give the main term, and the zero column sums of $B$ turn them into minors of $B$ containing the row $n+k+1$ (Step~4); this is why one more hyperplane is needed at infinity. The entries $D_{m'}\Gamma^{\mathrm{top}}_{m'}$ do not vanish in general; they give the error terms $E_m$, which vanish on the common zero set of the later functions. They also explain why different choices of $t_0$ give different functions $\tilde\Gamma_m$ but, by the theorem, the same common zero sets.
\item \emph{Covering $L_S$.} The charts $A_{t_0}$, $t_0\in R$, together with their images under $v\mapsto-v$, cover $V\setminus\{0\}$ up to the common zeros of all the $\ell_i|_V$. Generically there are no such zeros. Since each $\Gamma^{\mathrm{top}}_m$ is homogeneous, its zero set is invariant under $v\mapsto-v$. So Theorem~\ref{thm:restriction-LS-infinity} describes $\{\Gamma^h_{j+1}=\dots=\Gamma^h_k=0\}\cap L_S$ completely.
\end{enumerate}
\end{remark}

\section{The reduced system on a flat $L_S$ and its boundary degree}\label{sec:reduced}

In this section we fix $S$, $T$ and $t_0$ as in Subsection~\ref{subsec:minor-coordinates}, with $\det N\neq0$, and we consider the reduced system $(\tilde D,\tilde B)$ of Definition~\ref{def:reduced-system}. By Theorems~\ref{thm:restriction-LS} and~\ref{thm:restriction-LS-infinity}, the common zeros on $L_S$ of the original Jacobians are the common zeros of the reduced Jacobians $\tilde\Gamma=(\tilde\Gamma_{j+1},\dots,\tilde\Gamma_k)$. Since $(\tilde D,\tilde B)$ is again a pair of type $(n,k-j)$, the results of Section~\ref{sec:boundary-degree} apply to it. We obtain:
\begin{itemize}
\item a facet-count formula for the boundary degree $\tilde d(\tilde{\mathcal C})$ of a chamber $\tilde{\mathcal C}$ of $L_S$ (Theorem~\ref{thm:facet-count-LS}), and a lower bound for it in terms of bounds on the numbers of zeros on the facets (Proposition~\ref{prop:degree-lower-bound});
\item the lower bound $|\tilde d(\tilde{\mathcal C})|$ for the number of zeros of the original Jacobians on $\tilde{\mathcal C}$ (Proposition~\ref{prop:lower-bound-LS});
\item in dimension one, explicit bounds for the number of zeros of $\Gamma_k$ on each arc of a line of the arrangement (Proposition~\ref{prop:line-zeros}). They start the recursion of Section~\ref{sec:recursive}.
\end{itemize}

\subsection{A facet-count formula on $L_S$}

Let $\tilde{\mathcal C}$ be a connected component of $\mathbb{R}^{k-j}\setminus\bigcup_{i\in\tilde R\setminus\{t_0\}}\{\tilde p_i=0\}$, possibly unbounded, and let $\tilde\epsilon=\prod_i\sign\tilde p_i|_{\tilde{\mathcal C}}$. As in Subsection~\ref{subsec:unbounded}, let $\tilde\Sigma$ be the trace on $S^{k-j-1}$ of the recession cone of $\tilde{\mathcal C}$, and $\tilde{\mathcal C}_R=\tilde{\mathcal C}\cap B(0,R)$.

\begin{theorem}\label{thm:facet-count-LS}
Assume that $(\tilde D,\tilde B)$ and $\tilde{\mathcal C}$ satisfy the hypotheses (H) and (H$_\infty$) of Theorem~\ref{thm:facet-count-unbounded}. For $i\in\tilde R\setminus\{t_0\}$, let $\tilde N_i$ be the number of zeros of $(\tilde\Gamma_{j+2},\dots,\tilde\Gamma_k)$ in the relative interior of the facet $\{\tilde p_i=0\}\cap\partial\tilde{\mathcal C}$ ($\tilde N_i=0$ if there is no such facet), and let $\tilde N_{t_0}$ be the number of common zeros of $\tilde\Gamma^{\mathrm{top}}_{j+2},\dots,\tilde\Gamma^{\mathrm{top}}_k$ on $\tilde\Sigma$. Then the degree $\tilde d(\tilde{\mathcal C})$ of $\tilde\Gamma/|\tilde\Gamma|$ on $\partial\tilde{\mathcal C}_R$ does not depend on $R$ large and, when the zeros of $\tilde\Gamma$ in $\tilde{\mathcal C}$ are isolated, it is the sum of their indices. Moreover
\[
\tilde d(\tilde{\mathcal C})=-\frac{\tilde\epsilon}{2}\sum_{i\in\tilde R}\sign\big(\tilde B_{i,j+1}\big)\,\tilde N_i
=-\frac{\tilde\epsilon\,\sign(\det N)}{2}\sum_{i\in\tilde R}\sign\Big(\det B_{S\cup\{i\},\{1,\dots,j+1\}}\Big)\,\tilde N_i .
\]
In particular,
\[
|\tilde d(\tilde{\mathcal C})|=\frac12\,\Big|\sum_{i\in\tilde R}\sign\Big(\det B_{S\cup\{i\},\{1,\dots,j+1\}}\Big)\,\tilde N_i\Big| ,
\]
which depends neither on $\tilde\epsilon$ nor on $\det N$.
\end{theorem}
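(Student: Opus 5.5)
The plan is to reduce everything to Theorem~\ref{thm:facet-count-unbounded} applied to the pair $(\tilde D,\tilde B)$. By Proposition~\ref{prop:reduced-format} this is a pair of type $(n,k-j)$. Its affine functions are the $\tilde p_i$, $i\in\tilde R\setminus\{t_0\}$, and the role of the hyperplane at infinity (the last row, $p_{\tilde n+\tilde k+1}=1$) is played by the index $t_0$. Its Jacobians, in the numbering of Section~\ref{sec:KR}, are $\tilde\Gamma_{j+1},\dots,\tilde\Gamma_k$, with $\tilde\Gamma_{j+1}$ playing the role of $\Gamma_1$ and $\tilde\Gamma_{j+2},\dots,\tilde\Gamma_k$ that of $\Gamma_2,\dots,\Gamma_k$. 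The first column of $\tilde B$ is the column indexed by $m=j+1$.

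Since Section~\ref{sec:boundary-degree} only uses the structure of a pair of type $(n,k)$, Theorem~\ref{thm:facet-count-unbounded} applies verbatim under (H) and (H$_\infty$). It gives the independence of $\tilde d(\tilde{\mathcal C})$ from $R$ large and its identification with the sum of indices. It also gives
\[
\tilde d(\tilde{\mathcal C})=-\frac{\tilde\epsilon}{2}\sum_{i\in\tilde R}\sign\big(\tilde B_{i,j+1}\big)\,\tilde N_i .
\]
Here the finite indices $i\in\tilde R\setminus\{t_0\}$ count zeros of $(\tilde\Gamma_{j+2},\dots,\tilde\Gamma_k)$ on facets, and the index $t_0$, which is last in the ordering of $\tilde R$, counts zeros of the top-degree parts on $\tilde\Sigma$. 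One bookkeeping point must be checked: the entry $\tilde B_{t_0,j+1}$ is exactly the one that Theorem~\ref{thm:facet-count-unbounded} attaches to infinity, namely the negative of the sum of the other entries. This holds by Proposition~\ref{prop:reduced-format}(2), since the columns of $\tilde B$ sum to zero.

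The second expression follows by substituting Proposition~\ref{prop:reduced-format}(3):
\[
\sign\tilde B_{i,j+1}=\sign(\det N)\,\sign\det B_{S\cup\{i\},\{1,\dots,j+1\}}\qquad(i\in\tilde R).
\]
This holds for every $i\in\tilde R$, including $t_0$, because the Schur complement identity is purely algebraic. Factoring out $\sign(\det N)$ gives the displayed formula, and taking absolute values removes both $\tilde\epsilon$ and $\sign\det N$.

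The main obstacle is conceptual rather than computational. One must be sure that the reduced data really fit the hypotheses of Section~\ref{sec:boundary-degree}: the chart index $t_0$ must be the last row, with $\tilde D_{t_0}=(0,\dots,0,1)$, and the affine functions $\tilde p_i$ must coincide with $\tilde D\binom{w}{1}$. Both facts are contained in Proposition~\ref{prop:reduced-format}(1) and~\eqref{eq:minor-coordinates}. Once this identification is in place, the theorem is a direct transcription, and no new topological argument is needed.
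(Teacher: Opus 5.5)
Your proposal is correct and is essentially the paper's own proof. Apply Theorem~\ref{thm:facet-count-unbounded} to the reduced pair $(\tilde D,\tilde B)$ of type $(n,k-j)$, with $t_0$ as the row at infinity and the column $j+1$ as first column, then substitute Proposition~\ref{prop:reduced-format}(3) and take absolute values.
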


\begin{proof}
By Proposition~\ref{prop:reduced-format}, $(\tilde D,\tilde B)$ is a pair of type $(n,k-j)$, with the row $t_0$ playing the role of the row $n+k+1$. Its Jacobians are $\tilde\Gamma_{j+1},\dots,\tilde\Gamma_k$, and the first column of $\tilde B$ is the column $m=j+1$. The first formula is Theorem~\ref{thm:facet-count-unbounded} applied to this pair, and the second follows from Proposition~\ref{prop:reduced-format}(3). The last one follows by taking absolute values.
\end{proof}

Since $\tilde d(\tilde{\mathcal C})$ is a signed count of the zeros on the facets, bounds on these numbers of zeros give a lower bound for $|\tilde d(\tilde{\mathcal C})|$.

\begin{proposition}\label{prop:degree-lower-bound}
Assume the hypotheses of Theorem~\ref{thm:facet-count-LS}. Suppose that for every $i\in\tilde R$ we are given integers $l_i\le\tilde N_i\le L_i$, with $l_i=L_i=0$ if $\{\tilde p_i=0\}$ does not support a facet of $\tilde{\mathcal C}$ (for $i\neq t_0$), respectively if $\tilde\Sigma=\emptyset$ (for $i=t_0$). Let
\[
I_\pm=\{i\in\tilde R:\ \sign\tilde B_{i,j+1}=\pm1\},
\]
and put
\[
a=\sum_{i\in I_+}l_i-\sum_{i\in I_-}L_i,\qquad b=\sum_{i\in I_+}L_i-\sum_{i\in I_-}l_i .
\]
Then
\[
|\tilde d(\tilde{\mathcal C})|\ \ge\ \max\Big(0,\ \Big\lceil\frac a2\Big\rceil,\ \Big\lceil-\frac b2\Big\rceil\Big).
\]
Moreover, if $a>0$ then $\sign\tilde d(\tilde{\mathcal C})=-\tilde\epsilon$, and if $b<0$ then $\sign\tilde d(\tilde{\mathcal C})=\tilde\epsilon$.
\end{proposition}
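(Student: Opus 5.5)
The plan is to start from the exact formula of Theorem~\ref{thm:facet-count-LS}, namely $\tilde d(\tilde{\mathcal C})=-\frac{\tilde\epsilon}{2}\,\Sigma$ with
\[
\Sigma=\sum_{i\in\tilde R}\sign\big(\tilde B_{i,j+1}\big)\,\tilde N_i=\sum_{i\in I_+}\tilde N_i-\sum_{i\in I_-}\tilde N_i ,
\]
where the indices with $\tilde B_{i,j+1}=0$ drop out. Everything then reduces to elementary interval estimates on the integer $\Sigma$.

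\textbf{Step 1: bracketing $\Sigma$.} Using $l_i\le\tilde N_i\le L_i$ termwise, the positive part is at least $\sum_{I_+}l_i$ and at most $\sum_{I_+}L_i$. The negative part is at least $\sum_{I_-}l_i$ and at most $\sum_{I_-}L_i$. Hence
\[
a\le\Sigma\le b .
\]
The convention $l_i=L_i=0$ for missing facets, or for $\tilde\Sigma=\emptyset$ at $i=t_0$, agrees with $\tilde N_i=0$ in Theorem~\ref{thm:facet-count-LS}. So these indices cause no trouble.

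\textbf{Step 2: lower bound for $|\tilde d|$.} We have $|\tilde d(\tilde{\mathcal C})|=|\Sigma|/2$.
\begin{itemize}
\item If $a>0$, then $\Sigma\ge a$, so $|\tilde d|\ge a/2$. Since $\tilde d$ is an integer (a degree), this gives $|\tilde d|\ge\lceil a/2\rceil$.
\item If $b<0$, then $-\Sigma\ge -b$, which gives $|\tilde d|\ge\lceil -b/2\rceil$.
\item In all other cases, the trivial bound $|\tilde d|\ge0$ applies.
\end{itemize}
When $a\le0$ we have $\lceil a/2\rceil\le0$, and similarly for $b\ge0$. So in every case $|\tilde d|$ dominates all three quantities, and taking the maximum is legitimate. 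Note also that $a\le b$, so $a>0$ and $b<0$ cannot both hold.

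\textbf{Step 3: the sign statements.} If $a>0$, then $\Sigma>0$, so $\tilde d=-\tilde\epsilon\,\Sigma/2$ has sign $-\tilde\epsilon$. If $b<0$, then $\Sigma<0$, so $\tilde d$ has sign $\tilde\epsilon$.

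The only point needing care is the integrality of $\tilde d$, which justifies the ceilings. It is a topological degree, as stated in Theorem~\ref{thm:facet-count-LS}; in particular, $\Sigma$ is always even. No real obstacle is expected beyond checking that the index $t_0$ enters with $\sign\tilde B_{t_0,j+1}$, consistently with that theorem.
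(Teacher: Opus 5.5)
Your proposal is correct and matches the paper's proof: both bracket the signed sum $\sum_{i\in I_+}\tilde N_i-\sum_{i\in I_-}\tilde N_i$ between $a$ and $b$ using the facet-count formula of Theorem~\ref{thm:facet-count-LS}, then use integrality to obtain the ceilings, and read off the signs from $\tilde d(\tilde{\mathcal C})=-\tilde\epsilon X/2$. The paper phrases the integrality step as ``$X$ is even, so $X\ge 2\lceil a/2\rceil$'' rather than ``$\tilde d$ is an integer'', but the two are equivalent.
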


\begin{proof}
Let $X=\sum_{i\in I_+}\tilde N_i-\sum_{i\in I_-}\tilde N_i$. By Theorem~\ref{thm:facet-count-LS}, $\tilde d(\tilde{\mathcal C})=-\tilde\epsilon X/2$, so $X$ is an even integer, and $a\le X\le b$. If $a>0$, then $X\ge a$, hence $X\ge2\lceil a/2\rceil$ and $|\tilde d(\tilde{\mathcal C})|\ge\lceil a/2\rceil$, with $\sign\tilde d(\tilde{\mathcal C})=-\tilde\epsilon$. The case $b<0$ is symmetric.
\end{proof}

The same statement holds for the original system, with $d(\mathcal C)$, $\epsilon$, $N_i$ and $\sign B_{i1}$ ($1\le i\le n+k+1$), by Theorem~\ref{thm:facet-count-unbounded}.

\subsection{Zeros of the original Jacobians on $L_S$}

If $n+k+1\notin S$, we take $t_0=n+k+1$ and put $\Gamma^S=(\Gamma_{j+1}\circ\varphi,\dots,\Gamma_k\circ\varphi)$. If $n+k+1\in S$, we put $\Gamma^S=(\Gamma^{\mathrm{top}}_{j+1}\circ\varphi,\dots,\Gamma^{\mathrm{top}}_k\circ\varphi)$. Here $\varphi$ is the parametrization of Subsection~\ref{subsec:minor-coordinates}.

\begin{proposition}\label{prop:lower-bound-LS}
Assume the hypotheses of Theorem~\ref{thm:facet-count-LS}, and let $Z$ be the set of zeros of $\Gamma^S$ in $\tilde{\mathcal C}$.
\begin{enumerate}
\item $Z$ is compact, and it is also the set of zeros of $\tilde\Gamma$ in $\tilde{\mathcal C}$. A point of $Z$ is a nondegenerate zero of $\Gamma^S$ if and only if it is a nondegenerate zero of $\tilde\Gamma$.
\item If every point of $Z$ is nondegenerate, then
\[
\#Z\ \ge\ |\tilde d(\tilde{\mathcal C})|\qquad\text{and}\qquad \#Z\equiv\tilde d(\tilde{\mathcal C})\pmod 2 .
\]
\end{enumerate}
\end{proposition}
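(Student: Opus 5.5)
The plan is to derive everything from the restriction theorems (Theorems~\ref{thm:restriction-LS} and~\ref{thm:restriction-LS-infinity}) and the degree statement of Theorem~\ref{thm:facet-count-LS}. I split into the two cases $n+k+1\notin S$ and $n+k+1\in S$.

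\emph{Assertion 1.} In the finite case, Theorem~\ref{thm:restriction-LS}(1) gives $\Gamma^S=(\lambda^{2^{k-m}}\tilde\Gamma_m)_{m}$ exactly, with $\lambda\neq0$. The zero sets therefore coincide. The Jacobian matrices differ by left multiplication by an invertible diagonal matrix, so nondegeneracy is the same for both. At infinity, Theorem~\ref{thm:restriction-LS-infinity}(2) gives $\Gamma^S_m=\lambda^{2^{k-m}}\tilde\Gamma_m+\sum_{m'>m}h_{m,m'}\tilde\Gamma_{m'}$ for some polynomials $h_{m,m'}$, and the theorem already states the equality of zero sets. At a common zero the terms $\tilde\Gamma_{m'}\,dh_{m,m'}$ vanish. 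The differential of $\Gamma^S$ is then a triangular matrix with nonzero diagonal entries times that of $\tilde\Gamma$, and the argument of Proposition~\ref{prop:BS-jacobians}(2) applies.

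For compactness, I use the first conclusion of Theorem~\ref{thm:facet-count-unbounded}, applied to the pair $(\tilde D,\tilde B)$ under (H$_\infty$)(i). It says that $\tilde\Gamma$ has no zero in $\tilde{\mathcal C}\setminus B(0,R_0)$, so $Z$ is bounded. Hypothesis (H) says that $\tilde\Gamma$ does not vanish on $\partial\tilde{\mathcal C}$, so $Z$ is closed in $\overline{\tilde{\mathcal C}}$. Hence $Z$ is compact.

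\emph{Assertion 2.} The zeros are nondegenerate, hence isolated, and $Z$ is compact, so $Z$ is finite. By Theorem~\ref{thm:facet-count-LS}, $\tilde d(\tilde{\mathcal C})$ is the sum over $Z$ of the indices of $\tilde\Gamma$, which is the Kronecker--Poincaré--Hopf formula~\eqref{eq:KPH} on $\tilde{\mathcal C}_R$ for $R\ge R_0$. Each index is $\pm1$, so $|\tilde d|\le\#Z$. Since $\pm1\equiv1\pmod 2$, we also get $\tilde d\equiv\#Z\pmod2$.

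The statement is phrased for zeros of $\Gamma^S$, and the degree is defined for $\tilde\Gamma$. Assertion 1 bridges the two, since the zero sets and their nondegeneracy agree. The only delicate point is the congruence version at infinity, where the identification of differentials needs the triangular structure. I expect this to be routine once one notes that the error terms lie in $I_{m+1}$.
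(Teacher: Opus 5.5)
Your proposal is correct and follows essentially the same route as the paper: write $\Gamma^S=M\tilde\Gamma$ with $M$ upper triangular (diagonal in the finite case), with constant nonzero diagonal entries $\lambda^{2^{k-m}}$, to get equal zero sets and equivalent nondegeneracy. Then get compactness from (H) together with the absence of zeros outside $B(0,R_0)$, and conclude since $\tilde d(\tilde{\mathcal C})$ is the sum of the $\pm1$ indices of the finitely many zeros.
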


\begin{proof}
By Theorems~\ref{thm:restriction-LS} and~\ref{thm:restriction-LS-infinity}, $\Gamma^S=M\tilde\Gamma$, where $M(w)$ is an upper triangular matrix with polynomial entries and constant diagonal entries $\lambda^{2^{k-m}}\neq0$; it is diagonal if $n+k+1\notin S$. So $\Gamma^S$ and $\tilde\Gamma$ have the same zeros, and at a zero $D_w\Gamma^S=M\,D_w\tilde\Gamma$, which gives the statement on nondegeneracy. By (H) and Theorem~\ref{thm:facet-count-LS}, $\tilde\Gamma$ does not vanish on $\partial\tilde{\mathcal C}$ nor on $\tilde{\mathcal C}\setminus B(0,R_0)$, so $Z$ is compact. If its points are nondegenerate, they are isolated, hence finitely many, and each has index $\pm1$. Since $\tilde d(\tilde{\mathcal C})$ is the sum of these indices (Theorem~\ref{thm:facet-count-LS}), assertion~2 follows.
\end{proof}

Since $\varphi$ is an affine isomorphism onto its image, $\varphi(Z)$ is the set of common zeros of $\Gamma_{j+1},\dots,\Gamma_k$ (respectively of $\Gamma^{\mathrm{top}}_{j+1},\dots,\Gamma^{\mathrm{top}}_k$) on the chamber $\varphi(\tilde{\mathcal C})$ of $L_S$, and nondegeneracy means that the restriction of these functions to $L_S$ has invertible tangential differential there.

\subsection{Zeros of $\Gamma_k$ on a line}\label{subsec:line-zeros}

We now take $j=k-1$, so that the flat is a line. Let $S'\subset\{1,\dots,n+k+1\}$ with $|S'|=k-1$ and $R'=\{1,\dots,n+k+1\}\setminus S'$, so $|R'|=n+2$. For $l\in R'$ put
\[
\sigma_l=\sign\det B_{S'\cup\{l\},\{1,\dots,k\}},
\]
where the rows are those of $S'$ in increasing order followed by the row $l$. For $k=2$ and $S'=\{i\}$, $\sigma_l=\sign(B_{i1}B_{l2}-B_{i2}B_{l1})$, with $B_{n+3,m}=-\sum_{l\le n+2}B_{lm}$. We assume that the rows $D_s$ ($s\in S'$), $D_l$, $D_{l'}$ are linearly independent for all $l\neq l'$ in $R'$, that $\det N'\neq0$ with $N'=B_{S',\{1,\dots,k-1\}}$, and that $\sigma_l\neq0$ for all $l\in R'$. These assumptions hold under (U$_D$) and (U$_B$).

The reduced system along $S'$ lives on a line with coordinate $w$. It has the $n+1$ affine functions $\tilde p_l$, $l\in R'\setminus\{t_0\}$, and the single exponent column
\[
\beta_l=\tilde B_{lk}=\frac{\det B_{S'\cup\{l\},\{1,\dots,k\}}}{\det N'},\qquad l\in R',\qquad \sum_{l\in R'}\beta_l=0,
\]
by Proposition~\ref{prop:reduced-format}, so that $\sign\beta_l=\sign(\det N')\,\sigma_l$. On $\mathbb{P}^1=\mathbb{R}\cup\{\infty\}$, let $r_l$ be the root of $\tilde p_l$ for $l\neq t_0$, and $r_{t_0}=\infty$. These $n+2$ points are pairwise distinct and cut $\mathbb{P}^1$ into $n+2$ open arcs. For an arc $\Delta$ with endpoints $r_a$ and $r_b$, let $N_\Delta$ be the number of zeros of $\tilde\Gamma_k$ in $\Delta$, counted with multiplicity, and put $l_\Delta=1$ if $\sigma_a\sigma_b>0$ and $l_\Delta=0$ if $\sigma_a\sigma_b<0$. Let $W$ be the number of sign changes of the cyclic sequence $(\sigma_l)$, the points $r_l$ being taken in their cyclic order on $\mathbb{P}^1$; it is even.

By Theorems~\ref{thm:restriction-LS} and~\ref{thm:restriction-LS-infinity}(1), $N_\Delta$ is also the number of zeros of $\Gamma_k$ (respectively of $\Gamma^{\mathrm{top}}_k$ if $n+k+1\in S'$) on the corresponding arc of the line $L_{S'}$, and these arcs are the edges of the chambers of the arrangement on $L_{S'}$. For instance, for $k=2$, $N_\Delta$ is the number $N_i$ of Theorem~\ref{thm:facet-count-unbounded} for an edge $\Delta$ on $\{p_i=0\}$, and $N_{n+3}$ for $S'=\{n+3\}$.

\begin{proposition}\label{prop:line-zeros}
Under the above assumptions:
\begin{enumerate}
\item $\tilde\Gamma_k$ is a polynomial of degree exactly $n$ with no zero at the points $r_l$. In particular
\[
\sum_{\Delta}N_\Delta\le n\qquad\text{and}\qquad\sum_\Delta N_\Delta\equiv n\pmod 2,
\]
where $\Delta$ runs over the $n+2$ arcs.
\item \emph{Parity.} For every arc $\Delta$ with endpoints $r_a$ and $r_b$, $N_\Delta$ is odd if $\sigma_a=\sigma_b$ and even if $\sigma_a\neq\sigma_b$; that is, $N_\Delta\equiv l_\Delta\pmod 2$. In particular $N_\Delta\ge1$ if $\sigma_a=\sigma_b$, that is, $N_\Delta\ge l_\Delta$.
\item \emph{Upper bound.} For every arc $\Delta$,
\[
N_\Delta\ \le\ L_\Delta:=n-\sum_{\Delta'\neq\Delta}l_{\Delta'}\ =\ W-2+l_\Delta ,
\]
and $L_\Delta\equiv l_\Delta\pmod 2$.
\end{enumerate}
\end{proposition}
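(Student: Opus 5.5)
Everything reduces to the reduced system on the line, which is a pair of type $(n,1)$. Its single Jacobian is
\[
\tilde\Gamma_k=\sum_{l\in R'\setminus\{t_0\}}\beta_l\,\tilde D_l\prod_{l'\neq l,t_0}\tilde p_{l'},
\]
obtained from Proposition~\ref{prop:gammak}(1) with $k=1$, where $\tilde D_l$ is the slope of $\tilde p_l$.

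\emph{Step 1: no zero at the roots, and exact degree.} At a root $r_l$ with $l\neq t_0$, only the term $l$ survives. It equals $\beta_l\tilde D_l\prod_{l'\neq l,t_0}\tilde p_{l'}(r_l)$, which is nonzero because $\sigma_l\neq0$, the slope $\tilde D_l$ is nonzero, and the roots are distinct. The distinctness of the roots follows from the independence of $D_s,D_l,D_{l'}$ through Lemma~\ref{lem:minor-coordinates}.

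The coefficient of $w^n$ is $\big(\prod_{l\neq t_0}\tilde D_l\big)\sum_{l\neq t_0}\beta_l=-\big(\prod\tilde D_l\big)\beta_{t_0}$, using the zero column sum. This is nonzero, so $\deg\tilde\Gamma_k=n$, and there is no zero at $r_{t_0}=\infty$ in the projective sense. This gives $\sum_\Delta N_\Delta\le n$.

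For the parity $\sum N_\Delta\equiv n$, the nonreal roots of a real polynomial come in conjugate pairs, so the number of real roots counted with multiplicity is $\equiv n\pmod 2$. These real roots all lie in the arcs.

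\emph{Step 2: parity on each arc.} This is exactly the $k=1$ computation of Section~\ref{sec:boundary-degree}. On a bounded arc $(r_a,r_b)$ with all $\tilde p$ of fixed signs, we have $\sign\tilde\Gamma_k(r_a)=\epsilon'\sign\beta_a$ and $\sign\tilde\Gamma_k(r_b)=-\epsilon'\sign\beta_b$ for a common sign $\epsilon'$. This is the case $k=1$ of Lemma~\ref{lem:sign-identity}, with opposite outward normals at the two ends.

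So $\tilde\Gamma_k$ changes sign across the arc, i.e.\ has an odd number of roots with multiplicity, iff $\sign\beta_a=\sign\beta_b$, iff $\sigma_a=\sigma_b$, since $\sign\beta_l=\sign(\det N')\sigma_l$.

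For the unbounded arcs touching $r_{t_0}=\infty$, I would use the leading coefficient $-(\prod\tilde D_l)\beta_{t_0}$ together with the computation of Remark (the case $k=1$) after Theorem~\ref{thm:facet-count-unbounded}. Alternatively, one can pass to a chart $t_0'\neq t_0$. By Theorem~\ref{thm:restriction-LS-infinity}(1) and the independence of the zero set from the chart, the point at infinity then becomes an ordinary root, and the bounded-arc argument applies directly. This unbounded case is the main obstacle: signs at infinity must be tracked, and the chart-change route avoids redoing the sign analysis, so I would try it first.

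\emph{Step 3: the upper bound.} By Step 2, $N_{\Delta'}\ge l_{\Delta'}$ for every arc. Hence
\[
N_\Delta\le n-\sum_{\Delta'\neq\Delta}N_{\Delta'}\le n-\sum_{\Delta'\neq\Delta}l_{\Delta'}.
\]
To rewrite this bound, note that among the $n+2$ cyclic arcs exactly $W$ have $\sigma_a\neq\sigma_b$. Therefore $\sum_{\Delta'}l_{\Delta'}=n+2-W$, and
\[
n-\sum_{\Delta'\neq\Delta}l_{\Delta'}=n-(n+2-W-l_\Delta)=W-2+l_\Delta.
\]
Since $W$ is even, $L_\Delta\equiv l_\Delta\pmod 2$.
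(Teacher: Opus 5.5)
Your proof is correct. Part~1 and part~2 follow the paper closely, while part~3 takes a genuinely more elementary route.

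\textbf{Part 2.} The paper also compares endpoint signs, but it works with the chart-free form $\omega=\sum_{l\in R'}\beta_l\,d\log|\ell_l|=(\tilde\Gamma_k/\tilde P)\,dw$, written in a projective coordinate sending $r_b$ to $\infty$. Every arc, including the two ending at $r_{t_0}$, is then handled by the same asymptotics of $\sum_{l\neq b}\beta_l/(u-u_l)$.

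Your leading-coefficient route for the two arcs at $r_{t_0}$ works as it stands. Let $\epsilon$ be the product of the signs of the $\tilde p_l$ on the arc, and let $r_{\max},r_{\min}$ be the extreme finite points $r_l$. Then the sign of $\tilde\Gamma_k$ is $-\epsilon\sign\beta_{t_0}$ at $+\infty$ on $(r_{\max},+\infty)$, and $\epsilon\sign\beta_{t_0}$ at $-\infty$ on $(-\infty,r_{\min})$. This fits your endpoint rule, so you can commit to this route. If you prefer the chart change, the relevant tool is Lemma~\ref{lem:chart-independence}, not Theorem~\ref{thm:restriction-LS-infinity}: its factor $g_k$ is nowhere zero on the arcs and therefore preserves multiplicities.

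\textbf{Part 3.} The paper invokes the Descartes rule for sums of reciprocals of linear forms \cite[Prop.~4.3]{BD} to get $N_\Delta\le\min(V_1,V_2)$. It then checks combinatorially that this minimum is $L_\Delta$ or $L_\Delta+1$, and concludes by parity. You instead read off $N_\Delta\le n-\sum_{\Delta'\neq\Delta}l_{\Delta'}$ directly from $\sum_{\Delta'}N_{\Delta'}\le n$ and the forced zeros $N_{\Delta'}\ge l_{\Delta'}$. This uses nothing beyond parts~1 and~2, and it is exactly the ``B\'ezout number minus forced zeros'' mechanism of Definition~\ref{def:recursive-bounds} at $d=1$.

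What the paper's route buys in addition is the identification of $L_\Delta$ as the largest integer $\le\min(V_1,V_2)$ with the parity of $l_\Delta$. That ties the bound to the sign-variation form of \cite{BD}, and it is reused in the proof of Proposition~\ref{prop:BDF-line}(4). It is not part of the present statement, so your argument is complete.
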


\begin{proof}
\emph{Proof of~1.} Write $\tilde p_l(w)=a_l(w-r_l)$ for $l\in R'\setminus\{t_0\}$, with $a_l\neq0$. By Proposition~\ref{prop:gammak}(1) for the reduced system, which has $\tilde k=1$,
\[
\tilde\Gamma_k=\sum_{l\in R'\setminus\{t_0\}}\beta_l\,a_l\prod_{m\neq l}\tilde p_m .
\]
At $w=r_l$ all the terms but the $l$-th vanish, so $\tilde\Gamma_k(r_l)=\beta_la_l\prod_{m\neq l}a_m(r_l-r_m)\neq0$, since $\beta_l\neq0$ and the points $r_m$ are distinct. Each term has degree $n$ with leading coefficient $\beta_l\prod_ma_m$. Since $\sum_{l\neq t_0}\beta_l=-\beta_{t_0}$, the coefficient of $w^n$ in $\tilde\Gamma_k$ is $-\beta_{t_0}\prod_ma_m\neq0$. So $\tilde\Gamma_k$ has degree exactly $n$ and no zero at the points $r_l$. Its real zeros therefore lie in the arcs, and their number, counted with multiplicity, is at most $n$ and congruent to $n$ modulo $2$, since the nonreal zeros come in conjugate pairs.

For the other assertions, consider the $1$-form on $\mathbb{P}^1\setminus\{r_l\}_{l\in R'}$
\[
\omega=\sum_{l\in R'}\beta_l\,d\log|\ell_l| ,
\]
where $\ell_l$ is a linear form on $\mathbb{R}^2$ vanishing at $r_l$. Since $\sum_l\beta_l=0$, $\omega$ does not depend on the choice of the $\ell_l$. In the affine coordinate $w$ we have $\ell_{t_0}=1$ and $\ell_l=\tilde p_l$ for $l\neq t_0$, so
\[
\omega=\tilde\psi_k'(w)\,dw=\frac{\tilde\Gamma_k(w)}{\tilde P(w)}\,dw .
\]
In particular, on each arc the zeros of $\omega$, with their multiplicities, are those of $\tilde\Gamma_k$.

\emph{Proof of~2 and~3.} Choose a projective coordinate $u$ on $\mathbb{P}^1$ sending $r_b$ to $\infty$, and write $u_l$ for the image of $r_l$. Zeros of $\omega$ and their multiplicities are preserved. In this coordinate $\ell_b$ becomes constant, and
\[
\omega=F(u)\,du,\qquad F(u)=\sum_{l\neq b}\frac{\beta_l}{u-u_l}.
\]
The arc $\Delta$ becomes one of the two intervals $(u_a,+\infty)$ or $(-\infty,u_a)$. Assume it is $(u_a,+\infty)$; the other case is symmetric. As $u\to u_a^+$ the term $\beta_a/(u-u_a)$ dominates, so $\sign F=\sign\beta_a$, and as $u\to+\infty$, $F(u)\sim\big(\sum_{l\neq b}\beta_l\big)/u=-\beta_b/u$. So $N_\Delta$ is odd if and only if $\beta_a\beta_b>0$, that is, $\sigma_a\sigma_b>0$, since $\sign\beta_l=\sign(\det N')\,\sigma_l$. This proves~2.

Let $q_0=b,q_1,\dots,q_{n+1}=a$ be the points $r_l$ in cyclic order starting from $r_b$, away from $\Delta$, so that $u_{q_1}<\dots<u_{q_{n+1}}=u_a$. On $\Delta$ the linear forms $u-u_l$, $l\neq b$, are positive, and $F$ is a linear combination of their reciprocals. By the Descartes rule of signs for such functions \cite[Prop.~4.3]{BD},
\[
N_\Delta\le V_1:=\operatorname{sgnvar}\big(\sigma_{q_1},\dots,\sigma_{q_{n+1}}\big).
\]
Exchanging the roles of $r_a$ and $r_b$ gives $N_\Delta\le V_2:=\operatorname{sgnvar}(\sigma_{q_0},\dots,\sigma_{q_n})$. The sequence $(\sigma_{q_0},\dots,\sigma_{q_{n+1}})$ has $W-1+l_\Delta$ sign changes, since closing it up cyclically adds one sign change exactly when $\sigma_a\sigma_b<0$. Removing its first or its last term decreases this number by one exactly when the arc $(r_{q_0},r_{q_1})$, respectively $(r_{q_n},r_{q_{n+1}})$, adjacent to $\Delta$ has endpoints of opposite signs. Hence $\min(V_1,V_2)$ equals $L_\Delta=W-2+l_\Delta$ or $L_\Delta+1$. Since $W$ is even, $L_\Delta\equiv l_\Delta\equiv N_\Delta\pmod 2$ by~2, and therefore $N_\Delta\le L_\Delta$. Finally, an arc $\Delta'$ has $l_{\Delta'}=1$ exactly when the signs at its endpoints agree, so $\sum_{\Delta'}l_{\Delta'}=(n+2)-W$ and $L_\Delta=n-\sum_{\Delta'\neq\Delta}l_{\Delta'}$.
\end{proof}

\section{Recursive bounds on all flats and chambers}\label{sec:recursive}

In this section we combine the previous results into a recursive procedure. For every flat of the arrangement and every chamber of that flat, it produces a lower and an upper bound for the number of common zeros of the corresponding Jacobians. The recursion runs over the dimension of the flat. In dimension one, the bounds are those of Proposition~\ref{prop:line-zeros}, that is, Descartes' rule of signs for circuits \cite{BD}.

\subsection{Setting}

We work in the real projective space $\mathbb{RP}^k$ with homogeneous coordinates $(y,y_{k+1})$. For $i\in[n+k+1]:=\{1,\dots,n+k+1\}$ let $H_i=\{P_i=0\}$, with $P_i$ as in Section~\ref{sec:mu}. So $H_{n+k+1}=\{y_{k+1}=0\}$ is the hyperplane at infinity, and $\mathbb{RP}^k\setminus H_{n+k+1}=\mathbb{R}^k$. For $S\subset[n+k+1]$ put
\[
\Lambda_S=\bigcap_{i\in S}H_i\subset\mathbb{RP}^k .
\]
This is the image in $\mathbb{RP}^k$ of the flat $L_S\subset\mathbb{S}^k$ of Section~\ref{sec:mu}.

We assume:
\begin{itemize}
\item[(U$_D$)] any $k+1$ rows of $D$ are linearly independent;
\item[(U$_B$)] for every $U\subset[n+k+1]$ with $1\le|U|\le k$, $\det B_{U,\{1,\dots,|U|\}}\neq0$.
\end{itemize}
In (U$_B$) the rows of $B$ include the row $n+k+1$ fixed by the zero column sums. Both conditions hold for generic $D$ and $B$.

Let $|S|=j\le k$. By (U$_D$), $\Lambda_S$ is a projective subspace of dimension $d=k-j$. The traces $H_i\cap\Lambda_S$, $i\notin S$, are $n+d+1$ hyperplanes of $\Lambda_S$, any $d+1$ of which have empty intersection. A \emph{chamber} of $\Lambda_S$ is a connected component of $\Lambda_S\setminus\bigcup_{i\notin S}H_i$. We write $\mathrm{Ch}(S)$ for the set of chambers of $\Lambda_S$.
\begin{itemize}
\item If $|S|=k$, then $\Lambda_S$ is a point $v_S$, which lies on no $H_i$ with $i\notin S$, and $\mathrm{Ch}(S)=\{\{v_S\}\}$.
\item If $S=\emptyset$, the chambers are the connected components of $\mathbb{R}^k\setminus\bigcup_{i\le n+k}\{p_i=0\}$. More generally, if $n+k+1\notin S$, the chambers of $\Lambda_S$ are the chambers of the affine flat $L_S\cap\mathbb{R}^k$. If $n+k+1\in S$, they are the cells at infinity, each one identified with its antipode.
\item Let $d\ge1$ and $\mathcal C\in\mathrm{Ch}(S)$. Let $I(\mathcal C)$ be the set of $i\notin S$ such that $H_i\cap\overline{\mathcal C}$ has dimension $d-1$. For $i\in I(\mathcal C)$, the relative interior of $H_i\cap\overline{\mathcal C}$ is a chamber
\[
F_i(\mathcal C)\in\mathrm{Ch}(S\cup\{i\}),
\]
the \emph{facet} of $\mathcal C$ on $H_i$. In any affine chart $\{P_{t_0}\neq0\}$ with $t_0\notin S$, the chamber $\mathcal C$ is a convex polyhedron, possibly unbounded. Its facets are the $F_i(\mathcal C)$ with $i\neq t_0$, together with the facet at infinity $F_{t_0}(\mathcal C)$ when $t_0\in I(\mathcal C)$.
\end{itemize}

\subsection{Reduced systems, chart independence and the numbers $Z(S,\mathcal C)$}

For $1\le|S|=j\le k-1$ we call \emph{reduced system along $S$} any system $(\tilde D,\tilde B)$ as in Definition~\ref{def:reduced-system}, for any choice of $T$ and of $t_0\in T$. For $S=\emptyset$, the reduced system is $(D,B)$ itself, written in the affine chart $\{P_{t_0}\neq0\}$ for some $t_0\in[n+k+1]$ (the chart $t_0=n+k+1$ being the original one); in this case we put $\det N=1$, so that $\sigma_\emptyset(i)=\sign B_{i1}$. By Proposition~\ref{prop:reduced-format}, it is a pair of type $(\tilde n,\tilde k)$ (Definition~\ref{def:setting}), with $\tilde n=n$ and $\tilde k=d$. Its hyperplanes are the traces $H_i\cap\Lambda_S$ ($i\notin S$), written in coordinates of the affine chart $\{P_{t_0}\neq0\}$ of $\Lambda_S$. Its Jacobians are $\tilde\Gamma_{j+1},\dots,\tilde\Gamma_k$.

\begin{lemma}[Chart independence]\label{lem:chart-independence}
Let $(D,B)$ be a pair of type $(n,k)$ (Definition~\ref{def:setting}). Let $(D',B')$ be another one, obtained from the same linear forms $P_i$ and the same exponents by one of the following operations: another choice of the hyperplane at infinity (and reordering of the rows accordingly), a linear change of the homogeneous coordinates, or multiplication of the forms $P_i$ by nonzero constants. Let $\tau$ be the induced coordinate change between the two affine charts. Then, on the intersection of the two charts, off the arrangement, there are smooth functions $g_m$, nowhere zero, and smooth functions $h_{mm'}$ such that
\[
\Gamma'_m\circ\tau=g_m\,\Gamma_m+\sum_{m'>m}h_{mm'}\,\Gamma_{m'},\qquad m=1,\dots,k .
\]
Consequently, for every $m$, the common zero sets of $(\Gamma_m,\dots,\Gamma_k)$ and of $(\Gamma'_m,\dots,\Gamma'_k)\circ\tau$ in the complement of the arrangement coincide. At a common zero of $\Gamma_1,\dots,\Gamma_k$, one of the two families is nondegenerate if and only if the other one is.
\end{lemma}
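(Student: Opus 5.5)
We will prove the chart-independence lemma by first working out how the logarithmic functions $\psi_m$ change under each operation, and then running a downward induction on $m$ through the recursive definition of the Jacobians. We treat the three operations one at a time, since a composition of triangular relations of the stated form is again of that form.

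\emph{Step 1: the $\psi_m$ change only by constants or by a function with a simple gradient.} The functions $f_m=\prod_{i=1}^{n+k+1}|P_i|^{B_{im}}$ are homogeneous of degree $0$ because the columns of $B$ sum to zero, so they are well defined on $\mathbb{S}^k$ minus the arrangement. We consider the effect of each operation on $\psi'_m\circ\tau$.
\begin{itemize}
\item Multiplying the $P_i$ by nonzero constants adds a constant to each $\psi_m$.
\item A linear change of homogeneous coordinates that preserves the chart composes $\psi_m$ with an affine map. For a general projective change, and for a change of the hyperplane at infinity, $\tau$ is a projective transformation $y\mapsto(\text{affine})/P_{t_0}$.
\end{itemize}
In every case $\psi'_m\circ\tau=\psi_m+c_m$ on the common domain, because both sides are $\log f_m$ computed in two charts of the same degree-$0$ function. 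The exponent matrix is the same up to reordering of rows, so the same $m$ is used on both sides.

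\emph{Step 2: the Jacobian determinants transform triangularly.} Write $J_\tau$ for the Jacobian matrix of $\tau$, and $P'$, $P$ for the products of the affine functions in the two charts. Then $P'\circ\tau$ equals $P$ times a smooth nowhere-zero function $u$. In the projective case this is a power of $p_{t_0}$ times a constant, coming from dehomogenization: for instance, $P'\circ\tau=\kappa\,P/p_{t_0}^{\,n+k+1}$ after accounting for the exchanged row.

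We then argue by downward induction on $m$. For $m=k$,
\[
\Gamma'_k\circ\tau=(P'\circ\tau)\det\big(\partial\psi'/\partial y'\big)\circ\tau=u\,P\det(\partial\psi/\partial y)\,\det J_\tau^{-1}=g_k\Gamma_k ,
\]
with $g_k=u\det J_\tau^{-1}$, which is nowhere zero.

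For $m<k$, we assume $\Gamma'_{m'}\circ\tau=g_{m'}\Gamma_{m'}+\sum_{m''>m'}h_{m'm''}\Gamma_{m''}$ for all $m'>m$. Differentiating gives
\[
\nabla(\Gamma'_{m'}\circ\tau)=g_{m'}\nabla\Gamma_{m'}+\sum_{m''>m'}h_{m'm''}\nabla\Gamma_{m''}+\Big(\Gamma_{m'}\nabla g_{m'}+\sum_{m''>m'}\Gamma_{m''}\nabla h_{m'm''}\Big).
\]
We substitute this into $\det(\nabla\psi'_1;\dots;\nabla\psi'_m;\nabla\Gamma'_{m+1};\dots)$, pulled back by $\tau$. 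The pulled-back gradients are the gradients of the pulled-back functions multiplied by $J_\tau^{-T}$, which contributes the factor $\det J_\tau^{-1}$. We then expand by multilinearity, exactly as in the proof of Proposition~\ref{prop:BS-jacobians}, and sort the terms as follows.
\begin{itemize}
\item Terms containing an extra row $\Gamma_r\nabla(\cdot)$ are smooth multiples of some $\Gamma_r$ with $r>m$.
\item Among the remaining terms, triangularity forces the diagonal choice, which yields $\prod_{m'>m}g_{m'}\cdot\det(\nabla\psi_1;\dots;\nabla\Gamma_k)$.
\end{itemize}
Multiplying by $P'\circ\tau=uP$ gives $g_m=u\det J_\tau^{-1}\prod_{m'>m}g_{m'}$, which is nowhere zero, and smooth functions $h_{mm'}$. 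The only care needed is that all functions involved are smooth off the arrangement on the overlap of the charts. This holds because $p_{t_0}$ and the new $P_{t_0'}$ do not vanish there.

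\emph{Step 3: consequences.} The triangular form with nowhere-zero diagonal gives equality of the common zero sets of $(\Gamma_m,\dots,\Gamma_k)$ and $(\Gamma'_m,\dots,\Gamma'_k)\circ\tau$, again by downward induction on $m$. At a common zero of all the $\Gamma_m$, differentiating kills the terms $\Gamma_{m'}\nabla h$. Hence $D(\Gamma'\circ\tau)=M\,D\Gamma$, where $M$ is triangular with diagonal $g_m\neq0$, and $D(\Gamma'\circ\tau)=D\Gamma'\cdot J_\tau$ with $J_\tau$ invertible. So nondegeneracy is preserved.

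The main obstacle is Step 2 for a change of the hyperplane at infinity. There the prefactor $P$, and not just the determinant, must be checked to transform by a nowhere-zero factor; this requires the degree-$0$ homogeneity to be tracked through the dehomogenization by $p_{t_0}$. We would settle it first by writing $\tau$ explicitly as $y'=\big(p_t(y)/p_{t_0}(y)\big)_{t}$ in suitable coordinates and computing $P'\circ\tau$ and $\det J_\tau$ as constants times powers of $p_{t_0}$.
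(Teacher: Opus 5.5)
Your proposal is correct and follows essentially the same route as the paper. Both arguments use $\psi'_m\circ\tau=\psi_m+c_m$ from degree-$0$ homogeneity, the chain-rule factor $(\det D\tau)^{-1}$, downward induction with a multilinear expansion in which only the diagonal term survives modulo the later $\Gamma_{m'}$, giving $g_m=(P'\circ\tau/P)(\det D\tau)^{-1}\prod_{m'>m}g_{m'}$, and the triangular comparison of Jacobian matrices at common zeros. The step you flag as the main obstacle is in fact immediate: $P'\circ\tau$ and $P$ are both nonvanishing off the arrangement on the overlap, which is all the paper uses, so your explicit formula $P'\circ\tau=\kappa P/p_{t_0}^{\,n+k+1}$ is correct but not needed.
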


\begin{proof}
Since $\sum_iB_{ij}=0$, the function $\psi_j=\sum_iB_{ij}\log|P_i|$ is homogeneous of degree $0$, and in every chart it is the function $\psi_j$ of Section~\ref{sec:KR}. Multiplying the $P_i$ by constants changes $\psi_j$ by an additive constant. Hence $\psi'_j\circ\tau=\psi_j+c_j$. By the chain rule,
\[
\Gamma'_k\circ\tau=\frac{P'\circ\tau}{P}\,(\det D\tau)^{-1}\,\Gamma_k ,
\]
and $g_k=(P'\circ\tau/P)(\det D\tau)^{-1}$ vanishes nowhere on the common chart minus the arrangement.

Assume the claim for all $m'>m$. In the determinant defining $\Gamma'_m\circ\tau$, the chain rule replaces derivatives with respect to $y'$ by derivatives with respect to $y$, at the cost of the factor $(\det D\tau)^{-1}$. Moreover
\[
\nabla\big(\Gamma'_{m'}\circ\tau\big)=g_{m'}\nabla\Gamma_{m'}+\sum_{m''>m'}h_{m'm''}\nabla\Gamma_{m''}+\Gamma_{m'}\nabla g_{m'}+\sum_{m''>m'}\Gamma_{m''}\nabla h_{m'm''} .
\]
Row operations, performed from the last row upwards, remove the terms $h\,\nabla\Gamma_{m''}$. Expanding multilinearly, the terms containing $\Gamma_{m'}\nabla g$ or $\Gamma_{m''}\nabla h$ are multiples of $\Gamma_{m'}$ or $\Gamma_{m''}$ with $m',m''>m$. The remaining term is $(P'\circ\tau/P)(\det D\tau)^{-1}\prod_{m'>m}g_{m'}\cdot\Gamma_m$. This proves the formula.

The statement about zero sets follows by downward induction on $m$. At a common zero of all the $\Gamma_m$, the Jacobian matrices of the two families differ by an invertible triangular matrix and by $D\tau$, which proves the last assertion.
\end{proof}

\begin{definition}
Assume (U$_D$) and (U$_B$). Let $|S|=j\le k-1$ and $\mathcal C\in\mathrm{Ch}(S)$. We denote by $Z(S,\mathcal C)$ the number of common zeros in $\mathcal C$ of the Jacobians $\tilde\Gamma_{j+1},\dots,\tilde\Gamma_k$ of a reduced system along $S$. By Lemma~\ref{lem:chart-independence} this number, and the nondegeneracy of these zeros, do not depend on the choice of the reduced system. For $|S|=k$ we put $Z(S,\{v_S\})=1$, the number of zeros of the empty family.
\end{definition}

By Theorems~\ref{thm:restriction-LS} and~\ref{thm:restriction-LS-infinity}, together with Proposition~\ref{prop:lower-bound-LS}(1), $Z(S,\mathcal C)$ is also the number of common zeros in $\mathcal C$ of $\Gamma_{j+1},\dots,\Gamma_k$ if $n+k+1\notin S$, respectively of $\Gamma^{\mathrm{top}}_{j+1},\dots,\Gamma^{\mathrm{top}}_k$ if $n+k+1\in S$. For $S=\emptyset$, $Z(\emptyset,\mathcal C)$ is the number of solutions of $\Gamma_1=\dots=\Gamma_k=0$ in the chamber $\mathcal C$ of $\mathbb{R}^k$.

We now add the genericity assumption:
\begin{itemize}
\item[(G)] for every $S$ with $|S|\le k-1$ and every $\mathcal C\in\mathrm{Ch}(S)$, the zeros counted by $Z(S,\mathcal C)$ are nondegenerate.
\end{itemize}

\begin{lemma}\label{lem:inheritance}
Assume (U$_D$) and (U$_B$). Let $|S|=j\le k-1$ and let $(\tilde D,\tilde B)$ be a reduced system along $S$, with $N=B_{S,\{1,\dots,j\}}$.
\begin{enumerate}
\item For $U\subset\tilde R$ with $|U|=d+1$, $\det\tilde D_U=\pm\det D_{S\cup U}/\det D_{S\cup T}$. For $U\subset\tilde R$ with $1\le|U|\le d$,
\[
\det\tilde B_{U,\{j+1,\dots,j+|U|\}}=\pm\frac{\det B_{S\cup U,\{1,\dots,j+|U|\}}}{\det N}.
\]
Hence $(\tilde D,\tilde B)$ satisfies (U$_D$) and (U$_B$).
\item For $i\in\tilde R$, a reduced system of $(\tilde D,\tilde B)$ along $\{i\}$ is obtained from a reduced system of $(D,B)$ along $S\cup\{i\}$ by operations as in Lemma~\ref{lem:chart-independence}.
\end{enumerate}
\end{lemma}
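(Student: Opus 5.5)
For part~1, I would begin with the matrix $\tilde D$. Its rows indexed by $T$ are unit vectors (Proposition~\ref{prop:reduced-format}(1)), and by Lemma~\ref{lem:minor-coordinates} each row $\tilde D_i=(c_{it})_{t\in T}$ is the coordinate vector of $D_i$ modulo the span of the $D_s$, $s\in S$, in the basis $(D_t)_{t\in T}$. I would stack the rows $D_s$, $s\in S$, on top of the rows $D_u$, $u\in U$, and write $D_u=\sum_{s}c_{us}D_s+\sum_{t}c_{ut}D_t$. Row operations then eliminate the $S$-components. What remains is the block product of $\tilde D_U$ and the matrix $D_{S\cup T}$ (expressed in the basis $D_{S\cup T}$), so $\det D_{S\cup U}=\pm\det\tilde D_U\det D_{S\cup T}$. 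The sign comes only from reordering the columns of $\tilde D$ ($T'$ first, then $t_0$) and from the row orders. Property (U$_D$) for $(\tilde D,\tilde B)$ then follows at once from (U$_D$) for $D$, since $|S\cup U|=k+1$.

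For $\tilde B$, I would reuse Step~2 of the proof of Theorem~\ref{thm:restriction-LS} exactly. With $G$ as in~\eqref{eq:G}, the first $j+|U|$ columns of $G$ form a block that is unipotent upper triangular in its leading square, so $\det B_{S\cup U,\{1,\dots,j+|U|\}}=\det (BG)_{S\cup U,\{1,\dots,j+|U|\}}$. The rows of $BG$ indexed by $S$ are $[\,N\,|\,0\,]$, and the rows indexed by $U$ in columns $j+1,\dots,j+|U|$ are those of $\tilde B$. The matrix is therefore block lower triangular, and its determinant equals $\det N\cdot\det\tilde B_{U,\{j+1,\dots,j+|U|\}}$, up to the sign of reordering rows when $S$ is listed before $U$. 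One point needs care: $U$ may contain $t_0$, whose row in $\tilde B$ is the row $t_0$ of $BG$. This is consistent because the column sums of $\tilde B$ vanish and $t_0$ is simply an index of $\tilde R$. Given this, (U$_B$) for $(\tilde D,\tilde B)$ follows from (U$_B$) for $B$ applied to $S\cup U$, with $|S\cup U|\le k$.

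For part~2, I would compose the two reductions explicitly. I would pick $T\ni i$ with $t_0\neq i$ (possible since $d\ge1$ gives $|T|\ge2$), and reduce $(\tilde D,\tilde B)$ along $\{i\}$ using $\tilde T=T\setminus\{i\}$ and the same $t_0$. The new affine functions are the restrictions of the $\tilde p_l$ to $\{\tilde p_i=0\}$, expressed in the coordinates $w_t$, $t\in T'\setminus\{i\}$. These are exactly the restrictions of the original $P_l$ to $L_{S\cup\{i\}}$, divided by $P_{t_0}$ and written in the same coordinates. The reduced $D$-matrix therefore agrees with that of $(D,B)$ along $S\cup\{i\}$ with the set $T\setminus\{i\}$, up to an ordering of the rows and columns. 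On the exponent side, the Schur complement of $\tilde B_{i,j+1}$ in $\tilde B$, taken after reducing by $N$, equals the Schur complement of $B_{S\cup\{i\},\{1,\dots,j+1\}}$ in $B$ (quotient formula for Schur complements), so the $\tilde B$-matrices coincide. Any other choice of $T$ or $t_0$ differs by a change of the hyperplane at infinity, a linear coordinate change, or rescaling, which are covered by Lemma~\ref{lem:chart-independence}.

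I expect the main obstacle to be the sign and ordering bookkeeping, together with the Schur quotient identity in part~2. For the latter, I would argue through $G$: the product of the two unipotent $G$-matrices is a unipotent block matrix that clears the first $j+1$ columns on the rows of $S\cup\{i\}$. Its lower-right block must then be the $G$-matrix for $S\cup\{i\}$, because that matrix is uniquely determined by this clearing property.
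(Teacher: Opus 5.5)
Your proposal is correct and is essentially the paper's proof. Part~1 is the same block-triangular computation (the paper phrases it via $D_{S\cup U}D_{S\cup T}^{-1}$ and Schur's determinant formula). For the exponents in part~2 the paper uses exactly the uniqueness you invoke: for $m>j+1$ there is a unique vector $b_m+\sum_{l\le j+1}\alpha_lb_l$ vanishing on the rows $S\cup\{i\}$. For the affine functions it simply observes that both reduced systems are restrictions of the $P_l$ to $\Lambda_{S\cup\{i\}}$ in two projective coordinate systems, instead of fixing a chart with $i\in T$.

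One small correction: the product of your two $G$-matrices is not itself the $G$-matrix for $S\cup\{i\}$, since its column $j+1$ is the column $j+1$ of the first $G$ and not $e_{j+1}$. Only its last $k-j-1$ columns coincide with those of that matrix, and that is all the argument needs.
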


\begin{proof}
1. By Lemma~\ref{lem:minor-coordinates}, the rows of $D_{S\cup U}D_{S\cup T}^{-1}$ indexed by $S$ are unit vectors supported in the columns $S$, and the block indexed by $U$ and the columns $T$ is $\tilde D_U$. Taking determinants gives the first formula. The second one is Schur's determinant formula applied to the matrix with rows $S\cup U$, $S$ first, and columns $1,\dots,j+|U|$.

2. For the exponents, recall from the proof of Proposition~\ref{prop:reduced-format} that, for $m>j$, the column $\tilde b_m$ of $\tilde B$ is the restriction to the rows $\tilde R$ of the unique vector of the form $b_m+\sum_{l\le j}\alpha_lb_l$ which vanishes on the rows $S$; uniqueness holds because $\det N\neq0$. Reducing $(\tilde D,\tilde B)$ along $\{i\}$ therefore gives, for $m>j+1$, the restriction of a vector of the form $b_m+\sum_{l\le j+1}\alpha'_lb_l$ which vanishes on the rows $S$ and on the row $i$. Reducing $(D,B)$ along $S\cup\{i\}$ gives a vector of the same form with the same vanishing conditions. Since $\det B_{S\cup\{i\},\{1,\dots,j+1\}}\neq0$ by (U$_B$), such a vector is unique, so the two reduced exponent matrices coincide. In particular they do not depend on the order of the rows of $S\cup\{i\}$. For the affine functions: by Lemma~\ref{lem:minor-coordinates}, both reduced systems consist of the restrictions of the forms $P_l$, $l\notin S\cup\{i\}$, to $\Lambda_{S\cup\{i\}}$, written in two systems of projective coordinates and up to constant factors.
\end{proof}

\begin{lemma}[Vertices]\label{lem:vertices}
Let $(D,B)$ satisfy (U$_D$) and (U$_B$). Then $\Gamma_k$ does not vanish at the finite vertices $\Lambda_{S'}$, $|S'|=k$, $n+k+1\notin S'$, and $\Gamma^{\mathrm{top}}_k$ does not vanish at the vertices at infinity.
\end{lemma}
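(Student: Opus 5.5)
For the finite vertices we will use the closed formula from Proposition~\ref{prop:gammak}(1) with $m=k$, as in Remark~\ref{rem:flat-vanishing}. Let $v=\Lambda_{S'}$ with $|S'|=k$ and $n+k+1\notin S'$. In the expansion
\[
\Gamma_k=\sum_{|I|=k}\det(B'_I)\det(D'_I)\prod_{i\notin I}p_i ,
\]
every term with $I\neq S'$ contains a factor $p_s$ with $s\in S'$, and this factor vanishes at $v$. Only the term $I=S'$ survives:
\[
\Gamma_k(v)=\det B_{S',\{1,\dots,k\}}\,\det D'_{S'}\prod_{i\le n+k,\ i\notin S'}p_i(v).
\]
We check that each factor is nonzero. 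The first factor is nonzero by (U$_B$) with $U=S'$. For the second, (U$_D$) applied to the rows $S'\cup\{n+k+1\}$ shows that $\det(D_{S'};D_{n+k+1})$ is nonzero. Since $D_{n+k+1}=(0,\dots,0,1)$, this determinant equals $\pm\det D'_{S'}$, so $\det D'_{S'}\neq0$. For the third, if $p_i(v)=0$ for some $i\notin S'$, then the $k+1$ rows $D_s$ ($s\in S'$) and $D_i$ would all vanish on the nonzero vector $(v,1)$. That contradicts (U$_D$).

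For vertices at infinity, we will argue in the same way with the central arrangement, using Lemma~\ref{lem:top}. There $\Gamma^{\mathrm{top}}_k=\sum_{|I|=k}\det(B'_I)\det(D'_I)\prod_{i\notin I}\ell_i$, with $\ell_i=\langle D'_i,\cdot\rangle$. A vertex at infinity is $\Lambda_S$ with $S=S''\cup\{n+k+1\}$ and $|S''|=k-1$. It is the direction $\theta$ spanning $V=\bigcap_{s\in S''}\ker\ell_s$. Each surviving term must have $S''\subset I$, so $I=S''\cup\{i\}$ with $i\in R=\{1,\dots,n+k\}\setminus S''$. The $\ell_l$ for $l\in R\setminus\{i\}$ must be kept as factors.

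The cleanest route is the line case of the restriction theorem. Theorem~\ref{thm:restriction-LS-infinity}(1), applied with $j=k-1$, gives the exact identity $\Gamma^{\mathrm{top}}_k\circ\varphi=\lambda\tilde\Gamma_k$ with $\lambda\neq0$. The hypothesis $\det N\neq0$ is exactly (U$_B$) for $U=S$. Here $\varphi$ parametrizes the chart $A_{t_0}\subset V$, and $\tilde\Gamma_k$ is a constant, the Jacobian of a reduced system of type $(n,0)$. With $\tilde k=0$ there is, however, a degenerate issue of conventions. To avoid it, we will instead compute directly. Since $V$ is one-dimensional, the chart $A_{t_0}$ is the single point $\theta$ with $\ell_{t_0}(\theta)=1$. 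This gives
\[
\Gamma^{\mathrm{top}}_k(\theta)=\sum_{i\in R}\det B_{S''\cup\{i\},\{1,\dots,k\}}\det D'_{S''\cup\{i\}}\prod_{l\in R\setminus\{i\}}\ell_l(\theta).
\]
By (U$_D$), $\ell_l(\theta)\neq0$ for every $l\in R$, since $\theta$ together with the rows $S''\cup\{l,n+k+1\}$ would otherwise violate independence. Factoring out $\prod_{l\in R}\ell_l(\theta)$, we are left with $\sum_{i\in R}\det B_{S''\cup\{i\}}\cdot\det D'_{S''\cup\{i\}}/\ell_i(\theta)$.

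The key step, and the main obstacle, is to show that this last sum is nonzero. Expanding $\det D'_{S''\cup\{i\}}$ along its last row $D'_i$ gives $\det D'_{S''\cup\{i\}}=\langle D'_i,c\rangle$. Here $c$ is the cofactor vector, which is orthogonal to the $D'_s$ for $s\in S''$ and hence proportional to $\theta$: $c=\kappa\theta$ with $\kappa\neq0$. So $\det D'_{S''\cup\{i\}}/\ell_i(\theta)=\kappa$ for every $i\in R$. The sum therefore collapses to
\[
\kappa\sum_{i\in R}\det B_{S''\cup\{i\},\{1,\dots,k\}}=-\kappa\det B_{S''\cup\{n+k+1\},\{1,\dots,k\}},
\]
using the zero column sums of $B$ (terms with $i\in S''$ vanish because of a repeated row). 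This is the Euler/column-sum mechanism of Step~4 of the proof of Theorem~\ref{thm:restriction-LS-infinity}. By (U$_B$) with $U=S''\cup\{n+k+1\}$ the result is nonzero, and hence $\Gamma^{\mathrm{top}}_k$ does not vanish at the vertex at infinity. The same value is obtained at $-\theta$ up to the sign $(-1)^n$, since $\Gamma^{\mathrm{top}}_k$ is homogeneous of degree $n$.
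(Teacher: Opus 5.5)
Your proof is correct and is essentially the paper's. At a finite vertex only the term $I=S'$ survives. At a vertex at infinity, the cofactor vector of $D'_{S''}$ reduces the surviving terms to $\kappa\prod_{l\in R}\ell_l(\theta)\sum_{i\in R}\det B_{S''\cup\{i\},\{1,\dots,k\}}$; the paper takes $\theta$ to be that cofactor vector itself, so $\kappa=1$. The zero column sums then turn this into $-\det B_{S''\cup\{n+k+1\},\{1,\dots,k\}}$.

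You were right to drop the detour through Theorem~\ref{thm:restriction-LS-infinity}: that theorem is only stated for flats with $|S|\le k-1$, while a vertex at infinity has $|S|=k$. Your (U$_D$) arguments for $\det D'_{S'}\neq0$, $p_i(v)\neq0$ and $\ell_l(\theta)\neq0$ supply details the paper leaves implicit.
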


\begin{proof}
At a finite vertex $v=\Lambda_{S'}$ only the term $I=S'$ of Proposition~\ref{prop:gammak} survives, so
\[
\Gamma_k(v)=\det B'_{S'}\,\det D'_{S'}\prod_{l\notin S'}p_l(v)\neq0 .
\]
Now let $v=[\theta:0]$ with $S'=S''\cup\{n+k+1\}$ and $|S''|=k-1$, and put $R=[n+k]\setminus S''$. Choose for $\theta$ the vector of cofactors of the $(k-1)\times k$ matrix $D'_{S''}$, so that $\det(D'_{S''};x)=\langle x,\theta\rangle$ for all $x$. By Lemma~\ref{lem:top}, $\Gamma^{\mathrm{top}}_k=\sum_{|I|=k}\det B'_I\det D'_I\prod_{l\notin I}\ell_l$. At $\theta$, the terms with $S''\not\subset I$ vanish. For $I=S''\cup\{i\}$ with $i\in R$, and rows in the order $S''$ then $i$ in both determinants,
\[
\det D'_I\prod_{l\notin I}\ell_l(\theta)=\ell_i(\theta)\prod_{l\in R\setminus\{i\}}\ell_l(\theta)=\prod_{l\in R}\ell_l(\theta).
\]
Hence, using the zero column sums,
\[
\Gamma^{\mathrm{top}}_k(\theta)=\prod_{l\in R}\ell_l(\theta)\cdot\det\Big(B_{S''};\ \sum_{i\in R}B_i\Big)_{\{1,\dots,k\}}
=-\prod_{l\in R}\ell_l(\theta)\cdot\det B_{S''\cup\{n+k+1\},\{1,\dots,k\}} .
\]
This is nonzero by (U$_D$) (which gives $\ell_l(\theta)\neq0$ for $l\in R$) and by (U$_B$).
\end{proof}

The next lemma is what makes the recursion work. It says that, under the genericity assumptions, the facet-count formula of Theorem~\ref{thm:facet-count-LS} can be applied to every chamber $\mathcal C$ of every flat $\Lambda_S$, and that the numbers of zeros on the facets which enter this formula are exactly the numbers $Z(S\cup\{i\},F_i(\mathcal C))$ attached to the facets $F_i(\mathcal C)$, which are chambers of flats of one dimension less. In this way the boundary degree of $\mathcal C$, hence a lower bound for $Z(S,\mathcal C)$, is expressed in terms of the same kind of numbers in lower dimension.

\begin{lemma}[Facets]\label{lem:facets-recursive}
Assume (U$_D$), (U$_B$) and (G). Let $|S|=j\le k-1$, $\mathcal C\in\mathrm{Ch}(S)$, and let $(\tilde D,\tilde B)$ be a reduced system along $S$ with chart $t_0$. Let $\tilde{\mathcal C}$ be the chamber $\mathcal C$ written in this chart. Then:
\begin{enumerate}
\item $\tilde{\mathcal C}$ satisfies the hypotheses (H) and (H$_\infty$) of Theorem~\ref{thm:facet-count-unbounded} for the system $(\tilde D,\tilde B)$;
\item the numbers of Theorem~\ref{thm:facet-count-LS} are $\tilde N_i=Z(S\cup\{i\},F_i(\mathcal C))$ for $i\in I(\mathcal C)$, and $\tilde N_i=0$ for $i\notin I(\mathcal C)$.
\end{enumerate}
\end{lemma}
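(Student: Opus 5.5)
The approach is to reduce everything to the already established machinery: restriction formulas, chart independence, inheritance of genericity, and the vertex lemma. We then check (H) and (H$_\infty$) facet by facet. The main tool is Lemma~\ref{lem:inheritance}(2). It identifies a reduced system of $(\tilde D,\tilde B)$ along $\{i\}$ with a reduced system of $(D,B)$ along $S\cup\{i\}$, up to the operations of Lemma~\ref{lem:chart-independence}. Hence the common zeros of $(\tilde\Gamma_{j+2},\dots,\tilde\Gamma_k)$ on the facet of $\tilde{\mathcal C}$ on $\{\tilde p_i=0\}$ correspond bijectively, with nondegeneracy preserved, to the zeros counted by $Z(S\cup\{i\},F_i(\mathcal C))$.

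More precisely, I would apply Theorem~\ref{thm:restriction-LS} to the pair $(\tilde D,\tilde B)$ of type $(n,d)$, which by Lemma~\ref{lem:inheritance}(1) satisfies (U$_D$) and (U$_B$), with its flat $\{\tilde p_i=0\}$ for $i\ne t_0$. This gives $\tilde\Gamma_m\circ\tilde\varphi=\tilde\lambda^{2^{k-m}}\hat\Gamma_m$ for $m\ge j+2$. For the facet at infinity ($i=t_0$) I would use Theorem~\ref{thm:restriction-LS-infinity} instead, which gives equal ideals and hence equal zero sets and nondegeneracy, as in Proposition~\ref{prop:lower-bound-LS}(1). Combined with chart independence, this proves part~2. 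For $i\notin I(\mathcal C)$ the hyperplane supports no facet, or $\tilde\Sigma=\emptyset$, so $\tilde N_i=0$ by definition. The case $d=1$ needs no separate treatment: the facets are vertices, $Z=1$ by convention, and the count is simply the number of endpoints (including infinity).

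For part~1, hypothesis (H) has three components. First, the zeros of $(\tilde\Gamma_{j+2},\dots,\tilde\Gamma_k)$ on $\partial\tilde{\mathcal C}$ must lie in relative interiors of facets. A zero on a lower-dimensional face lies on a flat $\Lambda_{S\cup U}$ with $|U|\ge2$. By Corollary~\ref{cor:flat-vanishing} applied to the reduced system, or more directly by restricting to $\Lambda_{S\cup U}$ via Theorem~\ref{thm:restriction-LS}, it would be a common zero of the Jacobians there. These are $d-1$ equations on a flat of dimension $d-|U|<d-1$. I would argue via (G) on that flat together with Lemma~\ref{lem:vertices}: at vertices, the last Jacobian is nonzero, and on higher flats one iterates the restriction to reach a vertex. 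Second, transversality on the facet is (G) applied to $(S\cup\{i\},F_i(\mathcal C))$. Third, $\tilde\Gamma$ must not vanish on $\partial\tilde{\mathcal C}$. At a zero of $(\tilde\Gamma_{j+2},\dots)$ in $F_i$, Theorem~\ref{thm:restriction-LS}(2) shows that $\tilde\Gamma_{j+1}$ restricts to $\tilde P'$ times the Jacobian determinant, which is nonzero by (G) and because $\tilde P'\ne0$ off the other hyperplanes. (H$_\infty$) is handled identically, in the chart where $t_0$ is a finite hyperplane, using Theorem~\ref{thm:restriction-LS-infinity}(3) and interiority of zeros in $\Sigma^\circ$ via the same face argument.

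I expect the main obstacle to be excluding zeros on faces of codimension $\ge2$, i.e.\ showing that the relevant Jacobians have no common zeros on lower-dimensional flats. The dimension count is overdetermined, but turning that into a proof requires the iterated restriction to end at a vertex where Lemma~\ref{lem:vertices} applies. I would first try downward induction on $|U|$, using that each restricted top Jacobian is again the last Jacobian of a reduced system satisfying (U$_B$) and (U$_D$).
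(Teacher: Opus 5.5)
You have part~2 and the facet-level pieces of (H) right: transversality from (G), and $\tilde\Gamma_{j+1}\neq0$ at a zero on a facet via Theorem~\ref{thm:restriction-LS}(2). There is a genuine gap, however, at exactly the step you flag: excluding common zeros of $\tilde\Gamma_{j+2},\dots,\tilde\Gamma_k$ on faces of $\overline{\mathcal C}$ of codimension $\ge2$ in $\Lambda_S$. The route you suggest would not close it.

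The dimension count is misleading. Apply Theorem~\ref{thm:restriction-LS}(3), or Corollary~\ref{cor:flat-vanishing}, to $(\tilde D,\tilde B)$ along $S'\setminus S$ with $|S'|=j'$. Then $\tilde\Gamma_{j+1},\dots,\tilde\Gamma_{j'-1}$ vanish identically on $\Lambda_{S'}$. So the family has only $k-j'+1$ nontrivial members on a flat of dimension $k-j'$. The system is overdetermined by exactly one equation, $\tilde\Gamma_{j'}$, and that equation is what you must control. Iterating the restriction ``down to a vertex'' does not help either. A point in the relative interior of a positive-dimensional face never reaches a vertex, and restricting only the last Jacobian $\tilde\Gamma_k$ gives nothing: on an edge, it genuinely has zeros.

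The missing step is your own facet argument, applied on the face that contains the point. Let $x\in\partial\tilde{\mathcal C}$ be a common zero of $\tilde\Gamma_{j+2},\dots,\tilde\Gamma_k$. Let $F'\in\mathrm{Ch}(S')$ be the face containing $x$ in its relative interior, and suppose $j'=|S'|\ge j+2$.
\begin{itemize}
\item If $j'=k$, then $x$ is a vertex and $\tilde\Gamma_k(x)\neq0$ by Lemma~\ref{lem:vertices} applied to $(\tilde D,\tilde B)$.
\item If $j'\le k-1$, then $x$ is a common zero of $\tilde\Gamma_{j'+1},\dots,\tilde\Gamma_k$ on $F'$. By the restriction theorems, Lemma~\ref{lem:inheritance}(2) and Lemma~\ref{lem:chart-independence}, it is one of the $Z(S',F')$ zeros, hence nondegenerate by (G). Since $x$ lies in a chamber of $\Lambda_{S'}$, we have $\tilde P'(x)\neq0$. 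Theorem~\ref{thm:restriction-LS}(2), or Theorem~\ref{thm:restriction-LS-infinity}(3) modulo the later Jacobians when $t_0\in S'$, then gives $\tilde\Gamma_{j'}(x)\neq0$.
\end{itemize}
Because $j'\ge j+2$, both $\tilde\Gamma_{j'}$ and $\tilde\Gamma_k$ belong to the family $\tilde\Gamma_{j+2},\dots,\tilde\Gamma_k$, which is a contradiction. No induction is needed.

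This argument also settles two points your sketch gets slightly wrong.
\begin{itemize}
\item If $t_0\notin I(\mathcal C)$, then $\tilde\Sigma$ can be nonempty but lower-dimensional. In that case $\tilde N_{t_0}=0$ is not ``by definition'': it is (H$_\infty$)(ii), which comes from the face argument above.
\item When $d=1$ the facets are vertices. There $\tilde\Gamma_k\neq0$ comes from Lemma~\ref{lem:vertices}, not from (G) and Theorem~\ref{thm:restriction-LS}(2), which only apply to flats $\Lambda_{S'}$ with $|S'|\le k-1$.
\end{itemize}
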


\begin{proof}
\emph{Step 1: restriction to faces.} Let $S'\supsetneq S$ with $|S'|=j'\le k-1$, and let $F'\in\mathrm{Ch}(S')$ with $F'\subset\overline{\mathcal C}$. Apply Theorem~\ref{thm:restriction-LS} (if $t_0\notin S'$) or Theorem~\ref{thm:restriction-LS-infinity} (if $t_0\in S'$) to the system $(\tilde D,\tilde B)$ and to the set $S'\setminus S$. The hypotheses hold by Lemma~\ref{lem:inheritance}(1); in particular the relevant pivot is nonzero by (U$_B$). Combine this with Lemma~\ref{lem:inheritance}(2) and Lemma~\ref{lem:chart-independence}. On $F'$ we obtain, with the top-degree parts when $F'$ lies at infinity of the chart:
\begin{itemize}
\item for every $m\ge j'+1$, the common zeros of $\tilde\Gamma_m,\dots,\tilde\Gamma_k$ are those of the corresponding Jacobians of a reduced system along $S'$; in particular the common zeros of $\tilde\Gamma_{j'+1},\dots,\tilde\Gamma_k$ in $F'$ are the $Z(S',F')$ nondegenerate zeros of (G);
\item $\tilde\Gamma_{j'}$ is congruent, modulo the ideal generated by $\tilde\Gamma_{j'+1},\dots,\tilde\Gamma_k$, to a nonzero constant times $\tilde P'\det\partial(\Gamma'_{j'+1},\dots,\Gamma'_k)/\partial w$, where the $\Gamma'$ are the Jacobians of the reduced system along $S'$;
\item for $j+1\le m\le j'-1$, $\tilde\Gamma_m$ vanishes on $F'$ or lies in the ideal generated by $\tilde\Gamma_{m+1},\dots,\tilde\Gamma_k$.
\end{itemize}

\emph{Step 2.} Let $x\in F'$ with $\tilde\Gamma_{j'+1}(x)=\dots=\tilde\Gamma_k(x)=0$. If $j'<k$, then $\tilde P'(x)\neq0$ because $x$ lies in a chamber, and the Jacobian determinant is nonzero by (G), so $\tilde\Gamma_{j'}(x)\neq0$. If $j'=k$, then $F'$ is a vertex and $\tilde\Gamma_k(x)\neq0$ by Lemma~\ref{lem:vertices} applied to $(\tilde D,\tilde B)$; this case does not use Step~1.

\emph{Step 3.} Let $x\in F'$ be a common zero of $\tilde\Gamma_{j+2},\dots,\tilde\Gamma_k$. If $j'\ge j+2$, this family contains $\tilde\Gamma_{j'},\dots,\tilde\Gamma_k$, which contradicts Step~2. Hence such zeros lie only in facets ($j'=j+1$). There they are nondegenerate, that is, transversal, and $\tilde\Gamma_{j+1}(x)\neq0$ by Step~2, so $\tilde\Gamma$ does not vanish on $\partial\tilde{\mathcal C}$. This is (H) for the finite facets. For the facet on $H_{t_0}$ the same argument, with top-degree parts, gives (H$_\infty$). Assertion~2 is Step~1 with $S'=S\cup\{i\}$ and $m=j+2$.
\end{proof}

\subsection{The recursive bounds}

\begin{lemma}[Bézout bound]\label{lem:bezout-flat}
Assume (U$_D$), (U$_B$) and (G). Let $|S|=j\le k-1$ and $d=k-j$. Then
\[
\sum_{\mathcal C\in\mathrm{Ch}(S)}Z(S,\mathcal C)\ \le\ \beta_d:=n^d\,2^{d(d-1)/2}.
\]
\end{lemma}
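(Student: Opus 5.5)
The plan is to apply Bézout's theorem in $\mathbb{RP}^d$ to the homogenized reduced Jacobians. First reduce to a single chart. Choose a reduced system $(\tilde D,\tilde B)$ along $S$ with chart $t_0\notin S$. By Lemma~\ref{lem:inheritance}(1) it is a pair of type $(n,d)$ satisfying (U$_D$) and (U$_B$). The chambers of $\Lambda_S$ are identified with the chambers $\tilde{\mathcal C}$ of the affine arrangement $\bigcup_{i\in\tilde R\setminus\{t_0\}}\{\tilde p_i=0\}$ in $\mathbb{R}^d$. This holds once we note that every chamber of $\Lambda_S$ meets the affine chart $\{P_{t_0}\neq0\}$ in exactly one chamber of the affine arrangement, which is true because $H_{t_0}\cap\Lambda_S$ is itself one of the hyperplanes of the arrangement on $\Lambda_S$. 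By the chart independence Lemma~\ref{lem:chart-independence}, $Z(S,\mathcal C)$ is the number of common zeros of $\tilde\Gamma=(\tilde\Gamma_{j+1},\dots,\tilde\Gamma_k)$ in $\tilde{\mathcal C}$. Hence $\sum_{\mathcal C}Z(S,\mathcal C)$ is at most the number of common zeros of $\tilde\Gamma$ in $\mathbb{R}^d$ off the arrangement. By (G) these zeros are nondegenerate.

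Next bound this count. By Proposition~\ref{prop:gammak}(2) applied to $(\tilde D,\tilde B)$, we have $\deg\tilde\Gamma_m\le 2^{k-m}n$ for $m=j+1,\dots,k$. The product of these bounds is $\prod_{i=0}^{d-1}2^in=\beta_d$. Homogenize each $\tilde\Gamma_m$ to a form of degree $2^{k-m}n$ in $d+1$ variables. Apply the refined Bézout theorem: the number of isolated points of the common zero set in $\mathbb{CP}^d$, counted with multiplicity, is at most the product of the degrees. This holds even if the intersection has positive-dimensional components, e.g.\ by Fulton's refined intersection theory, or by Lazard's form of the bound on isolated solutions. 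Each nondegenerate affine real zero is an isolated point of multiplicity one in $\mathbb{CP}^d$. Distinct affine zeros give distinct points. Hence their number is at most $\beta_d$.

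I expect the main obstacle to be justifying the use of Bézout. Positive-dimensional components of the intersection may occur, for instance at infinity or inside the arrangement, since by Corollary~\ref{cor:flat-vanishing} the Jacobians vanish on lower flats. These components must not affect the count of the isolated nondegenerate affine zeros. I would invoke the version of Bézout that bounds the number of isolated zeros of a polynomial system of $d$ equations in $d$ unknowns by the product of the degrees, regardless of excess components. A nondegenerate zero is automatically isolated, so no transversality at infinity is needed. If a self-contained argument is preferred, I would instead perturb $\tilde\Gamma$ by small generic forms of the same degrees. This makes all intersections in $\mathbb{CP}^d$ finite and transverse, so their total number is exactly $\beta_d$. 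By the implicit function theorem, each nondegenerate real affine zero persists under the perturbation and stays distinct from the others, and this gives the same inequality.
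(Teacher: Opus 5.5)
Your proof is correct and follows the paper's argument. You work in a single chart $\{P_{t_0}\neq0\}$, which contains every chamber of $\Lambda_S$ because $H_{t_0}$ belongs to the arrangement on $\Lambda_S$; you use the degree bounds $\deg\tilde\Gamma_m\le 2^{k-m}n$ of Proposition~\ref{prop:gammak}(2) for the reduced system; and you apply the B\'ezout inequality for isolated solutions (the paper cites Fulton, Example~8.4.6) to the zeros given by (G), which are nondegenerate and hence isolated in $\mathbb{C}^d$. Your perturbation variant is also a valid self-contained replacement for the refined B\'ezout theorem, but it is not needed.
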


\begin{proof}
Fix a reduced system along $S$ with chart $t_0$. Every chamber of $\Lambda_S$ lies in $\{P_{t_0}\neq0\}$, since $H_{t_0}$ belongs to the arrangement on $\Lambda_S$. The zeros counted by the $Z(S,\mathcal C)$ are therefore distinct real zeros of $\tilde\Gamma_{j+1},\dots,\tilde\Gamma_k$ in $\mathbb{R}^d$. They are nondegenerate, hence isolated in $\mathbb{C}^d$. By Proposition~\ref{prop:gammak}(2) applied to the reduced system, $\deg\tilde\Gamma_m\le2^{k-m}n$. By the Bézout inequality for isolated solutions \cite[Ex.~8.4.6]{Fulton}, their number is at most $\prod_{m=j+1}^k2^{k-m}n=n^d2^{d(d-1)/2}$.
\end{proof}

\begin{definition}\label{def:recursive-bounds}
For $|S|=j\le k-1$ and $i\notin S$ put
\[
\sigma_S(i)=\sign\det B_{S\cup\{i\},\{1,\dots,j+1\}},
\]
where the rows are those of $S$ in increasing order followed by the row $i$. It is nonzero by (U$_B$). Define integers $\ell_S(\mathcal C)$ and $u_S(\mathcal C)$, for all $S$ and all $\mathcal C\in\mathrm{Ch}(S)$, by induction on $d=k-|S|$.
\begin{itemize}
\item $d=0$: $\ell_S(\{v_S\})=u_S(\{v_S\})=1$.
\item $d\ge1$: for $\mathcal C\in\mathrm{Ch}(S)$, put
\[
a_S(\mathcal C)=\sum_{\substack{i\in I(\mathcal C)\\ \sigma_S(i)>0}}\ell_{S\cup\{i\}}\big(F_i(\mathcal C)\big)-\sum_{\substack{i\in I(\mathcal C)\\ \sigma_S(i)<0}}u_{S\cup\{i\}}\big(F_i(\mathcal C)\big),
\]
\[
b_S(\mathcal C)=\sum_{\substack{i\in I(\mathcal C)\\ \sigma_S(i)>0}}u_{S\cup\{i\}}\big(F_i(\mathcal C)\big)-\sum_{\substack{i\in I(\mathcal C)\\ \sigma_S(i)<0}}\ell_{S\cup\{i\}}\big(F_i(\mathcal C)\big),
\]
and then
\[
\ell_S(\mathcal C)=\max\Big(0,\ \Big\lceil\frac{a_S(\mathcal C)}2\Big\rceil,\ \Big\lceil-\frac{b_S(\mathcal C)}2\Big\rceil\Big),\qquad
u_S(\mathcal C)=\beta_d-\sum_{\mathcal C'\in\mathrm{Ch}(S)\setminus\{\mathcal C\}}\ell_S(\mathcal C') .
\]
\end{itemize}
At level $d$, the lower bounds $\ell_S$ use only the bounds at level $d-1$, and the upper bounds $u_S$ use the lower bounds at level $d$. So the sequences $(\ell_S(\mathcal C))$ and $(u_S(\mathcal C))$ are well defined.
\end{definition}

\begin{theorem}\label{thm:recursive-bounds}
Assume (U$_D$), (U$_B$) and (G). Then for every $S\subset[n+k+1]$ with $|S|\le k$ and every $\mathcal C\in\mathrm{Ch}(S)$,
\[
\ell_S(\mathcal C)\ \le\ Z(S,\mathcal C)\ \le\ u_S(\mathcal C).
\]
In particular, every chamber $\mathcal C$ of $\mathbb{R}^k\setminus\bigcup_{i\le n+k}\{p_i=0\}$ contains at least $\ell_\emptyset(\mathcal C)$ and at most $u_\emptyset(\mathcal C)$ solutions of $\Gamma_1=\dots=\Gamma_k=0$.
\end{theorem}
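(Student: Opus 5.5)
The proof is by induction on $d=k-|S|$, following the order of Definition~\ref{def:recursive-bounds}: at each level we first prove the lower bounds for all chambers of all flats of that dimension, and only then the upper bounds. For $d=0$, $\Lambda_S$ is the point $v_S$ and $\ell_S=Z=u_S=1$ by definition.

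\emph{Lower bounds.} Let $d\ge1$, and assume $\ell\le Z\le u$ for all flats of dimension $d-1$. Fix $S$ with $|S|=k-d$, a chamber $\mathcal C\in\mathrm{Ch}(S)$, and a reduced system $(\tilde D,\tilde B)$ along $S$ with some chart $t_0$.

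1. By Lemma~\ref{lem:facets-recursive}(1), the hypotheses (H) and (H$_\infty$) hold for $\tilde{\mathcal C}$.
2. Theorem~\ref{thm:facet-count-LS} therefore applies. By Lemma~\ref{lem:facets-recursive}(2), the facet numbers are $\tilde N_i=Z(S\cup\{i\},F_i(\mathcal C))$ for $i\in I(\mathcal C)$ and $\tilde N_i=0$ otherwise.
3. Since $\sign\tilde B_{i,j+1}=\sign(\det N)\,\sigma_S(i)$, the sets $I_\pm$ of Proposition~\ref{prop:degree-lower-bound} are either $\{\sigma_S=\pm1\}$ or the same sets swapped.
4. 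Apply Proposition~\ref{prop:degree-lower-bound} with $l_i=\ell_{S\cup\{i\}}(F_i(\mathcal C))$ and $L_i=u_{S\cup\{i\}}(F_i(\mathcal C))$, which is legitimate by the induction hypothesis. If $\det N>0$, the quantities $a,b$ are $a_S(\mathcal C),b_S(\mathcal C)$. If $\det N<0$, they become $-b_S,-a_S$, and the resulting bound $\max(0,\lceil a/2\rceil,\lceil -b/2\rceil)$ is unchanged by this swap. In either case $|\tilde d(\tilde{\mathcal C})|\ge\ell_S(\mathcal C)$.
5. By (G) the zeros are nondegenerate, so Proposition~\ref{prop:lower-bound-LS}(2) gives $Z(S,\mathcal C)\ge|\tilde d(\tilde{\mathcal C})|$.
6. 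For $S=\emptyset$ the same argument works directly with Theorem~\ref{thm:facet-count-unbounded}, using $\det N=1$.

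\emph{Upper bounds.} Once $\ell_S(\mathcal C')\le Z(S,\mathcal C')$ is known for all $\mathcal C'\in\mathrm{Ch}(S)$, Lemma~\ref{lem:bezout-flat} gives
\[
Z(S,\mathcal C)\le\beta_d-\sum_{\mathcal C'\neq\mathcal C}Z(S,\mathcal C')\le\beta_d-\sum_{\mathcal C'\neq\mathcal C}\ell_S(\mathcal C')=u_S(\mathcal C).
\]
This completes the induction.

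\emph{Main obstacle.} The delicate point is the bookkeeping that makes the facet numbers of Theorem~\ref{thm:facet-count-LS}, which are computed in one chart for one reduced system, coincide with the chart-independent numbers $Z(S\cup\{i\},F_i(\mathcal C))$. This includes the facet at infinity of the chart, $t_0\in I(\mathcal C)$, which must be matched with the chamber $F_{t_0}(\mathcal C)$ of $\Lambda_{S\cup\{t_0\}}$. That matching is exactly what Lemma~\ref{lem:facets-recursive} supplies, so I would check it carefully there. A second point to verify is that the sign flip caused by $\det N<0$ really only exchanges $a$ and $-b$. This holds because the bound in Proposition~\ref{prop:degree-lower-bound} is symmetric under $(a,b)\mapsto(-b,-a)$.
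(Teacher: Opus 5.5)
Your proposal is correct and follows the paper's proof essentially step for step. It proceeds by induction on $d$, applies the facet-count formula via Lemma~\ref{lem:facets-recursive} with $\tilde N_i=Z(S\cup\{i\},F_i(\mathcal C))$, uses the $(a,b)\mapsto(-b,-a)$ symmetry when $\det N<0$, invokes Proposition~\ref{prop:lower-bound-LS}(2) under (G), and finishes with Lemma~\ref{lem:bezout-flat} for the upper bound. Your separate step for $S=\emptyset$ is harmless but unnecessary, since the paper treats it uniformly as the reduced system $(D,B)$ with $\det N=1$.
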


\begin{proof}
We argue by induction on $d=k-|S|$. For $d=0$ both bounds equal $Z=1$.

Let $d\ge1$, $\mathcal C\in\mathrm{Ch}(S)$, and choose a reduced system along $S$. By Lemma~\ref{lem:facets-recursive}, Theorem~\ref{thm:facet-count-LS} applies to $\tilde{\mathcal C}=\mathcal C$, with $\tilde N_i=Z(S\cup\{i\},F_i(\mathcal C))$ for $i\in I(\mathcal C)$ and $\tilde N_i=0$ otherwise. By the induction hypothesis, $\ell_{S\cup\{i\}}(F_i(\mathcal C))\le\tilde N_i\le u_{S\cup\{i\}}(F_i(\mathcal C))$.

Proposition~\ref{prop:degree-lower-bound} is stated with the signs $\sign\tilde B_{i,j+1}=\sign(\det N)\,\sigma_S(i)$ (Proposition~\ref{prop:reduced-format}). If $\det N<0$, the two sets $I_\pm$ are exchanged, which replaces $(a,b)$ by $(-b,-a)$ and leaves the bound unchanged. Hence $|\tilde d(\mathcal C)|\ge\ell_S(\mathcal C)$. By Proposition~\ref{prop:lower-bound-LS}(2), whose nondegeneracy hypothesis is (G), $Z(S,\mathcal C)\ge|\tilde d(\mathcal C)|\ge\ell_S(\mathcal C)$.

For the upper bound, Lemma~\ref{lem:bezout-flat} and the lower bounds for the other chambers of $\Lambda_S$ give
\[
Z(S,\mathcal C)\le\beta_d-\sum_{\mathcal C'\neq \mathcal C}Z(S,\mathcal C')\le\beta_d-\sum_{\mathcal C'\neq \mathcal C}\ell_S(\mathcal C')=u_S(\mathcal C). \qedhere
\]
\end{proof}

\begin{remark}
\begin{enumerate}
\item \emph{Dimension one.} If $|S|=k-1$, the chambers of the projective line $\Lambda_S$ are the $n+2$ arcs $\Delta$ of Proposition~\ref{prop:line-zeros}. Their facets are the two endpoints, and $\sigma_S(i)=\sign\det B_{S\cup\{i\},\{1,\dots,k\}}$. Moreover $\beta_1=n$. Definition~\ref{def:recursive-bounds} then gives $\ell_S(\Delta)=l_\Delta$ and $u_S(\Delta)=L_\Delta$. So the first step of the recursion is exactly the Descartes rule of signs for circuits \cite{BD}, with its parity refinement.
\item \emph{Combinatorial nature.} The bounds only depend on two pieces of data: the face structure of the projective arrangement $\{H_i\}$, which is determined by the oriented matroid of $D$, and the signs of the minors $\det B_{S\cup\{i\},\{1,\dots,|S|+1\}}$. No function $\Gamma_m$ has to be computed.
\item \emph{Refinements.} In Definition~\ref{def:recursive-bounds}, $\beta_d$ may be replaced by any upper bound for the number of nondegenerate real zeros of the reduced system on $\Lambda_S$, for instance a mixed volume. The parity relations $Z(S,\mathcal C)\equiv\tilde d(\mathcal C)\pmod 2$ (Proposition~\ref{prop:lower-bound-LS}) and, when the homogenized reduced Jacobians have finitely many common complex zeros, $\sum_{\mathcal C\in\mathrm{Ch}(S)}Z(S,\mathcal C)\equiv\beta_d\pmod 2$, could also be used to refine the bounds.
\end{enumerate}
\end{remark}

\section{Bounds for the positive solutions}\label{sec:possol}

We now return to the polynomial system~\eqref{eq:system} of Section~\ref{sec:polysys}, with $B$ and $D$ Gale dual matrices of $A$ and $C$, and to its positive chamber $\Delta=\{p_1>0,\dots,p_{n+k}>0\}$. We assume $\Delta\neq\emptyset$; otherwise~\eqref{eq:system} has no positive solution by Theorem~\ref{thm:gale-duality}. We view $\Delta$ as a chamber of $\mathrm{Ch}(\emptyset)$. For $S\subset[n+k+1]$ we write $\Delta_S$ for the chamber of $\mathrm{Ch}(S)$ contained in $\overline\Delta$, when it exists; it is the face of $\overline\Delta$ lying on $\Lambda_S$.

\begin{definition}\label{def:boundary-faces}
For $1\le j\le k$, let $\mathcal S_j(\Delta)$ be the set of subsets $S\subset[n+k+1]$ with $|S|=j$ such that
\begin{enumerate}
\item $\Delta_S$ exists, that is, $\overline\Delta$ has a face of codimension $j$ on $\Lambda_S$;
\item the vectors $b_i^{(j-1)}=(B_{i1},\dots,B_{i,j-1})$, $i\in S$, positively span $\mathbb{R}^{j-1}$.
\end{enumerate}
Condition~2 is motivated by the curves $C_j$. Since $C_j$ is contained in the level set $\mu_j$, its ends lie in $\overline{\mu_j}\setminus\mu_j$. Under (U$_D$) and (U$_B$), Theorem~\ref{thm:mu-boundary} shows that $\overline{\mu_j}$ meets the relative interior of a face $\Delta_S$ with $|S|=j$ if and only if condition~2 holds, and Proposition~\ref{prop:ends} below shows that the ends of the curves $C_j(\Theta)$, obtained from $C_j$ by perturbing the Jacobians (Section~\ref{sec:KR}), lie on these faces only. For $j=1$ condition~2 is empty, so $\mathcal S_1(\Delta)$ is the set of facets of $\Delta$, including the facet at infinity if $\Delta$ is unbounded. For $j=k$, $\Delta_S$ is a vertex and $Z(S,\Delta_S)=1$.
\end{definition}

In Definition~\ref{def:boundary-faces}, condition~1 only depends on the oriented matroid of $D$, equivalently of $C$, and condition~2 only on signs of minors of $B$, equivalently of the lifted exponent matrices (Remark~\ref{rem:flag-minors}). We first make condition~1 explicit. Recall that $\overline\Delta\subset\mathbb{S}^k$ is the image of the cone $K=\{v\in\mathbb{R}^{k+1}:\ \langle D_i,v\rangle\ge0,\ i\in[n+k+1]\}$, since $P_i(v)=\langle D_i,v\rangle$ and $P_{n+k+1}=y_{k+1}$.

\begin{proposition}[Faces of $\Delta$]\label{prop:faces-Delta}
Let $S\subset[n+k+1]$ with $|S|=j\le k$. The following conditions are equivalent.
\begin{enumerate}
\item $\Delta_S$ exists, that is, $\overline\Delta$ has a face of codimension $j$ on $\Lambda_S$.
\item There is $v\in\mathbb{R}^{k+1}$ with $\langle D_s,v\rangle=0$ for $s\in S$ and $\langle D_i,v\rangle>0$ for $i\notin S$. In other words, the sign vector which is $0$ on $S$ and $+$ outside $S$ is a covector of the oriented matroid of the rows of $D$.
\item There is no $u\in\mathbb{R}^n$ such that the vector $u^TC\in\mathbb{R}^{n+k+1}$ has nonnegative entries outside $S$, not all zero. In other words, no linear combination of the equations of~\eqref{eq:system} has all its coefficients outside $S$ nonnegative and not all zero.
\end{enumerate}
Under (U$_D$), they are also equivalent to:
\begin{enumerate}
\setcounter{enumi}{3}
\item $S$ is contained in a set $V$ with $|V|=k$ such that the determinants $\det(D_V;D_i)$, $i\notin V$, all have the same sign. Here $(D_V;D_i)$ is the square matrix with the rows $D_v$, $v\in V$, in increasing order, followed by the row $D_i$. In other words, $\overline\Delta$ has a vertex $\Delta_V$ lying on $\Lambda_S$.
\end{enumerate}
\end{proposition}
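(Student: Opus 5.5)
I would prove the equivalences in the order $1\Leftrightarrow2\Leftrightarrow3$, and then $2\Leftrightarrow4$ under (U$_D$), working throughout with the polyhedral cone $K=\{v:\langle D_i,v\rangle\ge0\}$, whose image in $\mathbb{S}^k$ is $\overline\Delta$.

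\emph{$1\Leftrightarrow2$.} This is standard polyhedral theory for $K$ (note $\Delta\ne\emptyset$, so $K$ is full-dimensional).

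Given $v$ as in 2, consider the face $K\cap\{\langle D_s,v\rangle=0,\ s\in S\}$ of $K$. It contains $v$. Since the rows $D_s$, $s\in S$, are independent and the other inequalities are strict at $v$, near $v$ this face is an open subset of the linear space $\{\langle D_s,\cdot\rangle=0\}$. That space has codimension $j$ by (R)/(U$_D$), or more generally by the definition of a codimension-$j$ face. So its image is a codimension-$j$ face on $\Lambda_S$, and its relative interior is the chamber $\Delta_S$.

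Conversely, any point in the relative interior of a face of $\overline\Delta$ lying on $\Lambda_S$ with codimension $j=|S|$ satisfies the inequalities for $i\notin S$ strictly. Otherwise the face would lie on a further hyperplane $H_i$. Here I will use the convention of the section that $\Delta_S$ is a chamber of $\mathrm{Ch}(S)$, that is, a component of $\Lambda_S$ minus the $H_i$, $i\notin S$. A representative $v$ of such a point gives 2, the sign being $+$ since the point lies in $\overline\Delta$.

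\emph{$2\Leftrightarrow3$.} This is a Gordan/Stiemke-type alternative, in the spirit of Lemma~\ref{lem:stiemke}.

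The set $\{(\langle D_i,v\rangle)_i\}$ is the column space of $D$, which is $\ker C$. Condition 2 says that $\ker C$ contains a vector $z$ with $z_S=0$ and $z_i>0$ for $i\notin S$. Equivalently, the projection of $\ker C\cap\{z_S=0\}$ to the coordinates in $S^c$ meets the open positive orthant.

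By Stiemke's lemma (or the Farkas alternative for a linear subspace $W$ meeting $\mathbb{R}^{S^c}_{>0}$), this fails if and only if $W^\perp$ contains a nonzero vector $\ge0$. Here $W^\perp$ inside $\mathbb{R}^{S^c}$ is the set of restrictions to $S^c$ of vectors of $(\ker C\cap\{z_S=0\})^\perp=\mathrm{row}(C)+\mathbb{R}^S$. That is, it consists of $(u^TC)|_{S^c}$ for $u\in\mathbb{R}^n$. This gives exactly the negation of 3.

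The one care point is that a vector $u^TC$ supported in $S$ with nonzero restriction is excluded automatically, since we restrict to $S^c$.

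\emph{$2\Leftrightarrow4$.}

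\emph{$4\Rightarrow2$.} Let $v$ span the line $\{\langle D_u,v\rangle=0,\ u\in V\}$; it is a line by (U$_D$). Then $\langle D_i,v\rangle=\det(D_V;D_i)$ up to a common sign, by expanding along the last row with cofactor vector $v$. If these determinants all have the same sign, then $\pm v$ is a vertex ray of $K$ on $\Lambda_V\subset\Lambda_S$. Under (U$_D$), $\langle D_i,v\rangle\ne0$ for $i\notin V$.

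Next, perturb $v$ inside $\{\langle D_s,\cdot\rangle=0,\ s\in S\}$ towards making the $D_u$, $u\in V\setminus S$, positive. This is possible because $D_V$ has independent rows, so one can solve $\langle D_u,w\rangle=1$ for $u\in V\setminus S$ and $\langle D_s,w\rangle=0$ for $s\in S$. Then $v+\varepsilon w$ gives 2.

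\emph{$2\Rightarrow4$.} The face $\overline{\Delta_S}$ is a pointed polyhedral cone image. It is pointed because, under (U$_D$), $K$ contains no line: the $D_i$ span $\mathbb{R}^{k+1}$. So this face has an extreme ray. That ray lies on at least $k$ hyperplanes $H_i$, including those of $S$, and by (U$_D$) on exactly $k$; call this set $V\supset S$. The vertex condition $\langle D_i,v\rangle>0$ for $i\notin V$ then translates, via the cofactor identity, into the equal-sign condition on $\det(D_V;D_i)$.

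\emph{Main obstacle.} I expect the delicate point to be pointedness and orientation bookkeeping: making sure the face is nonempty in $\mathbb{S}^k$ rather than merely in $\mathbb{RP}^k$, and that the sign $\pm$ from the cofactor expansion is uniform over $i\notin V$. The rest is routine convex duality.
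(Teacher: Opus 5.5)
Your proposal is correct and follows essentially the paper's route: faces of the cone $K$ for $1\Leftrightarrow2$, a transposition theorem for $2\Leftrightarrow3$ (you apply Stiemke to $\ker C^{S^c}$, the paper applies Motzkin to $\eta^TD=0$), and, for $2\Leftrightarrow4$, pointedness of $K$ together with the fact that its extreme rays are the cofactor vectors $v_V$. Your explicit perturbation $v+\varepsilon w$ in $4\Rightarrow2$ makes precise a step the paper leaves implicit. Like the paper, your $1\Leftrightarrow2$ tacitly uses that the rows $D_s$, $s\in S$, are independent, which is automatic under (U$_D$).
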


\begin{proof}
The faces of $\overline\Delta$ are the images of the faces $K\cap\{\langle D_s,v\rangle=0,\ s\in S\}$ of $K$, and $\Delta_S$ is the image of the relative interior of such a face on which all the other forms are positive. This gives $1\Leftrightarrow2$. By Motzkin's transposition theorem, the system of~2 has no solution if and only if there is $\eta\in\mathbb{R}^{n+k+1}$ with $\eta^TD=0$, $\eta_i\ge0$ for $i\notin S$ and $\eta_i\neq0$ for some $i\notin S$. Since the columns of $D$ form a basis of $\ker C$, the vectors $\eta$ with $\eta^TD=0$ are the vectors $\eta=C^Tu$, $u\in\mathbb{R}^n$. This gives $2\Leftrightarrow3$. Finally, $D$ has rank $k+1$, so $K$ is a pointed cone and every nonzero face of $K$ contains an extreme ray. Under (U$_D$), the extreme rays are spanned by $\pm v_V$, $|V|=k$, where $v_V$ is the vector of cofactors of $D_V$, so that $\langle D_i,v_V\rangle=\det(D_V;D_i)$. Such a ray lies in $K$ if and only if these numbers have the same sign for $i\notin V$, and it lies in the face of~2 if and only if $S\subset V$. This gives $2\Leftrightarrow4$.
\end{proof}

By Proposition~\ref{prop:gale-minors}, $\det(D_V;D_i)=\kappa_D\,(-1)^{|V\cup\{i\}|_\Sigma+\#\{v\in V:\,v>i\}}\det C^{[n+k+1]\setminus(V\cup\{i\})}$, so condition~4 is a sign condition on maximal minors of the coefficient matrix $C$.

For $S=\emptyset$, condition~3 is Stiemke's criterion: $\Delta\neq\emptyset$ if and only if no linear combination of the equations has nonnegative coefficients, not all zero, which is the obvious obstruction to the existence of positive solutions.

Under (U$_B$), condition~2 of Definition~\ref{def:boundary-faces} is a sign condition on minors of $B$. Write $S=\{i_1<\dots<i_j\}$. Any $j-1$ of the vectors $b_i^{(j-1)}$, $i\in S$, are linearly independent, so they satisfy a unique linear relation up to scaling. By Cramer's rule its coefficients are $(-1)^r\det B_{S\setminus\{i_r\},\{1,\dots,j-1\}}$, $r=1,\dots,j$. Condition~2 holds if and only if these $j$ numbers all have the same sign. For $j=2$ this means $B_{i_11}B_{i_21}<0$.

We use the notation $C_j(\Theta)$, $Z_j(\Theta)$ and the degrees $D_m=2^{k-m}n$ of Section~\ref{sec:KR}, and the homogenizations $\Gamma^h_m$ of degree $D_m$ of Section~\ref{subsec:restriction-infinity}. Thus $\Gamma^h_m=y_{k+1}^{D_m}\Gamma_m(y/y_{k+1})$ for $y_{k+1}>0$, and $\Gamma^h_m=\Gamma^{\mathrm{top}}_m$ on the hyperplane at infinity. Closures are taken in $\mathbb{S}^k$, which we identify with the unit sphere of $\mathbb{R}^{k+1}$, so that forms are functions on $\mathbb{S}^k$.

\begin{lemma}[Zeros on closed faces]\label{lem:closed-faces}
Assume (U$_D$), (U$_B$) and (G). Let $0\le j\le k-1$.
\begin{enumerate}
\item For every $S'$ with $j+1\le|S'|\le k$ such that $\Delta_{S'}$ exists, the forms $\Gamma^h_{j+1},\dots,\Gamma^h_k$ have no common zero on $\overline{\Delta_{S'}}$.
\item Let $|S|=j$ be such that $\Delta_S$ exists, with $\Delta_\emptyset=\Delta$. The common zeros in $\overline{\Delta_S}$ of the restrictions of $\Gamma^h_{j+1},\dots,\Gamma^h_k$ to $L_S$ lie in $\Delta_S$. There are $Z(S,\Delta_S)$ of them, they are nondegenerate zeros of these restrictions, and, if $j\ge1$, $\Gamma^h_j$ does not vanish at them.
\end{enumerate}
\end{lemma}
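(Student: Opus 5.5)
The plan is to derive both assertions from the restriction theorems (Theorems~\ref{thm:restriction-LS} and~\ref{thm:restriction-LS-infinity}) and from Steps~1--2 of the proof of Lemma~\ref{lem:facets-recursive}, applied to the original system $(D,B)$ with $S=\emptyset$. The key decomposition is that the closed face $\overline{\Delta_{S'}}$ is the disjoint union of the relatively open faces $\Delta_{S''}$ with $S''\supseteq S'$ for which $\Delta_{S''}$ exists. Under (U$_D$), these faces are exactly the chambers of $\mathrm{Ch}(S'')$ contained in $\overline\Delta$.

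For assertion~1, I would fix a point $x\in\overline{\Delta_{S'}}$. Then $x\in\Delta_{S''}$ for some $S''\supseteq S'$ with $j'':=|S''|\ge j+1$. Choose a chart $t_0\notin S''$; this is possible since $|S''|\le k<n+k+1$. In that chart, Step~1 of the proof of Lemma~\ref{lem:facets-recursive} identifies the common zeros on $\Delta_{S''}$ of $\Gamma^h_{j''+1},\dots,\Gamma^h_k$ with the common zeros of the Jacobians of a reduced system along $S''$. There are $Z(S'',\Delta_{S''})$ of them, and they are nondegenerate by (G). The same step shows that $\Gamma^h_{j''}$ is congruent, modulo the ideal of the later forms, to a nonzero constant times $\tilde P''\det\partial(\cdots)/\partial w$. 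This is nonzero at those points, by (G) and since $\tilde P''\ne0$ in a chamber. When $j''=k$, $x$ is a vertex, and Lemma~\ref{lem:vertices} gives $\Gamma^h_k(x)\ne0$ directly. Since $j''\ge j+1$, the family $\Gamma^h_{j+1},\dots,\Gamma^h_k$ contains $\Gamma^h_{j''},\dots,\Gamma^h_k$, so it cannot vanish at $x$.

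There is one point to check. $\Gamma^h_m$ is a form on $\mathbb{R}^{k+1}$, and passing between the chart $y_{k+1}=1$, the top-degree parts at infinity, and the chart $P_{t_0}=1$ changes each form only by positive powers of $P_{t_0}$ and $y_{k+1}$ together with triangular combinations. This is covered by Lemma~\ref{lem:chart-independence} and Proposition~\ref{prop:BS-jacobians}. I expect this bookkeeping, especially for faces at infinity where Theorem~\ref{thm:restriction-LS-infinity} only gives congruences modulo later Jacobians, to be the main technical obstacle. The congruences suffice here because we only evaluate at common zeros of the later forms.

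For assertion~2, I would apply assertion~1 to every $S'\supsetneq S$ with $\Delta_{S'}$ existing. This shows that the restrictions of $\Gamma^h_{j+1},\dots,\Gamma^h_k$ to $L_S$ have no common zero on $\overline{\Delta_S}\setminus\Delta_S$, which is the union of these $\Delta_{S'}$. The remaining zeros lie in $\Delta_S$. By Theorem~\ref{thm:restriction-LS} or~\ref{thm:restriction-LS-infinity}, together with Proposition~\ref{prop:lower-bound-LS}(1), they are the common zeros of the reduced Jacobians in $\Delta_S$. So there are $Z(S,\Delta_S)$ of them, nondegenerate by (G). Finally, for $j\ge1$, assertion~2 of Theorem~\ref{thm:restriction-LS} (respectively assertion~3 of Theorem~\ref{thm:restriction-LS-infinity}) shows that $\Gamma^h_j$ at such a zero equals a nonzero constant times $\tilde P$ times the nonzero Jacobian determinant. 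This is the same argument as Step~2 of Lemma~\ref{lem:facets-recursive}, and it shows $\Gamma^h_j\neq0$ there.
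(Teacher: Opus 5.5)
Your overall structure matches the paper's. You decompose $\overline{\Delta_{S'}}$ into the relatively open faces $\Delta_{S''}$, $S''\supseteq S'$. At vertices you use Lemma~\ref{lem:vertices}. Elsewhere, Step~2 of the proof of Lemma~\ref{lem:facets-recursive} gives $\Gamma^h_{j''}\neq0$ at common zeros of $\Gamma^h_{j''+1},\dots,\Gamma^h_k$. You then deduce assertion~2 from assertion~1, the restriction theorems and (G).

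The step that does not go through as you justify it is your choice of a chart $t_0\notin S''$. In that chart, Step~1 of the proof of Lemma~\ref{lem:facets-recursive} speaks about the Jacobians $\Gamma'_m$ of $(D,B)$ rewritten in $\{P_{t_0}\neq0\}$, not about the forms $\Gamma^h_m$ of the statement. So you must compare the two families at points of $\Delta_{S''}$, and the cited results do not give this comparison:
\begin{itemize}
\item Lemma~\ref{lem:chart-independence} only holds on the intersection of the two affine charts, off the arrangement. But $\Delta_{S''}$ lies on the hyperplanes $H_s$, $s\in S''$.
\item When $n+k+1\in S''$, which is exactly the case where your choice forces $t_0\neq n+k+1$, $\Delta_{S''}$ is not in the chart $\{y_{k+1}\neq0\}$ at all.
\item Proposition~\ref{prop:BS-jacobians} compares $P^{2^{k-m}}G_m$ with $\Gamma_m$ and has nothing to do with chart changes.
\end{itemize}
What you would need is a triangular identity, with nonzero constant diagonal, between the forms $\Gamma^h_m$ and the homogenized chart-$t_0$ Jacobians. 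It would have to hold on all of $\{P_{t_0}\neq0\}$, including the arrangement and the hyperplane at infinity. This is plausible but is not proved in the paper. Note also that it cannot involve positive powers of $y_{k+1}$, since $\Gamma^{\mathrm{top}}_k$ does not vanish at the vertices at infinity.

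The fix is simply not to change charts: take $t_0=n+k+1$, as the paper does. Then the reduced system along $\emptyset$ is $(D,B)$ itself. Step~1 treats finite faces by Theorem~\ref{thm:restriction-LS} and faces at infinity by Theorem~\ref{thm:restriction-LS-infinity}. The top-degree parts $\Gamma^{\mathrm{top}}_m$ appearing there are exactly the restrictions of $\Gamma^h_m$ to $H_{n+k+1}$ (Lemma~\ref{lem:top}). As you observe yourself, congruences modulo the later Jacobians suffice, since one only evaluates at their common zeros. With this change, your argument for both assertions is the paper's proof.
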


\begin{proof}
Let $\xi\in\overline\Delta$, lying in the relative interior of the face $\Delta_{S''}$, with $|S''|=j''$. On the hyperplane at infinity, $\Gamma^h_m$ is the homogeneous part of degree $D_m$ of $\Gamma_m$, that is, $\Gamma^{\mathrm{top}}_m$ by Lemma~\ref{lem:top}. We apply Steps~1 and~2 of the proof of Lemma~\ref{lem:facets-recursive} with $S=\emptyset$, $\mathcal C=\Delta$ and $t_0=n+k+1$. The reduced system along $\emptyset$ is then $(D,B)$ itself, and its top-degree parts on the hyperplane at infinity are the restrictions of the $\Gamma^h_m$. If $j''=k$, then $\Gamma^h_k(\xi)\neq0$ by Lemma~\ref{lem:vertices}. If $j''<k$ and $\Gamma^h_{j''+1}(\xi)=\dots=\Gamma^h_k(\xi)=0$, then $\Gamma^h_{j''}(\xi)\neq0$ by Step~2. In both cases $\Gamma^h_{j''},\dots,\Gamma^h_k$ have no common zero on $\Delta_{S''}$. If $j''\ge j+1$, this family is contained in $\Gamma^h_{j+1},\dots,\Gamma^h_k$. Since $\overline{\Delta_{S'}}$ is the union of the faces $\Delta_{S''}$ with $S''\supset S'$, this proves~1. By~1, the common zeros of~2 lie in $\Delta_S$. By Step~1 of the same proof, they are the zeros counted by $Z(S,\Delta_S)$. By Theorems~\ref{thm:restriction-LS}(1) and~\ref{thm:restriction-LS-infinity}(1)--(2), at such a zero the Jacobian matrices of the restrictions and of the reduced Jacobians differ by an invertible triangular matrix and by a change of coordinates, so these zeros are nondegenerate by (G). Finally $\Gamma^h_j$ does not vanish at them by Step~2, applied with $S''=S$.
\end{proof}

We use the following consequence of the implicit function theorem and of compactness. Let $K$ be a compact subset of a smooth manifold $M$ of dimension $d$, and let $g\colon M\to\mathbb{R}^{d'}$ be smooth.
\begin{itemize}
\item[($\ast$)] If $g$ has no zero in $K$, then every $g'$ close enough to $g$ in the $C^0$ topology on $K$ has no zero in $K$. If $d'=d$ and the zeros of $g$ in $K$ are finitely many, nondegenerate and in the interior of $K$, then every $g'$ close enough to $g$ in the $C^1$ topology on a neighbourhood of $K$ has the same number of zeros in $K$, all nondegenerate, in the interior of $K$, and each close to a zero of $g$.
\end{itemize}
If $\Theta_m$ is a polynomial of degree at most $D_m$ close to $\Gamma_m$, then the form $\Theta^h_m$ is close to $\Gamma^h_m$ in the $C^1$ topology on $\mathbb{S}^k$.

\begin{proposition}\label{prop:ends}
Assume (R), (U$_D$), (U$_B$) and (G). There is $\eta>0$ with the following property. Let $\Theta_1,\dots,\Theta_k$ be polynomials with $\deg\Theta_m\le D_m$, at distance at most $\eta$ from $\Gamma_1,\dots,\Gamma_k$. Then:
\begin{enumerate}
\item $\#Z_0(\Theta)=Z(\emptyset,\Delta)$;
\item if $\Theta$ satisfies condition~(i) of Section~\ref{sec:KR}, then for $1\le j\le k$,
\[
2\,b(C_j(\Theta))\ \le\ \sum_{S\in\mathcal S_j(\Delta)}Z(S,\Delta_S);
\]
\item for $1\le m\le k$, if the points of $Z_m(\Theta)$ are nondegenerate solutions of $\psi_1=\dots=\psi_m=\Theta_{m+1}=\dots=\Theta_k=0$, then $Z_m(\Theta)$ is finite.
\end{enumerate}
Assertion~3 only involves $\Theta_{m+1},\dots,\Theta_k$.
\end{proposition}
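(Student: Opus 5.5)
We work throughout in the compact closure $\overline\Delta\subset\mathbb{S}^k$ and with the homogenized forms $\Gamma^h_m$ and $\Theta^h_m$. The ambient manifold for ($\ast$) is $\mathbb{S}^k$, or a flat $L_S$, so the compact sets $K$ are allowed to be closed polyhedral faces. Since there are finitely many faces, we can choose $\eta$ so that every application of ($\ast$) below holds at once, for all $j$, $m$ and $S$. This finiteness is what makes a single uniform $\eta$ possible.

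\emph{Assertion 1.} We apply Lemma~\ref{lem:closed-faces}(2) with $S=\emptyset$. The common zeros of $\Gamma^h_1,\dots,\Gamma^h_k$ in $\overline\Delta$ then lie in $\Delta$. There are $Z(\emptyset,\Delta)$ of them, and they are nondegenerate. By the second part of ($\ast$), with $K=\overline\Delta\subset\mathbb{S}^k$, the forms $\Theta^h$ have exactly as many zeros in $\overline\Delta$, all in $\Delta$. Hence $\#Z_0(\Theta)=Z(\emptyset,\Delta)$.

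\emph{Assertion 3.} The set $Z_m(\Theta)$ is closed in $\Delta$ and consists of nondegenerate, hence isolated, points. Suppose it were infinite. Then it would accumulate at some $\xi\in\partial\overline\Delta$, lying in the relative interior of a face $\Delta_{S''}$. Two facts hold at $\xi$. First, $\xi\in\overline{\mu_{m+1}}$, so by Corollary~\ref{cor:mu-generic} (whose hypotheses follow from (U$_D$) and (U$_B$)) we get $|S''|\ge m+1$; for $m=k$ this is already impossible under (U$_D$). Second, by continuity, $\Theta^h_{m+1},\dots,\Theta^h_k$ vanish at $\xi$. Lemma~\ref{lem:closed-faces}(1), applied with $j=m$, says that $\Gamma^h_{m+1},\dots,\Gamma^h_k$ have no common zero on the compact union of the closed faces $\overline{\Delta_{S'}}$ with $|S'|\ge m+1$. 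By the $C^0$ part of ($\ast$), the same holds for $\Theta^h$, which is a contradiction. Only $\Theta_{m+1},\dots,\Theta_k$ enter this argument.

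\emph{Assertion 2.} Let $K_0$ be a noncompact component of $C_j(\Theta)$. Its closure in $\mathbb{S}^k$ is compact, so each of its two ends has a nonempty, connected limit set in $\partial\overline\Delta$. Take a limit point $\xi$ in the relative interior of $\Delta_{S''}$. Exactly as in assertion 3, two facts hold at $\xi$: we have $\xi\in\overline{\mu_j}$, which gives $|S''|\ge j$, and $\Theta^h_{j+1},\dots,\Theta^h_k$ vanish at $\xi$. Since the $\Theta^h$ have no common zero on faces of codimension $\ge j+1$, we get $|S''|=j$. Corollary~\ref{cor:mu-generic} then forces $\{b_i^{(j-1)}\}_{i\in S''}$ to positively span $\mathbb{R}^{j-1}$, because a minimal positively spanning subfamily already has at least $j$ elements. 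So $S''\in\mathcal S_j(\Delta)$. We next apply Lemma~\ref{lem:closed-faces}(2) together with ($\ast$) on $L_{S''}$ with $K=\overline{\Delta_{S''}}$. This shows that the zeros of the restrictions of $\Theta^h_{j+1},\dots,\Theta^h_k$ in $\overline{\Delta_{S''}}$ are exactly $Z(S'',\Delta_{S''})$ nondegenerate points, each close to a zero of $\Gamma^h$. The limit set of an end is therefore a connected subset of a finite set, hence a single point. Consequently every end converges to one of these points.

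\emph{Key step.} It remains to show that distinct ends converge to distinct points. This is the main obstacle, and we handle it by a local analysis. Near such a zero $x\in\Delta_S$, we use the coordinates $(z,w)$ of Theorem~\ref{thm:mu-boundary}, Step~1, together with $t_s=-\log z_s$. In these coordinates the equations $\psi_1=\dots=\psi_{j-1}=0$ read $Mt=c(z,w)$. Here $M$ is a $(j-1)\times j$ matrix of rank $j-1$, so its kernel is spanned by a single positive vector $\tau$. The equations $\Theta_{j+1}=\dots=\Theta_k=0$ are smooth in $(z,w)$ up to $z=0$, with $z$ entering through $z_s=e^{-t_s}$. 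Their restriction to $z=0$ has an invertible $w$-Jacobian at $x$. By the implicit function theorem, $w=w(z)$ is therefore uniquely determined near $x$.

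Substituting $w(z)$ and writing $t=s\tau+M^+u$, the contraction argument of Step~4 of Theorem~\ref{thm:mu-boundary} gives, for each large $s$, a unique solution $u(s)$. The derivatives of $c$ in $t$ are $O(e^{-\min t})$, which keeps the contraction uniform. Hence, inside the open orthant $\Delta$ near $x$, the curve $C_j(\Theta)$ is a single branch parametrized by $s\to\infty$. Thus at most one end converges to each point. Since each noncompact component has two ends, we obtain
\[
2\,b(C_j(\Theta))\ \le\ \sum_{S\in\mathcal S_j(\Delta)}Z(S,\Delta_S).
\]
The delicate points are the uniformity of these neighbourhoods in $\Theta$, which we would secure by shrinking $\eta$ once more, and checking that condition~(i) makes the branch coincide with the curve near $x$.
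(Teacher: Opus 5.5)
Your proposal is correct and follows the paper's proof: you choose a uniform $\eta$ from Lemma~\ref{lem:closed-faces} and $(\ast)$, locate the ends via Theorem~\ref{thm:mu-boundary} and the no-common-zero and exact-count properties on closed faces, and use the same implicit-function-plus-contraction argument to show that a single arc ends at each point. Your assertion~3 is a slight shortcut of the paper's: you invoke the no-common-zero property at level $m$ directly, and for $m=k$ you apply the positive-relation criterion to $f_1=\dots=f_k=1$, which is equivalent to the paper's check that $|\psi_k|\to\infty$. The uniformity in $\Theta$ you flag at the end is unnecessary, since the local analysis is for a fixed $\Theta$, and condition~(i) is used only to make noncompact components copies of $\mathbb{R}$ with two ends.
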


\begin{proof}
By Lemma~\ref{lem:closed-faces} and ($\ast$), applied to the finitely many closed faces of $\overline\Delta$, we may choose $\eta$ such that the following holds for $0\le j\le k-1$:
\begin{itemize}
\item[(P1)] for every $S'$ with $j+1\le|S'|\le k$ such that $\Delta_{S'}$ exists, $\Theta^h_{j+1},\dots,\Theta^h_k$ have no common zero on $\overline{\Delta_{S'}}$;
\item[(P2)] for every $S$ with $|S|=j$ such that $\Delta_S$ exists, the restrictions of $\Theta^h_{j+1},\dots,\Theta^h_k$ to $L_S$ have exactly $Z(S,\Delta_S)$ common zeros in $\overline{\Delta_S}$, all in $\Delta_S$ and nondegenerate, and, if $j\ge1$, $\Theta^h_j$ does not vanish at them.
\end{itemize}
For $j=0$ and $S=\emptyset$, (P2) says that $\Theta_1,\dots,\Theta_k$ have exactly $Z(\emptyset,\Delta)$ common zeros in $\Delta$, which is~1.

For~2, let $E_j=\overline{C_j(\Theta)}\setminus C_j(\Theta)$. Since $C_j(\Theta)$ is closed in $\Delta$, $E_j\subset\partial\overline\Delta$, where $\overline\Delta$ may meet the hyperplane at infinity. Let $\xi\in E_j$. Let $S'=S(\xi)$ and $j'=|S'|$, so that $\xi$ lies in the relative interior of the face $\Delta_{S'}$, and $1\le j'\le k$ by (U$_D$).

(a) \emph{The vectors $b_i^{(j-1)}$, $i\in S'$, positively span $\mathbb{R}^{j-1}$, and $j'\ge j$.} If $j=1$ there is nothing to prove. If $j\ge2$, then $C_j(\Theta)\subset\mu_j$, so by Theorem~\ref{thm:mu-boundary}(1) the vectors $b_i^{(j-1)}$, $i\in S'$, admit a positive linear relation. By (U$_B$), any $j-1$ of them are linearly independent, so $j'\ge j$ and they positively span $\mathbb{R}^{j-1}$, as in the proof of Corollary~\ref{cor:mu-generic}.

(b) \emph{$j'=j$, $S'\in\mathcal S_j(\Delta)$, and $\xi$ is one of the $Z(S',\Delta_{S'})$ points of~(P2).} If $j=k$, then $j'=k$ by~(a), $\xi$ is the vertex $\Delta_{S'}$, and $Z(S',\Delta_{S'})=1$. Let $j<k$. By continuity, $\Theta^h_{j+1}(\xi)=\dots=\Theta^h_k(\xi)=0$. By~(P1), $j'\le j$, so $j'=j$, and $\xi$ is one of the common zeros of~(P2) for $S=S'$. By~(a), $S'\in\mathcal S_j(\Delta)$.

(c) \emph{Exactly one end of $C_j(\Theta)$ converges to $\xi$.} Work in an affine chart $\{P_{t_0}>0\}$ containing $\xi$, and use the local coordinates $(z,w)$ of Step~1 of the proof of Theorem~\ref{thm:mu-boundary}, with $z_i=q_i$ for $i\in S'$; the chamber $\Delta$ is $\{z_i>0,\ i\in S'\}$ near $\xi$. In this chart, $C_j(\Theta)$ is defined in $\mu_j$ by the vanishing of the functions $\Theta'_m=\Theta^h_m/P_{t_0}^{D_m}$, $m=j+1,\dots,k$. By~(P2), their restrictions to $\{z=0\}$ have a nondegenerate common zero at $\xi$ (for $j=k$ there are no such functions). By the implicit function theorem, near $\xi$ their common zero set $V$ is the graph $w=w(z)$ of a smooth map defined near $z=0\in\mathbb{R}^j$. If $j=1$, then near $\xi$ the set $C_1(\Theta)=V\cap\Delta$ is the single arc $\{w=w(z),\ z>0\}$. If $j\ge2$, then near $\xi$ the curve $C_j(\Theta)$ is defined in $V\cap\Delta$ by $Mt=c\big(z,w(z)\big)$, with $t_i=-\log z_i$, as in Step~2 of that proof. The $(j-1)\times j$ matrix $M$ has rank $j-1$, and its kernel is spanned by a vector $\tau$ with positive entries. Write $t=s\tau+M^+u$, with $M^+$ a right inverse of $M$ whose image is complementary to $\mathbb{R}\tau$. Then $u=Mt=c$ stays close to $c(\xi)$, and $s\to+\infty$ as $\xi$ is approached. Since $w(\cdot)$ is $C^1$, the derivatives of $c\big(z,w(z)\big)$ with respect to $t$ are still $O(e^{-\min_it_i})$, so the contraction argument of Step~4 of that proof applies: for every large $s$ there is exactly one solution $u$ in a fixed ball around $c(\xi)$. Every point of $C_j(\Theta)$ close enough to $\xi$ has $u$ in this ball and $s$ large. So near $\xi$, $C_j(\Theta)$ is a single arc ending at $\xi$.

(d) \emph{Conclusion.} By (a) and (b), $E_j$ is contained in the finite set of the $\sum_{S\in\mathcal S_j(\Delta)}Z(S,\Delta_S)$ points of~(P2). A noncompact component of $C_j(\Theta)$ is a smooth curve homeomorphic to $\mathbb{R}$, closed in $\Delta$. The limit set of each of its two ends is a nonempty connected subset of this finite set, hence a single point. By (c), each point of $E_j$ is the limit of exactly one end of $C_j(\Theta)$. Therefore $2b(C_j(\Theta))\le\#E_j\le\sum_{S\in\mathcal S_j(\Delta)}Z(S,\Delta_S)$.

For~3, the set $Z_m(\Theta)$ is discrete, since its points are nondegenerate solutions, and closed in $\Delta$. Since $\overline\Delta$ is compact, it suffices to show that $Z_m(\Theta)$ has no accumulation point $\xi\in\partial\overline\Delta$. Such a point lies in the closure of $C_{m+1}(\Theta)$ if $m<k$, and of $\mu_k$ if $m=k$. Steps~(a) and~(b) above, which do not use condition~(i), apply to these sets with $j=m+1$, respectively $j=k$; let $S'=S(\xi)$.

If $m<k$, then by~(b) for $j=m+1$, $\xi$ is one of the common zeros of~(P2) for $S=S'$ and $j=m+1$ (or, if $m+1=k$, a vertex). By~(P2), respectively by~(P1) for $j=k-1$, $\Theta^h_{m+1}(\xi)\neq0$. But $\Theta_{m+1}$ vanishes on $Z_m(\Theta)$, so $\Theta^h_{m+1}(\xi)=0$ by continuity, a contradiction.

If $m=k$, then by~(a) $|S'|=k$, $\xi$ is a vertex, and the vectors $b_i^{(k-1)}$, $i\in S'$, admit a positive relation $\tau$. In the coordinates of Step~2 of the proof of Theorem~\ref{thm:mu-boundary}, $\psi_k=-\sum_{i\in S'}B_{ik}t_i+c_k$ with $c_k$ bounded near $\xi$, and on $\mu_k$ we have $t=s\tau+M^+u$ with $u=c$ bounded and $s\to+\infty$ as $\xi$ is approached. Hence $\psi_k=-s\sum_{i\in S'}\tau_iB_{ik}+O(1)$. Since $\sum_{i\in S'}\tau_ib_i^{(k-1)}=0$ and $\det B_{S',\{1,\dots,k\}}\neq0$ by (U$_B$), we have $\sum_{i\in S'}\tau_iB_{ik}\neq0$, so $|\psi_k|\to+\infty$ at $\xi$, and $\psi_k$ has no zero near $\xi$, a contradiction.
\end{proof}

\begin{theorem}\label{thm:possol-Z}
Assume (R), (U$_D$), (U$_B$) and (G). Then the number of nondegenerate positive solutions of~\eqref{eq:system} is at most
\[
Z(\emptyset,\Delta)+\frac12\sum_{j=1}^k\ \sum_{S\in\mathcal S_j(\Delta)}Z(S,\Delta_S).
\]
\end{theorem}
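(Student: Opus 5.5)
The plan is to combine Theorem~\ref{thm:KR-general} with Lemma~\ref{lem:perturbed-jacobians} and Proposition~\ref{prop:ends}. The one technical point is the regularity hypothesis of Lemma~\ref{lem:perturbed-jacobians}, which is not among the assumptions of the theorem. We handle it with the rescaling $C\mapsto C\operatorname{diag}(s_1^{-1},\dots,s_{n+k},1)^{\pm}$ described after that lemma.

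\textbf{Reduction to the regular case.} Fix a finite set of $r$ nondegenerate positive solutions of~\eqref{eq:system}; it suffices to show that $r$ is at most the stated quantity. Replace $D$ by $\operatorname{diag}(s_1,\dots,s_{n+k},1)D$ with $s\in\mathbb{R}^{n+k}_{>0}$ close to $(1,\dots,1)$. This operation preserves the following data:
\begin{itemize}
\item the signs of the minors of $C$ and of $D$, hence (U$_D$), and the arrangement together with all chambers and faces $\Delta_S$, hence the sets $\mathcal S_j(\Delta)$;
\item the matrix $B$, hence (U$_B$).
\end{itemize}
By Lemma~\ref{lem:chart-independence}, it also preserves every number $Z(S,\mathcal C)$ and the nondegeneracy of the corresponding zeros, hence (G). Condition (R) is preserved as well. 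By Sard's theorem, as explained after Lemma~\ref{lem:perturbed-jacobians}, for almost every such $s$ the value $0$ is regular for each map $(\psi_1,\dots,\psi_m)$ on $\Delta$. By the implicit function theorem, for $s$ close to $(1,\dots,1)$ the new system still has at least $r$ nondegenerate positive solutions. We may therefore assume the regularity hypothesis, and the bound to be proved is unchanged.

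\textbf{Perturbation and Khovanskii--Rolle.} Let $\eta$ be given by Proposition~\ref{prop:ends}. Lemma~\ref{lem:perturbed-jacobians} provides polynomials $\Theta_m$ with $\deg\Theta_m\le D_m$, each within distance $\eta$ of $\Gamma_m$, satisfying (i), (ii) and (iii). Theorem~\ref{thm:KR-general} then gives
\[
r\le\#Z_0(\Theta)+\sum_{j=1}^k b(C_j(\Theta)).
\]
By Proposition~\ref{prop:ends}(1), $\#Z_0(\Theta)=Z(\emptyset,\Delta)$. Since $\Theta$ satisfies (i), Proposition~\ref{prop:ends}(2) gives
\[
b(C_j(\Theta))\le\tfrac12\sum_{S\in\mathcal S_j(\Delta)}Z(S,\Delta_S).
\]
Summing over $j$ yields the bound.

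\textbf{Main obstacle.} The substantive work is already contained in Proposition~\ref{prop:ends} and Lemma~\ref{lem:perturbed-jacobians}, so the only care needed is in the reduction step. We must check that the rescaling leaves every ingredient of the bound invariant, namely $\Delta$, the faces $\Delta_S$, the sets $\mathcal S_j(\Delta)$ and the counts $Z(S,\Delta_S)$, while still allowing a generic choice of $s$. Lemma~\ref{lem:chart-independence}, applied with constant factors multiplying the forms $P_i$, gives this invariance.
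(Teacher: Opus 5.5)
Your proposal is correct and follows the paper's proof essentially line by line. You first rescale $D$ by $\operatorname{diag}(s_1,\dots,s_{n+k},1)$, which by Sard's theorem gives the regularity hypothesis of Lemma~\ref{lem:perturbed-jacobians} while preserving the sign data, (G) and the numbers $Z(S,\mathcal C)$; you then combine Lemma~\ref{lem:perturbed-jacobians}, Theorem~\ref{thm:KR-general} and Proposition~\ref{prop:ends}(1)--(2), exactly as the paper does. One small fix: the rescaling of $C$ in your first paragraph is garbled and should read $C\operatorname{diag}(s_1^{-1},\dots,s_{n+k}^{-1},1)$.
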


\begin{proof}
Rescale the coefficients as explained after Lemma~\ref{lem:perturbed-jacobians}. This does not decrease the number of nondegenerate positive solutions. It preserves (R), (U$_D$), (U$_B$), (G), the chamber $\Delta$, its faces and the sets $\mathcal S_j(\Delta)$, and it multiplies the Jacobians and the reduced Jacobians by positive constants, so that it does not change the numbers $Z(S,\mathcal C)$. So we may assume that $0$ is a regular value of $(\psi_1,\dots,\psi_r)$ on $\Delta$ for $1\le r\le k$; in particular all the positive solutions are nondegenerate. Let $\eta$ be as in Proposition~\ref{prop:ends}, and let $\Theta$ be given by Lemma~\ref{lem:perturbed-jacobians} for this $\eta$. By Theorem~\ref{thm:KR-general} and Proposition~\ref{prop:ends},
\[
\#\{x\in\mathbb{R}^n_{>0}:\ F(x)=0\}\ \le\ \#Z_0(\Theta)+\sum_{j=1}^kb(C_j(\Theta))\ \le\ Z(\emptyset,\Delta)+\frac12\sum_{j=1}^k\ \sum_{S\in\mathcal S_j(\Delta)}Z(S,\Delta_S).\qedhere
\]
\end{proof}

\begin{corollary}\label{cor:possol-recursive}
Under the assumptions of Theorem~\ref{thm:possol-Z}, the number of nondegenerate positive solutions of~\eqref{eq:system} is at most
\[
u_\emptyset(\Delta)+\frac12\sum_{j=1}^{k}\ \sum_{S\in\mathcal S_j(\Delta)}u_S(\Delta_S).
\]
The term $j=k$ equals $\frac12\#\mathcal S_k(\Delta)$, half the number of vertices $v_S$ of $\overline\Delta$ such that the vectors $b_i^{(k-1)}$, $i\in S$, positively span $\mathbb{R}^{k-1}$.
\end{corollary}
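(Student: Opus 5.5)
The plan is to combine Theorem~\ref{thm:possol-Z} with the recursive upper bounds of Theorem~\ref{thm:recursive-bounds}. Under (R), (U$_D$), (U$_B$) and (G), Theorem~\ref{thm:possol-Z} bounds the number of nondegenerate positive solutions by
\[
Z(\emptyset,\Delta)+\frac12\sum_{j=1}^k\ \sum_{S\in\mathcal S_j(\Delta)}Z(S,\Delta_S).
\]
Every term is a number $Z(S,\mathcal C)$ with $\mathcal C\in\mathrm{Ch}(S)$. For $S=\emptyset$ the chamber is $\Delta$, viewed as an element of $\mathrm{Ch}(\emptyset)$. For $S\in\mathcal S_j(\Delta)$ the chamber is $\Delta_S$, which exists by condition~1 of Definition~\ref{def:boundary-faces} and lies in $\mathrm{Ch}(S)$ by definition.

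Theorem~\ref{thm:recursive-bounds} gives $Z(S,\mathcal C)\le u_S(\mathcal C)$ for every $|S|\le k$ and every chamber, under exactly the hypotheses (U$_D$), (U$_B$), (G). Replacing each $Z$ by the corresponding $u$ is therefore legitimate. All coefficients are nonnegative (the factor $\frac12$ and the coefficient $1$), so this replacement preserves the inequality, and the first assertion follows.

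For the term $j=k$, each $S\in\mathcal S_k(\Delta)$ has $|S|=k$, so $\Lambda_S$ is a point $v_S$ and $\Delta_S=\{v_S\}$ is a vertex of $\overline\Delta$. By Definition~\ref{def:recursive-bounds} at level $d=0$, $u_S(\{v_S\})=1$ (equal to $Z=1$). Hence the $j=k$ term is $\frac12\#\mathcal S_k(\Delta)$. Condition~2 of Definition~\ref{def:boundary-faces} with $j=k$ is exactly the positive spanning of $\mathbb{R}^{k-1}$ by the $b_i^{(k-1)}$, $i\in S$, so this term equals half the number of such vertices. Vertices at infinity are included, since $S$ may contain $n+k+1$.

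There is no real obstacle. The only points to check are that $\Delta_S$ is genuinely a chamber of $\mathrm{Ch}(S)$, so that Theorem~\ref{thm:recursive-bounds} applies to it, and that the bounds at vertices are the trivial ones.
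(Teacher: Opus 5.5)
Your proposal is correct and is exactly the paper's argument. It combines Theorem~\ref{thm:possol-Z} with the bounds $Z(S,\mathcal C)\le u_S(\mathcal C)$ of Theorem~\ref{thm:recursive-bounds} applied to $\Delta$ and to the faces $\Delta_S\in\mathrm{Ch}(S)$, and uses $Z(S,\{v_S\})=u_S(\{v_S\})=1$ for $|S|=k$ to identify the $j=k$ term.
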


\begin{proof}
Combine Theorem~\ref{thm:possol-Z} with Theorem~\ref{thm:recursive-bounds}, and use $Z(S,\{v_S\})=u_S(\{v_S\})=1$ for $|S|=k$.
\end{proof}

\begin{remark}\label{rem:possol-small-k}
\begin{enumerate}
\item For $k=1$, $\mathcal S_1(\Delta)$ consists of the two endpoints of the interval $\Delta$, and Corollary~\ref{cor:possol-recursive} reads $\#\{\text{positive solutions of }\eqref{eq:system}\}\le L_\Delta+1$. This is the Descartes rule of signs for circuits of \cite{BD}.
\item For $k=2$, the curves are $C_1(\Theta)=\{\Theta_2=0\}\cap\Delta$, where $\Theta_2$ is a small perturbation of $\Gamma_2$, and $C_2=\{\psi_1=0\}\cap\Delta$. The set $\mathcal S_1(\Delta)$ consists of the edges of $\Delta$, and $\mathcal S_2(\Delta)$ of the vertices $\{i,i'\}$ of $\overline\Delta$, where the edges on $H_i$ and $H_{i'}$ meet, such that $B_{i1}B_{i'1}<0$.
\end{enumerate}
\end{remark}

\begin{remark}[Minors of $B$ with fewer columns]\label{rem:flag-minors}
The minors $\det B_{U,\{1,\dots,j\}}$ with $|U|=j<k$, which enter the signs $\sigma_S(i)$ and assumption (U$_B$), are not determined by $\mathcal A$ alone: they depend on the flag $V_1\subset\dots\subset V_k=\ker A$, where $V_j$ is spanned by the first $j$ columns $b_1,\dots,b_j$ of $B$. They are nevertheless maximal minors of a lifted exponent matrix. Choose vectors $w_1,\dots,w_k\in\mathbb{R}^{n+k+1}$ with $\langle w_m,b_l\rangle=\delta_{ml}$, and let $A_j$ be the $(n+k+1-j)\times(n+k+1)$ matrix obtained by adding to $A$ the rows $w_{j+1},\dots,w_k$. Its columns are the points $(1,a_i,w_{j+1,i},\dots,w_{k,i})$, a lifting of $\mathcal A$ to $\mathbb{R}^{n+k-j}$, and $\ker A_j=V_j$. Proposition~\ref{prop:gale-minors} applied to $A_j$ and $(b_1,\dots,b_j)$ gives
\[
\det B_{U,\{1,\dots,j\}}=\kappa_j\,(-1)^{|U|_\Sigma}\det A_j^{[n+k+1]\setminus U},\qquad |U|=j,
\]
for a nonzero constant $\kappa_j$. For $j=k$ this is Proposition~\ref{prop:gale-minors}. For $j=1$ it says that the entries of the relation $b_1$ are, up to the signs $(-1)^u$, the maximal minors of the lifted configuration obtained by adding $k-1$ coordinates. So the choice of the relations $b_1,\dots,b_{k-1}$ amounts to the choice of a lifting of $\mathcal A$, and the signs of all the minors of $B$ used in this paper are signs of maximal minors of the liftings $A_j$.
\end{remark}

\begin{remark}[Dependence on signs and on the flag]\label{rem:flag-choice}
\emph{Signs only.} In Theorem~\ref{thm:possol-Z}, the index sets $\mathcal S_j(\Delta)$ depend only on signs: by Proposition~\ref{prop:faces-Delta}, the existence of $\Delta_S$ depends only on the oriented matroid of $C$ (equivalently of $D$), and the positive spanning condition depends only on the signs of the minors $\det B_{U,\{1,\dots,|U|\}}$, that is, by Remark~\ref{rem:flag-minors}, on the oriented matroids of the lifted configurations $A_j$. The numbers $Z(\emptyset,\Delta)$ and $Z(S,\Delta_S)$ are numbers of zeros of Jacobians, but they are bounded above by the recursive bounds of Section~\ref{sec:recursive}, which are computed from the same sign data. Hence \emph{the bound of Corollary~\ref{cor:possol-recursive} depends only on the oriented matroid of $C$ and on the oriented matroids of $A$ and of the lifted configurations $A_j$, $1\le j\le k-1$.} The maximal minors of $A$ enter through the data on the lines. In particular it depends on signs only, but on the signs of more than the maximal minors of $A$. The sharper bound of Corollary~\ref{cor:possol-BDF} also uses, on the lines of the arrangement, the values of the maximal minors of $B$ through their partial sums (Proposition~\ref{prop:BDF-line}).

\emph{The flag.} The number of positive solutions of~\eqref{eq:system} does not depend on the ordered basis $b_1,\dots,b_k$ of $V_B=\ker A$, but the constructions of Sections~\ref{sec:KR}--\ref{sec:BDF} do. The Jacobian $\Gamma_k$ only depends on $V_B$, but for $m<k$ the Jacobian $\Gamma_m$ depends on the flag $V_m\subset\dots\subset V_{k-1}$, where $V_j$ is spanned by $b_1,\dots,b_j$. Hence the curves $C_j$ of Section~\ref{sec:KR}, the numbers $Z(S,\mathcal C)$ and all the sign-based bounds depend on the flag. The signs used in the bounds are those of the minors $\det B_{U,\{1,\dots,j\}}$, that is, of the Plücker coordinates of the subspaces $V_j$. The data on the lines, which only involve the maximal minors of $B$, do not depend on the flag.

There are only finitely many sign patterns, so the minimum of any of our bounds over all flags is attained, and it can be found by a finite search. Indeed, for $|U|=j$ the minor $\det B_{U,\{1,\dots,j\}}$ vanishes if and only if $V_j$ contains a nonzero vector vanishing on $U$. For $j=k-1$, the vectors of $V_B$ vanishing on a set $U$ with $|U|=k-1$ form a line, spanned by the relation $c_U$ of the circuit $\mathcal A\setminus\{a_u\}_{u\in U}$, which has $n+2$ elements. So the sign data of the hyperplane $V_{k-1}\subset V_B$ only changes when $V_{k-1}$ passes through one of the lines $\mathbb{R}c_U$. The possible sign data of $V_{k-1}$ correspond to the chambers of the arrangement of the $\binom N{k-1}$ hyperplanes $c_U^\perp$ in the dual projective space $\mathbb{P}(V_B^*)\cong\mathbb{P}^{k-1}$, which only depends on $\mathcal A$. Recursively, once $V_{j+1}$ is fixed, the sign data of $V_j\subset V_{j+1}$ correspond to the chambers of the analogous arrangement in $\mathbb{P}(V_{j+1}^*)$, whose hyperplanes are defined by the lines $V_{j+1}\cap\{v_U=0\}$, $|U|=j$. In the language of Remark~\ref{rem:flag-minors}, these lines are the circuits of the lifted configuration $A_{j+1}$, and choosing a flag amounts to choosing a chain of liftings of $\mathcal A$ to $\mathbb{R}^{n+1},\dots,\mathbb{R}^{n+k-1}$, up to their oriented matroids.

There is no choice which is best for all the terms of the bounds at the same time: the lower bounds of the other chambers favour relations whose signs are constant on the boundaries of many chambers, whereas the boundary terms $b(C_j)$ favour relations with few sign changes around the positive chamber. For $k\ge3$, a practical procedure is to choose $V_{k-1}$ first, among the chambers of the arrangement of the $c_U^\perp$, then $V_{k-2}$, and so on; this greedy procedure need not give the global minimum.
\end{remark}

\begin{remark}[The assumptions]\label{rem:genericity}
The assumptions (R) and (U$_D$) hold as soon as all the maximal minors of $C$ are nonzero (Proposition~\ref{prop:gale-minors}), hence for generic coefficients. The assumption (U$_B$) holds for a generic choice of the flag $b_1,\dots,b_{k-1}$ when no $n+1$ points of $\mathcal A$ lie on an affine hyperplane. The transversality assumption of the Khovanskii--Rolle argument of \cite{BS} for the curves $C_j$ is not needed: following \cite{BS2}, we apply this argument to small perturbations of the Jacobians (Lemma~\ref{lem:perturbed-jacobians}). The assumption (G) is a nondegeneracy assumption on the zeros of the Jacobians on the flats of the arrangement. It cannot be removed in the same way, since the recursion uses the exact Jacobians through the facet-count formula and the restriction theorems. Since the bound of Corollary~\ref{cor:possol-recursive} only depends on locally constant sign data, and nondegenerate positive solutions persist under small perturbations of the coefficients, these bounds hold for the nondegenerate positive solutions of any system satisfying (R), (U$_D$) and (U$_B$) which can be approximated by systems satisfying (G) with the same sign data.

\emph{Degenerate supports and coefficients.} The assumptions on $\mathcal A$ and $C$ can be removed at the cost of a minimization. Theorem~\ref{thm:gale-duality} holds for real exponents, and a nondegenerate positive solution of~\eqref{eq:system} persists, as a nondegenerate positive solution, under small perturbations of the points of $\mathcal A$ in $\mathbb{R}^n$ and of the coefficients, by the implicit function theorem. Hence the number of nondegenerate positive solutions of~\eqref{eq:system} is at most the bound of Corollary~\ref{cor:possol-recursive} (or~\ref{cor:possol-BDF}) of any sufficiently close system satisfying the assumptions, and therefore at most the minimum of these bounds over the sign data of such systems. If, for instance, $n+1$ points of $\mathcal A$ lie on an affine hyperplane, the corresponding maximal minor of $A$ vanishes, and a small perturbation of $\mathcal A$ makes it positive or negative; the admissible sign data are those of the configurations in general position close to $\mathcal A$, and similarly for $C$.
\end{remark}

\section{Sharper recursive bounds from the optimal Descartes rule for circuits}\label{sec:BDF}

In this section we replace the base case of the recursion of Section~\ref{sec:recursive} by the optimal Descartes rule of signs of Bihan, Dickenstein and Forsgård \cite{BDF}. The upper bounds for the numbers of zeros on arcs of lines then use the partial sums of the exponents rather than the exponents themselves. The recursion is otherwise unchanged, and we obtain bounds that are never worse than those of Theorem~\ref{thm:recursive-bounds}.

Throughout this section we assume (U$_D$), (U$_B$) and (G).

\subsection{The optimal Descartes rule on a line}

Let $S\subset[n+k+1]$ with $|S|=k-1$, so that $\Lambda_S$ is a projective line, and use the notation of Subsection~\ref{subsec:line-zeros} with $S'=S$: the $n+2$ points $r_l$, $l\in R'=[n+k+1]\setminus S$, the arcs $\Delta$, the numbers $N_\Delta=Z(S,\Delta)$, $l_\Delta$ and $L_\Delta$ of Proposition~\ref{prop:line-zeros}, and the exponents $\beta_l$ of the reduced system on $\Lambda_S$. Multiplying all the $\beta_l$ by a common nonzero constant does not change the numbers defined below, so we may and do take
\[
\beta_l=\det B_{S\cup\{l\},\{1,\dots,k\}},\qquad l\in R',
\]
with the rows of $S$ in increasing order followed by the row $l$. Thus $\sum_{l\in R'}\beta_l=0$, and $\beta_l\neq0$ by (U$_B$).

Let $\Delta\in\mathrm{Ch}(S)$ be an arc with endpoints $r_a$ and $r_b$. List the points $r_l$ in their cyclic order on $\Lambda_S$, starting from $r_b$ and moving away from $\Delta$:
\[
q_0=b,\ q_1,\ \dots,\ q_{n+1}=a .
\]
Define the partial sums
\[
\mu_\ell(\Delta)=\sum_{s=0}^{\ell}\beta_{q_s},\qquad \ell=0,\dots,n ,
\]
and put
\[
U_\Delta=\operatorname{sgnvar}\big(\mu_0(\Delta),\dots,\mu_n(\Delta)\big),
\]
where zero terms are discarded. The omitted partial sum $\mu_{n+1}(\Delta)=\sum_{l\in R'}\beta_l$ vanishes.

\begin{proposition}\label{prop:BDF-line}
\begin{enumerate}
\item $N_\Delta\le U_\Delta$.
\item $U_\Delta\equiv l_\Delta\pmod 2$.
\item $U_\Delta$ does not change if the roles of $r_a$ and $r_b$ are exchanged.
\item $U_\Delta\le L_\Delta$.
\end{enumerate}
\end{proposition}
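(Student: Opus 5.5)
The plan is to reduce everything to the one-variable function $F$ already used in the proof of Proposition~\ref{prop:line-zeros}, and then apply Abel summation to the exponents. Choose the projective coordinate $u$ sending $r_b$ to $\infty$, with $\Delta=(u_a,+\infty)$; the other case is symmetric. Then $u_{q_1}<\dots<u_{q_{n+1}}=u_a$, and on $\Delta$ the zeros of $\omega$, counted with multiplicity, are those of
\[
F(u)=\sum_{s=1}^{n+1}\frac{\beta_{q_s}}{u-u_{q_s}} .
\]
By the proof of Proposition~\ref{prop:line-zeros}, $N_\Delta$ is the number of these zeros. Rescaling the $\beta_l$ by the common constant $\det N'$ changes nothing.

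For assertion~1, write $\beta_{q_s}=\mu_s-\mu_{s-1}$ for $s\ge1$, where $\mu_s=\mu_s(\Delta)$ and $\mu_{n+1}=0$. Summation by parts then gives
\[
F(u)=-\sum_{s=0}^{n}\mu_s\,g_s(u),\qquad g_0=\frac1{u-u_{q_1}},\qquad g_s=\frac{u_{q_{s+1}}-u_{q_s}}{(u-u_{q_s})(u-u_{q_{s+1}})}\ (1\le s\le n).
\]
Every $g_s$ is positive on $\Delta$. Each successive ratio $g_{s+1}/g_s$ is a positive multiple of the Möbius function $(u-u_{q_s})/(u-u_{q_{s+2}})$ (with $u-u_{q_0}$ read as $1$ for $s=0$), and this function is strictly monotone on $\Delta$. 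I would then show that $(g_0,\dots,g_n)$ is a Descartes system on $\Delta$. The first attempt is the classical Rolle induction: divide by $g_0$ and differentiate. This kills the $\mu_0$ term and produces a combination of functions of the same shape with the same coefficients $\mu_1,\dots,\mu_n$, up to a constant sign, so each step loses one sign variation and at most one zero. Alternatively, this is exactly the statement of the optimal rule of \cite{BDF} for the circuit obtained after this change of variables, and I would cite it directly. The main obstacle is this step: checking that the Descartes-system property, or the hypotheses of \cite{BDF}, really hold in the coordinate $u$, with the correct signs and with multiplicities. The other steps are bookkeeping.

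For assertion~2, the zero terms are harmless. The number of sign changes of $(\mu_0,\dots,\mu_n)$ is odd if and only if $\mu_0$ and $\mu_n$ have opposite signs. Here $\mu_0=\beta_b$ and $\mu_n=-\beta_a$, since the total sum vanishes. So the count is odd exactly when $\beta_a\beta_b>0$, that is, when $l_\Delta=1$. For assertion~3, starting from $r_a$ gives the partial sums $\mu'_\ell=\sum_{s=n+1-\ell}^{n+1}\beta_{q_s}=-\mu_{n-\ell}$. This is the reversed and negated sequence, which has the same number of sign variations.

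For assertion~4, use the elementary fact that the partial sums of a sequence $c_0,\dots,c_n$ have at most as many sign variations as the sequence itself. This gives $U_\Delta\le\operatorname{sgnvar}(\sigma_{q_0},\dots,\sigma_{q_n})=V_2$, up to the global sign of $\det N'$. By assertion~3, the same argument from $r_a$ gives $U_\Delta\le V_1$. In the proof of Proposition~\ref{prop:line-zeros} we saw that $\min(V_1,V_2)\in\{L_\Delta,L_\Delta+1\}$ with $L_\Delta\equiv l_\Delta\pmod2$. Together with assertion~2, this yields $U_\Delta\le L_\Delta$.
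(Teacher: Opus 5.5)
Your proposal follows the paper's proof essentially step for step. You use Abel summation to write $\omega$ as a combination of differences of the functions $1/(u-u_l)$ with coefficients the partial sums $\mu_\ell(\Delta)$, then appeal to the optimal rule of \cite{BDF} for~(1). For (2)--(4) you use the same arguments as the paper: the endpoint terms $\mu_0=\beta_b$ and $\mu_n=-\beta_a$, reversal and negation of the partial sums, and the fact that partial sums have at most as many sign changes as the sequence.

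One caveat on~(1): your Rolle sketch does not close as stated. Dividing by $g_0$ and differentiating gives $\sum_{s=1}^n\mu_s(k_{s+1}-k_s)$ with $k_t=(u_{q_t}-u_{q_1})/(u-u_{q_t})^2$. The coefficients are right, but these functions are no longer of the original shape, so iterating needs the full Wronskian criterion of \cite[Prop.~2.6]{BDF}.

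The citation route is therefore the one to take, and it is cleanest in the paper's coordinate, with $\infty$ in an arc other than $\Delta$. Then all $r_l$ are finite and $q_0,\dots,q_{n+1}$ is an ordering in the sense of \cite[Def.~2.1]{BDF}. You should also note that their argument does not use integrality of the coefficients, since here the $\beta_l$ are arbitrary real minors.
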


\begin{proof}
1. Choose a projective coordinate $u$ on $\Lambda_S$ such that the point $u=\infty$ lies in an arc different from $\Delta$. Then all the points $r_l$ are finite, say $r_l=\{u=u_l\}$, and $\Delta$ is a bounded interval. Let $h_l=1/(u-u_l)$ on $\Delta$. Since $\sum_l\beta_l=0$, the form $\omega=\sum_l\beta_l\,d\log|\ell_l|$ of the proof of Proposition~\ref{prop:line-zeros} reads
\[
\omega=G'(u)\,du,\qquad G'=\sum_{l\in R'}\beta_l\,h_l .
\]
Its zeros on $\Delta$, counted with multiplicity, are the $N_\Delta$ zeros of $\tilde\Gamma_k$ on $\Delta$. By Abel summation, using $\mu_{n+1}=0$,
\[
G'=\sum_{\ell=0}^{n}\mu_\ell(\Delta)\,\big(h_{q_\ell}-h_{q_{\ell+1}}\big).
\]
For fixed $u\in\Delta$, the function $r\mapsto1/(u-r)$ is increasing as $r$ runs from $r_b$ away from $\Delta$, through $\infty$, back to $r_a$. So the ordering $q_0,\dots,q_{n+1}$ is an ordering in the sense of \cite[Def.~2.1]{BDF}, and each difference $h_{q_\ell}-h_{q_{\ell+1}}$ has constant sign on $\Delta$. Bihan, Dickenstein and Forsgård show that the functions $h_{q_\ell}-h_{q_{\ell+1}}$, $\ell=0,\dots,n$, satisfy Descartes' rule of signs on $\Delta$, via the Wronskian criterion \cite[Prop.~2.6]{BDF}. Their argument does not use the integrality of the coefficients. Hence the number of zeros of $G'$ on $\Delta$, counted with multiplicity, is at most $\operatorname{sgnvar}(\mu_0,\dots,\mu_n)$; see the proof of \cite[Thm.~2.4]{BDF}.

2. We have $\mu_0=\beta_b$ and, since $\mu_{n+1}=0$, $\mu_n=-\beta_a$. Both are nonzero. The parity of the number of sign changes of a sequence equals $1$ if and only if its first and last nonzero terms have opposite signs. Here this means $\beta_a\beta_b>0$, that is, $l_\Delta=1$.

3. Starting from $r_a$ instead gives the partial sums $\mu'_\ell=\sum_{s=0}^{\ell}\beta_{q_{n+1-s}}=-\mu_{n-\ell}$, because the total sum vanishes. The sequence $(\mu'_\ell)$ is $(\mu_\ell)$ reversed and negated, so it has the same number of sign changes.

4. We first show that $\operatorname{sgnvar}(\mu_0,\dots,\mu_n)\le\operatorname{sgnvar}(\beta_{q_0},\dots,\beta_{q_n})$. Let $0=s_0<s_1<\dots<s_V$ be indices at which the signs of the $\mu_{s_r}$ alternate, with $V=\operatorname{sgnvar}(\mu)$. For $r\ge1$, the difference $\mu_{s_r}-\mu_{s_{r-1}}=\sum_{s_{r-1}<t\le s_r}\beta_{q_t}$ has the sign of $\mu_{s_r}$. So there is $t_r\in(s_{r-1},s_r]$ such that $\beta_{q_{t_r}}$ has the sign of $\mu_{s_r}$. Together with $t_0=0$ and $\beta_{q_0}=\mu_0$, we obtain $V+1$ terms of $(\beta_{q_t})$ with alternating signs. Hence $U_\Delta\le V_2=\operatorname{sgnvar}(\beta_{q_0},\dots,\beta_{q_n})$. By~3, the same argument starting from $r_a$ gives $U_\Delta\le V_1$. Finally, by the proof of Proposition~\ref{prop:line-zeros}, where $V_1$ and $V_2$ are written with the signs $\sigma_l=\sign(\det N')\sign\beta_l$, $L_\Delta$ is the largest integer $\le\min(V_1,V_2)$ with the parity of $l_\Delta$. By~2, $U_\Delta$ has this parity, so $U_\Delta\le L_\Delta$.
\end{proof}

\subsection{The recursive bounds}

\begin{definition}\label{def:recursive-bounds-BDF}
Define integers $\ell^*_S(\mathcal C)$ and $u^*_S(\mathcal C)$, for all $S$ with $|S|\le k$ and all $\mathcal C\in\mathrm{Ch}(S)$, by induction on $d=k-|S|$.
\begin{itemize}
\item $d=0$: $\ell^*_S(\{v_S\})=u^*_S(\{v_S\})=1$.
\item $d=1$: for an arc $\Delta\in\mathrm{Ch}(S)$, $\ell^*_S(\Delta)=l_\Delta$ and $u^*_S(\Delta)=U_\Delta$.
\item $d\ge2$: $\ell^*_S(\mathcal C)$ and $u^*_S(\mathcal C)$ are given by the formulas of Definition~\ref{def:recursive-bounds}, with $\ell$ and $u$ replaced everywhere by $\ell^*$ and $u^*$:
\[
\ell^*_S(\mathcal C)=\max\Big(0,\ \Big\lceil\frac{a^*_S(\mathcal C)}2\Big\rceil,\ \Big\lceil-\frac{b^*_S(\mathcal C)}2\Big\rceil\Big),\qquad
u^*_S(\mathcal C)=\beta_d-\sum_{\mathcal C'\in\mathrm{Ch}(S)\setminus\{\mathcal C\}}\ell^*_S(\mathcal C'),
\]
where $a^*_S(\mathcal C)$ and $b^*_S(\mathcal C)$ are computed from the numbers $\ell^*_{S\cup\{i\}}(F_i(\mathcal C))$ and $u^*_{S\cup\{i\}}(F_i(\mathcal C))$ and the signs $\sigma_S(i)$, $i\in I(\mathcal C)$.
\end{itemize}
\end{definition}

\begin{theorem}\label{thm:recursive-bounds-BDF}
Assume (U$_D$), (U$_B$) and (G). For every $S$ with $|S|\le k$ and every $\mathcal C\in\mathrm{Ch}(S)$,
\[
\ell_S(\mathcal C)\ \le\ \ell^*_S(\mathcal C)\ \le\ Z(S,\mathcal C)\ \le\ u^*_S(\mathcal C)\ \le\ u_S(\mathcal C).
\]
In particular, every chamber $\mathcal C$ of $\mathbb{R}^k\setminus\bigcup_{i\le n+k}\{p_i=0\}$ contains at least $\ell^*_\emptyset(\mathcal C)$ and at most $u^*_\emptyset(\mathcal C)$ solutions of $\Gamma_1=\dots=\Gamma_k=0$.
\end{theorem}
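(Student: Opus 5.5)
The plan is to argue by induction on $d=k-|S|$, proving the three inequalities $\ell_S\le\ell^*_S$, $\ell^*_S\le Z\le u^*_S$ and $u^*_S\le u_S$ together at each level. This mirrors the proof of Theorem~\ref{thm:recursive-bounds}.

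For $d=0$ all five quantities equal $1$. For $d=1$ the chamber is an arc $\Delta$ of the line $\Lambda_S$. Here $\ell^*_S(\Delta)=l_\Delta=\ell_S(\Delta)$ by the first item of the remark after Theorem~\ref{thm:recursive-bounds}. Proposition~\ref{prop:line-zeros}(2) gives $l_\Delta\le N_\Delta=Z(S,\Delta)$. Proposition~\ref{prop:BDF-line}(1) and~(4) give $N_\Delta\le U_\Delta\le L_\Delta=u_S(\Delta)$. This settles the base case.

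For $d\ge2$, the bounds $\ell^*_S(\mathcal C)\le Z(S,\mathcal C)\le u^*_S(\mathcal C)$ come from repeating the proof of Theorem~\ref{thm:recursive-bounds} word for word. We use Lemma~\ref{lem:facets-recursive} to identify $\tilde N_i=Z(S\cup\{i\},F_i(\mathcal C))$. The induction hypothesis then gives $\ell^*\le\tilde N_i\le u^*$ on the facets. Proposition~\ref{prop:degree-lower-bound}, with the same remark on the sign of $\det N$, gives $|\tilde d(\mathcal C)|\ge\ell^*_S(\mathcal C)$. Proposition~\ref{prop:lower-bound-LS}(2) under (G) turns this into $Z(S,\mathcal C)\ge\ell^*_S(\mathcal C)$. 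Finally, Lemma~\ref{lem:bezout-flat} together with the lower bounds on the other chambers yields the upper bound $u^*_S$.

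The comparison with the unstarred bounds is a monotonicity statement, and I expect it to be the only point needing care. By induction we have $\ell_{S\cup\{i\}}\le\ell^*_{S\cup\{i\}}$ and $u^*_{S\cup\{i\}}\le u_{S\cup\{i\}}$ on every facet. Since $a_S$ is increasing in the lower bounds and decreasing in the upper bounds, $a_S(\mathcal C)\le a^*_S(\mathcal C)$. Since $b_S$ is increasing in the upper bounds and decreasing in the lower bounds, $b^*_S(\mathcal C)\le b_S(\mathcal C)$. The map $(a,b)\mapsto\max(0,\lceil a/2\rceil,\lceil -b/2\rceil)$ is nondecreasing in $a$ and nonincreasing in $b$, so $\ell_S(\mathcal C)\le\ell^*_S(\mathcal C)$ for every chamber of $\Lambda_S$. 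Summing over the chambers $\mathcal C'\neq\mathcal C$ in the definition of $u$ then gives $u^*_S(\mathcal C)\le u_S(\mathcal C)$. The ordering matters: the lower-bound comparison at level $d$ uses only level $d-1$, and the upper-bound comparison at level $d$ uses the lower bounds at level $d$. This is exactly how Definition~\ref{def:recursive-bounds} is set up, so the induction is well founded.

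The final sentence of the theorem is the case $S=\emptyset$, using that $Z(\emptyset,\mathcal C)$ counts the solutions of $\Gamma_1=\dots=\Gamma_k=0$ in $\mathcal C$.
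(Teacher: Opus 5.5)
Your proposal is correct and follows essentially the same route as the paper's proof. You use induction on $d$, with the base cases $d=0,1$ taken from Propositions~\ref{prop:line-zeros} and~\ref{prop:BDF-line}. For $d\ge2$, you get the middle inequalities by rerunning the proof of Theorem~\ref{thm:recursive-bounds} with the starred facet bounds, and the outer inequalities from the monotonicity of $a_S$ and $b_S$ in the facet bounds, after which you sum over the other chambers to compare $u^*_S$ with $u_S$.
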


\begin{proof}
We argue by induction on $d$. For $d=0$ all the numbers equal $1$. For $d=1$ we have $\ell_S(\Delta)=\ell^*_S(\Delta)=l_\Delta\le N_\Delta$ by Proposition~\ref{prop:line-zeros}, and $N_\Delta\le U_\Delta=u^*_S(\Delta)\le L_\Delta=u_S(\Delta)$ by Proposition~\ref{prop:BDF-line}.

Let $d\ge2$. The middle inequalities $\ell^*_S(\mathcal C)\le Z(S,\mathcal C)\le u^*_S(\mathcal C)$ follow exactly as in the proof of Theorem~\ref{thm:recursive-bounds}. That proof only uses that the facet bounds satisfy $\ell^*\le Z\le u^*$, which holds by the induction hypothesis. For the outer inequalities, the induction hypothesis gives $\ell\le\ell^*$ and $u^*\le u$ on the facets. Since $a$ is increasing in the lower bounds and decreasing in the upper bounds, and $b$ is the opposite, we get $a^*_S(\mathcal C)\ge a_S(\mathcal C)$ and $b^*_S(\mathcal C)\le b_S(\mathcal C)$. Hence $\ell^*_S(\mathcal C)\ge\ell_S(\mathcal C)$ for every $\mathcal C\in\mathrm{Ch}(S)$. It follows that $u^*_S(\mathcal C)=\beta_d-\sum_{\mathcal C'\neq \mathcal C}\ell^*_S(\mathcal C')\le\beta_d-\sum_{\mathcal C'\neq \mathcal C}\ell_S(\mathcal C')=u_S(\mathcal C)$.
\end{proof}

\subsection{Positive solutions}

\begin{corollary}\label{cor:possol-BDF}
Under the assumptions of Theorem~\ref{thm:possol-Z}, the number of nondegenerate positive solutions of~\eqref{eq:system} is at most
\[
u^*_\emptyset(\Delta)+\frac12\sum_{j=1}^{k}\ \sum_{S\in\mathcal S_j(\Delta)}u^*_S(\Delta_S),
\]
which is at most the bound of Corollary~\ref{cor:possol-recursive}.
\end{corollary}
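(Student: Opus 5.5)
This corollary is obtained by replacing, in Theorem~\ref{thm:possol-Z}, each number of zeros of Jacobians by its sharper recursive upper bound. We then compare term by term with the bound of Corollary~\ref{cor:possol-recursive}.

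First, Theorem~\ref{thm:possol-Z} applies under the stated assumptions (R), (U$_D$), (U$_B$) and (G). It bounds the number of nondegenerate positive solutions by
\[
Z(\emptyset,\Delta)+\frac12\sum_{j=1}^k\sum_{S\in\mathcal S_j(\Delta)}Z(S,\Delta_S).
\]
Here $\Delta\in\mathrm{Ch}(\emptyset)$. For $S\in\mathcal S_j(\Delta)$, the face $\Delta_S$ exists and is a chamber of $\mathrm{Ch}(S)$ with $|S|=j\le k$.

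Second, we invoke Theorem~\ref{thm:recursive-bounds-BDF}, which gives $Z(S,\mathcal C)\le u^*_S(\mathcal C)$ for every $S$ with $|S|\le k$ and every $\mathcal C\in\mathrm{Ch}(S)$. We apply it to $(\emptyset,\Delta)$ and to each pair $(S,\Delta_S)$. For $j=k$ the face $\Delta_S$ is a vertex, and $Z(S,\{v_S\})=u^*_S(\{v_S\})=1$ by definition, so these terms need no separate argument. Summing with the nonnegative weights $1$ and $\tfrac12$ yields the first bound.

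Third, for the comparison we use the outer inequality $u^*_S(\mathcal C)\le u_S(\mathcal C)$ of the same Theorem~\ref{thm:recursive-bounds-BDF}, applied to the same pairs. Summing again with the same weights gives a quantity at most
\[
u_\emptyset(\Delta)+\frac12\sum_{j=1}^{k}\sum_{S\in\mathcal S_j(\Delta)}u_S(\Delta_S),
\]
which is the bound of Corollary~\ref{cor:possol-recursive}.

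There is no real obstacle: all the analytic work is already contained in Theorem~\ref{thm:possol-Z} and Theorem~\ref{thm:recursive-bounds-BDF}. The only point to check is that the index sets $\mathcal S_j(\Delta)$ and the faces $\Delta_S$ are the same in both corollaries. This holds because they are defined independently of the choice of base rule on the lines, which affects only the numerical bounds $u$ and $u^*$.
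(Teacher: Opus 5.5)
Your proposal is correct and follows the paper's proof. The paper likewise combines Theorem~\ref{thm:possol-Z} with the chain $Z(S,\mathcal C)\le u^*_S(\mathcal C)\le u_S(\mathcal C)$ of Theorem~\ref{thm:recursive-bounds-BDF}, applied to $(\emptyset,\Delta)$ and to the faces $(S,\Delta_S)$ with $S\in\mathcal S_j(\Delta)$.
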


\begin{proof}
Combine Theorem~\ref{thm:possol-Z} with Theorem~\ref{thm:recursive-bounds-BDF}.
\end{proof}

For $k=1$ this reads $\#\{\text{positive solutions of }\eqref{eq:system}\}\le U_\Delta+1$, which is the optimal Descartes rule of signs for circuits of \cite{BDF}.

\subsection{Two examples}\label{subsec:examples}

In both examples $k=2$, and we use the bound of Corollary~\ref{cor:possol-recursive}, which reads
\[
\#\{\text{positive solutions}\}\ \le\ u_\emptyset(\Delta)+\frac12\sum_{i}u_{\{i\}}(\Delta_{\{i\}})+\frac12\,\#\mathcal S_2(\Delta),
\]
where $i$ runs over the edges of $\overline\Delta$, $u_{\{i\}}(\Delta_{\{i\}})=L_{\Delta_{\{i\}}}$ is the Descartes bound of Proposition~\ref{prop:line-zeros} on the edge, and $\mathcal S_2(\Delta)$ is the set of vertices of $\overline\Delta$ at which $B_{i1}$ changes sign. We also give the value of the sharper bound of Corollary~\ref{cor:possol-BDF}, where $u_\emptyset$ is replaced by $u^*_\emptyset$ and $L$ by the partial-sum bound $U$ of Proposition~\ref{prop:BDF-line}. In both cases we assume, as in Remark~\ref{rem:genericity}, that the system, or a small perturbation of it with the same sign data, satisfies (G).

\begin{figure}[htbp]
\centering
\begin{tikzpicture}[scale=2.2]
  \fill[gray!15] (0,0) -- (1.6,0) -- (1.6,1.6) -- (0.782,1.6) -- (0,1/6) -- cycle;
  \draw[->] (-0.2,0) -- (1.75,0) node[right] {$y_1$};
  \draw[->] (0,-0.2) -- (0,1.75) node[above] {$y_2$};
  \draw[thick] (-0.05,0.075) -- (0.8,1.633) node[above left] {$H_1{:}\,+$};
  \node at (1.0,0.6) {$\Delta$};
  \node[left] at (0,1.0) {$H_2{:}\,-$};
  \node[below] at (0.8,0) {$H_3{:}\,-$};
  \node[font=\small,align=center] at (1.25,1.3) {$H_4$ at infinity${:}\,+$};
  \fill (0,1/6) circle (0.025) node[left,font=\scriptsize] {$\{1,2\}$};
  \draw[->,thin] (1.2,0.12) -- (1.58,0.12) node[above,font=\scriptsize] {$\{3,4\}$};
\end{tikzpicture}
\qquad
\begin{tikzpicture}[scale=1.4]
  \foreach \i in {0,...,4}{\coordinate (P\i) at ({90+72*\i}:1);}
  \fill[gray!15] (P0)--(P1)--(P2)--(P3)--(P4)--cycle;
  \draw[thick] (P0)--(P1)--(P2)--(P3)--(P4);
  \draw[thick,dashed] (P4)--(P0);
  \node at (0,0) {$\Delta$};
  \path (P0)--(P1) node[midway,above left,font=\small] {$H_2{:}\,+$};
  \path (P1)--(P2) node[midway,left,font=\small] {$H_1{:}\,-$};
  \path (P2)--(P3) node[midway,below,font=\small] {$H_3{:}\,-$};
  \path (P3)--(P4) node[midway,right,font=\small] {$H_4{:}\,+$};
  \path (P4)--(P0) node[midway,above right,font=\small] {$H_5{:}\,+$};
  \fill (P1) circle (0.045) node[left,font=\scriptsize] {$\{1,2\}$};
  \fill (P3) circle (0.045) node[below right,font=\scriptsize] {$\{3,4\}$};
\end{tikzpicture}
\caption{The positive chambers of Example~\ref{ex:n1k2} (left) and Example~\ref{ex:EH} (right, schematic, the dashed edge being at infinity), with the signs of $b_1$ on the edges. The marked vertices are those where $B_{i1}$ changes sign.}
\label{fig:examples}
\end{figure}

\begin{example}[One polynomial with four monomials]\label{ex:n1k2}
Let $n=1$ and
\[
F(x)=x^3-6x^2+11x-6=(x-1)(x-2)(x-3),
\]
with exponents $a_j=j-1$, $j=1,\dots,4$, and coefficient matrix $C=(-6,11,-6,1)$. So $k=2$, and $F$ has three positive roots, as predicted by Descartes' rule of signs. A Gale dual matrix of $C$, normalized as in Definition~\ref{def:setting}, has rows
\[
D_1=\big(\tfrac{11}6,-1,\tfrac16\big),\quad D_2=(1,0,0),\quad D_3=(0,1,0),\quad D_4=(0,0,1),
\]
so that $p_1=\frac16(1+11y_1-6y_2)$, $p_2=y_1$ and $p_3=y_2$. The positive chamber
\[
\Delta=\{y_1>0,\ y_2>0,\ 6y_2<1+11y_1\}
\]
is unbounded. Its edges lie, in cyclic order, on $H_1,H_2,H_3$ and the line at infinity $H_4$ (Figure~\ref{fig:examples}, left). The four lines cut $\mathbb{RP}^2$ into $7$ chambers. As Gale dual matrix of $A$ we take the columns
\[
b_1=(1,-1,-1,1),\qquad b_2=(0,1,-2,1).
\]
\begin{itemize}
\item \emph{Interior.} The Bézout number is $\beta_2=2n^2=2$. The lower bounds $\ell_\emptyset$ of the six other chambers add up to $1$, so $Z(\emptyset,\Delta)\le u_\emptyset(\Delta)=1$.
\item \emph{Edges.} The Descartes bounds on the edges on $H_1,H_2,H_3,H_4$ are $1,0,0,1$, with sum $2$; the partial-sum bounds are the same.
\item \emph{Vertices.} The signs of $b_1$ are $(+,-,-,+)$. They change at the vertices $\{1,2\}$ and $\{3,4\}$ of $\overline\Delta$, so $\#\mathcal S_2(\Delta)=2$.
\end{itemize}
Corollaries~\ref{cor:possol-recursive} and~\ref{cor:possol-BDF} give $1+\frac12(2+2)=3$, which is sharp. The choice of the first relation matters (Remark~\ref{rem:flag-choice}): for $b_1=(1,-3,3,-1)$, whose signs alternate, $B_{i1}$ changes sign at the four vertices of $\overline\Delta$, $u_\emptyset(\Delta)=2$, and the bound becomes $2+\frac12(2+4)=5$.
\end{example}

\begin{example}[El Hilany's system]\label{ex:EH}
El Hilany \cite{EH} constructed a system of two equations in two variables with seven positive solutions and support
\[
a_1=(0,0),\quad a_2=(6,0),\quad a_3=(3,6),\quad a_4=(-14,7),\quad a_5=(-12,9).
\]
Up to linear combinations of the equations, which change neither the positive solutions nor $\ker C$, the system of \cite{EH} reads
\[
-1+x^6+x^3y^6-t\,x^{-14}y^7=0,\qquad -1+c_0t^7+x^6+(1-c_0t)\,x^3y^6-c\,t\,x^{-12}y^9=0,
\]
with $t=10^{-5}$, $c_0=0.36008$ and $c=(44/31)^{5/6}$. So $n=k=2$ and $N=5$. A Gale dual matrix of $A$ has columns
\[
b_1=(-15,17,-8,3,3),\qquad b_2=(-9,11,-6,0,4).
\]
For these coefficients, the five lines cut $\mathbb{RP}^2$ into $11$ chambers, and $\Delta$ is the pentagon of this arrangement, which is unbounded in $\mathbb{R}^2$: its edges lie, in cyclic order, on $H_2,H_1,H_3,H_4$ and the line at infinity $H_5$ (Figure~\ref{fig:examples}, right).
\begin{itemize}
\item \emph{Interior.} The Bézout number is $\beta_2=8$. The lower bounds $\ell_\emptyset$ of the ten other chambers add up to $1$, so $Z(\emptyset,\Delta)\le u_\emptyset(\Delta)=7$. With the partial-sum bounds they add up to $2$, and $u^*_\emptyset(\Delta)=6$.
\item \emph{Edges.} On the edges on $H_1,\dots,H_5$, the Descartes bounds are $0,1,2,0,1$, with sum $4$, and the partial-sum bounds are $0,1,0,0,1$, with sum $2$.
\item \emph{Vertices.} The signs of $b_1$ are $(-,+,-,+,+)$. They change at the vertices $\{1,2\}$ and $\{3,4\}$, so $\#\mathcal S_2(\Delta)=2$.
\end{itemize}
Corollary~\ref{cor:possol-recursive} gives $7+\frac12(4+2)=10$, and Corollary~\ref{cor:possol-BDF} gives $6+\frac12(2+2)=8$, while the system has $7$ positive solutions. None of the five sign patterns of the first relation gives better bounds (Remark~\ref{rem:flag-choice}). For comparison, the bound of \cite{BS} for systems of two equations in two variables with five monomials is $15$.
\end{example}

\bibliographystyle{amsalpha}
\providecommand{\bysame}{\leavevmode\hbox to3em{\hrulefill}\thinspace}
\providecommand{\MR}{\relax\ifhmode\unskip\space\fi MR }
\providecommand{\MRhref}[2]{%
  \href{http://www.ams.org/mathscinet-getitem?mr=#1}{#2}
}
\providecommand{\href}[2]{#2}

\end{document}